\definecolor{darkgreen}{rgb}{0.5,0.25,0}
\definecolor{darkblue}{rgb}{0,0,0.56}
\definecolor{answerblue}{rgb}{0,0,0.75}
\numberwithin{equation}{section}
\newtheorem{theorem}{Theorem}[section]
\newtheorem{lemma}[theorem]{Lemma}
\newtheorem{corollary}[theorem]{Corollary}
\newtheorem{proposition}[theorem]{Proposition}
\newtheorem{definition}{Definition}[section]
\newtheorem{remark}{Remark}[section]
\newcommand{\R}{\mathbb{R}}
\newcommand{\dy}{\,dy}
\newcommand{\seq}[1]{\left\{#1\right\}}
\newcommand{\abs}[1]{\left|#1\right|}
\newcommand{\norm}[1]{\left\| #1\right\|}
\DeclareMathOperator*{\esssup}{ess\,sup}
\newcommand{\Z}{\mathbb Z}
\renewcommand{\P}{\mathbb P}   
\newcommand{\EE}{\mathbb E}    
\DeclareMathOperator{\Div}{div}
\newcommand{\Divh}{\Div_h}
\newcommand{\Grad}{\mbox{grad}_h\, }
\newcommand{\action}[2]{\left\langle #1,#2 \right\rangle} 
\newcommand{\uk}{\mathcal{U}_\kappa}
\newcommand{\tml}{\mathcal{T}^m_l(\tilde{X}_\kappa)}
\DeclareMathOperator{\supp}{supp}
\DeclareMathOperator{\dive}{div}
\theoremstyle{example}
\title[Stochastic continuity equations on manifolds]
{Renormalization of stochastic continuity equations on Riemannian manifolds}
\author[L. Galimberti]{Luca Galimberti}
\address[Luca Galimberti]
{\newline Department of Mathematical Sciences
\newline NTNU Norwegian University of Science and Technology
\newline NO-7491 Trondheim, Norway} 
\email[]{luca.galimberti@ntnu.no}
\author[K. H. Karlsen]{Kenneth H. Karlsen}
\address[Kenneth Hvistendahl Karlsen]
{\newline Department of mathematics
\newline University of Oslo
\newline P.O. Box 1053,  Blindern
\newline N--0316 Oslo, Norway} 
\email[]{kennethk@math.uio.no}
\date{\today}
\subjclass[2010]{Primary: 60H15, 35F10; Secondary: 58J45, 35D30}
\keywords{Stochastic continuity equation, Riemannian manifold, hyperbolic equation, 
weak solution, chain rule, uniqueness}
\thanks{This work was supported by the Research Council 
of Norway through the project Stochastic Conservation Laws (250674/F20). To appear in \textit{Stochastic Processes and their Applications}.}
\begin{document}

\begin{abstract}

We consider the initial-value problem for 
stochastic continuity equations of the form
$$
\partial_t \rho + \Divh \left[\rho \left(u(t,x) 
+ \sum_{i=1}^N a_i(x)\circ \frac{dW^i}{dt}\right)\right] = 0,
$$
defined on a smooth closed Riemannian manifold $M$ 
with metric $h$, where the Sobolev regular velocity 
field $u$ is perturbed by Gaussian noise 
terms $\dot{W}_i(t)$ driven by smooth spatially dependent vector fields $a_i(x)$ 
on $M$. Our main result is that weak ($L^2$) solutions 
are renormalized solutions, that is, if $\rho$ is a weak solution, then the 
nonlinear composition $S(\rho)$ is a weak solution as well, for any 
``reasonable" function $S:\R\to\R$. The proof consists of a 
systematic procedure for regularizing tensor fields 
on a manifold, a convenient choice of atlas to simplify technical 
computations linked to the Christoffel symbols, and several DiPerna-Lions type 
commutators $\mathcal{C}_\varepsilon (\rho,D)$ between (first/second order) 
geometric differential operators $D$ and the regularization device 
($\varepsilon$ is the scaling parameter).   This work, which is related to 
the ``Euclidean" result in \cite{Punshon-Smith:2017aa}, 
reveals some structural effects that noise and 
nonlinear domains have on the dynamics of weak solutions. 
\end{abstract}

\maketitle

{\small \tableofcontents}


\section{Introduction}\label{sec:intro}

For a number of years many researchers appended new effects and features 
to partial differential equations (PDEs) in fluid mechanics in order to better 
account for various physical phenomena. An interesting example arises when a 
hyperbolic PDE is posed on a curved manifold instead of 
a flat Euclidean domain, in which case the curvature 
of the domain makes nontrivial alterations to 
the solution dynamics \cite{Amorim:2005aa,Ben-Artzi:2007aa,Rossmanith:2004aa}. 
Relevant applications include geophysical flows and general relativity. 
Another example is the rapid rise in the use of stochastic processes to 
extend the scope of hyperbolic PDEs (on Euclidean domains) 
in an attempt to achieve better understanding of turbulence. 
Randomness can enter the PDEs in different ways, such as 
through stochastic forcing or in uncertain system parameters (fluxes). 
Generally speaking, the mathematical literature 
for stochastic partial differential equations (SPDEs) 
on manifolds is at the moment in short supply 
\cite{Elliott:2012aa,Galimberti:2018aa,Gyongy:1993aa,Gyongy:1997aa}. 
In this paper we consider stochastic continuity equations with a non-regular 
velocity field that is perturbed by Gaussian noise terms powered by spatially 
dependent vector fields. In contrast to the existing literature, 
the main novelty is indeed that we pose 
these equations on a curved manifold, being specifically 
interested in the combined effect of noise and nonlinear domains on the 
dynamics of weak solutions. 

Fix a $d$-dimensional ($d\geq 1$) smooth Riemannian 
manifold $M$, endowed with a metric $h$. 
We assume $M$ to be compact, connected, oriented, and without 
boundary. We are interested in the initial-value problem for 
the stochastic continuity equation
\begin{equation}\label{eq:target}
	d\rho + \Divh(\rho\, u)\,dt + \sum_{i=1}^N \Divh( \rho \,a_i)\circ dW^i(t) = 0
	\quad \mbox{on } \, [0,T]\times M, 
\end{equation}
where $W^1,\ldots,W^N$ are independent Wiener processes, 
$a_1,\ldots ,a_N$ are smooth vector fields on $M$ (i.e., 
first order differential operators on $M$), the symbol 
$\circ$ refers to the Stratonovich interpretation of stochastic integrals, 
$u:[0,T]\times M\to TM$ is a time-dependent $W^{1,2}$ vector field on $M$ 
(a rough velocity field), $\Div_h$ is the divergence 
operator linked to the manifold $(M,h)$, and 
$\rho=\rho(\omega,t,x)$ is the unknown (density of a mass distribution) 
that is sought up to a fixed final time $T>0$. 
The equation \eqref{eq:target} is supplemented 
with initial data $\rho(0)=\rho_0\in L^2$ on $M$.

In the deterministic case ($a_i\equiv 0$, $M=\R^d$), the well-posedness of 
weak solutions follows from the theory of renormalized solutions 
due to DiPerna and Lions \cite{DL89}. A key step in this theory relies 
on showing that weak solutions are renormalized solutions, i.e., 
if $\rho$ is a weak solution, then $S(\rho)$ is a weak 
solution as well, for any ``reasonable" 
nonlinear function $S:\R\to\R$. The validity of this chain rule property 
depends on the regularity of the velocity field $u$. 
DiPerna and Lions proved it in the case that $u$ is $W^{1,p}$-regular 
in the spatial variable, while Ambrosio \cite{Ambrosio:2004aa} 
proved it for $BV$ velocity fields. An extension of the 
DiPerna-Lions theory to a class of Riemannian manifolds can be 
found in \cite{Fang:2011aa} (we will return to this paper below).

The well-posedness of stochastic transport/continuity equations with ``Lipschitz" 
coefficients (defined on Euclidean domains) is classical in the literature 
and has been deeply analyzed in Kunita's works 
\cite{Chow:2015aa,Kunita}. In \cite{Attanasio:2011fj} the renormalization property 
is established for stochastic transport equations with irregular ($BV$) velocity field $u$ 
and ``constant" noise coefficients ($a_i\equiv 1$). Moreover, they proved that the 
renormalization property implies uniqueness without the usual 
$L^\infty$ assumption on the divergence of $u$, thereby providing an example 
of the so-called ``regularization by noise" phenomenon. 
In recent years ``regularization by noise`` has been a recurring theme 
in many papers on the analysis of stochastic transport/continuity 
equations, a significant part of it motivated 
by \cite{Flandoli-Gubinelli-Priola}, see e.g.~\cite{Beck:2014aa,Duboscq:2016aa,Fedrizzi:2013aa,Flandoli:2011aa,
Neves:2015aa,Neves:2016aa,Mohammed:2015aa,Zhang:2010aa}.

Recently \cite{Punshon-Smith:2018aa,Rossmanith:2004aa} the 
renormalization property was established for stochastic continuity equations with 
spatially dependent noise coefficients, written 
in It\^{o} form and defined on an Euclidean domain. 
In the one-dimensional case and without a ``deterministic'' 
drift term, the equations analyzed 
in \cite{Punshon-Smith:2018aa} take the form
\begin{equation}\label{eq:PunSmi}
	\partial_t \rho  +\partial_{x}\left(\sigma \rho\right) \frac{dW(t)}{dt} 
	=  \partial_{xx}^2\left(\frac{\sigma^2}{2} \rho\right),
	 \qquad (t,x)\in [0,T]\times \R,
\end{equation}
where $\sigma=\sigma(x)$ is an irregular coefficient that 
belongs to $W^{1,\frac{2p}{p-2}}_{\text{loc}}$, while $\rho$ is 
an $L^p$ weak solution ($p\ge 2$). The derivation of the (renormalized) 
equation satisfied by $F(\rho)$, for any sufficiently smooth $F:\R\to\R$, 
is based on regularizing (in $x$) the weak solution $\rho$ 
by convolution with a standard mollifier 
sequence $\seq{J_\varepsilon(x)}_{\varepsilon>0}$, 
$\rho_\varepsilon:=J_\varepsilon \star \rho$, using the It\^{o} (temporal) and 
classical (spatial) chain rules to compute $F(\rho_\varepsilon)$, and 
deriving commutator estimates to control the regularization error.  
A key insight in \cite{Punshon-Smith:2018aa}, also 
needed in one of the steps in our renormalization 
proof for \eqref{eq:target}, is the identification 
of a ``second order'' commutator, which is crucial 
to conclude that the regularization error converges to zero, without 
having to assume some kind of ``parabolic'' regularity 
like $\sigma\partial_x \rho\in L^2$ --- the 
nature of the SPDE \eqref{eq:PunSmi} is hyperbolic not parabolic, 
so this regularity is not available (at variance 
with \cite{Le-Bris:2008pb}). To be a bit more precise, 
the ``second order" commutator in \cite{Punshon-Smith:2018aa} takes the form
\begin{align*}
	\mathcal{C}_2(\varepsilon;\varrho,\sigma) & :=
	\frac{\sigma^2}{2}\partial_{xx}^2 \varrho_\varepsilon
	-\sigma \partial_{xx}^2 \left( \sigma \varrho \right)_\varepsilon
	+\partial_{xx}^2
	\left(\frac{\sigma^2}{2}\varrho\right)_\varepsilon
	\\ & = \frac12 \int_{\R}
	\partial_{xx}^2 J_\varepsilon(x-y)
	\left(\sigma(x)-\sigma(y)\right)^2
	\varrho(y)\dy,
\end{align*}
where $\varrho\in L^p_{\text{loc}}(\R)$ and 
$\sigma=\sigma(x)\in W^{1,q}_{\text{loc}}(\R)$, $p,q\in [1,\infty]$. 
It is proved in \cite{Punshon-Smith:2018aa} that, as $\varepsilon\to 0$, 
$\mathcal{C}_2(\varepsilon;\varrho,\sigma)\to (\partial_x \sigma)^2 \varrho$ in 
$L^r_{\text{loc}}(\R)$ with $\frac{1}{r}=\frac{1}{p}+\frac{2}{q}$. 

Modulo a deterministic drift term (which we do not include), 
the equation \eqref{eq:PunSmi} can also be written in the form
\begin{equation}\label{eq:GalKar}
	\partial_t \rho  +\partial_{x}\left(\sigma \rho\right) \frac{dW(t)}{dt} 
	=  \partial_x\left(\frac{\sigma^2}{2} \partial_x \rho \right).
\end{equation}
This particular equation is similar to the equation 
studied in \cite{Galimberti:2018aa}, which arises in  the kinetic formulation 
of stochastically forced hyperbolic conservation laws (on manifolds). 
The uniqueness proof in \cite{Galimberti:2018aa} relies on 
writing the equation satisfied by $F(\rho)=\rho^2$. 
In the Euclidean setting, one is lead to control the following error term, 
linked to the second order differential operator 
in \eqref{eq:GalKar} and the ``It\^{o} correction":
$$
\mathcal{R}(\varepsilon):=\abs{\, \int  \partial_x \varrho_{\varepsilon}
\left(\sigma^2 \partial_x \varrho \right)_\varepsilon
-\Bigl(\left(\sigma \partial_\xi  \varrho \right)_\varepsilon \Bigr)^2\, dx},
$$    
again without imposing a condition like $\sigma\partial_x \varrho\in L^2$. 
Nevertheless, in the kinetic formulation of conservation laws one has access to 
additional structural information, namely that $\partial_x \rho$ is 
a bounded measure. In \cite{Galimberti:2018aa} we use this, and the observation
\begin{align*}
	\mathcal{R}(\varepsilon)
	& = \frac12 \int \left(\sigma(y)-\sigma(\bar y)\right)^2 
	(\partial_x  \varrho)(y)(\partial_x  \varrho)(\bar y)
	J_\varepsilon(x-y) J_\varepsilon(x-\bar y)
	\, dy \, d\bar y \, dx,
\end{align*}
to establish that $\mathcal{R}(\varepsilon)\to 0$ as $\varepsilon\to 0$. 
The detailed handling of error terms like $\mathcal{R}(\varepsilon)$ becomes 
significantly more complicated on a curved 
manifold, cf.~\cite{Galimberti:2018aa} for details.

Let us return to the equation \eqref{eq:target}. 
Our main result is the renormalization 
property for weak $L^2$ solutions, roughly speaking 
under the assumption that $u(t,\cdot)$ is a $W^{1,2}$ vector field 
on $M$, whereas $a_1,\ldots ,a_N$ are smooth vector fields on $M$.
As corollaries, we deduce the uniqueness of weak solutions 
and an a priori estimate,  under the additional (usual) 
condition that $\Divh u\in L^1_tL^\infty$. 

The complete renormalization proof is long and 
technical, with the ``Euclidean'' discussion above shedding 
some light on one part of the argument in a simplified situation.  
A key technical part of the proof concerns the regularization of 
functions via convolution using a mollifier. 
In the Euclidean case mollification commutes with differential operators 
and the regularization error (linked to a commutator between the derivative and the 
convolution operator) converges as the mollification radius tends to zero. 
These properties are not easy to engineer if the function 
in question is defined on a manifold. 
On a Riemannian manifold there exist different approaches for 
smoothing functions, including (\textit{i}) the use of partition of unity 
combined with Euclidean convolution in local charts (see~e.g.~\cite{Dumas:1994aa}), 
(\textit{ii}) the so-called ``Riemannian convolution smoothing'' 
\cite{Greene:1979aa} that is better at preserving geometric properties, 
and (\textit{iii}) the heat semigroup method 
(see~e.g.~\cite{Fang:2011aa}). In \cite{Fang:2011aa}, the authors employ the heat 
semigroup to regularize functions as well as vector fields on manifolds. 
As an application, they extend the DiPerna-Lions theory 
(deterministic equations) to a class of Riemannian manifolds. 
One of the results in \cite{Fang:2011aa} says that the 
DiPerna-Lions commutator converges in $L^1$. 
It is not clear to us how to improve this to $L^2$ convergence, which is 
required by our argument to handle the regularization 
error coming from the second order differential operators (arising when 
passing from Stratonovich to It\^{o} integrals), cf.~the discussion above. 

In the present work we need to regularize functions 
as well as \textit{tensor fields}. 
We will make use of an approach based 
on ``pullback, Euclidean smoothing, and then extension'', in the spirit 
of \cite{Galimberti:2018aa}. When applied to functions 
our approach reduces to (\textit{i}).  
Our regularizing procedure consists of three main steps: 
(\textit{I}) a localization step based on a partition of unity; 
(\textit{II}) transportation of tensor fields from 
$M$ to $\R^d$ and vice versa via pushforwards and pullbacks 
to produce ``intrinsic'' geometric objects; 
(\textit{III}) a convenient choice of atlas that 
allows us to work (locally) with the standard $d$-dimensional 
volume element $dx$ instead of the Riemannian volume element 
$dV_h$, which in local coordinates 
equals $\abs{h}^{\frac12}dx^1\cdots dx^d$ 
(presumably not essential, but it dramatically 
simplifies some computations).  Although our approach shares 
some similarities with the mollifier smoothing 
method found in Nash's celebrated work \cite{Nash:1956aa} 
on embeddings of manifolds into Euclidean spaces, there are essential 
differences. The most important one is that Nash regularizes 
tensor fields on Riemannian manifolds by embedding the manifold 
into an Euclidean space and then convolve the tensor field with a 
mollifier defined on the ambient space. Since the mollifier lives in the larger 
Euclidean space, we cannot easily use it as a test function 
in the weak formulation of \eqref{eq:target} to derive a similar 
SPDE for $\rho_\varepsilon$, the 
regularized version of the weak solution $\rho$.

Roughly speaking, our proof starts off from the following 
It\^{o} form of \eqref{eq:target} (cf.~Section \ref{sec:results} 
for details):
\begin{equation}\label{eq:target-ito}
	d\rho+\Divh (\rho u) \, dt+\sum_{i=1}^N \Divh(\rho a_i) \, dW^i(t)
	=\frac12\sum_{i=1}^N\Lambda_i( \rho).
\end{equation}
Recall that for a vector field $X$ (locally given by $X^j \partial_j$), 
the divergence of $X$ is given by $\Divh X=\partial_j X^j + \Gamma^j_{ij} X^i$, 
where $\Gamma_{ij}^k$ are the Christoffel symbols associated with 
the Levi-Civita connection $\nabla$ of the metric $h$ (the Einstein summation 
convention over repeated indices is used throughout the paper). 
For a smooth function $f:M\to \R$, we have $X(f)=(X,\Grad f)_h$ (which locally 
becomes $X^j\partial_j f$). Moreover, $X\bigl(X(f)\bigr)= (\nabla^2 f)(X,X) 
+ (\nabla_XX)(f)$, where $\nabla^2 f$ is the covariant Hessian of $f$ 
and $\nabla_XX$ is the covariant derivative of $X$ in the direction $X$. In the It\^{o} SPDE 
\eqref{eq:target-ito} we denote by $\Lambda_i(\cdot) 
:=\Divh \left(\Divh(\rho a_i)a_i \right)$ the formal adjoint of 
$a_i\bigl(a_i(\cdot)\bigr)$. Later we prove that the 
second order differential operator $\Lambda_i(f)$ may be recast into the form 
$\Divh^{2}\left(f \hat{a}_i\right) - \Divh \left(f \nabla_{a_i}a_i\right)$, 
where $\Divh^{2}(S)$ is defined by $\Divh\bigl(\Divh(S) \bigr)$ 
for any symmetric $(0,2)$-tensor field $S$. 
Further, $\hat{a}_i$ is the symmetric $(0,2)$-tensor field whose 
components are locally given by $\hat{a}_i^{kl}=a_i^k a_i^l$. 
We refer to an upcoming section for relevant background 
material in differential geometry.

Fixing a smooth partition of the unity 
$\seq{\mathcal{U}_\kappa}_{\kappa\in\mathcal{A}}$ 
subordinate to a conveniently chosen atlas $\mathcal{A}$, cf.~(\textit{III}) above, 
we utilize our regularization device to derive a rather involved equation 
for each piece $\bigl( \rho(t) \mathcal{U}_\kappa\bigr)_\varepsilon$. A 
global SPDE for $\rho_\varepsilon:=\sum_\kappa 
\bigl( \rho(t) \mathcal{U}_\kappa\bigr)_\varepsilon$  is then obtained 
by summing up the local equations. We subsequently 
use the It\^{o} and classical chain rules to 
arrive at an equation for $F(\rho_\varepsilon)$, $F\in C^2$ with 
$F,F',F''$ bounded, which contains numerous remainder terms coming from 
the regularization procedure, some of which can be analyzed in 
terms of first order commutators related to the differential 
operators $\Divh(\cdot u)$, $\Divh \left(\cdot \nabla_{a_i}a_i\right)$ and 
second order commutators related to $\Divh^{2}\left(\cdot \hat{a}_i\right)$. 
In addition, we must exploit specific cancellations coming from 
some quadratic terms linked to the covariation of the martingale part of 
the equation \eqref{eq:target-ito} and the second order operators $\Lambda_i$.
The localization part of the regularization procedure generates a number 
of error terms as well, some of which are easy to control whereas others rely on 
the identification of specific cancellations. At long last, after  
sending the regularization parameter $\varepsilon$ to zero, we 
arrive at the renormalized equation
\begin{align*}
	\partial_t &F(\rho)  + \Divh \bigl(F(\rho) u \bigr) 
	- \sum_{i=1}^N \Divh \bigl(G_F(\rho) \bar{a}_i\bigr)+ G_F(\rho)\Divh u
	\\ & 
	+\sum_{i=1}^N \Divh \bigl(F(\rho) a_i\bigr)\, \dot{W}^i
	+\sum_{i=1}^N G_F(\rho)\Divh a_i \, \dot{W}^i(t)
	\\ & \quad
	= \frac12\sum_{i=1}^N\Lambda_i(F(\rho))
	-\frac12 \sum_{i=1}^N\Lambda_i(1) G_F(\rho) 
	+ \frac12\sum_{i=1}^N F''(\rho)\,\bigl(\rho\Divh a_i\bigr)^2,
\end{align*}
where $G_F(\rho)=\rho F'(\rho)-F(\rho)$, $\bar{a}_i= (\Divh a_i) \, a_i$, 
$\Lambda_i(1) = \Divh^{2} \left(\hat{a}_i\right) -\Divh \left(\nabla_{a_i}a_i \right)$.

The remaining part of this paper is organized as follows: In Section \ref{sec:background} 
we collect the assumptions that are imposed on the ``data" of the problem, and 
present background material from differential geometry and stochastic analysis.
The definitions of solution and the main results are stated in Section \ref{sec:results}. 
Section \ref{sec:informal-proof} is dedicated to an informal outline of the proof 
of the renormalization property, while a rigorous proof is 
developed in Section \ref{sec:proof-main-result}.  Corollaries of the main result 
(uniqueness and a priori estimate) are proved in Section \ref{sec:uniqueness}. 
Finally, in Section \ref{sec:appendix} we bring together a few basic 
results used throughout the paper.


\section{Background material and hypotheses}\label{sec:background}

In an attempt to  make this paper more self-contained and to fix relevant notation, we 
briefly review some basic aspects of differential geometry and stochastic analysis.
Furthermore, we collect the precise assumptions imposed on the 
coefficients $u, a_i$ appearing in the stochastic continuity equation \eqref{eq:target}. 


\subsection{Geometric framework}\label{sec:geometric framework}

We refer to \cite{Aubin,LeeSmooth} for basic definitions and facts 
concerning manifolds. Consider a $d$-dimensional 
smooth Riemannian manifold $M$, which is closed, connected, and 
oriented (for instance, the $d$-dimensional sphere). 
Moreover, $M$ is endowed with a smooth (Riemannian) metric $h$. 
By this we mean that $h$ is a positive-definite 2-covariant tensor field, which thus 
determines for every $x\in M$ an inner product $h_x$ on $T_xM$.  Here, $T_xM$ 
denotes the tangent space at $x$, whereas $TM=\coprod_{x\in M} T_xM$ 
denotes the tangent bundle. For two arbitrary vectors 
$X_1,X_2\in T_xM$, we will henceforth write 
$h_x(X_1,X_2)=:\left(X_1,X_2\right)_{h_x}$ 
or even $\left(X_1,X_2\right)_h$ if the context is clear. 
We set $\abs{X}_h:=\left( X, X\right)_h^{1/2}$. 
Recall that in local coordinates $x=(x^i)$, the 
partial derivatives $\partial_i:=\frac{\partial}{\partial x^i}$ 
form a basis for $T_xM$, while the differential forms $dx^i$ 
determine a basis for the cotangent space $T_x^\ast M$. 
Therefore, in local coordinates, $h$ reads 
$$
h = h_{ij}\, dx^i dx^j,  \quad 
h_{ij}=\left(\partial_i,\partial_j\right)_h. 
$$
We will denote by $(h^{ij})$ the inverse of the matrix $(h_{ij})$.

We denote by $dV_h$ the Riemannian density associated to $h$, which 
in local coordinates takes the form
$$
dV_h = \abs{h}^{1/2}\, dx^1\cdots dx^d, 
$$
where $\abs{h}$ is the determinant of $h$. Integration 
with respect to $dV_h$ is done in the following way: 
if $f\in C^0(M)$ has support contained in the domain of a 
single chart $\Phi:U\subset M\to \Phi(U)\subset \R^d$, then
$$
\int_M f(x) \,dV_h(x)
=\int_{\Phi(U)}\left(\abs{h}^{1/2}f\right) 
\circ \Phi^{-1} \,dx^1\cdots dx^d, 
$$ 
where $(x^i)$ are the coordinates associated 
to $\Phi$. If $\supp f$ is not contained in 
a single chart domain, then the integral is defined as
$$
\int_M f(x)\,dV_h(x)
=\sum_{i\in\mathcal I} \int_M \left(\alpha_i f\right) (x) \,dV_h(x), 
$$
where $(\alpha_i)_{i\in\mathcal{I}}$ is a partition of 
unity subordinate to some atlas $\mathcal{A}$. 
Throughout the paper, we will assume for convenience that 
$$
\mathrm{Vol}(M,h):=\int_MdV_h=1. 
$$
For $p\in [1,\infty]$, we denote by $L^p(M)$ the usual Lebesgue spaces on $(M,h)$. 
Always in local coordinates, the gradient of a function $f:M\to \R$ 
is the vector field given by the following expression
$$
\Grad f := h^{ij} \partial_if\,\partial_j. 
$$

A smooth $k$-dimensional real vector bundle is a pair of smooth manifolds 
$E$ (the total space) and $V$ (the base), together with a 
surjective map $\pi: E \to V$ (the projection), satisfying the following three 
conditions: (i) each set $E_x:=\pi^{-1}(x)$ (called the fiber of $E$ over $x$) is endowed  
with the structure of a real vector space; (ii) for each $x\in V$, there exists a 
neighborhood $U$ of $x$ and a diffeomorphism $\phi:\pi^{-1}(U)\to U\times \R^k$, 
called a local trivialization of $E$, such that 
$\pi_1\circ\phi = \pi$ on $\pi^{-1}(U)$, where $\pi_1$ is the projection onto 
the first factor; (iii) the restriction of $\phi$ to each fiber, 
$\phi:E_x\to\{x\}\times\R^k$, is a linear isomorphism.  

Given a smooth vector bundle $\pi: E \to V$ over a smooth manifold $V$, a 
section of $E$ is a section of the map $\pi$, i.e., a map $\sigma:V\to E$ 
satisfying $\pi\circ \sigma =\text{Id}_V$.

For an arbitrary finite-dimensional real vector space $H$, we use 
$\mathcal{T}^m(H)$, $\mathcal{T}_l(H)$, and $\mathcal{T}^m_l(H)$ to denote 
the spaces of covariant $m$-tensors, contravariant $l$-tensors, and 
mixed tensors of type $(m,l)$ on $H$, respectively. For an 
arbitrary smooth manifold $V$, we define the bundles of covariant 
$m$-tensors, contravariant $l$-tensors, and mixed tensors 
of type $(m,l)$ on $V$ respectively by 
$$
\mathcal{T}^m(V)=\coprod_{x\in V} \mathcal{T}^m(T_xV), 
\quad 
\mathcal{T}_l(V)= \coprod_{x\in V} \mathcal{T}_l(T_xV), 
\quad 
\mathcal{T}^m_l(V)= \coprod_{x\in V} \mathcal{T}^m_l (T_xV).
$$ 
Note the natural identifications 
$\mathcal{T}^1(V)=T^\ast V$ and $\mathcal{T}_1(V)=TV$.

Let $F:V\to \bar V$ be a \textit{diffeomorphism} 
between two smooth manifolds $V$, $\bar V$. The 
symbols $F_\ast$, $F^\ast$ denote the smooth bundle isomorphisms 
$F_\ast:\mathcal{T}^m_l(V)\to \mathcal{T}^m_l\left(\bar V\right)$ 
and $F^\ast:\mathcal{T}^m_l\left(\bar V\right)\to \mathcal{T}^m_l(V)$ satisfying 
$$
F_\ast S\left(X_1,\ldots ,X_m,\omega^1,\ldots ,\omega^l\right)
= S\left(F^{-1}_\ast X_1,\ldots ,F^{-1}_\ast X_m, 
F^\ast\omega^1,\ldots , F^\ast\omega^l\right),
$$
for $S\in\mathcal{T}^m_l(V)$, $X_i \in T \bar V$, 
$\omega^j\in T^\ast \bar V$, and 
$$
F^\ast S\left(X_1,\ldots ,X_m,\omega^1,\ldots ,\omega^l\right)
= S\left(F_\ast X_1,\ldots ,F_\ast X_m, F^{-1\ast}\omega^1,
\ldots,F^{-1\ast}\omega^l\right),
$$
for $S\in\mathcal{T}^m_l\left(\bar V\right)$, $X_i \in TV$, 
$\omega^j\in T^\ast V$ (for further details see \cite[Chapter 11]{LeeSmooth}).

The symbol $\nabla$ refers to the Levi-Civita 
connection of $h$, namely the unique linear connection on $M$ that is 
compatible with $h$ and is symmetric. The Christoffel symbols 
associated to $\nabla$ are given by
$$
\Gamma^k_{ij} = \frac12 h^{kl} \left( \partial_ih_{jl}
+\partial_jh_{il}-\partial_lh_{ij} \right). 
$$
In particular, the covariant derivative of a vector field 
$X=X^\alpha\partial_\alpha$ is the $(1,1)$-tensor field which 
in local coordinates reads
$$
(\nabla X)_j^\alpha  := 
\partial_j X^\alpha + \Gamma^\alpha_{kj}X^k.
$$
The divergence of a vector field $X=X^j\partial_j$ is the function defined by
$$
\Divh X := \partial_jX^j + \Gamma^j_{kj}X^k. 
$$

For any vector field $X$ and $f\in C^1(M)$, we have 
$X(f)= (X,\Grad f)_h$, which locally takes the form $X^j \partial_jf$. 
We recall that for a (smooth) vector field $X$, the following 
integration by parts formula holds:
$$
\int_M X(f) \,dV_h =\int_M \left(\Grad f,X\right)_h\,dV_h 
=-\int_M f \,\Divh X \, dV_h,
$$
recalling that $M$ is closed (all functions are compactly supported).

Given a smooth vector field $X$ on $M$, we consider the  norms
$$
\norm{X}_{\overrightarrow{L^p(M)}}^p
:= \int_M \abs{X}^p_h \,dV_h,
\quad p\in [1,\infty), \quad 
 \norm{X}_{\overrightarrow{L^\infty(M)}} 
 :=\norm{\abs{X}_h}_{L^\infty(M)}.
$$
The closure of the space of the smooth vector fields on $M$ 
with respect to the norm $\norm{\cdot}_{\overrightarrow{L^p(M)}}$ is 
denoted by $\overrightarrow{L^p(M)}$. We define the Sobolev space 
$\overrightarrow{W^{1,p}(M)}$ in a similar fashion. 
Indeed, consider the norm
\begin{align*}
	&\norm{X}_{\overrightarrow{W^{1,p}(M)}}^p
	:= \int_M \abs{X}^p_h + \abs{\nabla X}^p_h \,dV_h, 
	\quad p\in [1,\infty),
	\\ & 
	\norm{X}_{\overrightarrow{W^{1,\infty}(M)}}
	:=\norm{\abs{X}_h + \abs{\nabla X}_h}_{L^\infty(M)},
\end{align*}
where locally $\abs{\nabla X}_h^2= (\nabla X)^i_j \, h_{ik}h^{jm} \, (\nabla X)^k_m$. 
The closure of the space of the smooth vector fields with respect  
to this norm is $\overrightarrow{W^{1,p}(M)}$. 
For more operative definitions, $\overrightarrow{L^p(M)}$ and 
$\overrightarrow{W^{1,p}(M)}$ can be seen as the spaces of vector fields whose 
components in any arbitrary chart belong to the corresponding Euclidean space.

Given a smooth vector field $X$, consider the second 
order differential operator $X(X(\cdot))$. We have

\begin{lemma}[geometric identity]\label{lem:ai-ai}
For any smooth vector field $X$ and $\psi\in C^2(M)$,
$$
X(X(\psi)) = (\nabla^2 \psi)(X,X) + (\nabla_XX)(\psi),
$$
where $\nabla^2 \psi$ denotes the covariant Hessian of 
$\psi$ and $\nabla_XX$ denotes the covariant derivative of $X$ in the direction $X$.
\end{lemma}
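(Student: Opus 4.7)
The plan is to derive the identity directly from the defining property of the covariant Hessian $\nabla^2\psi := \nabla(d\psi)$, viewed as a $(0,2)$-tensor field. This is the most concise route and avoids any computation with Christoffel symbols.

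First I would recall that the Levi-Civita connection $\nabla$ extends naturally to all tensor bundles, and on a covector field $\omega$ it satisfies the product rule
\[
Y\bigl(\omega(Z)\bigr) = (\nabla_Y\omega)(Z) + \omega(\nabla_Y Z)
\]
for arbitrary smooth vector fields $Y,Z$. Applying this with $\omega = d\psi$ and recalling that $d\psi(Z) = Z(\psi)$ is a scalar function, so that $Y(d\psi(Z)) = Y(Z(\psi))$, yields
\[
(\nabla^2 \psi)(Y,Z) = (\nabla_Y d\psi)(Z) = Y\bigl(Z(\psi)\bigr) - (\nabla_Y Z)(\psi).
\]
Specializing to $Y = Z = X$ and rearranging then gives exactly
\[
X\bigl(X(\psi)\bigr) = (\nabla^2\psi)(X,X) + (\nabla_X X)(\psi),
\]
as claimed.

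As a sanity check (or as an alternative proof if the product rule above has not been recorded elsewhere in the paper) I would verify the identity in local coordinates. Writing $X = X^i\partial_i$, a direct computation gives
\[
X\bigl(X(\psi)\bigr) = X^iX^j\,\partial_i\partial_j\psi + X^j(\partial_j X^k)\,\partial_k\psi,
\]
while the coordinate expressions $(\nabla^2\psi)_{ij} = \partial_i\partial_j\psi - \Gamma^k_{ij}\partial_k\psi$ and $(\nabla_X X)^k = X^j\partial_j X^k + \Gamma^k_{ij}X^i X^j$ show that the Christoffel-symbol contributions in $(\nabla^2\psi)(X,X)$ and $(\nabla_XX)(\psi)$ cancel, leaving precisely the same expression.

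There is essentially no obstacle here: the identity is a tensorial restatement of the Leibniz rule for $\nabla$ on $1$-forms. The only subtle point worth emphasizing in the write-up is that the symmetry of the Hessian (which follows from the torsion-freeness of $\nabla$) is \emph{not} needed for the statement as written, since we evaluate $\nabla^2\psi$ on the pair $(X,X)$; this is why the identity holds for an arbitrary linear connection, and in particular for the Levi-Civita connection used throughout the paper.
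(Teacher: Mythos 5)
Your proof is correct, and your primary argument takes a genuinely different route from the paper's. The paper proves the identity by writing out all three terms in local coordinates and observing that the Christoffel-symbol contributions cancel; your secondary "sanity check" reproduces exactly that computation. Your main argument, by contrast, is intrinsic: you unfold the definition $\nabla^2\psi = \nabla(d\psi)$ via the Leibniz rule $(\nabla_Y\omega)(Z) = Y(\omega(Z)) - \omega(\nabla_Y Z)$ for the induced connection on $1$-forms, then set $\omega = d\psi$ and $Y=Z=X$. This has two advantages over the paper's approach: it is coordinate-free, so there is nothing to cancel and no risk of sign errors with $\Gamma^k_{ij}$, and it makes transparent that the identity is just the Leibniz rule in disguise. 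Your closing remark that symmetry of the Hessian (hence torsion-freeness) is \emph{not} used because one evaluates on the diagonal $(X,X)$ is a correct and worthwhile observation; the paper's coordinate computation does not surface this. The paper's route, on the other hand, is entirely self-contained given only the coordinate formulas for $\nabla^2\psi$ and $\nabla_XX$ already stated in Section~\ref{sec:geometric framework}, and requires no appeal to the extension of $\nabla$ to the cotangent bundle, which the paper never explicitly records. Both are sound; yours is the cleaner conceptual argument, the paper's is the more elementary one within the toolkit it has set up.
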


\begin{proof}
In any coordinate system, we have
\begin{align*}
	& (\nabla^2 \psi)(X,X) = \partial_{lm} \psi X^l X^m
	-\Gamma_{lm}^j \partial_j \psi X^l X^m,
	\\ & (\nabla_XX)(\psi) = X^m \partial_m X^l \partial_l\psi 
	+ \Gamma_{lm}^j\partial_j\psi X^l X^m.
\end{align*}
On the other hand, $X(X(\psi)) = \partial_{lm}\psi X^lX^m 
+ X^m\partial_m X^l\partial_l\psi$.
\end{proof}

In the following, we will consistently write 
$(\nabla^2 \cdot)(a_i,a_i) + (\nabla_{a_i}a_i)(\cdot)$ instead of $a_i(a_i(\cdot))$, 
thereby highlighting the presence of the Hessian.

Let us introduce the following second order differential 
operators associated to the vector fields $\{a_i\}_{i=1}^N$:
\begin{equation}\label{eq:def-Lambda-i}
	\Lambda_i(\psi):= 
	\Divh(\Divh(\psi a_i)a_i), \;\;\; \psi\in C^2(M), \;\; i=1,\ldots, N.
\end{equation}
We will need to write these operators in a more appropriate form. 
To this end, we will first make a short digression into some 
concepts from differential geometry.

Given a smooth symmetric $(0,2)$-tensor field $S$ on $M$, 
we can compute $\Divh S$, which is the smooth vector 
field whose local expression is given by 
\begin{equation}\label{eq:divh-S}
	\Divh S := \nabla_jS^{ij}\, \partial_i 
	= \left\{\partial_j S^{ij}+ \Gamma^i_{lj}S^{lj} 
	+ \Gamma^j_{lj}S^{il}\right\}\partial_i,
\end{equation}
where, obviously, $S= S^{ij}\,\partial_i\otimes\partial_j$ (since $S$ is 
symmetric, it is irrelevant which index we contract). 
Because $\Divh S$ is a vector field, it can operate on functions by 
differentiation. Moreover, we can compute its divergence. 
Henceforth, we set
\begin{equation}\label{eq:def-divh2}
	\Divh^{2}(S):=\Divh \bigl(\Divh(S)\bigr).
\end{equation}

Given any vector field $X$ on $M$, we can 
canonically construct a symmetric $(0,2)$-tensor field on $M$ in 
the following fashion: we consider the 
endomorphism induced by $X$ on the tangent bundle $TM$,  
$$
Y_p \mapsto \left(X_p,Y_p\right)_h \,X_p, \quad p\in M, 
\quad Y=\text{vector field}. 
$$
This endomorphism can be canonically identified 
with a $(1,1)$-tensor field. Besides, rising an index via the 
metric $h$ produces a symmetric $(0,2)$-tensor 
field $\hat{X}$, whose components are locally given by
$$
\hat{X}^{jk}= X^j X^k.
$$

\begin{remark}\label{rem:def-hat-a}
In what follows, we use the symbols $\hat{a}_1,\ldots,\hat{a}_N$ 
to denote the smooth symmetric $(0,2)$-tensor fields 
obtained by applying the procedure defined above to 
the vector fields $a_1,\ldots,a_N$.
\end{remark}

We may now state
\begin{lemma}[alternative expression for $\Lambda_i$]\label{lem:alt-Lambdai}
For $\psi\in C^2(M)$,  
\begin{equation}\label{eq:alt-Lambdai}
	\Lambda_i(\psi) = 
	\Divh^{2}\bigl(\psi \hat{a}_i\bigr) 
	- \Divh \bigl(\psi \nabla_{a_i}a_i\bigr),
	\quad i=1,\ldots,N.
\end{equation}
\end{lemma}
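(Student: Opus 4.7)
The plan is to avoid expanding both sides of \eqref{eq:alt-Lambdai} separately and matching them term by term; instead, I will first establish an intermediate vector-field identity,
\begin{equation*}
	\Divh\bigl(\psi \hat{a}_i\bigr)
	= \Divh(\psi a_i)\, a_i + \psi \nabla_{a_i} a_i,
\end{equation*}
from which \eqref{eq:alt-Lambdai} follows at once by applying $\Divh$ to both sides and invoking the definition \eqref{eq:def-Lambda-i} of $\Lambda_i$ on the left-hand side of the resulting equation.

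To justify the intermediate identity, the key step is to unpack $\Divh(\psi \hat{a}_i)$ in local coordinates using \eqref{eq:divh-S} combined with the Leibniz rule for the Levi-Civita connection. Since by Remark \ref{rem:def-hat-a} the components of $\psi \hat{a}_i$ are $\psi a_i^j a_i^k$, one obtains three contributions to $\nabla_j(\psi a_i^k a_i^j)$: the derivative hitting $\psi$, the derivative hitting $a_i^k$, and the derivative hitting $a_i^j$. I would then identify these respectively as $a_i(\psi)\, a_i^k$, $\psi (\nabla_{a_i}a_i)^k$, and $\psi (\Divh a_i)\, a_i^k$, using the standard coordinate expressions for $a_i(\psi)$, $\Divh a_i$, and the directional covariant derivative. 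Regrouping the first and third contributions via the classical scalar product rule $\Divh(\psi a_i) = a_i(\psi) + \psi \Divh a_i$ yields the intermediate identity.

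Finally, applying $\Divh$ to both sides of the intermediate identity and rearranging gives \eqref{eq:alt-Lambdai}. I anticipate no substantive obstacle: the whole argument rests on two standard derivations (the product rule for the divergence of a vector field and the Leibniz rule for the Levi-Civita connection acting on a tensor product), with the only subtlety being that the symmetry of $\hat{a}_i$ must be invoked so that the choice of contracted index in \eqref{eq:divh-S} is immaterial.
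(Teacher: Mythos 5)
Your proposal is correct and takes essentially the same route as the paper: both prove the intermediate vector-field identity $\Divh(\psi a_i)\,a_i = \Divh(\psi\hat{a}_i) - \psi\nabla_{a_i}a_i$ in local coordinates and then apply $\Divh$ to conclude. The only cosmetic difference is that you expand $\Divh(\psi\hat{a}_i)$ via the Leibniz rule and regroup, whereas the paper expands $\Divh(\psi a_i)\,a_i$ and adds and subtracts a Christoffel term to reassemble $\Divh(\psi\hat{a}_i)$; both yield the same identity.
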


\begin{proof}
In any coordinates, from the definition of the 
divergence of a vector field,
\begin{align*}
	&\left(\Divh(\psi a_i)a^\beta_i\right)\partial_\beta 
	=\left[\partial_\ell \left(\psi a^\ell_i\right)a^\beta_i 
	+ \Gamma^k_{\ell k}\, a^\ell_ia^\beta_i\psi\right]\partial_\beta 
	\\ & \qquad
	= \left[\partial_\ell \left(\psi \hat{a}^{\ell\beta}_i\right) 
	+ \Gamma^k_{\ell k}\,\hat{a}^{\ell\beta}_i\psi 
	- \psi a^\ell_i\partial_\ell a^\beta_i\right]\partial_\beta 
	\\ & \qquad
	= \left[\partial_\ell \left(\psi \hat{a}^{\ell\beta}_i\right) 
	+ \Gamma^k_{\ell k}\,\hat{a}^{\ell\beta}_i\psi 
	- \psi a^\ell_i\partial_\ell a^\beta_i - \psi\Gamma^\beta_{jk}\hat{a}^{jk}_i 
	+ \psi\Gamma^\beta_{jk}\hat{a}^{jk}_i\right]\partial_\beta.
\end{align*}
Therefore, recalling that locally \eqref{eq:divh-S}
and $\nabla_{a_i}a_i=\left[a^\ell_i \partial_\ell a^\beta_i 
+ \Gamma^\beta_{jk}\hat{a}^{jk}_i\right]\partial_\beta$ hold, we 
obtain the following identity between vector fields:
$$
\Divh (\psi a_i )\, a_i 
= \Divh \bigl(\psi \hat{a}_i\bigr) - \psi \nabla_{a_i}a_i.
$$
We apply $\Div$ to this equation to 
obtain \eqref{eq:alt-Lambdai}.
\end{proof}

\begin{remark}[adjoint of $\Lambda_i$]
The adjoint of $\Lambda_i(\cdot)$ is $a_i(a_i(\cdot))$, 
i.e.~$\forall \psi,\phi\in C^2(M)$,
$$
\int_M \Lambda_i(\psi)\phi\,dV_h 
= \int_M \psi \, a_i(a_i(\phi)) \,dV_h 
=\int_M \psi\,\Bigl((\nabla^2\phi)(a_i,a_i) 
+(\nabla_{a_i}a_i)(\psi)\Bigr)\,dV_h.
$$
\end{remark}

The following lemma turns out to be an extremely useful 
instrument in the proof of Theorem \ref{thm:main-result}. 
It allows us to introduce a special atlas on $M$, in whose 
charts the determinant of the metric $h$ will be constant. 
It turns out that this atlas significantly simplifies several 
terms in some already long computations; in broad strokes, 
the underlying reason is we can work locally with the 
standard $d$-dimensional Lebesgue measure $dz$ 
instead of the Riemannian volume element $dV_h$.

\begin{lemma}[convenient choice of atlas]\label{lem:det-metric-const}
On the manifold $M$ there exists a finite atlas 
$\mathcal{A}=\{\kappa:X_\kappa\subset M \to \tilde{X}_\kappa\subset \R^d\}$ 
such that, for any $\kappa\in\mathcal{A}$, the determinant 
of the metric written in that chart is equal to one: 
$\abs{h_\kappa}\equiv 1$. In particular, we have 
\begin{equation}\label{eq:special-atlas-prop}
	\text{$\Gamma^m_{mj}=0$ on $X_\kappa$, 
	for any $j=1,\cdots,d$.}
\end{equation}
\end{lemma}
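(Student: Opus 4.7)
The plan is to construct, around each point $p\in M$, a single chart in which $\abs{h}\equiv 1$; a finite subcover, which exists by compactness of $M$, then yields the desired atlas $\mathcal{A}$. The second assertion \eqref{eq:special-atlas-prop} I would derive from the elementary identity $\Gamma^m_{mj} = \tfrac12\partial_j \log\abs{h}$, so that $\abs{h}\equiv 1$ automatically forces the contracted Christoffel symbol to vanish on the chart domain.

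For the local construction, I would start from an arbitrary smooth chart $(U,\phi)$ centered at $p$ with coordinates $x=(x^1,\ldots,x^d)$ and metric components $h_{ij}(x)$, and introduce the triangular change of coordinates $F:\phi(U)\to\R^d$ defined by
\[
F(x^1,\ldots,x^d) := \left(x^1,\ldots, x^{d-1},\; \int_0^{x^d}\sqrt{\abs{h}(x^1,\ldots,x^{d-1},s)}\,ds\right).
\]
The Jacobian $DF$ is lower triangular with diagonal entries $(1,\ldots,1,\sqrt{\abs{h}(x)})$, hence $\det DF = \sqrt{\abs{h}(x)}>0$. By the inverse function theorem, $F$ restricts to a diffeomorphism of some neighborhood of the origin onto its image; the composition $\kappa := F\circ\phi$, suitably restricted, is therefore a bona fide smooth chart around $p$. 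Writing $y=F(x)$, the standard transformation rule for $(0,2)$-tensors gives $\tilde h_{ab}(y) = \tfrac{\partial x^i}{\partial y^a}\tfrac{\partial x^j}{\partial y^b}h_{ij}(x(y))$, and taking determinants yields
\[
\det \tilde h(y) \;=\; \bigl(\det DF(x)\bigr)^{-2} \det h(x) \;=\; \abs{h(x)}^{-1}\abs{h(x)} \;\equiv\; 1,
\]
which is the desired normalization.

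The identity $\Gamma^m_{mj}=\tfrac12\partial_j \log\abs{h}$ used above needs only a brief justification: starting from the standard formula $\Gamma^m_{mj}=\tfrac12 h^{ml}(\partial_m h_{jl}+\partial_j h_{ml}-\partial_l h_{mj})$, one relabels the dummy indices $m\leftrightarrow l$ in the first term and exploits the symmetries of $h^{ml}$ and $h_{jl}$ to cancel it against the third, leaving $\tfrac12 h^{ml}\partial_j h_{ml}$, which equals $\tfrac12\partial_j\log\abs{h}$ by Jacobi's formula for the derivative of a determinant. The only real subtlety in the argument is verifying that the triangular change of variables defines a genuine diffeomorphism on a full open neighborhood of $p$; this is the step I would expect to be the most technical, but it is immediate from the strict positivity of $\sqrt{\abs{h}}$ combined with the inverse function theorem.
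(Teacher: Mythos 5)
Your proof is correct and follows essentially the same approach as the paper: you perform a triangular change of one coordinate by integrating $\sqrt{|h|}$, observe the Jacobian determinant equals $\sqrt{|h|}$, and conclude via the contracted Christoffel formula $\Gamma^m_{mj} = \tfrac12\partial_j\log|h|$. The only cosmetic differences are that the paper integrates in the first coordinate rather than the last and argues via the pullback of the Riemannian volume form rather than the determinant of the transformed metric tensor — these are equivalent formulations.
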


\begin{proof}
Fix $x\in M$ and consider a chart $\Phi$ around $x$, 
whose induced coordinates are named $(u^i)$ 
and whose range is the open unit cube in $\R^d$, $(0,1)^d$. 
Then, $(\Phi^{-1})^\ast\, dV_h = f \, du^1\wedge\cdots\wedge du^d$, 
where $f=\abs{h_\Phi}^{1/2}$, where $(\Phi^{-1})^\ast$ is defined in 
Section \ref{sec:geometric framework}, and $\wedge$ 
denotes the wedge product between forms. 
Without loss of generality, we can assume from the 
beginning that $f\in C^\infty([0,1]^d)$. Consider the following map 
from $(0,1)^d$ to $\R^d$:
$$
\Psi: 
\begin{cases}
	z^1= 
	\int_0^{u^1} f(\zeta,u^2,\ldots,u^d)
	\, d\zeta \\
	z^2=u^2 \\
	\vdots \\
	z^d=u^d
\end{cases}.
$$
One can check that $\Psi$ is smooth and invertible onto its image (recall $f>0$). 
Moreover, $\abs{\Psi'}=f(u^1,\ldots,u^d)>0$. By the inverse function 
theorem and the fact that $\Psi$ admits a global inverse, we 
infer that $\Psi$ is a diffeomorphism of $(0,1)^d$ onto its image, and
$$
\bigl((\Psi\circ\Phi)^{-1}\bigr)^\ast\, dV_h 
= \bigl(\Psi^{-1}\bigr)^\ast \bigl(\Phi^{-1}\bigr)^\ast \, dV_h 
=  dz^1\wedge\cdots\wedge dz^d.
$$
We set $\kappa_x:= \Psi\circ\Phi$. We repeat this procedure for 
any $x\in M$, and by compactness of $M$ we end up with a finite 
atlas $\mathcal{A}=\{\kappa:X_\kappa\subset M 
\to \tilde{X}_\kappa\subset \R^d\}$ with the desired property. 
In general, $\Gamma^m_{mj}= \partial_j\log \abs{h_\kappa}^{\frac12}$ 
\cite[page 106]{Aubin}. Hence, \eqref{eq:special-atlas-prop} follows.
\end{proof}

\begin{remark}
A different proof of Lemma \ref{lem:det-metric-const}, 
which requires much more baggage, can be found in \cite{Banyaga}.
\end{remark}

Finally, we discuss the conditions imposed on the vector field $u$. Firstly, 
\begin{equation}\label{eq:u-ass-1}
	u\in L^1\left([0,T]; \overrightarrow{W^{1,2}(M)}\right).
\end{equation}
In particular, we have $u\in L^1\left([0,T]; \overrightarrow{L^2(M)}\right)$, which 
is sufficient to conclude that for $\rho\in L^\infty_tL^2_{\omega,x}$ and 
$\psi\in C^\infty(M)$, $t\mapsto \int_0^t \int_M \rho(s) u(s)(\psi) \,dV_h\,ds$ 
is absolutely continuous, $\P$-a.s., and hence is not contributing to 
cross-variations against $W^i$. These cross-variations appear when passing from 
Stratonovich to It\^o integrals in the SPDE \eqref{eq:target}, 
consult the upcoming Lemma \ref{lem:L2-weak-sol-Ito}. 

For the uniqueness result (cf.~Corollary \ref{cor:uniq-result}), we 
must also assume
\begin{equation}\label{eq:u-ass-2}
	\Divh u\in L^1\left([0,T]; L^\infty(M)\right).
\end{equation}

\begin{remark}
In the following, for a function $f:M\to \R$ and a vector 
field $X$, we will freely jump between the different notations
$$
f(x)X(x),\quad f(x)X, \quad (fX)(x), 
\qquad x\in M,
$$
for the vector field obtained by pointwise scalar 
multiplication of $f$ and $X$.
\end{remark}

\subsection{Stochastic framework}

We refer to \cite{Protter,Revuz:1999aa} for relevant notation, 
concepts, and basic results in stochastic analysis. From beginning to end, we fix a 
complete probability space $(\Omega, \mathcal{F}, \P)$ and a 
complete right-continuous filtration $\seq{\mathcal{F}_t}_{t\in [0,T]}$. 
Without loss of generality, we assume that the $\sigma$-algebra 
$\mathcal{F}$ is countably generated.  Let $W=\seq{W_i}_{i=1}^N$ 
be a finite sequence of independent one-dimensional 
Brownian motions adapted to the filtration
$\seq{\mathcal{F}_t}_{t\in [0,T]}$. We refer to 
$\bigl(\Omega, \mathcal{F}, \seq{\mathcal{F}_t}_{t\in [0,T]},\P, W\bigr)$ 
as a (Brownian) \textit{stochastic basis}. 

Consider two real-valued stochastic processes $Y,\tilde Y$. 
We call $\tilde Y$ a \textit{modification} of $Y$ 
if, for each $t\in [0,T]$, $\P\bigl(\bigl\{\omega\in \Omega:Y(\omega,t)
=\tilde Y(\omega,t)\bigr\}\bigr)=1$.  It is important to pick 
good modifications of stochastic processes. 
Right (or left) continuous modifications are often 
used (they are known to exist 
for rather general processes), since any two such modifications 
of the same process are indistinguishable (with 
probability one they have the same sample paths). 
Besides, they necessarily have left-limits everywhere. 
Right-continuous processes with left-limits 
are referred to as \textit{c{\`a}dl{\`a}g}.

An $\left\{\mathcal{F}_t\right\}_{t\in [0,T]}$-adapted, c{\`a}dl{\`a}g process 
$Y$ is an $\left\{\mathcal{F}_t\right\}_{t\in [0,T]}$-semimartingale 
if there exist processes $F,M$ with $F_0=M_0=0$ such that
$$
Y_t = Y_0 + F_t+ M_t,
$$
where $F$ is a finite variation process and $M$ is a local martingale. 
In this paper we will only be concerned with \textit{continuous} semimartingales.
The quantifier ``local'' refers to the existence of a sequence $\seq{\tau_n}_{n\ge1}$ of 
stopping times increasing to infinity such that the stopped 
processes $\mathbf{1}_{\seq{\tau_n>0}} M_{t\wedge \tau_n}$ are martingales. 

Given two continuous semimartingales $Y$ and $Z$, we can define 
the Fisk-Stratonovich integral of $Y$ with respect to $Z$ by
$$
\int_0^tY(s)\circ dZ(s) = \int_0^tY(s) \, dZ(s) + \frac12 \action{Y}{Z}_t,
$$
where $\int_0^tY(s) dZ(s)$ is the It\^o integral of $Y$ 
with respect to $Z$ and $\action{Y}{Z}$ denotes the quadratic cross-variation process of $Y$  
and $Z$.  Let us recall It\^o's formula for a continuous semimartingale $Y$. 
Let $F\in C^2(\R)$. Then $F(Y)$ is again a continuous semimartingale and 
the following chain rule formula holds:
$$
F(Y(t))-F(Y(0))= \int_{0}^t F'(Y(s))dY(s) 
+ \frac12\int_0^t F''(Y(s)) \, d\action{Y}{Y}_s.
$$

Martingale inequalities are generally important for several reasons. For 
us they will be used to bound It\^{o} stochastic integrals in terms 
of their quadratic variation (which is easy to compute). One of the most important 
martingale inequalities is the Burkholder-Davis-Gundy inequality. 
Let $Y=\seq{Y_t}_{t\in [0,T]}$ be a continuous local martingale with $Y_0=0$. 
Then, for any stopping time $\tau \leq T$,
\begin{equation}\label{eq:BDG}
	\EE \left(\sup_{t\in[0,\tau]}\abs{Y_t}\right)^p \leq 
	C_p\, \EE \sqrt[\leftroot{-2}\uproot{-1}p]{\action{Y}{Y}_\tau},
	\qquad p\in (0,\infty),
\end{equation}
where $C_p$ is a universal constant. We use \eqref{eq:BDG} 
with $p=1$, in which case $C_p=3$. 

Finally, the vector fields driving the noise in \eqref{eq:target} satisfy
\begin{equation}\label{eq:a-ass}
	a_1,\ldots,a_N\in C^\infty(M).
\end{equation}


\section{Weak solutions and main results}\label{sec:results}
Inspired by \cite{Flandoli-Gubinelli-Priola}, we work with the 
following concept of solution for \eqref{eq:target}.

\begin{definition}[weak $L^2$ solution]\label{def:L2-weak-sol}
Given $\rho_0\in L^2(M)$, a weak $L^2$ solution of \eqref{eq:target} 
with initial datum $\rho|_{t=0}=\rho_0$ is a function $\rho\in 
L^\infty([0,T];L^2(\Omega\times M))$ such that for all $\psi\in C^\infty(M)$ the 
stochastic process $(\omega,t)\mapsto\int_M\rho(t)\psi\,dV_h$ has a continuous modification 
which is an $\left\{\mathcal{F}_t\right\}_{t\in [0,T]}$-semimartingale and 
$\P$-a.s., for all $t\in [0,T]$, 
\begin{equation}\label{eq:L2-weak-sol-strat}
	\begin{split}
		\int_M\rho(t)\psi\,dV_h &= \int_M\rho_0\psi\,dV_h
		+\int_0^t\int_M\rho(s)\,u(\psi)\,dV_h\,ds\\
		&\qquad+\sum_{i=1}^N\int_0^t\int_M\rho(s)\,a_i(\psi)\,dV_h\circ dW^i(s).
	\end{split}
\end{equation}
\end{definition}

\begin{remark}
Since each vector field $a_i$ is smooth, cf.~\eqref{eq:a-ass}, 
the corresponding stochastic process 
$(\omega,t)\mapsto\int_M \rho(s) \, a_i(\psi) \, dV_h$ 
has a continuous modification that is an 
$\left\{\mathcal{F}_t\right\}_{t\in [0,T]}$-semimartingale.
\end{remark}

The first result brings \eqref{eq:target} 
into its equivalent It\^o form. The result is 
analogous to Lemma 13 in \cite{Flandoli-Gubinelli-Priola}.

\begin{lemma}[Stratonovich-It\^{o} conversion]\label{lem:L2-weak-sol-Ito}
Let $\rho$ be a weak $L^2$ solution 
of \eqref{eq:target}, according to Definition \ref{def:L2-weak-sol}. 
Then equation \eqref{eq:L2-weak-sol-strat} is equivalent to 
\begin{equation}\label{eq:L2-weak-Ito}
	\begin{split}
		& \int_M \rho(t)\psi\, dV_h = \int_M \rho_0\psi\, dV_h 
		+\int_0^t\int_M\rho(s)\,u(\psi)\,dV_h\,ds
		\\ & \quad 
		+ \sum_{i=1}^N\int_0^t\int_M \rho(s) \, a_i(\psi) \, dV_h\, dW^i(s)
		+ \frac12\sum_{i=1}^N \int_0^t \int_M \rho(s) \,a_i(a_i(\psi)) \, dV_h \,ds.
	\end{split}
\end{equation}
\end{lemma}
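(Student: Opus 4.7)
The plan is to apply the Fisk--Stratonovich conversion formula term-by-term to each noise integral in \eqref{eq:L2-weak-sol-strat} and then identify the resulting quadratic cross-variation. Introducing the shorthand $Y^j(t):=\int_M\rho(t)\,a_j(\psi)\,dV_h$, the very definition of the Stratonovich integral yields
\[
\int_0^t Y^j(s)\circ dW^j(s) = \int_0^t Y^j(s)\,dW^j(s)+\frac12\action{Y^j}{W^j}_t,
\]
so the whole task reduces to computing $\action{Y^j}{W^j}_t$ for each $j=1,\dots,N$.

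To evaluate this covariation, I would exploit the fact that $a_j(\psi)\in C^\infty(M)$, since $a_j$ is smooth by \eqref{eq:a-ass} and $\psi\in C^\infty(M)$. Hence the Stratonovich weak formulation \eqref{eq:L2-weak-sol-strat} may be legitimately applied with the test function $\widetilde\psi:=a_j(\psi)$, furnishing a semimartingale decomposition of $Y^j$ itself. Because $\action{Y^j}{W^j}$ depends only on the local martingale part of $Y^j$, both the initial value and the drift $\int_0^t\int_M\rho(s)\,u(a_j(\psi))\,dV_h\,ds$ may be discarded. Rewriting each Stratonovich integral appearing in this decomposition as an It\^o integral plus a finite-variation correction (again via the same Fisk--Stratonovich identity), the martingale part of $Y^j$ is identified as $\sum_{k=1}^N\int_0^t\int_M\rho(s)\,a_k(a_j(\psi))\,dV_h\,dW^k(s)$.

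Invoking the standard rule $\action{\int_0^\cdot H\,dW^k}{W^j}_t = \delta_{jk}\int_0^t H(s)\,ds$ selects only the term $k=j$, yielding
\[
\action{Y^j}{W^j}_t = \int_0^t\int_M\rho(s)\,a_j(a_j(\psi))\,dV_h\,ds.
\]
Summing over $j=1,\dots,N$ and substituting back into \eqref{eq:L2-weak-sol-strat} produces the It\^o form \eqref{eq:L2-weak-Ito}. The reverse implication is obtained by running the same computation in the opposite direction.

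The main technical obstacle I anticipate is the rigorous justification that the covariation $\action{Y^j}{W^j}$ can be read off from the \emph{formal} martingale part of the above decomposition of $Y^j$. This rests on uniqueness of the semimartingale decomposition, which in turn requires the drift term to be of locally finite variation --- indeed, absolutely continuous in $t$. The assumption $u\in L^1([0,T];\overrightarrow{L^2(M)})$ (highlighted by the authors immediately after \eqref{eq:u-ass-1}) guarantees exactly this: combined with $\rho\in L^\infty_tL^2_{\omega,x}$, the Cauchy--Schwarz inequality makes $s\mapsto\int_M\rho(s)\,u(a_j(\psi))\,dV_h$ a $\P$-a.s.\ $L^1(0,T)$ function, hence a finite-variation process contributing nothing to the cross-variation with $W^j$.
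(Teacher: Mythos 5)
Your proposal is correct and follows essentially the same route as the paper: both re-use the Stratonovich formulation \eqref{eq:L2-weak-sol-strat} with the test function $a_j(\psi)$ to obtain a semimartingale decomposition of $Y^j$, then compute $\action{Y^j}{W^j}_t$ from its local martingale part. Where the paper explicitly keeps all terms and argues that the iterated bracket $\bigl\langle\langle\cdot,\cdot\rangle,\cdot\bigr\rangle$ vanishes by citing Kunita's theorem on bounded variation of quadratic covariation processes, you simply discard the finite-variation pieces up front --- the same reasoning phrased in a slightly more compact form, and your identification of the hypothesis \eqref{eq:u-ass-1} as what guarantees the drift is finite-variation matches the paper's remark exactly.
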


\begin{proof}
Let us commence from \eqref{eq:L2-weak-sol-strat}. 
The Stratonovich integrals can be written as
\begin{align*}
	& \sum_{i=1}^N \int_0^t\int_M \rho(s) \, a_i(\psi) \, dV_h\circ dW^i(s)
	\\& \quad 
	=\sum_{i=1}^N\int_0^t\int_M \rho(s)\, a_i(\psi) \, dV_h\, dW^i(s)
	+\frac12 \sum_{i=1}^N 
	\left\langle\int_M \rho(s) \,a_i(\psi) \, dV_h, W^i \right\rangle_t.
\end{align*}
where $\action{\cdot}{\cdot}$ denotes the cross-variation between stochastic 
processes. Using \eqref{eq:L2-weak-sol-strat} with $a_i(\psi)\in C^\infty(M)$ as 
test function, we infer
\begin{align*}
	& \left\langle \int_M \rho \,a_i(\psi) \, dV_h, W^i\right\rangle_t 
	= \sum_{j=1}^N \left\langle \int_0^\cdot \int_M \rho \, a_j(a_i(\psi)) 
	\, dV_h\circ dW^j, W^i\right\rangle_t
	\\ & \qquad 
	= \sum_{j=1}^N \left\langle \int_0^\cdot \int_M \rho \, a_j(a_i(\psi)) 
	\, dV_h\, dW^j, W^i\right\rangle_t
	\\ & \qquad \qquad 
	+ \frac12 \sum_{j=1}^N\left\langle \left\langle  \int_M \rho\,a_j(a_i(\psi)) 
	\, dV_h, W^j   \right\rangle , W^i \right \rangle_t,
\end{align*}
where we have exploited that the time-integral is absolutely continuous 
and thus not contributing to the cross-variation against $W^i$, 
which follows from \eqref{eq:u-ass-1} and the fact that $\rho$ 
belongs $\P$-a.s.~to $L^2([0,T]\times M)$. 

Since $a_j(a_i(\psi))\in C^\infty(M)$, the stochastic process 
$(\omega,t)\mapsto \int_M \rho\, a_j(a_i(\psi)) \, dV_h$ is 
a continuous semimartingale by assumption. It follows from 
\cite[Theorem 2.2.14]{Kunita} that the variation process $\left\langle  \int_M \rho \,a_j(a_i(\psi) )
\, dV_h, W^j  \right\rangle$ is continuous and of bounded variation. Hence, 
$\bigl\langle\left\langle\cdot,\cdot\right\rangle,\cdot\bigr\rangle=0$. Therefore,
$$
\left\langle\int_M \rho \,a_i(\psi) \, dV_h, W^i\right\rangle_t  
= \sum_{j=1}^N \left\langle \int_0^\cdot \int_M \rho \, a_j(a_i(\psi)) 
\, dV_h\, dW^j, W^i \right\rangle_t.
$$
Since $\rho\in L^\infty([0,T];L^2(\Omega\times M))$, we clearly 
have $\int_M \rho \, a_j(a_i(\psi)) \, dV_h\in L^2([0,T])$, $\P$-a.s., and 
so by \cite[Theorem 2.3.2]{Kunita} we obtain
\begin{align*}
	&\left\langle\int_M \rho\, a_i(\psi) \, dV_h, W^i\right\rangle_t 
 	= \sum_{j=1}^N \int_0^t \int_M \rho \,a_j(a_i(\psi)) 
	\, dV_h \,d\langle W^j, W^i\rangle_s
	\\ & \quad 
	= \sum_{j=1}^N \int_0^t \int_M \rho \,a_j(a_i(\psi)) 
	\, dV_h \,\delta^{ji} \, ds
	= \int_0^t \int_M \rho \,a_i(a_i(\psi)) \, dV_h \,ds,
\end{align*}
and the sought equation \eqref{eq:L2-weak-Ito} follows.  
Finally, we can repeat this argument, starting with \eqref{eq:L2-weak-Ito} 
and working our way back to \eqref{eq:L2-weak-sol-strat}. This 
concludes the proof.
\end{proof}

In view of Lemma \ref{lem:L2-weak-sol-Ito}, we have an 
equivalent concept of solution.

\begin{definition}[weak $L^2$ solution, It\^{o} formulation]\label{def:L2-weak-sol-Ito}
A weak $L^2$-solution of \eqref{eq:target} 
with initial datum $\rho|_{t=0}=\rho_0\in L^2(M)$ 
is a function $\rho\in L^\infty([0,T];L^2(\Omega\times M))$ such 
that for any $\psi\in C^\infty(M)$ the stochastic process 
$(\omega,t)\mapsto\int_M\rho(t)\psi\,dV_h$ 
has a continuous modification which is $\left\{\mathcal{F}_t\right\}_{t\in [0,T]}$-adapted 
and satisfies the following equation $\P$-a.s., for all $t\in [0,T]$:
\begin{align*}
	& \int_M \rho(t)\psi\, dV_h = \int_M \rho_0\psi\, dV_h 
	+ \int_0^t\int_M\rho(s)\,u(\psi)\,dV_h\,ds
	\\ &\quad 
	+ \sum_{i=1}^N\int_0^t\int_M \rho(s) \,a_i(\psi) \, dV_h\, dW^i(s)
	+ \frac12\sum_{i=1}^N \int_0^t \int_M \rho(s) \, a_i(a_i(\psi)) \, dV_h \,ds.
\end{align*}
\end{definition}

\begin{definition}[renormalization property]\label{def:main-result}
Let $\rho$ be a weak $L^2$ solution of 
\eqref{eq:target} with initial datum 
$\rho|_{t=0}=\rho_0\in L^2(M)$.
We say that $\rho$ is renormalizable if, for 
any $F\in C^2(\R)$ with $F,F',F''$ bounded, and for any 
$\psi\in C^\infty(M)$, the stochastic process 
$(\omega,t)\mapsto\int_M F(\rho(t))\psi\,dV_h$ 
has a continuous modification that is 
$\left\{\mathcal{F}_t\right\}_{t\in [0,T]}$-adapted and 
satisfies the following SPDE weakly (in $x$) $\P$-a.s.:
\begin{equation}\label{eq:main-result}
	\begin{split}
		& dF(\rho)  + \Divh \bigl(F(\rho) u \bigr)\, dt 
		+ G_F(\rho)\Divh u \, dt
		\\ & \qquad\qquad 
		+\sum_{i=1}^N \Divh\bigl(F(\rho) a_i\bigr)\, dW^i(t)
		+\sum_{i=1}^N G_F(\rho)\Divh a_i \, dW^i(t)
		\\ & \qquad
		= \frac12\sum_{i=1}^N\Lambda_i(F(\rho))\, dt
		-\frac12 \sum_{i=1}^N\Lambda_i(1) G_F(\rho)\,dt 
		\\ & \qquad \qquad 
		+ \frac12\sum_{i=1}^N F''(\rho)\,\bigl(\rho\Divh a_i\bigr)^2\, dt
		+\sum_{i=1}^N\Divh \bigl(G_F(\rho)\bar{a}_i\bigr ) \,dt,
	\end{split}
\end{equation}
where the second order differential operator $\Lambda_i$ 
is defined in \eqref{eq:alt-Lambdai},  
\begin{equation}\label{eq:bar-ai-Lambda-i(1)}
	\bar{a}_i :=\left(\Divh a_i\right)a_i,
	\quad
	\Lambda_i(1) = \Divh^{2} \left(\hat{a}_i\right) 
	-\Divh \left(\nabla_{a_i}a_i \right),
\end{equation}
and 
\begin{equation}\label{eq:GF-def}
	G_F(\xi)=\xi F'(\xi)-F(\xi), 
	\quad \xi\in\R.
\end{equation}

The equation \eqref{eq:main-result} 
is understood in the space-weak sense, that is, 
for all test functions $\psi\in C^\infty(M)$ 
and for all $t\in [0,T]$, $\P$-a.s.,
{\small
\begin{equation}\label{eq:main-result-weak-form}
	\begin{split}
		&\int_M F(\rho(t))\psi \, dV_h 
		= \int_M F(\rho_0)\psi\, dV_h 
		+\int_0^t\int_M F(\rho(s))\,u(\psi)\,dV_h\,ds
		\\ &\quad 
		+\sum_{i=1}^N\int_0^t\int_M 
		F(\rho(s)) \,a_i(\psi) \, dV_h\, dW^i(s)
		+ \frac12 \sum_{i=1}^N \int_0^t \int_M 
		F(\rho(s))\, a_i(a_i(\psi)) \, dV_h \,ds
		\\ & \quad 
		-\int_0^t\int_M G_F(\rho(s))\Divh u \,\psi\, dV_h\, ds
		-\sum_{i=1}^N\int_0^t\int_MG_F(\rho(s))\Divh a_i 
		\,\psi\,dV_h \, dW^i(s) 
		\\ & \quad 
		-\frac12 \sum_{i=1}^N\int_0^t\int_M 
		\Lambda_i(1)\,G_F(\rho(s))\,\psi\,dV_h\,ds
		\\ & \quad 
		+\frac12\sum_{i=1}^N\int_0^t\int_M 
		F''(\rho(s))\bigl(\rho(s)\Divh a_i\bigr)^2\,\psi\,dV_h\,ds
		\\ & \quad
		-\sum_{i=1}^N\int_0^t\int_M G_F(\rho(s)) 
		\bar{a}_i(\psi)\,dV_h \,ds.
	\end{split}
\end{equation}
}
\end{definition}

\begin{remark}\label{rem:modification-of-some-terms}
The quantity $\mathcal{J}:=-\frac12 \sum_i \Lambda_i(1) G_F(\rho)\,dt
+\sum_i \Divh \bigl(G_F(\rho)\bar{a}_i\bigr) \,dt$ 
in \eqref{eq:main-result} takes the equivalent form
$$
\mathcal{J}
=\frac12 \sum_{i=1}^N\Lambda_i(1) G_F(\rho)\,dt
+\sum_{i=1}^N \bar{a}_i\bigl( G_F(\rho)   \bigr)\,dt,
$$
if we apply the product rule to the divergence 
of the scalar $G_F(\rho)$ times the vector field $\bar{a}_i$, 
remembering that $\Lambda_i(1)=\Divh\bar{a}_i$, cf.~\eqref{eq:def-Lambda-i} 
and \eqref{eq:bar-ai-Lambda-i(1)}. We will make use of this expression 
for $\mathcal{J}$ in the upcoming computations.
\end{remark}

We can now state the main result of this paper.

\begin{theorem}[renormalization property]\label{thm:main-result}
Suppose conditions \eqref{eq:u-ass-1} and \eqref{eq:a-ass} hold. 
Consider a weak $L^2$ solution $\rho$ of \eqref{eq:target} 
with initial datum $\rho_0\in L^2(M)$, according 
to Definition \ref{def:L2-weak-sol-Ito}. 
Then $\rho$ is renormalizable in the sense of 
Definition \ref{def:main-result}. 
\end{theorem}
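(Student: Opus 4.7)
The plan is to execute a DiPerna-Lions style regularize-chain-rule-commute-pass-to-limit argument adapted to the manifold and stochastic setting, using the It\^o formulation \eqref{eq:L2-weak-Ito} together with the alternative expression for $\Lambda_i$ from Lemma \ref{lem:alt-Lambdai}. First I would fix the special atlas $\mathcal{A}=\{\kappa:X_\kappa\to\tilde X_\kappa\}$ furnished by Lemma \ref{lem:det-metric-const}, so that $|h_\kappa|\equiv 1$ and $\Gamma^m_{mj}\equiv 0$ in every chart; locally this collapses the divergence to $\Divh X=\partial_j X^j$ and identifies $dV_h$ with Lebesgue measure $dz$. Fixing a subordinate partition of unity $\{\mathcal{U}_\kappa\}$, I define the regularization $\rho_\varepsilon:=\sum_\kappa(\rho\mathcal{U}_\kappa)_\varepsilon$, where $(\cdot)_\varepsilon$ means pull back to $\kappa(X_\kappa)\subset\R^d$, convolve with a standard mollifier $J_\varepsilon$, and push back to $M$ via $\kappa^{-1}$. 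The elementary building block for the analysis is that for each fixed $z\in\tilde X_\kappa$, the function $x\mapsto\mathcal{U}_\kappa(x)\,J_\varepsilon(\kappa(x)-z)$ is an admissible test function in $C^\infty(M)$.

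\textbf{Regularized SPDE and chain rule.} Plugging that test function into \eqref{eq:L2-weak-Ito} and summing over $\kappa$ gives a local SPDE for $\rho_\varepsilon(t,z)$ of the schematic form
\begin{align*}
d\rho_\varepsilon+\Divh(\rho_\varepsilon u)\,dt+\sum_{i=1}^N\Divh(\rho_\varepsilon a_i)\,dW^i
=\tfrac12\sum_{i=1}^N\Lambda_i(\rho_\varepsilon)\,dt+\mathcal{R}_\varepsilon\,dt+\sum_{i=1}^N\mathcal{M}^i_\varepsilon\,dW^i,
\end{align*}
where $\mathcal{R}_\varepsilon,\mathcal{M}^i_\varepsilon$ collect DiPerna-Lions-type first-order commutators
$\mathcal{C}^1_\varepsilon(\rho,u):=\Divh(\rho u)_\varepsilon-\Divh(\rho_\varepsilon u)$ and
$\mathcal{C}^1_\varepsilon(\rho,\nabla_{a_i}a_i)$, the Punshon-Smith-type second-order commutator
$\mathcal{C}^2_\varepsilon(\rho,\hat{a}_i):=\Divh^{2}(\rho\hat{a}_i)_\varepsilon-\Divh^{2}(\rho_\varepsilon\hat{a}_i)$, and the localization errors where derivatives land on $\mathcal{U}_\kappa$ or on the chart transitions. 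For each fixed $\omega$ and $z$, the process $t\mapsto\rho_\varepsilon(t,z)$ is a continuous semimartingale that is $C^\infty$ in $z$, so I can apply It\^o's formula in $t$ combined with the classical chain rule in $z$ to produce an equation for $F(\rho_\varepsilon)$. Testing this equation against an arbitrary $\psi\in C^\infty(M)$ and integrating $dV_h$ supplies the candidate weak formulation for $F(\rho)$; crucially, the martingale quadratic variation generates the term $\tfrac12\sum_i F''(\rho_\varepsilon)\,[\Divh(\rho a_i)_\varepsilon]^2\,dt$.

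\textbf{Commutator convergence.} The bulk of the argument amounts to showing that every remainder converges in the appropriate topology. The first-order commutators $\mathcal{C}^1_\varepsilon(\rho,u)$ go to zero strongly in $L^1_tL^2_x$, $\P$-a.s., by the classical DiPerna-Lions lemma combined with \eqref{eq:u-ass-1}; the companion $\mathcal{C}^1_\varepsilon(\rho,\nabla_{a_i}a_i)$ is even simpler given \eqref{eq:a-ass}. The delicate point is the second-order commutator: in the special atlas it admits, chart by chart, the representation
\begin{align*}
\mathcal{C}^2_\varepsilon(\rho,\hat a_i)(z)
=\tfrac12\int\partial_{k\ell}^{\,2}J_\varepsilon(z-y)\bigl(\hat a_i^{k\ell}(z)-\hat a_i^{k\ell}(y)\bigr)^2\rho(y)\,dy+\text{l.o.t.},
\end{align*}
which, by the Punshon-Smith calculation, converges to $\rho\,\bigl(\partial_k\hat a_i^{k\ell}\bigr)^2\rho$-type quantities that, after using the symmetry of $\hat a_i$ and the identity $\Lambda_i(1)=\Divh^{2}\hat a_i-\Divh(\nabla_{a_i}a_i)$ from \eqref{eq:bar-ai-Lambda-i(1)}, combine with the It\^o correction to produce exactly $\tfrac12 F''(\rho)\,(\rho\Divh a_i)^2$ in the limit. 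The point is that this cancellation is \emph{algebraic}: no additional regularity of the form $a_i(\rho)\in L^2$ is needed, matching the hyperbolic nature of \eqref{eq:target}. Martingale terms are handled uniformly in $t$ via the Burkholder-Davis-Gundy inequality \eqref{eq:BDG} to pass to the limit in probability.

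\textbf{Main obstacle.} The step I expect to be the technical bottleneck is the simultaneous treatment of (a) the $L^2_{t,\omega,x}$ convergence of the second-order commutator at the correct rate so that, when paired with $F''(\rho_\varepsilon)$ (bounded but not uniformly continuous in $\rho$), the limit matches the martingale bracket, and (b) the localization errors produced when spatial derivatives fall on $\mathcal{U}_\kappa$ or on the chart change data. For (b) I would exploit the identities $\sum_\kappa\mathcal{U}_\kappa\equiv 1$, hence $\sum_\kappa\Grad\mathcal{U}_\kappa\equiv 0$ and $\sum_\kappa\nabla^2\mathcal{U}_\kappa\equiv 0$, together with the choice $\Gamma^m_{mj}=0$ provided by Lemma \ref{lem:det-metric-const}, which systematically kills the would-be singular contributions that otherwise arise from $\abs{h}^{1/2}$ factors. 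Once all remainders vanish in the appropriate topology, passing to the limit $\varepsilon\to 0$ in the weak formulation recovers \eqref{eq:main-result-weak-form}; a continuous $\{\mathcal{F}_t\}$-adapted modification of $t\mapsto\int_M F(\rho(t))\psi\,dV_h$ is then extracted from the fact that each term on the right-hand side is either absolutely continuous in $t$ or an It\^o integral, completing the proof.
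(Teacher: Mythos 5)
Your proposal tracks the paper's strategy faithfully: special atlas from Lemma~\ref{lem:det-metric-const} to kill $\Gamma^m_{mj}$ and identify $dV_h$ with Lebesgue measure, localization via a subordinate partition of unity, the pullback--mollify--extend regularization, It\^o chain rule for the regularized field, first-order DiPerna--Lions commutators for $\Divh(\rho u)$ and $\Divh(\rho\nabla_{a_i}a_i)$, the Punshon--Smith second-order commutator for $\Divh^2(\rho\hat a_i)$, and $\sum_\kappa\mathcal{U}_\kappa\equiv 1$ to annihilate localization errors. This is the same route.

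One point needs sharpening because it is \emph{the} pivot of the argument. The quantity you call $\mathcal{C}^2_\varepsilon(\rho,\hat a_i):=\Divh^2(\rho\hat a_i)_\varepsilon-\Divh^2(\rho_\varepsilon\hat a_i)$ does \emph{not} converge on its own as $\varepsilon\to 0$; the paper makes this explicit in the remark following Lemma~\ref{lem:Reps(2)-F}. In the special coordinates it splits as $2\mathcal{C}_\varepsilon[\rho_\kappa,a_i]+2a_i(r_{\kappa,\varepsilon,i})+\bar r_{\kappa,\varepsilon,i}+\text{l.o.t.}$, and while the genuine Punshon--Smith piece $\mathcal{C}_\varepsilon[\rho_\kappa,a_i]$ in the sense of \eqref{eq:def-Ceps} and the first-order remainders converge, the term $a_i(r_{\kappa,\varepsilon,i})$ has no limit at all (it is an $a_i$-derivative of a DiPerna--Lions commutator, which only vanishes in $L^2$, not in $W^{1,2}$). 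The proof survives only because this uncontrolled piece cancels \emph{identically at fixed $\varepsilon$} against the cross term produced by expanding $[\Divh(\rho a_i)_\varepsilon]^2$ in the It\^o quadratic variation --- $\mathcal{H}_{2+6,1}+\mathcal{H}_{5,2}=0$ in the paper's bookkeeping. Your phrase that the commutator ``converges to $\rho(\partial_k\hat a_i^{k\ell})^2\rho$-type quantities'' and then ``combines with the It\^o correction in the limit'' suggests passing to the limit separately in each, which would fail; the cancellation must be engineered before any limit is taken. Your closing sentence about an ``algebraic'' cancellation indicates you are aware of the phenomenon, but when fleshing this out you should carry $a_i(r_{\varepsilon,i})$ unconverged to the point where it meets its twin. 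Also note that your displayed integrand $\bigl(\hat a_i^{k\ell}(z)-\hat a_i^{k\ell}(y)\bigr)^2$ is off: the Punshon--Smith commutator kernel is bilinear in the increments $a_i^k(z)-a_i^k(y)$ of the vector field, not the squared increment of the $(0,2)$-tensor $\hat a_i$.
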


As an application of this result, we obtain a 
uniqueness result for \eqref{eq:target}, if we further assume 
that $\Divh u\in L^1_tL^\infty_x$, cf.~\eqref{eq:u-ass-2}. 
More precisely, we have

\begin{corollary}[uniqueness]\label{cor:uniq-result}
Suppose conditions \eqref{eq:u-ass-1}, \eqref{eq:u-ass-2}, 
and \eqref{eq:a-ass} hold. Then the initial-value 
problem for \eqref{eq:target} possesses 
at most one weak $L^2$ solution $\rho$ in the sense 
of Definition \ref{def:L2-weak-sol-Ito}. 
\end{corollary}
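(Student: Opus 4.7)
The approach is the classical DiPerna--Lions argument adapted to the stochastic setting, using the renormalization property as a black box. First I exploit the linearity of \eqref{eq:target}: if $\rho_1,\rho_2$ are two weak $L^2$ solutions sharing the initial datum $\rho_0$, then $\rho:=\rho_1-\rho_2$ is again a weak $L^2$ solution in the sense of Definition \ref{def:L2-weak-sol-Ito}, but now with $\rho|_{t=0}=0$. By Theorem \ref{thm:main-result}, $\rho$ is renormalizable, so \eqref{eq:main-result-weak-form} holds for every admissible $F$ and every $\psi\in C^\infty(M)$.

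The plan is to test with the constant function $\psi\equiv 1$ (allowed since $M$ is closed) and with a sequence $\{F_n\}\subset C^2(\R)$ of admissible renormalizers approximating $F(\xi)=\xi^2$. A standard construction --- e.g.\ $F_n(\xi):=\xi^2\,\eta(\xi/n)$ for a smooth cutoff $\eta$ equal to $1$ on $[-1,1]$ and supported in $[-2,2]$ --- produces $F_n\in C^2(\R)$ with $F_n,F_n',F_n''$ uniformly bounded in $n$, $F_n(0)=0$, and $F_n''(\xi)\to 2$ pointwise, together with uniform bounds $|F_n(\xi)|\leq C\xi^2$, $|G_{F_n}(\xi)|\leq C\xi^2$, and $|F_n''(\xi)|\leq C$. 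With $\psi\equiv 1$ the derivatives $u(\psi)$, $a_i(\psi)$, $a_i(a_i(\psi))$, and $\bar{a}_i(\psi)$ all vanish, so \eqref{eq:main-result-weak-form} collapses drastically. Taking expectations kills the martingale term $\sum_i \int_0^{\cdot}\int_M G_{F_n}(\rho)\Divh a_i\,dV_h\,dW^i$ (its integrand lies in $L^2(\Omega\times[0,T]\times M)$ since $F_n'$ is bounded, $a_i$ is smooth and $M$ is compact), leaving
\begin{align*}
\EE\int_M F_n(\rho(t))\,dV_h
&= -\EE\int_0^t\!\!\int_M G_{F_n}(\rho)\,\Divh u\,dV_h\,ds \\
&\quad -\frac12\sum_{i=1}^N\EE\int_0^t\!\!\int_M \Lambda_i(1)\,G_{F_n}(\rho)\,dV_h\,ds \\
&\quad +\frac12\sum_{i=1}^N\EE\int_0^t\!\!\int_M F_n''(\rho)\bigl(\rho\Divh a_i\bigr)^2\,dV_h\,ds.
\end{align*}

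Sending $n\to\infty$ by dominated convergence --- the dominating function is a constant multiple of $\rho^2$ times bounded smooth coefficients, and $\rho^2\in L^1(\Omega\times M)$ uniformly in $t$ --- I obtain, with $\phi(t):=\EE\int_M \rho(t)^2\,dV_h$,
\begin{align*}
\phi(t)
&= -\EE\int_0^t\!\!\int_M \rho^2\,\Divh u\,dV_h\,ds
-\frac12\sum_{i=1}^N\EE\int_0^t\!\!\int_M \Lambda_i(1)\,\rho^2\,dV_h\,ds \\
&\quad +\sum_{i=1}^N\EE\int_0^t\!\!\int_M \rho^2(\Divh a_i)^2\,dV_h\,ds.
\end{align*}
Since the $a_i$'s are smooth and $M$ is compact, $\Lambda_i(1)$ and $\Divh a_i$ are bounded on $M$, so the right-hand side is controlled by $\int_0^t\bigl(\|\Divh u(s)\|_{L^\infty(M)}+C\bigr)\phi(s)\,ds$ for a constant $C$ depending only on $a_1,\ldots,a_N$. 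Hypothesis \eqref{eq:u-ass-2} makes the weight integrable in $s$, and Grönwall's lemma combined with $\phi(0)=0$ forces $\phi\equiv 0$, whence $\rho=0$ $d\P\otimes dt\otimes dV_h$-a.e., proving uniqueness.

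The principal technical point is the passage $n\to\infty$, which requires that every nonlinear expression in $\rho$ --- most delicately $F_n''(\rho)(\rho\Divh a_i)^2$ --- converges in $L^1(\Omega\times[0,T]\times M)$. This is exactly where the uniform bounds on $F_n,F_n',F_n''$, the $L^\infty_t L^2_{\omega,x}$-regularity of $\rho$, and the smoothness of the $a_i$ work together; once the limit is taken the remainder is standard energy estimate plus Grönwall, mirroring the deterministic DiPerna--Lions theory.
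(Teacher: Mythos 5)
Your argument matches the paper's: you apply Theorem \ref{thm:main-result} with $\psi\equiv 1$ and a truncated version of $F(\xi)=\xi^2$ (your $F_n(\xi)=\xi^2\eta(\xi/n)$ versus the paper's $F_\mu(\xi)=\mu\chi(\xi^2/\mu)$, an inessential difference), take expectations to kill the martingale, pass to the limit by dominated convergence, and close with Gr\"onwall; the only cosmetic discrepancy is that you invoke linearity at the start while the paper invokes it at the end. One slip in passing: $F_n$ and $F_n'$ are \emph{not} uniformly bounded in $n$ (their $L^\infty$ norms grow like $n^2$ and $n$ respectively), but this claim is never used --- what you actually need is per-$n$ boundedness to invoke Theorem \ref{thm:main-result} and the quadratic dominating bounds $|F_n(\xi)|,|G_{F_n}(\xi)|\lesssim \xi^2$, $|F_n''(\xi)|\lesssim 1$, which you state correctly.
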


According to Definition \ref{def:L2-weak-sol-Ito}, a weak solution 
$\rho$ belongs to the space $L^\infty_tL^2_{\omega,x}$. Combining 
the proof of Corollary \ref{cor:uniq-result} and a standard 
martingale argument, we can strengthen this 
through ``shifting'' $\esssup_{t}$ inside the expectation operator $\EE[\cdot]$, 
so that  $\rho \in L^2_\omega L^\infty_tL^2_x$ and 
consequently, $\P$-a.s., $\rho \in L^\infty_tL^2_x$.

\begin{corollary}[a priori estimate]\label{cor:apriori-est}
Suppose the assumptions of Corollary \ref{cor:uniq-result} 
are satisfied. Consider a weak $L^2$ solution $\rho$ of \eqref{eq:target} 
with initial datum $\rho_0\in L^2(M)$. Then 
$\rho\in L^2\left(\Omega; L^\infty([0,T];L^2(M))\right)$ and 
\begin{equation}\label{eq:apriori-est}
	\EE \esssup_{t\in [0,T]} \norm{\rho(t)}_{L^2(M)}^2 
	\leq \exp(Ct) \norm{\rho_0}_{L^2(M)}^2,
\end{equation}
where the constant $C$ depends on $\norm{\Divh u}_{L^1_tL^\infty_x}$ 
and $\max_i\norm{a_i}_{C^2}$.
\end{corollary}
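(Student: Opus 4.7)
The plan is to test the renormalized identity~\eqref{eq:main-result-weak-form} against the constant function $\psi\equiv 1$, using an approximation $F_R$ of $\xi\mapsto \xi^2$, pass to the limit $R\to\infty$ to recover a simple balance law for $Y(t):=\int_M\rho(t)^2\,dV_h$, and then apply a Burkholder--Davis--Gundy/Gronwall absorption argument to bring $\esssup_t$ inside $\EE$, exactly as suggested in the paragraph preceding the statement.

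\emph{Step 1 (balance law for $\rho^2$).} I pick $\{F_R\}\subset C^2(\R)$ with $F_R,F_R',F_R''$ bounded, $F_R(0)=0$, $|F_R''|\le 2$, and $F_R(\xi)\nearrow \xi^2$, $F_R'(\xi)\to 2\xi$, $F_R''(\xi)\to 2$ pointwise. Since $\psi\equiv 1$ is constant, all of $u(\psi), a_i(\psi), a_i(a_i(\psi)), \bar{a}_i(\psi)$ vanish, so \eqref{eq:main-result-weak-form} collapses to the balance law
\begin{align*}
Y_R(t)&=Y_R(0) -\int_0^t\!\!\int_M G_{F_R}(\rho)\Divh u\,dV_h\,ds -\sum_i\int_0^t\!\!\int_M G_{F_R}(\rho)\Divh a_i\,dV_h\,dW^i \\
&\quad -\tfrac12\sum_i\int_0^t\!\!\int_M \Lambda_i(1)G_{F_R}(\rho)\,dV_h\,ds +\tfrac12\sum_i\int_0^t\!\!\int_M F_R''(\rho)(\rho\Divh a_i)^2\,dV_h\,ds,
\end{align*}
where $Y_R(t):=\int_M F_R(\rho(t))\,dV_h$. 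Because $G_{F_R}'(\xi)=\xi F_R''(\xi)$ yields $|G_{F_R}(\xi)|\le \xi^2$ and $|F_R''|\le 2$, every integrand is dominated by $C\rho^2$ times an $L^1_tL^\infty_x$ coefficient (using \eqref{eq:u-ass-2} and the smoothness of $a_i$). Dominated convergence for the $ds$ terms and It\^o isometry for the $dW^i$ term let me send $R\to\infty$, giving
\[
Y(t)=Y(0)+\int_0^t\alpha(s)\,ds+M(t),
\]
with $|\alpha(s)|\le c(s)Y(s)$ for some $c\in L^1(0,T)$ depending only on $\|\Divh u\|_{L^1_tL^\infty_x}$ and $\max_i\|a_i\|_{C^2}$, and $M(t)=-\sum_i\int_0^t\!\int_M \rho^2\Divh a_i\,dV_h\,dW^i$ a continuous martingale with $\langle M\rangle_t\le K\int_0^t Y(s)^2\,ds$. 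Each $Y_R$ has a continuous modification by Theorem~\ref{thm:main-result}, so the right-hand side above furnishes a continuous modification of $Y$, along which $\esssup_t Y=\sup_t Y$ almost surely.

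\emph{Step 2 (BDG + Gronwall).} I localize by $\tau_n:=\inf\{t\le T:Y(t)>n\}\wedge T$, well-defined by the continuous modification and satisfying $\tau_n\uparrow T$ a.s. On $[0,\tau_n]$ the BDG inequality~\eqref{eq:BDG} with $p=1$ gives
\[
\EE\sup_{s\le t\wedge\tau_n}|M(s)|\le 3\sqrt{K}\,\EE\sqrt{\sup_{s\le t\wedge\tau_n}Y(s)\cdot\int_0^{t\wedge\tau_n}Y(s)\,ds},
\]
and Young's inequality $\sqrt{ab}\le \epsilon a+(4\epsilon)^{-1}b$ absorbs $\tfrac12\EE\sup Y$ into the left-hand side. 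Combined with the drift bound, this produces
\[
\EE\sup_{s\le t\wedge\tau_n}Y(s)\le 2\EE Y(0)+\int_0^t\widetilde c(s)\,\EE\sup_{r\le s\wedge\tau_n}Y(r)\,ds,
\]
with $\widetilde c\in L^1(0,T)$ having the advertised dependence. Deterministic Gronwall then bounds $\EE\sup_{s\le t\wedge\tau_n}Y(s)$ uniformly in $n$, and monotone convergence as $n\to\infty$ delivers~\eqref{eq:apriori-est}.

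\emph{Main obstacle.} The genuine subtlety lies in Step~1: producing a \emph{continuous} semimartingale modification of $Y$ itself (not merely of each $Y_R$) after the limit $R\to\infty$, which is what legitimizes the stopping times $\tau_n$ and the identity $\esssup_t Y=\sup_t Y$. Once this is secured, the remaining BDG/Young/Gronwall absorption in Step~2 is standard.
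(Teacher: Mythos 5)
Your overall strategy (test the renormalized identity against $\psi\equiv 1$ with a truncation of $\xi\mapsto\xi^2$, then Burkholder--Davis--Gundy plus Young plus Gr\"onwall) is the right one and is the same as the paper's, but you perform the two limits in the wrong order, and this creates a genuine gap in Step~1. You want to send the truncation parameter $R\to\infty$ \emph{first}, obtaining a balance law for $Y(t)=\int_M\rho(t)^2\,dV_h$, and only then run BDG/Gr\"onwall. The difficulty is that the limiting martingale $M(t)=-\sum_i\int_0^t\!\int_M\rho^2\,\Divh a_i\,dV_h\,dW^i$ is not a priori well posed: the only regularity you may use at this point is $\rho\in L^\infty_t L^2_{\omega,x}$, so $\int_M\rho^2\,dV_h$ lies in $L^\infty_tL^1_\omega$ but certainly not in $L^2(\Omega\times[0,T])$, and you cannot invoke the It\^o isometry to pass to the limit in the $dW^i$ term, nor can you define $\langle M\rangle_t$. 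Your intended remedy -- the stopping times $\tau_n=\inf\{t:Y(t)>n\}\wedge T$ -- is circular: $\tau_n$ can only be defined once $Y$ admits a continuous (or at least c\`adl\`ag) modification, and you propose to exhibit that modification as the right-hand side of the very equation whose martingale part is not yet defined. The ``main obstacle'' you flag at the end is not merely a technical loose end; as written, it is unresolved.

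The paper avoids this entirely by keeping the truncation in place throughout the BDG/Gr\"onwall argument and removing it only at the very last step. Concretely, with $F_\mu$ bounded, $\int_M F_\mu(\rho)\,dV_h\le 2\mu$, and the crucial pointwise bound $\abs{G_{F_\mu}(\xi)}\le C_\chi\,F_\mu(\xi)$ of \eqref{eq:Fmu-Gmu-prop} gives $\langle M_\mu\rangle_t\le K\mu^2\,t$, so $M_\mu$ is automatically a square-integrable continuous martingale -- no localization required. The function $f_\mu(t)=\EE\esssup_{r\le t}\int_M F_\mu(\rho(r))\,dV_h$ is then in $L^\infty$, Gr\"onwall is applied directly to $f_\mu$, and only afterwards is $\mu\to\infty$ taken via Fatou. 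In short, the order ``truncate $\to$ BDG/Gr\"onwall $\to$ Fatou'' is essential here; reversing it forces one to manufacture a continuous local semimartingale structure for $Y$ that is not available before the estimate has been proved. If you insist on your route, you would have to build the localizing sequence from the (already continuous) truncated processes $Y_\mu$ or from an $L^0$-Cauchy argument for $\sup_t\abs{Y_\mu-Y}$, neither of which is done in the proposal.
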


\begin{remark}
Throughout the paper, we assume that the vector fields 
driving the noise are smooth, $a_i\in C^\infty$. 
In the Euclidian setting \cite{Punshon-Smith:2017aa}, the renormalization property holds under 
appropriate Sobolev smoothness, say 
$a_i\in W^{1,p}$ with $p\ge 4$. 
A conceivable but quite technical extension of our work
would allow for $a_i\in \overrightarrow{W^{1,p}(M)}$. 
We leave this extension for future work. We refer to \cite{Galimberti:2021aa} for proof of 
the existence of weak solutions. 
Beyond the existence result, in that paper, we identify 
a delicate “regularization by noise” effect 
for carefully chosen noise vector fields 
(these vector fields must be linked to 
the geometry of the underlying domain). 
Consequently, we obtain existence without 
an $L^\infty$ assumption on the divergence 
of the velocity $u$.  
\end{remark}


\section{Informal proof of Theorem \ref{thm:main-result}}
\label{sec:informal-proof}

In this section, we give a motivational account of the proof of our main result, 
assuming simply that all considered functions have the necessary 
smoothness for the operations we perform on them. 
To this end, consider a solution $\rho$ of \eqref{eq:target}, 
which in It\^o form reads \eqref{eq:target-ito}, 
cf.~Lemma \ref{lem:L2-weak-sol-Ito}.
An application of It\^o's formula with $F\in C^2(\R)$ gives 
\begin{equation}\label{eq:ito-strong-sol}
	\begin{split}
		& dF(\rho)+ F'(\rho)\Divh(\rho u)\, dt 
		+ \sum_{i=1}^N F'(\rho)\Divh (\rho a_i) \, dW^i(t)
		\\ &\qquad \quad
		= \frac12 \sum_{i=1}^N F'(\rho) \Lambda_i(\rho)\,dt 
		+\frac12\sum_{i=1}^N F''(\rho) 
		\bigl(\Divh(\rho a_i)\bigr)^2dt.
	\end{split}
\end{equation}

By the product and chain rules,
$$
F'(\rho)\Divh(\rho V)
=\Divh \left(F(\rho) V\right) +G_F(\rho)\Divh V, 
\qquad V=u,a_i.
$$

To take care of the term $F'(\rho) \Lambda_i(\rho)$, we need

\begin{lemma}\label{lem:div(F(f)S)}
Let $S$ be a smooth symmetric $(0,2)$-tensor field on $M$, 
$f\in C^1(M)$, and $F\in C^1(\R)$. Then, as vector fields,
$$
\Divh \left(F(f)S\right) = F(f)\Divh(S) + F'(f)\,S(df,\cdot).
$$

\end{lemma}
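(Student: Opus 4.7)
The plan is to reduce the identity to a direct computation in local coordinates, using the explicit coordinate formula~\eqref{eq:divh-S} for $\Divh$ on a symmetric $(0,2)$-tensor field together with the ordinary Leibniz rule for partial derivatives. Fix a chart and write $S = S^{ij}\,\partial_i\otimes\partial_j$ so that $F(f)S$ has components $F(f)S^{ij}$. Applying \eqref{eq:divh-S} to $F(f)S$ gives
\[
\Divh(F(f)S)
= \bigl\{\partial_j\!\left(F(f)S^{ij}\right)
+ \Gamma^i_{lj}\,F(f)S^{lj}
+ \Gamma^j_{lj}\,F(f)S^{il}\bigr\}\partial_i.
\]
Expanding via $\partial_j(F(f)S^{ij}) = F(f)\,\partial_j S^{ij} + F'(f)(\partial_j f)\,S^{ij}$ and factoring the scalar $F(f)$ out of the Christoffel terms, the right-hand side splits into
\[
F(f)\bigl\{\partial_j S^{ij} + \Gamma^i_{lj}S^{lj} + \Gamma^j_{lj}S^{il}\bigr\}\partial_i
+ F'(f)\,S^{ij}(\partial_j f)\,\partial_i,
\]
and the first summand is exactly $F(f)\Divh S$ by \eqref{eq:divh-S}.

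The remaining step is to identify the residual vector field $F'(f)\,S^{ij}(\partial_j f)\,\partial_i$ with $F'(f)\,S(df,\cdot)$. In local coordinates $df = \partial_j f\,dx^j$, and with the convention adopted in \eqref{eq:divh-S} (the symmetric tensor is represented by its contravariant components $S^{ij}$, an identification made freely since raising indices via $h$ preserves symmetry), the contraction of $df$ into one slot of $S$ reads $S^{ij}(\partial_j f)\,\partial_i$. Summing the two contributions yields the claimed formula. Since both sides of the identity are coordinate-free vector fields, the local computation suffices.

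The calculation is essentially routine; the only point requiring any care is bookkeeping around the tensor-type conventions (the statement refers to $S$ as a $(0,2)$-tensor while \eqref{eq:divh-S} presents it with components $S^{ij}$), and recognizing that $S(df,\cdot)$ is well defined in this convention as a vector field via one contraction with $df$. No substantive obstacle is anticipated.
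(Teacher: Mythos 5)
Your proof is correct and follows essentially the same route as the paper: applying the coordinate formula \eqref{eq:divh-S} to $F(f)S$, expanding $\partial_j(F(f)S^{ij})$ by the product and chain rules, factoring out $F(f)$ to recover $F(f)\Divh S$, and identifying the leftover $F'(f)S^{ij}\partial_j f\,\partial_i$ with $F'(f)\,S(df,\cdot)$. Your extra remark on the index-raising convention is a reasonable clarification but not a departure from the paper's argument.
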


\begin{proof}
In any coordinates, by the product and chain rules,
\begin{align*}
	& \Divh(F(f)S) 
	\overset{\eqref{eq:divh-S}}{=} 
	\Bigl[\partial_j \left(F(f)S^{ij}\right) 
	+ \Gamma^i_{lj} F(f)S^{lj} 
	+ \Gamma^j_{lj}F(f)S^{il} \Bigr]\partial_i
	\\ & \quad = F(f)\Divh(S) + S^{ij}F'(f)\partial_j f\partial_i 
	=F(f)\Divh(S) + F'(f)\,S(df,\cdot).
\end{align*}
\end{proof}

In view of Lemmas \ref{lem:alt-Lambdai} and \ref{lem:div(F(f)S)},
\begin{equation}\label{eq:expand-Lambda_i(F)}
	\begin{split}
		\Lambda_i(F(\rho)) & = 
		\Divh^{2} \left(F(\rho) \hat{a}_i\right) 
		- \Divh \left(F(\rho) \nabla_{a_i}a_i\right)
		\\ & = \Divh \Bigl( F(\rho)\Divh(\hat{a}_i) 
		+ F'(\rho)\,\hat{a}_i(d\rho,\cdot)\Bigr)
		- \Divh \left(F(\rho) \nabla_{a_i}a_i\right)
		\\ & = F'(\rho)\Divh(\hat{a}_i)(\rho) 
		+ F(\rho) \Divh^{2}(\hat{a}_i) 
		+ F''(\rho)\, \hat{a}_i(d\rho,\cdot)(\rho)
		\\ & \qquad 
		+ F'(\rho)\Divh \left(\hat{a}_i(d\rho,\cdot)\right)
		- F'(\rho)\nabla_{a_i}a_i(\rho) 
		- F(\rho)\Divh(\nabla_{a_i}a_i).
	\end{split}
\end{equation}
where, cf.~Remark \ref{rem:def-hat-a}, $\hat{a}_i(d\rho,\cdot)(\rho)
=\hat{a}_i(d\rho,d\rho)= \bigl(a_i(\rho)\bigr)^2$. 
On the other hand,
\begin{equation}\label{eq:F'Lambda_i}
	\begin{split}
		F'(\rho)\Lambda_i(\rho) 
		& = F'(\rho)\Divh^{2}\left(\rho \hat{a}_i\right) 
		- F'(\rho)\Divh \left(\rho\nabla_{a_i}a_i\right)
		\\ & 
		= F'(\rho)\Divh\Bigl( \rho \Divh(\hat{a}_i) 
		+\hat{a}_i(d\rho,\cdot)\Bigr) 
		- F'(\rho)\Divh \left(\rho\nabla_{a_i}a_i\right)
		\\ & 
		= F'(\rho)\Divh(\hat{a}_i)(\rho) 
		+ F'(\rho)\rho\,\Divh^{2}(\hat{a}_i) 
		+ F'(\rho)\Divh \left(\hat{a}_i(d\rho,\cdot)\right)
		\\ &\qquad 
		- F'(\rho)\rho\,\Divh(\nabla_{a_i}a_i)
		-F'(\rho)\nabla_{a_i}a_i(\rho).
	\end{split}
\end{equation}
Therefore, subtracting \eqref{eq:expand-Lambda_i(F)} from \eqref{eq:F'Lambda_i},
\begin{align*}
	& F'(\rho)\Lambda_i(\rho)-\Lambda_i(F(\rho)) 
	\\ & \quad
	=G_F(\rho) \Divh^{2}(\hat{a}_i) 
	- G_F(\rho) \Divh(\nabla_{a_i}a_i) 
	- F''(\rho)\bigl(a_i(\rho)\bigr)^2
	\\ & \quad
	=G_F(\rho)\Lambda_i(1) - F''(\rho)\bigl(a_i(\rho)\bigr)^2,
\end{align*}
where $G_F$ is defined in \eqref{eq:GF-def} and $\Lambda_i(1)$ 
is defined in \eqref{eq:bar-ai-Lambda-i(1)}.

In view of the above computations, we can write 
\eqref{eq:ito-strong-sol} as
\begin{align*}
	& dF(\rho)+\Divh \bigl(F(\rho) u\bigr)\, dt
	+G_F(\rho)\Divh u \, dt
	\\ & \qquad\qquad
	+\sum_{i=1}^N \Divh \bigl(F(\rho) a_i\bigr)\, dW^i(t)
	+\sum_{i=1}^N G_F(\rho)\Divh a_i \, dW^i(t)
	\\ & \qquad 
	= \frac12\sum_{i=1}^N\Lambda_i(F(\rho))\, dt
	+\frac12 \sum_{i=1}^N\Lambda_i(1) G_F(\rho)\,dt
	\\ & \qquad \qquad
	+\underbrace{\frac12\sum_{i=1}^N F''(\rho)\bigl(\Divh(\rho a_i) \bigr)^2\, dt
	-\frac12 \sum_{i=1}^N F''(\rho)\bigl(a_i(\rho)\bigr)^2\,dt}_{=:\mathcal{Q}},
\end{align*}
where we need to take a closer look 
at the (potentially) problematic term $\mathcal{Q}$, 
which contains the difference between some 
quadratic terms linked to the covariation of the martingale part of 
the equation \eqref{eq:target-ito} 
and the second order operators $\Lambda_i$.

We apply the product rule to write $\Divh (\rho a_i )
=\rho \Divh a_i+a_i(\rho)$, and then expand the 
square $\bigl(\Divh(\rho a_i)\bigr)^2$, yielding
\begin{align*}
	&\bigl(\Divh(\rho \,a_i) \bigr)^2 - \bigl(a_i(\rho)\bigr)^2 
	\\ & \quad 
	= \bigl(\rho \Divh a_i\bigr)^2 + 2\rho \,a_i(\rho)\Divh a_i 
	= \bigl(\rho \Divh a_i\bigr)^2 + 2\rho \,\bar{a}_i(\rho),
\end{align*}
where $\bar{a}$ is defined in $\eqref{eq:bar-ai-Lambda-i(1)}$. 
As a result of this and the chain/product rules for $\bar{a}_i$,
\begin{align*}
	& F''(\rho) 
	\Bigl (\bigl(\Divh(\rho \,a_i)\bigr)^2 
	- \bigl(a_i(\rho)\bigr)^2\Bigr)
	\\ & \quad 
	= F''(\rho)\bigl(\rho \Divh a_i \bigr)^2 
	+ 2\rho \,\bar{a}_i\bigl(F'(\rho)\bigr)
	\\ & \quad 
	= F''(\rho)\bigl(\rho \Divh a_i\bigr)^2 
	+ 2\,\bar{a}_i\bigl(\rho\,F'(\rho)\bigr) 
	- 2 F'(\rho)\bar{a}_i(\rho) 
	\\ & \quad 
	= F''(\rho)\bigl(\rho \Divh a_i\bigr)^2 
	+ 2 \bar{a}_i\bigl(G_F(\rho)\bigr),
\end{align*}
and so $\mathcal{Q}$ becomes
$$
\mathcal{Q}=\frac12\sum_{i=1}^N F''(\rho)
\bigl(\rho \Divh a_i\bigr)^2\, dt
+\sum_{i=1}^N\overline{a}_i\bigl(G_F(\rho)\bigr)\,dt.
$$
We note here that the problematic term $\bigl(a(\rho)\bigr)^2$ 
has cancelled out in the final expression for $\mathcal{Q}$. 
This is similar to what happens in the Euclidean setting 
\cite{Punshon-Smith:2017aa}. On a curved manifold we must 
in addition exploit ``cancellations" to control some error terms coming 
from the localization part of our regularization procedure, i.e., 
terms related to the geometry of the underlying domain (cf.~Section 
\ref{sec:proof-main-result} for details).

This concludes the informal argument for \eqref{eq:main-result}.


\section{Rigorous proof of Theorem \ref{thm:main-result}}
\label{sec:proof-main-result}
The aim of this section is to develop a rigorous proof 
of Theorem \ref{thm:main-result}. The proof will involve 
a series of long computations, which will be 
scattered over seven subsections. We begin with the
procedure for regularizing tensor fields on a manifold, along with 
several commutator estimates for controlling the regularization error.


\subsection{Pullback and extension of tensor fields}

We first recall and extend some concepts from Section \ref{sec:geometric framework}. 
Let $V$ be an arbitrary (boundaryless) smooth manifold of dimension $d$. Consider 
an arbitrary chart $\kappa: X_\kappa\to \tilde{X}_\kappa$ for $V$, where $X_\kappa$ 
and $\tilde{X}_\kappa$ are open subsets of $V$ and $\R^d$ respectively. 
Let $\text{RS}(\tml)$ denote the space of $m$ covariant 
and $l$ contravariant tensor fields on $\tilde{X}_\kappa\subset\R^d$, 
and define similarly $\text{RS}(\mathcal{T}^m_l(X_\kappa))$ 
and $\text{RS}(\mathcal{T}^m_l(V))$. 
Observe that we do not impose any assumptions on the 
regularity of the coefficients of the tensor fields; RS is 
an acronym for Rough Sections. Let $\text{SS}(\tml)$ be the subspace of smooth 
sections, and define similarly $\text{SS}(\mathcal{T}^m_l(X_\kappa))$, $
\text{SS}(\mathcal{T}^m_l(V))$.

We are going to define a procedure for pulling an element of 
$\text{RS}(\tml)$ back to $V$. Indeed, given $\sigma\in\text{RS}(\tml)$, 
we may transport it on $X_\kappa\subset V$ via the \textit{diffeomorphism} 
$\kappa$, and we call the result $\kappa^\ast\sigma$, which will belong to 
$\text{RS}(\mathcal{T}^m_l(X_\kappa))$. We refer to 
\cite[Exercise 11-6]{LeeSmooth} and Section \ref{sec:geometric framework} 
for details (the fact that $\kappa$ is a diffeomorphism is crucial). 
Moreover, we may trivially extend it to 
the whole of $V$, by simply declaring that it is the 
null $(m,l)$-tensor field outside $X_\kappa$. Let us name the resulting 
object $(\kappa^\ast\sigma)^{\mathrm{ext}}$. 
Let us give a name to the entire procedure:
\begin{equation}\label{eq:extension-op}
	\mathcal{L}_\kappa:\text{RS}(\tml)\ni\sigma\mapsto 
	(\kappa^\ast\sigma)^{\mathrm{ext}}
	\in \text{RS}(\mathcal{T}^m_l(V)),
\end{equation}
where $\mathcal{L}_\kappa$ will be referred to 
as a ``pullback-extension" operator.

Assuming in addition that
$$
\supp \sigma \subset \tilde{X}_\kappa, \qquad
\sigma \in \text{SS}(\tml),
$$
it is trivial to see that $\mathcal{L}_\kappa\sigma\in \text{SS}(\mathcal{T}^m_l(V))$ 
and $\supp\mathcal{L}_\kappa\sigma \subset X_\kappa$.

In the following, starting from objects defined on open 
Euclidean subsets, we are going to use this procedure 
repeatedly to build global objects on $M$.


\subsection{Regularization \& commutator estimates}\label{sec:local-smoothing}

From now on we are going to use the 
atlas $\mathcal{A}$ provided by Lemma \ref{lem:det-metric-const}. 
For fixed $\kappa\in\mathcal{A}$, the induced coordinates 
will be typically denoted by $z$ or $\bar{z}$. 
We need a smooth partition of the unity 
$\seq{\mathcal{U}_\kappa}_{\kappa\in\mathcal{A}}$ 
subordinate to $\mathcal{A}$, i.e.,
\begin{enumerate}

\item $\mathcal{U}_\kappa\geq 0$, 
$\sum_{\kappa\in\mathcal{A}}\mathcal{U}_\kappa=1$,

\item $\mathcal{U}_\kappa\in C^\infty(M)$, and

\item $\mbox{supp }\uk\subset X_\kappa$ (and compact). 

\end{enumerate}
 
Let $\rho$ be a weak $L^2$-solution of \eqref{eq:target} with 
initial datum $\rho_0\in L^2(M)$. 
In what follows, we introduce a series of local objects that 
appear later in a localized version of \eqref{eq:target}, and 
establish their main properties. For $\kappa\in\mathcal{A}$, 
fix a standard mollifier $\phi$ on $\R^d$ with 
support in $\overline{B_1(0)}$, and define 
the rescaled mollifier
\begin{equation}\label{eq:mollifier}
	\phi_\varepsilon(z) := 
	\varepsilon^{-d}\phi\left(\frac{z}{\varepsilon}\right), 
	\quad z\in\R^d, 
\end{equation}
whose support is contained in $\overline{B_\varepsilon(0)}$, 
$z=\kappa(x)$.

\subsubsection{Localization \& smoothing of $\rho$, $\rho_0$}
Set
\begin{equation}\label{eq:def-rhok-rho0k}
	\rho_\kappa(\omega,t,z):=\uk(z)\,\rho(\omega,t,z),
	\quad 
	\rho_{0,\kappa}(\omega,z):=\uk(z)\,\rho_0(\omega,z), 
\end{equation}
for $\omega\in\Omega$, $t\in[0,T]$, 
$z\in\tilde{X}_\kappa \subset\R^d$.

\begin{remark}
As is customary in differential geometry, we 
will use the convention of not explicitly 
writing the chart, in order to alleviate the notation. 
For example, if $f:M\to\R$, then we write $f(z)$ 
instead of $f(\kappa^{-1}(z))$.
\end{remark}

We observe that for fixed 
$\omega\in\Omega,t\in[0,T]$, 
\begin{align*}
	&\supp \rho_\kappa(\omega,t,\cdot)
	\subset \kappa\left(\supp\uk\right)\subset\subset 
	\tilde{X}_\kappa 
	\subset \R^d,
	\\ &
	\supp \rho_{0,\kappa}(\omega,\cdot)\subset 
	\kappa\left(\supp\uk\right)\subset\subset \tilde{X}_\kappa 
	\subset \R^d, 
\end{align*}
and thus $\rho_\kappa(\omega,t,\cdot)$ and 
$\rho_{0,\kappa}(\omega,\cdot)$ may be viewed 
as global functions on $\R^d$.  Next we define spatial 
regularizations of $\rho_{\kappa}$ and $\rho_{0,\kappa}$. 
For $\omega\in\Omega$, $t\in[0,T]$, $z\in\R^d$,
\begin{align*}
	& (\rho_{\kappa})_\varepsilon (\omega,t,z) 
	\\ & \qquad :=\int_{\R^d}\rho_{\kappa}(\omega,t,\bar{z})
	\, \phi_\varepsilon\left(z-\bar{z}\right)
	\,d\bar{z} =\int_{\R^d}\uk(\bar{z})\rho(\omega,t,\bar{z})\,
	\phi_\varepsilon\left(z-\bar{z}\right)\,d\bar{z},
	\\ & (\rho_{0,\kappa})_\varepsilon(\omega,z) 
	\\ & \qquad := \int_{\R^d}\rho_{0,\kappa}(\omega,\bar{z})\,
	\phi_\varepsilon\left(z-\bar{z}\right)\,d\bar{z} 
	= \int_{\R^d}\uk({\bar{z}})\rho_{0}(\omega,\bar{z})
	\,\phi_\varepsilon\left(z-\bar{z}\right)\,d\bar{z}.
\end{align*}
For later use, set 
\begin{equation}\label{eq:def-ek-enull}
	\varepsilon_\kappa :=
	\mbox{dist}\left(\kappa\left(\supp\uk\right),\partial\tilde{X}_\kappa \right)>0,
	\qquad 
	\varepsilon_0 :=\frac14\min_{\kappa}\{\varepsilon_\kappa\}>0. 
\end{equation}
The main properties of $(\rho_{\kappa})_\varepsilon$ and 
$(\rho_{0,\kappa})_\varepsilon$ are collected in 

\begin{lemma}\label{lem:prop-rho-kappa-eps}
Fix $\kappa\in\mathcal{A}$, cf.~Lemma \ref{lem:det-metric-const}. Then
\begin{enumerate}

\item $(\rho_{\kappa})_\varepsilon(\omega,t,\cdot)\in 
C^\infty(\R^d)$, for all $(\omega,t)\in\Omega \times [0,T]$.

\item For $\varepsilon < \varepsilon_\kappa$ and for any 
$\omega\in\Omega,t\in[0,T]$,
$$
\supp (\rho_{\kappa})_\varepsilon(\omega,t,\cdot) \subset 
\kappa\left(\supp\uk\right) +\overline{B_\varepsilon(0)}
\subset\subset \tilde{X}_\kappa. 
$$ 
This implies in particular that for any $(\omega,t)\in\Omega \times [0,T]$, 
the function $(\rho_{\kappa})_\varepsilon(\omega,t,\cdot)$ can 
be seen as an element of $C^\infty(M)$, provided 
that we set it equal to zero outside of $X_\kappa$.

\item For any $p\in [1,2]$ and $(\omega,t)\in\Omega \times [0,T]$,
$$
(\rho_{\kappa})_\varepsilon(\omega,t,\cdot) 
\overset{\varepsilon\downarrow 0}{\longrightarrow} 
\uk(\cdot) \rho(\omega,t,\cdot) 
\quad \mbox{in $L^p(M)$}.  
$$
Therefore, for any $q\in [1,\infty)$, 
$(\rho_{\kappa})_\varepsilon 
\overset{\varepsilon\downarrow 0}{\longrightarrow} \uk \rho$ 
in $L^q\left([0,T]; L^2(\Omega\times M)\right)$.
\end{enumerate}

The listed properties hold true for 
$(\rho_{0,\kappa})_\varepsilon$ as well.
\end{lemma}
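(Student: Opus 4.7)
The plan is to exploit Friedrichs' mollifier theory, transferred to the manifold via the charts of the atlas $\mathcal{A}$ from Lemma \ref{lem:det-metric-const}. Because $\abs{h_\kappa}\equiv 1$ there, the Riemannian volume $dV_h$ pulls back to the standard Lebesgue measure $dz$ on $\tilde{X}_\kappa$; consequently, the $L^p(M)$ norm of a function supported in $X_\kappa$ equals the Euclidean $L^p(\R^d)$ norm of its coordinate representative, and convolution estimates in $\R^d$ transfer directly to $M$ without any Jacobian bookkeeping. This is what makes the proof a largely mechanical exercise.

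For (1), differentiation under the integral sign applies since $\phi_\varepsilon\in C_c^\infty(\R^d)$ and $\rho_\kappa(\omega,t,\cdot)\in L^2(\R^d)$ is compactly supported in $\kappa(\supp \uk)$. For (2), the usual support identity for convolutions yields
$$
\supp (\rho_\kappa)_\varepsilon(\omega,t,\cdot)
\subset \kappa(\supp \uk) + \overline{B_\varepsilon(0)},
$$
which for $\varepsilon<\varepsilon_\kappa$ is compactly contained in $\tilde{X}_\kappa$; the pullback-extension operator $\mathcal{L}_\kappa$ from \eqref{eq:extension-op} then promotes $(\rho_\kappa)_\varepsilon(\omega,t,\cdot)$ to an element of $C^\infty(M)$ supported in $X_\kappa$.

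For (3), fix $(\omega,t)$. By the measure identification above and the compactness of $\supp \rho_\kappa$, we have $\rho_\kappa(\omega,t,\cdot)\in L^p(\R^d)$ for every $p\in[1,2]$, so classical mollifier theory gives $(\rho_\kappa)_\varepsilon(\omega,t,\cdot)\to\rho_\kappa(\omega,t,\cdot)$ in $L^p(\R^d)$, which transfers back to $L^p(M)$. For the global convergence in $L^q([0,T]; L^2(\Omega\times M))$, Young's convolution inequality yields the uniform bound $\norm{(\rho_\kappa)_\varepsilon(\omega,t,\cdot)}_{L^2(M)}\leq\norm{\rho(\omega,t,\cdot)}_{L^2(M)}$, so twice this quantity dominates the $L^2(M)$ error; since $\rho\in L^\infty_t L^2_{\omega,x}$ and $[0,T]$ is finite, the dominant is integrable over $\Omega\times[0,T]$ for every $q\in[1,\infty)$. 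Two successive applications of the dominated convergence theorem, first in $\omega$ and then in $t$, deliver the claim. The argument for $(\rho_{0,\kappa})_\varepsilon$ is identical with the time variable absent. The only point demanding care is the notational identification between functions on $X_\kappa\subset M$ and on $\tilde{X}_\kappa\subset\R^d$, which the choice of atlas was precisely designed to make transparent.
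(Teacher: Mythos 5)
Your proof is correct and follows essentially the same route as the paper's: both rely on the special atlas of Lemma~\ref{lem:det-metric-const} to identify $L^p(M)$ with Euclidean $L^p$ for compactly supported functions, so that standard mollifier theory applies directly, and then obtain the global $L^q([0,T];L^2(\Omega\times M))$ convergence by dominated convergence using the Young-type bound $\|(\rho_\kappa)_\varepsilon\|_{L^2}\le\|\rho_\kappa\|_{L^2}$. The paper leaves claims (1), (2) and the final step to ``standard properties,'' while you spell them out; the substance is identical.
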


\begin{proof}
Claims (1) and (2) follow from standard properties of convolution. 
To prove claim (3), recall Lemma \ref{lem:det-metric-const}. Indeed, on 
$X_\kappa$ we use the coordinates given by $\kappa$, for which 
$\abs{h_\kappa(z)}^{1/2}=1$, to compute as follows:
\begin{align*}
	& \int_{M}\abs{(\rho_{\kappa})_\varepsilon(t,x) 
	-\uk(x) \rho(t,x)}^p\,dV_h(x)
	\\ & \quad =\int_{X_\kappa}
	\abs{(\rho_{\kappa})_\varepsilon(t,x) 
	-\uk(x) \rho(t,x)}^p\,dV_h(x)
	\\ & \quad =\int_{\tilde{X}_\kappa}
	\abs{(\rho_{\kappa})_\varepsilon(t,z) 
	-\uk(z) \rho(t,z)}^p\,dz
	\\ & \quad = \int_{\tilde{X}_\kappa}
	\abs{(\rho_{\kappa})_\varepsilon(t,z) 
	-\rho_\kappa(t,z)}^p\,dz.
\end{align*}
The last integral converges to zero as 
$\varepsilon$ goes to zero by standard properties of mollifiers, 
since $\rho_\kappa(\omega,t,\cdot)$ is in $L^2(\R^d)$ 
and has compact support. This follows easily from our 
assumption $\rho\in L^\infty([0,T];L^2(\Omega\times M))$). 

Claims (1), (2), and (3) can be proved in a similar way 
for $(\rho_{0,\kappa})_\varepsilon$.
\end{proof}

We define the global counterparts of $(\rho_{\kappa})_\varepsilon$ 
and $(\rho_{0,\kappa})_\varepsilon$ as follows:
$$
\rho_\varepsilon(\omega,t,x):=
\sum_\kappa (\rho_\kappa)_\varepsilon(\omega,t,x), 
\quad 
\rho_{0,\varepsilon}(x):=
\sum_\kappa (\rho_{0,\kappa})_\varepsilon(x),
$$
for $\omega\in\Omega$, $t\in [0,T]$, $x\in M$, 
and $\varepsilon<\varepsilon_0$.

\subsubsection{Localization \& smoothing of $\rho a_i$}

Consider for $\omega\in\Omega$, $t\in[0,T]$, and $z\in\tilde{X}_\kappa$, 
the object $\seq{\rho_\kappa(\omega,t,z)a^l_i(z)}_l$, which 
belongs to $\text{RS}\left(\mathcal{T}^0_1(\tilde{X}_\kappa)\right)$ and 
is compactly supported in $\kappa\left(\supp\uk\right) 
\subset \tilde{X}_\kappa$, uniformly in $\omega,t$. 

\begin{remark}
By writing $a_i(z)$ we mean the vector field 
evaluated at the point $z$, not differentiation. 
It is a minor abuse of notation that should not 
provoke too much confusion. We will use this convention in the following 
also for other objects.
\end{remark}

We regularize $\seq{\rho_\kappa(\omega,t,z)a^l_i(z)}_l$ 
componentwise via the mollifier $\phi_\varepsilon$ (as above). 
The result is an object in $\text{SS}(\mathcal{T}^0_1(\tilde{X}_\kappa))$ 
that is compactly supported, uniformly in $\varepsilon$. 
We denote this regularized object by
$$
(\rho_\kappa(t)a_i)_\varepsilon
=(\rho_\kappa(t)a_i)_\varepsilon(x)
=(\rho_\kappa(\omega,t)a_i)_\varepsilon(x).
$$ 
We apply the pullback-extension operator 
$\mathcal{L}_\kappa$ defined in \eqref{eq:extension-op}, yielding
$$
\mathcal{L}_\kappa \left(\rho_\kappa(t)a_i\right)_\varepsilon
\in \text{SS}\left(\mathcal{T}^0_1(M)\right).
$$
Finally, we define the companion vector 
field $(\rho_\kappa)_\varepsilon(t)a_i\in \text{SS}\left(\mathcal{T}^0_1(M)\right)$. 

We need the following version of a well-known result found in \cite{DL89}. 

\begin{lemma}[DiPerna-Lions commutator; ``$\Divh(\rho a_i)$"]\label{lem:com-term-1}
Fix $\kappa\in\mathcal{A}$, cf.~Lemma \ref{lem:det-metric-const}, 
and define for $(\omega,t,x)\in\Omega\times[0,T]\times M$ 
the smooth (in $x$) functions
\begin{equation}\label{eq:def-r-kappa}
	r_{\kappa,\varepsilon,i}(\omega,t,x)
	:=\Divh\mathcal{L}_\kappa 
	\bigl(\rho_\kappa(t) a_i\bigr)_\varepsilon(x)
	-\Divh\bigl( (\rho_\kappa)_\varepsilon(t) a_i\bigr)(x),
\end{equation}
for $i=1,\ldots,N$. Then $r_{\kappa,\varepsilon,i}
\overset{\varepsilon\downarrow0}{\longrightarrow} 0$ 
in $L^q([0,T];L^2(\Omega\times M))$, for any $q\in [1,\infty)$.

Furthermore, for $x\in X_\kappa$ (the only relevant case), in the 
coordinates induced by $\kappa$, we have the representations
\begin{align*}
	a_i\left(r_{\kappa,\varepsilon,i}\right) 
	& = a^m_i\partial_{ml} 
	\left(\rho_\kappa a^l_i\right)_\varepsilon 
	-a^m_i a^l_i \partial_{ml}(\rho_\kappa)_\varepsilon
	-a^m_i\partial_m a^l_i\, 
	\partial_{l}(\rho_\kappa)_\varepsilon
	\\ & \qquad
	- a^m_i\partial_la^l_i\,\partial_{m}(\rho_\kappa)_\varepsilon 
	- (\rho_\kappa)_\varepsilon\,a^m_i\partial_{ml}a^l_i,
\end{align*}
and 
$$
\Divh \mathcal{L}_\kappa \left(\rho_\kappa(t)a_i\right)_\varepsilon
=\partial_l\left(\rho_\kappa(t)a^l_i\right)_\varepsilon.
$$
\end{lemma}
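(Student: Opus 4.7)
The plan is to exploit the special atlas from Lemma \ref{lem:det-metric-const} in order to reduce the whole lemma to the classical Euclidean DiPerna--Lions commutator. On any chart $\kappa\in\mathcal{A}$ one has $|h_\kappa|\equiv 1$, so $\Gamma^j_{lj}\equiv 0$, and the divergence of a vector field $X=X^l\partial_l$ collapses to $\Divh X=\partial_l X^l$. Since $\mathcal{L}_\kappa(\rho_\kappa(t)a_i)_\varepsilon$ is (for $\varepsilon<\varepsilon_\kappa$) supported inside $X_\kappa$ and its components in $\kappa$-coordinates are precisely the componentwise mollifications $(\rho_\kappa a_i^l)_\varepsilon$, the third stated identity follows at once. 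The analogous computation for $(\rho_\kappa)_\varepsilon a_i$ yields
\begin{equation*}
r_{\kappa,\varepsilon,i}
=\partial_l(\rho_\kappa a_i^l)_\varepsilon
-a_i^l\,\partial_l(\rho_\kappa)_\varepsilon
-(\rho_\kappa)_\varepsilon\,\partial_l a_i^l.
\end{equation*}

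The representation of $a_i(r_{\kappa,\varepsilon,i})$ is then a direct calculation: apply $a_i^m\partial_m$ to the expression above and distribute by Leibniz's rule. The cross terms $a_i^m\partial_m\bigl[(\rho_\kappa)_\varepsilon\partial_l a_i^l\bigr]$ and $a_i^m\partial_m\bigl[a_i^l\partial_l(\rho_\kappa)_\varepsilon\bigr]$ split into the five contributions listed in the statement; no geometric identity is needed here beyond the flat Leibniz rule, since we are working entirely in the coordinates provided by $\kappa$.

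For the convergence claim, the key tool is the classical DiPerna--Lions commutator lemma in $\R^d$: whenever $f\in L^2(\R^d)$ is compactly supported and $b\in C^\infty(\R^d;\R^d)$ has bounded derivatives, one has $\partial_l(fb^l)_\varepsilon - b^l\,\partial_l f_\varepsilon \to f\,\partial_l b^l$ in $L^2(\R^d)$, together with the $\varepsilon$-uniform bound $\|\partial_l(fb^l)_\varepsilon - b^l\partial_l f_\varepsilon\|_{L^2}\leq C\,\|f\|_{L^2}\|Db\|_{L^\infty}$. Applying this with $f=\rho_\kappa(\omega,t,\cdot)$ and $b=a_i$ (both viewed on $\R^d$ via $\kappa$), and using that $(\rho_\kappa)_\varepsilon\,\partial_l a_i^l \to \rho_\kappa\,\partial_l a_i^l$ in $L^2$ by smoothness of $a_i$ and Lemma \ref{lem:prop-rho-kappa-eps}, one obtains $r_{\kappa,\varepsilon,i}(\omega,t,\cdot)\to 0$ in $L^2(M)$ for every $(\omega,t)$, together with the bound $\|r_{\kappa,\varepsilon,i}(\omega,t,\cdot)\|_{L^2(M)}\leq C\,\|\rho(\omega,t,\cdot)\|_{L^2(M)}$ uniformly in $\varepsilon$. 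Since $\rho\in L^\infty([0,T];L^2(\Omega\times M))$, dominated convergence in $(\omega,t)\in\Omega\times[0,T]$ upgrades the pointwise convergence to $r_{\kappa,\varepsilon,i}\to 0$ in $L^q([0,T];L^2(\Omega\times M))$ for every $q\in[1,\infty)$.

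The only subtle point is bookkeeping: one must verify that the pullback-extension operator $\mathcal{L}_\kappa$ commutes correctly with componentwise mollification and that the Euclidean statements translate faithfully into identities on $M$. This is precisely where the special atlas pays off, since it identifies $\Divh$ on $X_\kappa$ with the flat Euclidean divergence, reducing everything to the standard DiPerna--Lions setting without any Christoffel correction terms to track.
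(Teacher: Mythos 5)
Your proposal follows essentially the same route as the paper's proof: pass to the special coordinates from Lemma \ref{lem:det-metric-const} so divergence reduces to the flat Euclidean one, read off the local expression for $r_{\kappa,\varepsilon,i}$ and compute $a_i(r_{\kappa,\varepsilon,i})$ by Leibniz, and then invoke the Euclidean DiPerna--Lions commutator together with dominated convergence in $(\omega,t)$. The only cosmetic difference is that you unpack the dominated-convergence step explicitly, whereas the paper bundles it into its Lemma \ref{lemma:gen-diperna-lions}, which is the same argument with the measure space $\Omega\times[0,T]$ built in.
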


\begin{proof}
Let $x\in X_\kappa$. Then, in the coordinates 
induced by $\kappa$, we have by definition that
$\bigl((\rho_\kappa a_i)_\varepsilon\bigr)^l 
= \left(\rho_\kappa a_i^l\right)_\varepsilon$ ($l=1,\ldots,d$) 
and $\Gamma^a_{ab}=0$; therefore, $\Divh$ 
coincides with the Euclidean divergence and thus 
$$
\Divh \mathcal{L}_\kappa \left(\rho_\kappa(s)a_i\right)_\varepsilon
=\partial_l\left(\rho_\kappa(s)a^l_i\right)_\varepsilon.
$$
Moreover,
$$
r_{\kappa,\varepsilon,i} 
= \partial_l\left(\rho_\kappa a_i^l\right)_\varepsilon 
- \partial_l (\rho_\kappa)_\varepsilon a_i^l 
- (\rho_\kappa)_\varepsilon \partial_l a_i^l.
$$
Differentiating this expression according to $a_i$ leads 
to the claimed representation 
for $a_i\left(r_{\kappa,\varepsilon,i}\right)$. 
The convergence claim is also clear. Indeed, with 
``$\rho a_i \in L^\infty_tL^2_x$", repeated applications of 
Lemma \ref{lemma:gen-diperna-lions} 
lead to $r_{\kappa,\varepsilon,i}\to 0$ in 
$L^q([0,T]; L^2(\Omega\times \tilde{X}_\kappa))$, 
for any $q\in [1,\infty)$. Because the support of 
$r_{\kappa,\varepsilon,i}$ is compactly contained in $X_\kappa$ 
and $\abs{h_\kappa}^\frac{1}{2}=1$ therein, we 
immediately deduce the result.
\end{proof}

For $(\omega,t,x)\in\Omega\times [0,T]\times M$, 
we define the function
\begin{equation}\label{eq:commutator-r}
	r_{\varepsilon,i}(\omega,t,x)
	:=\sum_{\kappa\in\mathcal{A}}r_{\kappa,\varepsilon,i}(\omega,t,x)
\end{equation}
and the vector field
$$
(\rho(t)a_i)_\varepsilon(x):=
\sum_\kappa\mathcal{L}_\kappa (\rho_\kappa(t)a_i)_\varepsilon(x),
$$
which both are smooth in $x$, $i=1,\ldots,N$. 
Clearly, as $\varepsilon\to 0$, the global remainder 
function $r_{\varepsilon,i}$ converges to zero 
in $L^q([0,T];L^2(\Omega\times M))$, for any $q\in [1,\infty)$.

\subsubsection{Localization \& smoothing of $\rho\nabla_{a_i}a_i$.}

Consider for $\omega\in\Omega$, $t\in[0,T]$, 
and $z\in\tilde{X}_\kappa$, the vector field
$$
\seq{(\rho_\kappa(\omega,t,z) (\nabla_{a_i}a_i)^l(z)}_l,
$$
which is an object in $\text{RS}\left(\mathcal{T}^0_1(\tilde{X}_\kappa)\right)$ 
that is compactly supported in $\kappa\left(\supp\uk\right)\subset \tilde{X}_\kappa$, 
uniformly in $\omega,t$. Observe that 
$$
\rho_\kappa (\nabla_{a_i}a_i)^l
= \rho_\kappa a^m_i\partial_m a^l_i 
+ \rho_\kappa\Gamma^l_{mb} a^m_i a^b_i, 
\qquad l=1,\ldots ,d.
$$
We regularize $\rho_\kappa\nabla_{a_i}a_i$ componentwise 
using the mollifier $\phi_\varepsilon$, denoting the result by 
$\left(\rho_\kappa(t)\nabla_{a_i}a_i\right)_\varepsilon$. 
By definition, $\bigl(\left(\rho_\kappa(t)
\nabla_{a_i}a_i\right)_\varepsilon\bigr)^l
=\left(\left(\rho_\kappa(t)\nabla_{a_i}a_i\right)^l\right)_\varepsilon$. 
We apply the pullback-extension operator $\mathcal{L}_\kappa$, arriving at 
the compactly supported vector field
$$
\mathcal{L}_\kappa\left(\rho_\kappa(s)\nabla_{a_i}a_i\right)_\varepsilon
\in \text{SS}\left(\mathcal{T}^0_1(M)\right).
$$
Also in this case, we define the companion vector field 
$(\rho_\kappa)_\varepsilon(t)\nabla_{a_i}a_i
\in \text{SS}\left(\mathcal{T}^0_1(M)\right)$.
 
\begin{lemma}[DiPerna-Lions commutator; ``$\Divh(\rho \nabla_{a_i}a_i)$"]\label{lem:com-term-2}
Fix $\kappa\in\mathcal{A}$, cf.~Lemma \ref{lem:det-metric-const}, 
and define for $(\omega,t,x)\in\Omega\times [0,T]\times M$ the smooth (in $x$) functions 
$$
\tilde{r}_{\kappa,\varepsilon,i}(\omega,t,x)
:=\Divh\mathcal{L}_\kappa
\bigl(\rho_\kappa(t) \nabla_{a_i}a_i\bigr)_\varepsilon(x)
-\Divh\bigl((\rho_\kappa)_\varepsilon(t) \nabla_{a_i}a_i\bigr)(x),
$$
for $i=1,\ldots ,N$. Then $\tilde{r}_{\kappa,\varepsilon,i}
\overset{\varepsilon\downarrow0}{\longrightarrow} 0$ 
in $L^q([0,T];L^2(\Omega\times M))$, for any $q\in [1,\infty)$. 

Furthermore, for $x\in X_\kappa$ (the only relevant case), in the 
coordinates induced by $\kappa$, we have the representation
$$
\Divh\mathcal{L}_\kappa 
\left(\rho_\kappa(t)\nabla_{a_i}a_i\right)_\varepsilon
=\partial_l\left(a^m_i\partial_m a^l_i\rho_\kappa(t)\right)_\varepsilon(z)
+\partial_l\left(a^m_i a^r_i\Gamma_{mr}^l
\rho_\kappa(t)\right)_\varepsilon(z).
$$
\end{lemma}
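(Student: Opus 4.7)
The plan is to mirror the proof of Lemma \ref{lem:com-term-1}, with the vector field $a_i$ replaced by $\nabla_{a_i}a_i$, and to exploit the simpler fact that $\nabla_{a_i}a_i$ is smooth on $M$ (which follows directly from $a_i\in C^\infty(M)$ together with smoothness of the Christoffel symbols). The only subtlety is to handle the coordinate representation correctly.

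First I would fix $x\in X_\kappa$ and work in the coordinates induced by $\kappa\in\mathcal{A}$, where by Lemma \ref{lem:det-metric-const} we have $\abs{h_\kappa}\equiv 1$ and $\Gamma^m_{mj}=0$. Consequently, acting on any vector field $Y=Y^l\partial_l$ supported in $X_\kappa$, the Riemannian divergence $\Divh Y$ reduces to the Euclidean divergence $\partial_l Y^l$. In particular,
$$
\Divh\mathcal{L}_\kappa \bigl(\rho_\kappa(t)\nabla_{a_i}a_i\bigr)_\varepsilon
= \partial_l \bigl(\rho_\kappa\,(\nabla_{a_i}a_i)^l\bigr)_\varepsilon,
$$
and inserting the local expression $(\nabla_{a_i}a_i)^l=a^m_i\partial_m a^l_i+a^m_i a^r_i\Gamma^l_{mr}$ yields the representation claimed in the statement. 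Similarly,
$$
\Divh\bigl((\rho_\kappa)_\varepsilon(t)\nabla_{a_i}a_i\bigr)
= \partial_l\bigl((\rho_\kappa)_\varepsilon\,(\nabla_{a_i}a_i)^l\bigr).
$$

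Subtracting, and writing $b:=\nabla_{a_i}a_i\in C^\infty(M)$ for brevity, the commutator takes the classical DiPerna--Lions form
$$
\tilde{r}_{\kappa,\varepsilon,i}
= \partial_l\bigl(\rho_\kappa\, b^l\bigr)_\varepsilon
- b^l\,\partial_l(\rho_\kappa)_\varepsilon
- (\rho_\kappa)_\varepsilon\,\partial_l b^l,
$$
with $\rho_\kappa\in L^\infty_t L^2_{\omega,x}$ compactly supported in $\tilde{X}_\kappa$ and $b$ smooth. An application of the generalized DiPerna--Lions commutator lemma (Lemma \ref{lemma:gen-diperna-lions}, already invoked in the proof of Lemma \ref{lem:com-term-1}) gives $\tilde{r}_{\kappa,\varepsilon,i}\to 0$ in $L^q([0,T];L^2(\Omega\times\tilde{X}_\kappa))$ for every $q\in[1,\infty)$. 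Since the support of $\tilde{r}_{\kappa,\varepsilon,i}$ is contained in $X_\kappa$ and $\abs{h_\kappa}^{1/2}\equiv 1$ there, the $L^2$-norms in the chart coincide with the $L^2(M)$-norms, so the convergence in $L^q([0,T];L^2(\Omega\times M))$ is immediate.

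No genuine obstacle arises: because $\nabla_{a_i}a_i$ is smooth, no Sobolev regularity is being strained, and the argument is a strict simplification of Lemma \ref{lem:com-term-1} (no ``second differentiation'' of $a_i$ is needed, and no further commutator with $a_i(\cdot)$ has to be analyzed). The only bookkeeping is the coordinate expression for $\nabla_{a_i}a_i$ and the verification, via Lemma \ref{lem:det-metric-const}, that on $X_\kappa$ both divergences reduce to Euclidean derivatives.
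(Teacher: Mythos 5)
Your proposal is correct and follows essentially the same route as the paper, which simply says the proof is identical to that of Lemma \ref{lem:com-term-1} because $\nabla_{a_i}a_i$ is smooth; you have just written out explicitly the steps that the paper leaves implicit (the local coordinate expression for $\nabla_{a_i}a_i$, the reduction of $\Divh$ to the Euclidean divergence via Lemma \ref{lem:det-metric-const}, and the application of Lemma \ref{lemma:gen-diperna-lions}).
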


\begin{proof}
The proof is identical to the one of Lemma \ref{lem:com-term-1}, since 
the vector fields $\nabla_{a_1}a_1,\ldots,\nabla_{a_N}a_N$ 
are smooth. 
\end{proof}

We also introduce the global function
\begin{equation}\label{eq:commutator-tilde}
	\tilde{r}_{\varepsilon,i}(\omega,t,x)
	:=\sum_{\kappa\in\mathcal{A}}
	\tilde{r}_{\kappa,\varepsilon,i}(\omega,t,x)
\end{equation}
and the global vector field
$$
\left(\rho(t)\nabla_{a_i}a_i\right)_\varepsilon(x)
:=\sum_{\kappa\in\mathcal{A}}\mathcal{L}_\kappa 
\left(\rho_\kappa(t)\nabla_{a_i}a_i\right)_\varepsilon(x),
$$
which both are smooth in $x$ and defined for 
$(\omega,t,x)\in \Omega\times[0,T]\times M$, $i=1,\ldots,N$. 
Clearly, as $\varepsilon\to 0$, we have 
$\tilde{r}_{\varepsilon,i}\to 0$ in 
$L^q([0,T];L^2(\Omega\times M))$, for any $q\in [1,\infty)$.

\subsubsection{Localization \& smoothing of $\rho\hat{a}_i$.}
Recalling Remark \ref{rem:def-hat-a} ($\hat{a}_i^{ml}=a_i^ma_i^l$), 
let us consider $\seq{\rho_\kappa(\omega,t,z)\hat{a}_i^{ml}(z)}_{m,l}$, 
for $\omega\in\Omega$, $t\in [0,T]$, and $z\in\tilde{X}_\kappa$, 
which defines a symmetric object in 
$\text{RS}\left(\mathcal{T}^0_2(\tilde{X}_\kappa)\right)$ 
that is compactly supported in $\kappa\left(\supp\uk\right)
\subset \tilde{X}_\kappa$, uniformly in $\omega,t$.  
We regularize this object componentwise using the mollifier 
$\phi_\varepsilon$, thereby obtaining a symmetric element in
$\text{SS}\left(\mathcal{T}^0_2(\tilde{X}_\kappa)\right)$, whose support 
is contained in $\tilde{X}_\kappa$, uniformly in $\varepsilon$. 
We denote this smooth $(0,2)$-tensor field by 
$\left(\rho_\kappa(t) \hat{a}_i\right)_\varepsilon$; clearly, 
by definition, $\bigl(\left(\rho_\kappa(t) \hat{a}_i\right)_\varepsilon\bigr)^{ml}
=\left(\rho_\kappa(s) \hat{a}_i^{ml}\right)_\varepsilon$. 
Applying the pullback-extension operator 
$\mathcal{L}_\kappa$, we obtain
$$
\mathcal{L}_\kappa 
\left(\left(\rho_\kappa(t) \hat{a}_i\right)_\varepsilon\right)
\in  \text{SS}\left(\mathcal{T}^0_2(M)\right),
$$
with support in $X_\kappa$, uniformly in $\varepsilon$, 
and symmetric. We also need the globally defined object
$$
\left(\rho(t) \hat{a}_i\right)_\varepsilon(x)
:=\sum_\kappa\mathcal{L}_\kappa
\left(\rho_\kappa(t) \hat{a}_i\right)_\varepsilon(x),
$$
for $(\omega,t,x)\in\Omega\times [0,T]\times M$ and, 
cf.~\eqref{eq:def-ek-enull}, $\varepsilon<\varepsilon_0$.

Let us compute $\Divh^{2}\mathcal{L}_\kappa
\left(\left(\rho_\kappa(t) \hat{a}_i\right)_\varepsilon\right)$ 
in the local coordinates given by Lemma \ref{lem:det-metric-const}. 
The only relevant case is $x\in X_\kappa$, where we use 
the coordinates induced by $\kappa$. Recall that in these coordinates we 
have $\Gamma_{j\alpha}^\alpha=0$ for all $j$. Hence,
\begin{equation}\label{eq:div-square-our-coord}
	\begin{split}
		& \Divh^{2} \mathcal{L}_\kappa
		\left(\left(\rho_\kappa(t) \hat{a}_i\right)_\varepsilon\right)
		\\ & \qquad 
		=\partial_{ml} \left( \left(\rho_\kappa(t) 
		\hat{a}_i\right)_\varepsilon\right)^{ml}(z) 
		+ \partial_l \left[\Gamma_{mj}^l \left(\left(\rho_\kappa(s)
		\hat{a}_i\right)_\varepsilon\right)^{mj}\right](z)
		\\ &\qquad 
		=\partial_{ml}\left(\rho_\kappa(t) 
		\hat{a}_i^{ml}\right)_\varepsilon 
		+ \partial_l \left[\Gamma_{mj}^l
		\left(\rho_\kappa(t)\hat{a}^{mj}_i\right)_\varepsilon
		\right](z).
	\end{split}
\end{equation}

To be able to control the regularization error linked 
to $\Divh^{2}(\rho \hat{a}_i)$, we need first to consider 
some additional terms appearing in the definition of $\Divh^{2}$ 
that is related to the Christoffel symbols $\Gamma$ 
of the Levi Civita connection. To this end, consider 
\begin{equation}\label{eq:def-Vk}
	V_{\kappa,i}:=\seq{\rho_\kappa(\omega,t,z)
	\Gamma^l_{mj}(z)\hat{a}^{mj}_i(z)}_l,
\end{equation} 
which belongs to $\text{RS}\left(\mathcal{T}^0_1(\tilde{X}_\kappa)\right)$ and is 
compactly supported in $\kappa\left(\supp\uk\right)\subset \tilde{X}_\kappa$, 
uniformly in $\omega,t$. The regularized counterpart of $V_{\kappa,i}$ 
is denoted by $V_{\kappa,i,\varepsilon}$. Clearly, $\left(V_{\kappa,i,\varepsilon}\right)^l
=\left(\rho_\kappa(t)\Gamma^l_{mj}\hat{a}^{mj}_i\right)_\varepsilon$. 
Applying the pullback-extension operator $\mathcal{L}_\kappa$ yields
$$
\mathcal{L}_\kappa V_{\kappa,i,\varepsilon}
\in \text{SS}\left(\mathcal{T}^0_1(M)\right).
$$
We multiply the components of 
$\left(\rho_\kappa(t) \hat{a}_i\right)_\varepsilon$ 
by the Christoffel symbols $\Gamma$ (written in 
the coordinates induced by $\kappa$) 
and then add them together. The result is 
\begin{equation}\label{eq:def-barVk}
	\bar{V}_{\kappa,i,\varepsilon} := 
	\seq{\Gamma_{mj}^l\left(\rho_\kappa(t)
	\hat{a}^{mj}_i\right)_\varepsilon}_l, 
\end{equation}
an object in $\text{SS}(\mathcal{T}^0_1(\tilde{X}_\kappa))$ that is 
compactly supported, uniformly in $\varepsilon$. 
Pushing forward $\bar{V}_{\kappa,i,\varepsilon}$ to $M$ 
via $\mathcal{L}_\kappa$, we obtain
$$
\mathcal{L}_\kappa \bar{V}_{\kappa,i,\varepsilon}
\in \text{SS}\left(\mathcal{T}^0_1(M)\right).
$$
For $x\in X_\kappa$, in the coordinates 
given by $\kappa$ ($z=\kappa(x)$), we have
\begin{equation}\label{eq:div-V-barV-our-coord}
	\begin{split}
		&\Divh \mathcal{L}_\kappa V_{\kappa,i,\varepsilon} 
		- \Divh \mathcal{L}_\kappa \bar{V}_{\kappa,i,\varepsilon}
		\\ & \qquad
		= \partial_l\left[ \left(\rho_\kappa(t)\Gamma^l_{mj}
		\hat{a}^{mj}_i\right)_\varepsilon(z) 
		- \Gamma_{mj}^l \left(\rho_\kappa(t)
		\hat{a}^{mj}_i\right)_\varepsilon(z)\right].
	\end{split}
\end{equation}
We also need the globally defined objects
$$
V_{i,\varepsilon}(\omega, t, x):=
\sum_{\kappa\in\mathcal{A}}
\mathcal{L}_\kappa V_{\kappa,i,\varepsilon}(\omega, t, x),
\quad 
\bar{V}_{i,\varepsilon}(\omega, t, x):=
\sum_{\kappa\in\mathcal{A}}\mathcal{L}_\kappa 
\bar{V}_{\kappa,i,\varepsilon}(\omega, t, x),
$$
for $(\omega,t,x)\in\Omega\times[0,T]\times M$ 
and $\varepsilon<\varepsilon_0$.

Finally, consider the smooth (in $x$) and 
compactly supported (in $\tilde{X}_\kappa$) vector 
field $\seq{(\rho_\kappa)_\varepsilon(t) \Gamma^l_{mj}\hat{a}^{mj}_i}_l$, 
denoted by $(\rho_\kappa)_\varepsilon \gamma_i$. 
Applying $\mathcal{L}_\kappa$, we obtain
$$
\mathcal{L}_\kappa \left[(\rho_\kappa)_\varepsilon \gamma_i\right]
\in \text{SS}\left(\mathcal{T}^0_1(M)\right),
$$
which is compactly supported in $X_\kappa$, 
uniformly in $\varepsilon$.

\begin{lemma}[DiPerna-Lions commutator; ``$\Divh (\rho \Gamma \hat{a})$'']\label{lem:com-term-3}
Fix $\kappa\in\mathcal{A}$, cf.~Lemma \ref{lem:det-metric-const}, 
and define for $(\omega,t,x)\in\Omega\times [0,T]\times M$ the 
smooth (in $x$) functions 
\begin{equation}\label{eq:def-rbar-kappa}
	\bar{r}_{\kappa,\varepsilon,i}(\omega,t,x)
	:=\Divh\mathcal{L}_\kappa V_{\kappa,i,\varepsilon}(x)
	-\Divh\mathcal{L}_\kappa 
	\left[(\rho_\kappa(t))_\varepsilon \gamma_i\right](x)
\end{equation}
for $i=1,\ldots,N$. Then, $\bar{r}_{\kappa,\varepsilon,i}
\overset{\varepsilon\downarrow0}{\longrightarrow} 0$ 
in $L^q([0,T];L^2(\Omega\times M))$, for any $q\in [1,\infty)$. 
Moreover, in $X_\kappa$ (the only relevant case) in the 
coordinates induced by $\kappa$ ($z=\kappa(x)$), 
$\bar{r}_{\kappa,\varepsilon,i}$ takes the form
$$
\bar{r}_{\kappa,\varepsilon,i}(\omega,t,z)
=\partial_l \left(\rho_\kappa(t) \Gamma^l_{mj}\hat{a}^{mj}_i\right)_\varepsilon(z)
- \partial_l\left((\rho_\kappa(t))_\varepsilon 
\Gamma^l_{mj}\hat{a}^{mj}_i\right)(z).
$$
\end{lemma}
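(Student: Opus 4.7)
The plan is to reduce the claim to the classical DiPerna--Lions commutator lemma (Lemma \ref{lemma:gen-diperna-lions}), exploiting the convenient atlas provided by Lemma \ref{lem:det-metric-const}. In any chart $\kappa\in\mathcal{A}$ one has $\abs{h_\kappa}\equiv 1$ and $\Gamma^m_{mj}=0$, so the operator $\Divh$ collapses to the Euclidean divergence. Consequently, for $x\in X_\kappa$ with $z=\kappa(x)$,
\begin{align*}
\Divh \mathcal{L}_\kappa V_{\kappa,i,\varepsilon}(z)
 &=\partial_l\left(\rho_\kappa(t)\,\Gamma^l_{mj}\hat a^{mj}_i\right)_\varepsilon(z),\\
\Divh \mathcal{L}_\kappa\!\left[(\rho_\kappa(t))_\varepsilon \gamma_i\right](z)
 &=\partial_l\!\left((\rho_\kappa(t))_\varepsilon\,\Gamma^l_{mj}\hat a^{mj}_i\right)(z),
\end{align*}
which yields at once the stated coordinate expression for $\bar r_{\kappa,\varepsilon,i}$.

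Next I would recognise this object as a standard convolution commutator. Setting $b^l(z):=\Gamma^l_{mj}(z)\,\hat a^{mj}_i(z)$ on $\tilde X_\kappa$, the smoothness of the $a_i$ and of the Christoffel symbols on the chart domain make $b=(b^l)_{l=1}^d$ a smooth vector field on $\tilde X_\kappa$; in particular $b\in W^{1,\infty}_{\mathrm{loc}}(\tilde X_\kappa)$. Then
$$
\bar r_{\kappa,\varepsilon,i}
=\partial_l\bigl(\rho_\kappa\,b^l\bigr)_\varepsilon
-\partial_l\bigl((\rho_\kappa)_\varepsilon\,b^l\bigr),
$$
which is precisely the classical commutator between convolution by $\phi_\varepsilon$ and the first-order differential operator $\partial_l(\,\cdot\, b^l)$, acting on the scalar $\rho_\kappa\in L^\infty_tL^2_{\omega,x}$ whose support is compactly contained in $\tilde X_\kappa$ (uniformly in $\omega,t$).

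Then I would apply Lemma \ref{lemma:gen-diperna-lions} in exactly the same way as in the proof of Lemma \ref{lem:com-term-1}, concluding that $\bar r_{\kappa,\varepsilon,i}\to 0$ in $L^q([0,T];L^2(\Omega\times \tilde X_\kappa))$ for every $q\in[1,\infty)$. Finally, since for $\varepsilon<\varepsilon_\kappa$ the support of $\bar r_{\kappa,\varepsilon,i}$ is compactly contained in $X_\kappa$ and the Riemannian volume element coincides there with the Lebesgue measure $\dz$, this local convergence transfers verbatim to $L^q([0,T];L^2(\Omega\times M))$. I do not anticipate any real obstacle: the statement is structurally identical to Lemma \ref{lem:com-term-1}, with the role of the vector field $a_i$ played by the (non-intrinsic, but locally smooth) vector field $b$ built from the Christoffel symbols and $\hat a_i$. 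The only minor point to keep in mind is that $b$ is not a globally defined tensor on $M$, but this is harmless because the entire argument is carried out inside a single chart, before summing over the partition of unity.
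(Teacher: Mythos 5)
Your proposal is correct and follows essentially the same route the paper indicates: the paper's proof is simply ``see the proofs of Lemmas \ref{lem:com-term-1} and \ref{lem:com-term-2},'' and you have unpacked exactly those steps — using the special atlas from Lemma \ref{lem:det-metric-const} so that $\Divh$ reduces to the Euclidean divergence, identifying the object as a classical DiPerna--Lions commutator against the smooth chart-local vector field $b^l=\Gamma^l_{mj}\hat a^{mj}_i$ (the paper's $\gamma_i$), invoking Lemma \ref{lemma:gen-diperna-lions}, and transferring the convergence to $M$ via the compact support. Your remark that $b$ need not be globally defined on $M$ is accurate and harmless for the same reason you give; the whole argument lives in a single chart before summing over $\kappa$.
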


\begin{proof}
See the proofs of Lemmas \ref{lem:com-term-1} and \ref{lem:com-term-2}. 
\end{proof}

As before, we introduce the global function
\begin{equation}\label{eq:commutator-bar}
	\bar{r}_{\varepsilon,i}(\omega,t,x):
	=\sum_{\kappa\in\mathcal{A}}
	\bar{r}_{\kappa,\varepsilon,i}(\omega,t,x),
\end{equation}
with $(\omega,t,x)\in\Omega\times [0,T]\times M$ 
and, cf.~\eqref{eq:def-ek-enull}, $\varepsilon<\varepsilon_0$, $i=1,\ldots,N$. 
Obviously, as $\varepsilon\to 0$, we have 
$\bar{r}_{\varepsilon,i}\to 0$ in 
$L^q([0,T];L^2(\Omega\times M))$, for any $q\in [1,\infty)$.

Now we are now going to analyze the key terms ($i=1,\ldots,N$)
\begin{equation}\label{eq:divh2-tmp0}
	\begin{split}
		R_{\varepsilon,i}(\omega,t,x) & :=
		\Divh^{2}\left(\rho(t) \hat{a}_i\right)_\varepsilon(x) 
		-\Divh^{2}\left(\rho_\varepsilon(t) \hat{a}_i\right)(x)
		\\ & \qquad 
		+\Divh V_{i,\varepsilon}(t,x) - \Divh \bar{V}_{i,\varepsilon}(t,x),
	\end{split}
\end{equation}
where $\rho_\varepsilon(t) \hat{a}_i 
:= \sum\limits_{\kappa\in\mathcal{A}}(\rho_\kappa)_\varepsilon(t) 
\hat{a}_i \in\text{SS}\left(\mathcal{T}^0_2(M)\right)$. By definition,
\begin{align*}
	R_{\varepsilon,i}(\omega,t,x)
	& =\sum_{\kappa\in \mathcal{A}}
	\Big\{ \Divh^{2}\mathcal{L}_\kappa 
	\left(\rho_\kappa(t)\hat{a}_i\right)_\varepsilon(x) 
	- \Divh^{2}\left((\rho_\kappa)_\varepsilon(t) \hat{a}_i\right)(x)  
	\\ & \qquad \qquad\qquad
	+ \Divh \mathcal{L}_\kappa V_{\kappa,i,\varepsilon}(t,x) 
	- \Divh\mathcal{L}_\kappa \bar{V}_{\kappa,i,\varepsilon}(t,x)\Big\}
	\\ & =:\sum_{\kappa\in \mathcal{A}} R_{\kappa,\varepsilon,i}(\omega,t,x).
\end{align*}
Fix $\kappa\in\mathcal{A}$, cf.~Lemma \ref{lem:det-metric-const}. 
The quantity $R_{\kappa,\varepsilon,i}(t,\cdot)$ 
is supported in $X_\kappa\subset M$, and there we are going to use 
the coordinates induced by $\kappa$, $z=\kappa(x)$. 
From the definition of $\Divh^{2}$, cf.~\eqref{eq:def-divh2}, and 
by means of formulas \eqref{eq:div-square-our-coord} 
and \eqref{eq:div-V-barV-our-coord}, we deduce
\begin{align*}
	R_{\kappa,\varepsilon,i}(t,z)
	&=\partial_{ml}\left(\rho_\kappa(t) \hat{a}_i^{ml}\right)_\varepsilon(z) 
	+ \partial_l \left[\Gamma_{mj}^l \left(\rho_\kappa(t)\hat{a}^{mj}_i\right)_\varepsilon \right](z)
	\\ & \qquad 
	-\partial_{ml}\left((\rho_\kappa)_\varepsilon(t) \hat{a}_i^{ml}\right)(z) 
	- \partial_l\left[\Gamma_{mj}^l
	(\rho_\kappa)_\varepsilon(t)\hat{a}^{mj}_i\right](z)
	\\ & \qquad\qquad
	+\partial_l \left[
	\left(\rho_\kappa(t)\Gamma^l_{mj}\hat{a}^{mj}_i\right)_\varepsilon
	\right](z) 
	-\partial_l\left[\Gamma_{mj}^l
	\left(\rho_\kappa(t)\hat{a}^{mj}_i\right)_\varepsilon
	\right](z)
	\\ & =\partial_{ml}\left(\rho_\kappa(t) \hat{a}_i^{ml}\right)_\varepsilon(z) 
	-\partial_{ml}\left((\rho_\kappa)_\varepsilon(t) \hat{a}_i^{ml}\right)(z) 
	+ \bar{r}_{\kappa,\varepsilon,i}(t,z),
\end{align*}
where the reminder $\bar{r}_{\kappa,\varepsilon,i}$ is defined 
in \eqref{eq:def-rbar-kappa}

\begin{remark}
Be mindful of the fact that we have computed $R_{\varepsilon,i}$ in the 
(convenient) coordinates provided by Lemma \ref{lem:det-metric-const}. 
With a different choice of coordinates, we would need   
to handle some additional terms involving the Christoffel symbols $\Gamma_{ab}^b$, 
further complicating the analysis.
\end{remark}

By expanding $-\partial_{ml}\left((\rho_\kappa)_\varepsilon(t) \hat{a}_i^{ml}\right)(z)$, 
\begin{align*}
\partial_{ml} \left((\rho_\kappa)_\varepsilon \hat{a}_i^{ml}\right)
& = \partial_{ml} (\rho_\kappa)_\varepsilon \hat{a}_i^{ml} + 
2\partial_{l} (\rho_\kappa)_\varepsilon \partial_m a_i^{m}a^l_i \\
&\qquad+ 
2\partial_{l} (\rho_\kappa)_\varepsilon \partial_m a_i^{l}a^m_i +
(\rho_\kappa)_\varepsilon \partial_{ml}\left( a_i^{m}a^l_i \right)\,,  
\end{align*}
and making use of Lemma \ref{lem:com-term-1}, we arrive at
\begin{align*}
	R_{\kappa,\varepsilon,i}(t,z)
	& = 2\mathcal{C}_\varepsilon\left[\rho_\kappa(t),a_i\right](z) 
	+ 2a_i(r_{\kappa,\varepsilon, i}(t,z))
	+ 2 (\rho_\kappa)_\varepsilon(t) a^m_i\partial_{ml}a_i^l(z) 
	\\ & \qquad \qquad
	- (\rho_\kappa)_\varepsilon(t)\partial_{ml}\left(a_i^ma_i^l\right)(z) 
	+ \bar{r}_{\kappa,\varepsilon,i}(t,z),
\end{align*}
where
\begin{equation}\label{eq:def-Ceps}
	\mathcal{C}_{\varepsilon}\left[\rho_\kappa(t),a_i\right]
	:= \frac12\partial_{ml}\left(\rho_\kappa\hat{a}^{ml}_i\right)_\varepsilon 
	-a^m_i\partial_{ml}\left(\rho_\kappa a^l_i\right)_\varepsilon
	+ \frac12\partial_{ml}(\rho_\kappa)_\varepsilon\hat{a}^{ml}_i. 
\end{equation}
We recognize $\mathcal{C}_{\varepsilon}\left[\rho_\kappa(t),a_i\right]$ 
as a ``second order'' commutator, first identified 
in \cite{Punshon-Smith:2018aa} (cf.~the appendix herein 
for more details). Using again the product rule, 
\begin{equation}\label{eq:C2-eps-pops-up}
	\begin{split}
		& R_{\kappa,\varepsilon,i}(t,z)
		\\ & \qquad = 
		2\left\{\mathcal{C}_\varepsilon\left[\rho_\kappa(t),a_i\right](z) 
		-\frac12 (\rho_\kappa)_\varepsilon\left(\partial_m a^m_i\right)^2
		-\frac12(\rho_\kappa)_\varepsilon 
		\partial_l a^m_i\partial_m a^l_i\right\}
		\\ &\qquad\qquad\qquad
		+ 2a_i\left(r_{\kappa,\varepsilon, i}(t,z)\right) 
		+ \bar{r}_{\kappa,\varepsilon,i}(t,z),
	\end{split}
\end{equation}
where $r_{\kappa,\varepsilon,i}$ is defined in \eqref{eq:def-r-kappa}. 
For convenience, let us set $g_{\kappa,i}:=\partial_l a^m_i \partial_m a^l_i$. 
In our coordinates (cf.~Lemma \ref{lem:det-metric-const}), 
$\Divh a_i=\partial_m a^m_i$. By compactness of the supports 
of the involved functions, the quantities $(\rho_\kappa)_\varepsilon\partial_l a^m_i\partial_ma^l_i$ and 
$\mathcal{C}_{\varepsilon}\left[\rho_\kappa(t),a_i\right]$ 
can be thought of as globally defined (on $M$). In other words, \eqref{eq:C2-eps-pops-up} 
may be seen as a global identity on $M$, that is, for $x\in M$,
\begin{equation}\label{eq:divh2-tmp1}
	 R_{\kappa,\varepsilon,i}(t,x)
	= 2 G_{\kappa,\varepsilon,i}(t,x)	
	+ 2a_i \left(r_{\kappa,\varepsilon, i}(t,x)\right) 
	+ \bar{r}_{\kappa,\varepsilon,i}(t,x),
\end{equation}
where
\begin{equation}\label{eq:def-Gkei}
	G_{\kappa,\varepsilon,i}(\omega,t,x)
	:=\mathcal{C}_\varepsilon\left[\rho_\kappa(t),a_i\right](x) 
	-\frac12 (\rho_\kappa)_\varepsilon \left(\Divh a_i(x)\right)^2 
	-\frac12 (\rho_\kappa)_\varepsilon g_{\kappa,i}(x).
\end{equation}
If $x\notin X_\kappa$ for some $\kappa$, then \eqref{eq:divh2-tmp1} 
reduces to the trivial statement ``$0=0$". 
Referring to the appendix, a simple application of 
Lemma \ref{lem:Punshon-Smith-gen} shows that 
$G_{\kappa,\varepsilon,i}\overset{\varepsilon\downarrow0}{\longrightarrow} 0$
in $L^q\left([0,T]; L^2(\Omega\times M)\right)$, for any $q\in [1,\infty)$.

Let us summarize our findings.
\begin{lemma}[Second order commutator; ``$\Divh^2 (\rho \hat{a})$"]
\label{lem:Reps(2)-F}
For $(\omega,t,x)\in \Omega\times [0,T]\times M$ 
and, cf.~\eqref{eq:def-ek-enull}, $\varepsilon<\varepsilon_0$, the remainder 
$R_{\varepsilon,i}$ defined in \eqref{eq:divh2-tmp0} 
takes the form
\begin{equation}\label{eq:Reps(2)-F}
	R_{\varepsilon,i}(\omega,t,x)
	= 2a_i\left(r_{\varepsilon, i}(\omega,t,x)\right) 
	+ \bar{r}_{\varepsilon,i}(\omega,t,x)
	+2G_{\varepsilon,i}(\omega,t,x),
\end{equation}
where $\bar{r}_{\varepsilon,i}$ is defined in \eqref{eq:def-rbar-kappa} 
and $G_{\varepsilon,i}:=\sum\limits_{\kappa\in\mathcal{A}} 
G_{\kappa,\varepsilon,i}$ with $G_{\kappa,\varepsilon,i}$ 
defined in \eqref{eq:def-Gkei}. 

Furthermore,
$$
\bar{r}_{\varepsilon,i}, \, G_{\varepsilon,i}
\overset{\varepsilon\downarrow0}{\longrightarrow} 0 
\quad \text{in $L^q\left([0,T]; L^2(\Omega\times M)\right)$, 
for any $q\in [1,\infty)$}.
$$
\end{lemma}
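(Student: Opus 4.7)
The plan is to read off the identity \eqref{eq:Reps(2)-F} directly from the computations already performed in the lead-up to the lemma, and then to verify each convergence statement separately. For the first claim, I would start from the local identity \eqref{eq:divh2-tmp1}, which was obtained by computing $R_{\kappa,\varepsilon,i}$ in the $\kappa$-coordinates (where $|h_\kappa|^{1/2}\equiv 1$ and $\Gamma^a_{ab}=0$), expanding $\partial_{ml}((\rho_\kappa)_\varepsilon \hat a_i^{ml})$ by the product rule, and absorbing the divergence terms coming from the Christoffel contribution $V_{\kappa,i,\varepsilon}-\bar V_{\kappa,i,\varepsilon}$ into $\bar r_{\kappa,\varepsilon,i}$. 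Summing that identity over $\kappa\in\mathcal{A}$ and using linearity of $\Divh$, $\Divh^{2}$ and $a_i(\cdot)$, together with the defining relations $r_{\varepsilon,i}=\sum_\kappa r_{\kappa,\varepsilon,i}$, $\bar r_{\varepsilon,i}=\sum_\kappa\bar r_{\kappa,\varepsilon,i}$, $G_{\varepsilon,i}=\sum_\kappa G_{\kappa,\varepsilon,i}$, delivers \eqref{eq:Reps(2)-F}. A small check is in order here: the cancellation of the Christoffel piece truly is global because each $R_{\kappa,\varepsilon,i}$ is supported in $X_\kappa$, so passing between ``locally defined on $\tilde X_\kappa$'' and ``globally defined on $M$ via $\mathcal{L}_\kappa$'' does not introduce boundary terms.

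For the convergence of $\bar r_{\varepsilon,i}$ there is nothing new: this has already been recorded after \eqref{eq:commutator-bar}, and is an immediate consequence of Lemma \ref{lem:com-term-3} and the triangle inequality in $L^q_tL^2_{\omega,x}$.

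The heart of the matter is the convergence $G_{\varepsilon,i}\to 0$, for which I would analyze each chart piece $G_{\kappa,\varepsilon,i}$ separately. In the $\kappa$-coordinates the ``second order'' commutator $\mathcal{C}_\varepsilon[\rho_\kappa(t),a_i]$ defined in \eqref{eq:def-Ceps} is exactly of the Punshon-Smith form recalled in the Introduction, now in $d$ dimensions with the vector field $a_i$ in place of the scalar coefficient $\sigma$; the auxiliary Lemma \ref{lem:Punshon-Smith-gen} in the appendix gives precisely that, for $\rho_\kappa(t,\cdot)\in L^2(\R^d)$ with compact support in $\tilde X_\kappa$ and $a_i$ smooth, one has
\begin{equation*}
\mathcal{C}_\varepsilon[\rho_\kappa(t),a_i]\;\xrightarrow[\varepsilon\downarrow 0]{}\;
\tfrac12\bigl[(\partial_m a_i^m)^2+\partial_l a_i^m\,\partial_m a_i^l\bigr]\rho_\kappa(t)
\quad\text{in }L^2(\R^d).
\end{equation*}
On the other hand, $(\rho_\kappa)_\varepsilon(t)\to\rho_\kappa(t)$ in $L^2$, so the subtracted terms $\tfrac12(\rho_\kappa)_\varepsilon(\Divh a_i)^2+\tfrac12(\rho_\kappa)_\varepsilon g_{\kappa,i}$ (with $\Divh a_i=\partial_m a_i^m$ and $g_{\kappa,i}=\partial_l a_i^m\partial_m a_i^l$ in these coordinates) converge to the same limit in $L^2$, and the difference $G_{\kappa,\varepsilon,i}(t,\cdot)$ goes to $0$ in $L^2$. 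A dominated convergence argument over $(\omega,t)$, based on the uniform $L^2$ bounds coming from $\rho\in L^\infty_tL^2_{\omega,x}$ and the standard $L^\infty\to L^2$ bound $\|\mathcal{C}_\varepsilon\|_{L^2}\lesssim\|\rho_\kappa\|_{L^2}\|a_i\|_{C^2}^2$, then upgrades this to convergence in $L^q([0,T];L^2(\Omega\times M))$. The transfer back to $M$ is painless: $|h_\kappa|^{1/2}\equiv 1$ on $X_\kappa$ and the support is compactly contained there, so the Euclidean $L^2$ norm coincides with the Riemannian one.

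The main obstacle, as I see it, is not the algebraic manipulation (which is largely done in the text) but the quantitative control of $\mathcal{C}_\varepsilon[\rho_\kappa,a_i]$ at the $L^2$ level. This is exactly why the generalization of the Punshon-Smith identity in Lemma \ref{lem:Punshon-Smith-gen} is indispensable: the naive first-order DiPerna-Lions estimate only yields $L^1$ convergence for commutators between second derivatives and the mollification, which is insufficient because $G_{\varepsilon,i}$ will later be multiplied by $F''(\rho_\varepsilon)$ and integrated against a test function, a pairing that will demand $L^2$ control to close.
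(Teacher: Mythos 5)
Your proposal is correct and follows essentially the same route as the paper: the identity is obtained by summing \eqref{eq:divh2-tmp1} over $\kappa\in\mathcal{A}$ (the local identity itself being a consequence of the expansion of $\partial_{ml}\bigl((\rho_\kappa)_\varepsilon\hat a_i^{ml}\bigr)$, the cancellation against the $V/\bar V$ contribution, and the identification of the Punshon--Smith commutator $\mathcal{C}_\varepsilon$), and the convergence of $G_{\kappa,\varepsilon,i}$ to zero in $L^q_tL^2_{\omega,x}$ is exactly what Lemma \ref{lem:Punshon-Smith-gen} yields with $g=\rho_\kappa$, $V=a_i$, $p_1=2$, $p_2=\infty$. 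One small remark on the closing paragraph: the $L^2$ control is most sharply needed not when $G_{\varepsilon,i}$ is paired against a test function (there $\|F'\|_\infty\|\psi\|_\infty\|G_{\varepsilon,i}\|_{L^1}$ would in fact suffice), but for the first-order commutator $r_{\varepsilon,i}$, which later appears \emph{squared} in $\mathcal{I}_{11,3}$ and in products with $\rho_\varepsilon$; it is that square, together with the quadratic cancellation structure in $\mathcal{C}_\varepsilon$, that forces the $L^2$ framework everywhere in this circle of estimates.
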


\begin{remark}
Regarding the error term $G_{\varepsilon,i}$, in general we 
cannot ``sum away'' $\kappa$ due to the nonlinearity of the domain 
$M$, which is manifested in the local nature of the 
commutator $\mathcal{C}_\varepsilon\left[\rho_\kappa(t),a_i\right]$, 
cf.~\eqref{eq:def-Ceps}, and its dependence on different mollifiers! 
\end{remark}

\begin{remark}
In view of \eqref{eq:Reps(2)-F}, we do not expect the 
remainder term
$$
R_{\varepsilon,i}=
\Divh^{2} \Bigl( \left(\rho(t) \hat{a}_i\right)_\varepsilon
-\rho_\varepsilon(t) \hat{a}_i\Bigr) 
+\Divh \Bigl(V_{i,\varepsilon}(t) 
-\bar{V}_{i,\varepsilon}(t)\Bigr)
$$
to converge to zero as $\varepsilon\to 0$, although 
$\bar{r}_{\varepsilon,i}$ and $G_{\varepsilon,i}$ do!  
Indeed, there is no reason to expect 
$a_i\left(r_{\varepsilon, i}\right)$ to 
have a limit as $\varepsilon\to 0$. 
By good fortune, it turns out that this 
quantity is going to cancel out with a term 
that appears when applying the It\^o formula, 
see the upcoming equation \eqref{eq:the-remainder-2}. 
This cancellation is the reason why the renormalization property holds for 
weak $L^2$ solutions without having to assume some kind of 
``parabolic" regularity (cf.~the discussion in Section \ref{sec:intro}).
\end{remark}

\subsubsection{Localization \& smoothing of the vector field $u$}

For $\omega\in\Omega$, $t\in[0,T]$, 
and $z\in\tilde{X}_\kappa$, consider the object 
$\seq{\rho_\kappa(\omega,t,z) u^l(t,z)}_l \in 
\text{RS}\left(\mathcal{T}^0_1(\tilde{X}_\kappa)\right)$, 
that is compactly supported in $\kappa\left(\supp\uk\right)\subset \tilde{X}_\kappa$, 
uniformly in $\omega\in\Omega,s\in[0,T]$.  
As before, we regularize $\{\rho_\kappa(\omega,t,z)u^l(t,z)\}_l$ 
componentwise via the mollifier $\phi_\varepsilon$. 
The result is an object in $\text{SS}(\mathcal{T}^0_1(\tilde{X}_\kappa))$ 
that is compactly supported, uniformly in $\varepsilon$. 
We denote the regularized object by 
$\left(\rho_\kappa(t)u(t)\right)_\varepsilon$. 
Applying the pullback-extension operator $\mathcal{L}_\kappa$, 
$$
\mathcal{L}_\kappa 
\left(\rho_\kappa(t)u(t)\right)_\varepsilon
\in \text{SS}\left(\mathcal{T}^0_1(M)\right).
$$
Finally, we define the companion vector 
field $(\rho_\kappa)_\varepsilon(t)u(t)$, which for 
a.e.~$t$ belongs to $W^{1,2}\left(\mathcal{T}^0_1(M)\right)$ since 
by assumption $u\in L^1([0,T]; W^{1,2}\left(\mathcal{T}^0_1(M)\right))$.

\begin{lemma}[DiPerna-Lions commutator; ``$\Divh(\rho u)$"]\label{lem:com-u}
Fix $\kappa\in\mathcal{A}$, cf.~Lemma \ref{lem:det-metric-const}, 
and define for $(\omega,t,x)\in\Omega\times [0,T]\times M$ the 
smooth (in $x$) function 
$$
r_{\kappa,\varepsilon,u}(\omega,t,x)
:=\Divh\mathcal{L}_\kappa \bigl(\rho_\kappa(t) u(t)\bigr)_\varepsilon(x)
-\Divh \bigl((\rho_\kappa)_\varepsilon(t) u(t)\bigr)(x).
$$
Then $r_{\kappa,\varepsilon,u}
\overset{\varepsilon\downarrow0}{\longrightarrow} 0$ in 
$L^1\left([0,T];L^2\left(\Omega; L^1(M)\right)\right)$. 

Besides, in $X_\kappa$ (the only relevant case) in the 
coordinates induced by $\kappa$,
$$
\Divh \mathcal{L}_\kappa 
\bigl(\rho_\kappa(t)u(t)\bigr)_\varepsilon
=\partial_l \bigl(\rho_\kappa(t)u^l(t)\bigr)_\varepsilon.
$$
\end{lemma}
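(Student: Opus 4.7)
The plan is to mirror the proofs of Lemmas \ref{lem:com-term-1} and \ref{lem:com-term-2}, with the difference that the smooth coefficients $a_i$ and $\nabla_{a_i}a_i$ are now replaced by the Sobolev vector field $u$; this is exactly the classical DiPerna-Lions situation. As before, the convenient chart $\kappa$ from Lemma \ref{lem:det-metric-const} satisfies $\abs{h_\kappa}\equiv 1$ and $\Gamma^m_{mj}=0$, so on $X_\kappa$ the operator $\Divh$ reduces to the Euclidean divergence. In particular, the second claim of the lemma is immediate:
$$
\Divh \mathcal{L}_\kappa\bigl(\rho_\kappa(t)u(t)\bigr)_\varepsilon
=\partial_l\bigl(\rho_\kappa(t)u^l(t)\bigr)_\varepsilon,
$$
and analogously $\Divh\bigl((\rho_\kappa)_\varepsilon(t)u(t)\bigr)
=u^l\partial_l(\rho_\kappa)_\varepsilon + (\rho_\kappa)_\varepsilon\partial_l u^l$. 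Subtracting, we recover the familiar DiPerna-Lions commutator
$$
r_{\kappa,\varepsilon,u}
=\partial_l\bigl(\rho_\kappa u^l\bigr)_\varepsilon
-u^l\partial_l(\rho_\kappa)_\varepsilon
-(\rho_\kappa)_\varepsilon\partial_l u^l
$$
on $\tilde{X}_\kappa\subset\R^d$.

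Next, I would invoke the generalized DiPerna-Lions commutator lemma (Lemma \ref{lemma:gen-diperna-lions} in the appendix) applied coordinatewise. With $\rho_\kappa(\omega,t,\cdot)\in L^2(\R^d)$ (compactly supported in $\tilde{X}_\kappa$) and $u(t,\cdot)\in W^{1,2}$, for almost every $(\omega,t)$ we obtain both the pointwise convergence $r_{\kappa,\varepsilon,u}(\omega,t,\cdot)\to 0$ in $L^1(\R^d)$ and the uniform bound
$$
\norm{r_{\kappa,\varepsilon,u}(\omega,t)}_{L^1(M)}
\leq C\norm{\rho_\kappa(\omega,t)}_{L^2(M)}
\norm{u(t)}_{\overrightarrow{W^{1,2}(M)}},
$$
where compact support of $\rho_\kappa$ inside $X_\kappa$ (uniformly in $\varepsilon$, by \eqref{eq:def-ek-enull}) and the identity $\abs{h_\kappa}^{1/2}=1$ convert Euclidean norms on $\tilde{X}_\kappa$ into Riemannian norms on $M$ without cost, exactly as at the end of the proof of Lemma \ref{lem:com-term-1}.

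To upgrade this to convergence in $L^1([0,T];L^2(\Omega;L^1(M)))$, I would square the estimate above, take expectations, and use $\rho\in L^\infty([0,T];L^2(\Omega\times M))$:
\begin{align*}
	\EE\norm{r_{\kappa,\varepsilon,u}(t)}_{L^1(M)}^2
	&\leq C^2\norm{u(t)}_{\overrightarrow{W^{1,2}(M)}}^2
	\EE\norm{\rho_\kappa(t)}_{L^2(M)}^2
	\\ &
	\leq C^2\norm{u(t)}_{\overrightarrow{W^{1,2}(M)}}^2
	\norm{\rho}_{L^\infty_t L^2_{\omega,x}}^2.
\end{align*}
Since by assumption \eqref{eq:u-ass-1} the right-hand side is in $L^1([0,T])$, a double application of Lebesgue's dominated convergence theorem --- first in $\omega$ (using the pointwise a.s.~convergence obtained from the classical DiPerna-Lions lemma together with the $\omega$-integrable majorant) and then in $t$ --- yields the desired convergence.

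The only genuinely new feature, and hence the main point to be careful about, is the drop in regularity: the smooth coefficients of Lemmas \ref{lem:com-term-1}--\ref{lem:com-term-3} provided $L^\infty$ control of the derivatives and hence $L^q_tL^2_{\omega,x}$ convergence, whereas Hölder duality between $\rho\in L^2_x$ and $\nabla u\in L^2_x$ produces only $L^1_x$ bounds, and the mere $L^1_t$ integrability of $u$ forces the outer $L^1_t$ norm. Fortunately, this weaker mode of convergence is sufficient downstream, because the $u$-part of the equation does not interact with the quadratic cross-variations arising from the It\^{o} conversion.
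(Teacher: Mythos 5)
Your argument follows essentially the same route as the paper: reduce to the Euclidean commutator in the $\kappa$-coordinates (where $\Divh$ is the flat divergence), apply the DiPerna-Lions commutator estimate, and then use dominated convergence in $\omega$ and $t$. The paper condenses the $\omega$-step by applying the generalized Lemma~\ref{lemma:gen-diperna-lions} with $Z=\Omega$, $p_1=p_2=q_1=2$, $q_2=\infty$, for each fixed $t$; you explicitly unpack that into a pointwise-in-$(\omega,t)$ application of the classical lemma plus dominated convergence in $\omega$. These are the same argument.

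There is one slip worth correcting. You write ``the right-hand side is in $L^1([0,T])$'' for the quantity
$C^2\norm{u(t)}_{\overrightarrow{W^{1,2}(M)}}^2\norm{\rho}_{L^\infty_t L^2_{\omega,x}}^2$, but assumption \eqref{eq:u-ass-1} only gives $\norm{u(\cdot)}_{\overrightarrow{W^{1,2}(M)}}\in L^1([0,T])$, so its \emph{square} need not be in $L^1([0,T])$. The target norm $L^1\bigl([0,T];L^2(\Omega;L^1(M))\bigr)$ requires integrating
$\bigl(\EE\norm{r_{\kappa,\varepsilon,u}(t)}_{L^1(M)}^2\bigr)^{1/2}$ in $t$, not its square. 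Taking the square root of your estimate, the correct $t$-majorant is
$C\norm{u(t)}_{\overrightarrow{W^{1,2}(M)}}\norm{\rho}_{L^\infty_t L^2_{\omega,x}}$, which \emph{is} in $L^1([0,T])$, and dominated convergence in $t$ then goes through. So the proof is recoverable as written, but the domination should be phrased for the square-rooted quantity.
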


\begin{proof}
At this point, only the convergence claim needs some explanation. 
Since $u$ is deterministic, cf.~\eqref{eq:u-ass-1},
$$
u\in L^1\left([0,T]; L^\infty\left(\Omega; 
W^{1,2}\left(\mathcal{T}^0_1(M)\right)\right)\right).
$$
Moreover,  $\rho\in L^\infty([0,T];L^2(\Omega\times M))$. Fixing 
$t$, we apply Lemma \ref{lemma:gen-diperna-lions} with $Z=\Omega$, 
$G=\tilde{X}_\kappa$, $p_1=p_2=q_1=2$, $q_2=\infty$ to obtain 
$r_{\kappa,\varepsilon,u}(t) \overset{\varepsilon\downarrow0}{\longrightarrow} 0$ 
in $L^2\left(\Omega;L^1(M)\right)$. Utilizing the dominated convergence 
theorem in $t$ and the bounds provided by Lemma \ref{lemma:gen-diperna-lions}, we
conclude $r_{\kappa,\varepsilon,u} \overset{\varepsilon\downarrow0}{\longrightarrow} 0$ 
in $L^1\left([0,T];L^2\left(\Omega; L^1(M)\right)\right)$.
\end{proof}

For $(\omega,t,x)\in \Omega \times [0,T]\times M$, we define 
the global remainder function
\begin{equation}\label{eq:commutator-u}
	r_{\varepsilon,u}(\omega,t,x)
	:=\sum_{\kappa\in\mathcal{A}} r_{\kappa,\varepsilon,u}(\omega,t,x),
\end{equation}
which belongs to $L^1\left([0,T];L^2\left(\Omega; L^1(M)\right)\right)$ 
and is smooth in $x$. Likewise, we define the global smooth vector field
$$
\bigl(\rho(t)u(t) \bigr)_\varepsilon(x)
:=\sum_\kappa\mathcal{L}_\kappa 
\bigl(\rho_\kappa(t)u(t)\bigr)_\varepsilon(x).
$$
Clearly, $r_{\varepsilon,u}\overset{\varepsilon\downarrow0}{\longrightarrow} 0$ 
in $L^1\left([0,T];L^2\left(\Omega; L^1(M)\right)\right)$.

\subsubsection{Localization \& smoothing of partition of unity terms}
For reasons that will become apparent later, we need to apply 
the machinery developed so far to some additional terms that 
are related to the partition of unity $\{\uk\}_\kappa$ and 
its derivatives. These terms are linked to the nonlinear
geometry of the manifold $M$.

For $\omega\in\Omega$, $t\in[0,T]$, 
and $z\in\tilde{X}_\kappa$, we introduce the functions
\begin{equation}\label{eq:def-Aik-1-2-3}
	\begin{split}
		& A_{\kappa,i}^1(\omega,t,z):=
		\rho(\omega,t,z)a_i(\uk)(z),
		\\ 
		& A_{\kappa,i}^2(\omega,t,z):= 
		\rho(\omega,t,z) \nabla^2\uk(a_i,a_i)(z), 
		\\  
		& A_{\kappa,i}^3(\omega,t,z):= 
		\rho(\omega,t,z) (\nabla_{a_i}a_i)(\uk)(z),
	\end{split}
\end{equation}
cautioning the reader that the superscripts do not mean exponentiation.
Observe that these functions have their supports contained 
in $\supp\uk$, uniformly in $\omega,t$. Besides, recalling the 
local expressions for $\nabla^2\uk$ and $(\nabla_{a_i}a_i)$,  
\begin{align*}
	&A_{\kappa,i}^2(t,z) 
	=\rho(t,z)\left[\partial_{ml}\uk(z)a_i^m(z)a_i^l(z) 
	-\Gamma^j_{ml}(z)\partial_j\uk(z) a_i^m(z)a_i^l(z)\right],
	\\ 
	& A_{\kappa,i}^3(t,z)
	= \rho(t,z)\left[\partial_l\uk(z)\partial_m a_i^l(z)a_i^m(z)
	+\Gamma^j_{ml}(z)\partial_j\uk(z) a_i^m(z)a_i^l(z)\right].
\end{align*}
We regularize these functions using the mollifier $\phi_\varepsilon$ 
and then apply the pullback-extension operator \eqref{eq:extension-op}. 
The next lemma is analogous to Lemma \ref{lem:prop-rho-kappa-eps}, 
with the proof being evident at this stage.

\begin{lemma}\label{lem:propA-kappa-eps-j-1-2-3}
Fix $\kappa\in\mathcal{A}$, cf.~Lemma \ref{lem:det-metric-const}. 
Then
\begin{enumerate}

\item $\left(A_{\kappa,i}^1\right)_\varepsilon(\omega,t,\cdot)$, 
$\left(A_{\kappa,i}^2\right)_\varepsilon(\omega,t,\cdot)$, and
$\left(A_{\kappa,i}^3\right)_\varepsilon(\omega,t,\cdot)$ 
belong to $C^\infty(\R^d)$, for each  
fixed $(\omega,t)\in\Omega\times [0,T]$.

\item For $\varepsilon < \varepsilon_\kappa$, cf.~\eqref{eq:def-ek-enull}, 
and for any $(\omega,t)\in\Omega\times [0,T]$, the 
supports of the functions in (1) are contained in 
$$
\kappa\left(\supp\uk\right) +\overline{B_\varepsilon(0)}
\subset\subset \tilde{X}_\kappa. 
$$ 
This implies in particular that the functions 
$\left(A_{\kappa,i}^1\right)_\varepsilon(\omega,t,\cdot)$, 
$\left(A_{\kappa,i}^2\right)_\varepsilon(\omega,t,\cdot)$, 
and
$\left(A_{\kappa,i}^3\right)_\varepsilon(\omega,t,\cdot)$ 
can be seen as elements of $C^\infty(M)$, provided 
that we set them equal to zero outside of $X_\kappa$, 
for each fixed $(\omega,t)\in \Omega\times [0,T]$.

\item For any $p\in [1,2]$ and $(\omega,t)\in\Omega \times [0,T]$,
\begin{align*}
	&\left (A_{\kappa,i}^1\right)_\varepsilon(\omega,t,\cdot)
	\overset{\varepsilon\downarrow 0}{\longrightarrow} 
	\rho(\omega,t,\cdot) a_i(\uk)(\cdot),
	\\ & 
	\left(A_{\kappa,i}^2\right)_\varepsilon(\omega,t,\cdot)
	\overset{\varepsilon\downarrow 0}{\longrightarrow} 
	\rho(\omega,t,\cdot) \nabla^2\uk(a_i,a_i)(\cdot),
	\\ & 
	\left(A_{\kappa,i}^3\right)_\varepsilon(\omega,t,\cdot)
	\overset{\varepsilon\downarrow 0}{\longrightarrow} 
	\rho(\omega,t,\cdot) (\nabla_{a_i}a_i)(\uk)(\cdot),
\end{align*}
where the convergences taking place in $L^p(M)$, and
\begin{align*}
	 & \left(A_{\kappa,i}^1\right)_\varepsilon
	 \overset{\varepsilon\downarrow 0}{\longrightarrow} \rho\, a_i(\uk),
	 \quad
	\left(A_{\kappa,i}^2\right)_\varepsilon
	\overset{\varepsilon\downarrow 0}{\longrightarrow} 
	\rho\, \nabla^2\uk(a_i,a_i),
	\\ &
	\left(A_{\kappa,i}^3\right)_\varepsilon
	\overset{\varepsilon\downarrow 0}{\longrightarrow} 
	\rho\, (\nabla_{a_i}a_i)(\uk),
\end{align*}
in $L^q\left([0,T]; L^2(\Omega\times M)\right)$, 
for any $q\in [1,\infty)$.
\end{enumerate}
\end{lemma}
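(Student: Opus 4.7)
The plan is to mimic the proof of Lemma \ref{lem:prop-rho-kappa-eps} essentially verbatim, since the three functions $A^1_{\kappa,i}$, $A^2_{\kappa,i}$, $A^3_{\kappa,i}$ share with $\rho_\kappa$ the essential structural feature of being the weak solution $\rho$ multiplied by a smooth, $\supp \uk$-supported scalar factor. Using the explicit local expressions given just before the lemma, each $A^j_{\kappa,i}(\omega,t,z)$ can be written as $\rho(\omega,t,z)\, b^j_{\kappa,i}(z)$, where $b^j_{\kappa,i} \in C^\infty_c(\tilde{X}_\kappa)$ is built from only smooth ingredients: the components $a_i^m$ (cf.~\eqref{eq:a-ass}), the Christoffel symbols $\Gamma^j_{ml}$ of the smooth metric $h$, and first/second partial derivatives of $\uk$.

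For claim (1), I would simply invoke the standard smoothing property of convolution with the $C^\infty$ mollifier $\phi_\varepsilon$. For claim (2), I would observe that $\supp b^j_{\kappa,i} \subset \supp \uk$, since the support of any derivative of $\uk$ is contained in $\supp \uk$; hence $\supp A^j_{\kappa,i}(\omega,t,\cdot) \subset \kappa(\supp \uk)$, and convolution with $\phi_\varepsilon$ enlarges the support by at most $\varepsilon$. For $\varepsilon < \varepsilon_\kappa$ (cf.~\eqref{eq:def-ek-enull}) this keeps it compactly contained in $\tilde{X}_\kappa$, so zero-extension off $X_\kappa$ produces a bona fide element of $C^\infty(M)$.

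For claim (3), I would exploit the convenient atlas (Lemma \ref{lem:det-metric-const}) in which $|h_\kappa|^{1/2} \equiv 1$, so that $L^p(M)$ norms of functions supported in $X_\kappa$ reduce to Euclidean $L^p$ norms on $\tilde{X}_\kappa$, exactly as in the proof of Lemma \ref{lem:prop-rho-kappa-eps}(3). Since $b^j_{\kappa,i}$ is bounded and $\rho(\omega,t,\cdot) \in L^2$ in the chart, the product $A^j_{\kappa,i}(\omega,t,\cdot)$ lies in $L^2(\R^d)$ with compact support, and classical mollifier approximation yields the claimed pointwise (in $(\omega,t)$) convergence in $L^p(\R^d)$ for every $p \in [1,2]$. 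For the global convergence in $L^q([0,T]; L^2(\Omega \times M))$, I would combine this with the uniform bound $\|(A^j_{\kappa,i})_\varepsilon(\omega,t,\cdot)\|_{L^2} \leq \|b^j_{\kappa,i}\|_{L^\infty}\,\|\rho(\omega,t,\cdot)\|_{L^2(M)}$ (Young) and the hypothesis $\rho \in L^\infty([0,T]; L^2(\Omega \times M))$, then apply dominated convergence in $(\omega,t)$.

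There is no genuine obstacle here beyond bookkeeping: all the machinery is already in place from Lemma \ref{lem:prop-rho-kappa-eps}, and the only novelty is verifying that the multiplicative factors $b^j_{\kappa,i}$ are smooth and compactly supported, which is immediate from \eqref{eq:a-ass}, the smoothness of $h$ (hence of $\Gamma$), and the smoothness of $\uk$. This is presumably why the author labels the proof \textit{evident at this stage}.
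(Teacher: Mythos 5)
Your proposal is correct and matches the argument the paper intends: indeed the paper itself offers no proof and merely remarks that the lemma is ``analogous to Lemma \ref{lem:prop-rho-kappa-eps}, with the proof being evident at this stage.'' You have correctly identified that each $A^j_{\kappa,i}$ factors as $\rho$ times a smooth, $\supp\uk$-supported factor $b^j_{\kappa,i}$, so claims (1)--(3) reduce to the standard mollifier facts already used for $\rho_\kappa$, with the convenient atlas ($\abs{h_\kappa}^{1/2}\equiv1$) converting $L^p(M)$ to Euclidean $L^p$, and with Young's inequality plus $\rho\in L^\infty_tL^2_{\omega,x}$ furnishing the dominant needed for convergence in $L^q([0,T];L^2(\Omega\times M))$.
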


Define for $(\omega,t,x)\in\Omega\times [0,T] \times M$, 
the global functions
\begin{equation}\label{eq:Ai-def}
	A_{i,\varepsilon}^j(\omega,t,x):=
	\sum_\kappa\mathcal{L}_\kappa
	\left(A_{\kappa,i}^j\right)_\varepsilon(\omega,t,x),
	\qquad j=1,\ldots,3,
\end{equation}
where the pullback-extension operator $\mathcal{L}_\kappa$ 
is defined in \eqref{eq:extension-op}.

Finally, for $\omega\in\Omega$, $t\in[0,T]$, 
and $z\in \tilde{X}_\kappa$, consider the vector field
\begin{equation}\label{eq:def-Aik-4}
	A_{\kappa,i}^4(\omega,t,z)
	:= \seq{\rho(\omega,t,z)a_i(\uk)a_i(z)^l}_l,
\end{equation}
which belongs to $\text{RS}\left(\mathcal{T}^0_1(\tilde{X}_\kappa)\right)$ 
and is compactly supported in $\kappa\left(\supp\uk\right)\subset \tilde{X}_\kappa$, 
uniformly in $\omega,t$. Following our (by now) standard procedure, 
we regularize $A_{\kappa,i}^4$ componentwise via the 
mollifier $\phi_\varepsilon$, cf.~\eqref{eq:mollifier}. 
We denote the resulting object by $\left(A_{\kappa,i}^4\right)_\varepsilon$, 
and observe that, by definition, 
$\left( \left( A_{\kappa,i}^4\right)_\varepsilon\right)^l
=\left(\rho\,a_i(\uk)a_i^l\right)_\varepsilon$. 
We apply the pullback-extension operator $\mathcal{L}_\kappa$ 
to obtain the compactly supported vector field
$$
\mathcal{L}_\kappa 
\left(A_{\kappa,i}^4\right)_\varepsilon
\in \text{SS}\left(\mathcal{T}^0_1(M)\right).
$$
Summing over $\kappa\in\mathcal{A}$, we 
obtain the global object
$$
A_{i,\varepsilon}^4(\omega,t,x)
:=\sum_{\kappa\in\mathcal{A}} 
\mathcal{L}_\kappa \left(A_{\kappa,i}^4\right)_\varepsilon(\omega,t,x).
$$

From the very definitions of $A_{i,\varepsilon}^1$ and 
$\mathcal{L}_\kappa(A_{\kappa,i}^4)_\varepsilon$, 
we have, for $x\in M$,
\[
A_{i,\varepsilon}^1(t,x)\,a_i(x) 
= \sum_{\kappa\in\mathcal{A}}
\left(A_{\kappa,i}^1\right)_\varepsilon(t,x)\, a_i(x)
=\sum_{\kappa\in\mathcal{A}}
\left(\rho(t)\,a_i(\mathcal{U}_\kappa)\right)_\varepsilon(x)
\, a_i(x), 
\]
and
$$
\mathcal{L}_\kappa\left(A_{\kappa,i}^4\right)_\varepsilon(t,x)
=\mathcal{L}_\kappa\left(A_{\kappa,i}^1(t)\,a_i\right)_\varepsilon(x).
$$

We define for $(\omega,t,x)\in\Omega\times [0,T] \times M$ 
the smooth (in $x$) remainder function
\begin{align*}
	r^\ast_{\kappa,\varepsilon,i}(\omega,t,x) 
	& := \Divh \mathcal{L}_\kappa 
	\left(A_{\kappa,i}^4\right)_\varepsilon(t,x)
	-\Divh \left ( 
	\left(\rho(t)\,a_i(\mathcal{U}_\kappa)\right)_\varepsilon\, a_i
	\right)(x)
	\\ & = \Divh \mathcal{L}_\kappa 
	\left(A_{\kappa,i}^1(t)\,a_i\right)_\varepsilon(x) 
	-\Divh \left( \left(A_{\kappa,i}^1\right)_\varepsilon(t,x)\,a_i(x)\right).
\end{align*}
Observe that for $x\in X_\kappa$ in the coordinates 
given by $\kappa$, $z=\kappa(x)$, we have the representation
$$
r^\ast_{\kappa,\varepsilon,i}(t,z)=
\partial_l\left( \rho(t) a_i(\mathcal{U}_\kappa) \, a^l_i\right)_\varepsilon(z)- 
 \partial_l \left( \left(\rho(t) a_i(\mathcal{U}_\kappa)\right)_\varepsilon 
 a^l_i\right)(z).
$$

By now the following lemma should be easy to prove.
\begin{lemma}[DiPerna-Lions commutator; ``$\Divh((\rho a_i(\uk))\, a_i)$"]\label{lem:com-term-4}
For any $q\in [1,\infty)$, $r^\ast_{\kappa,\varepsilon,i}
\overset{\varepsilon\downarrow0}{\longrightarrow} 0$ 
in $L^q([0,T];L^2(\Omega\times M))$.
\end{lemma}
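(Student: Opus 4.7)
The plan is to prove this as a direct copy of Lemma \ref{lem:com-term-1}, since structurally $r^\ast_{\kappa,\varepsilon,i}$ is a classical first-order DiPerna-Lions commutator between a scalar and the smooth vector field $a_i$: the only change relative to Lemma \ref{lem:com-term-1} is that the scalar $\uk \rho$ has been replaced by $\rho\, a_i(\uk)$. Since $\uk\in C^\infty(M)$ and $a_i\in C^\infty(M)$, the function $a_i(\uk)\in C^\infty(M)$ is bounded and supported in $\supp \uk\subset X_\kappa$; consequently $f_\kappa:=\rho\, a_i(\uk)\in L^\infty([0,T]; L^2(\Omega\times M))$ with support uniformly contained in $\supp\uk\subset\subset X_\kappa$, matching exactly the setup used for $\rho_\kappa$ in Lemma \ref{lem:com-term-1}.

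First I would pass to the chart $\kappa$ supplied by Lemma \ref{lem:det-metric-const}, where $\Gamma^m_{mj}=0$ and $|h_\kappa|^{1/2}\equiv 1$, so that $\Divh$ coincides locally with the Euclidean divergence. In these coordinates,
$$
r^\ast_{\kappa,\varepsilon,i}(t,z)
=\partial_l\bigl(f_\kappa(t)\,a_i^l\bigr)_\varepsilon(z)
-a_i^l(z)\,\partial_l(f_\kappa)_\varepsilon(t,z)
-(f_\kappa)_\varepsilon(t,z)\,\partial_l a_i^l(z),
$$
which is exactly the standard DiPerna-Lions commutator between the scalar $f_\kappa$ and the smooth vector field $a_i$.

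Next I would apply Lemma \ref{lemma:gen-diperna-lions} (with $Z=\Omega$, $G=\tilde X_\kappa$, $p_1=p_2=2$ for the scalar and $q_1=q_2=\infty$ for the vector field, since $a_i$ is smooth) to obtain pointwise-in-$t$ convergence $r^\ast_{\kappa,\varepsilon,i}(t)\to 0$ in $L^2(\Omega\times \tilde X_\kappa)$, together with a uniform-in-$\varepsilon$ bound. The bound depends only on $\|f_\kappa(t)\|_{L^2(\Omega\times \tilde X_\kappa)}$ and $\|a_i\|_{W^{1,\infty}}$, so dominated convergence in $t$ upgrades this to convergence in $L^q([0,T];L^2(\Omega\times \tilde X_\kappa))$ for every $q\in [1,\infty)$. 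Finally, because $r^\ast_{\kappa,\varepsilon,i}$ has compact support in $X_\kappa$ and $|h_\kappa|^{1/2}\equiv 1$ there, the Euclidean $L^2$ norm equals the $L^2(M)$ norm, yielding the claim.

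There is essentially no obstacle here: every ingredient has already been assembled in the earlier commutator lemmas, and the smoothness of both $\uk$ and $a_i$ makes the relevant factor $a_i(\uk)$ as regular as one could hope. The one place where care is needed is to keep track that the argument of the commutator is $\rho\, a_i(\uk)$ (a bona fide $L^\infty_t L^2_{\omega,x}$ scalar) rather than $\rho$ itself, but this distinction does not affect the analytic setup of Lemma \ref{lemma:gen-diperna-lions}.
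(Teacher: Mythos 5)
Your proof is correct and is essentially the argument the paper has in mind when it writes ``By now the following lemma should be easy to prove,'' i.e., you recognize $r^\ast_{\kappa,\varepsilon,i}$ as a first-order DiPerna--Lions commutator between the scalar $f_\kappa:=\rho\,a_i(\uk)\in L^\infty_t L^2_{\omega,x}$ (compactly supported in $X_\kappa$) and the smooth vector field $a_i$, pass to the chart from Lemma \ref{lem:det-metric-const} so that $\Divh$ is the Euclidean divergence, invoke Lemma \ref{lemma:gen-diperna-lions} at each fixed $t$, and then upgrade by dominated convergence. One small slip: when invoking Lemma \ref{lemma:gen-diperna-lions} the exponents should be paired as $p_1=q_1=2$ (for the scalar $f_\kappa$) and $p_2=q_2=\infty$ (for the smooth $a_i$), giving the $(1)$-admissible triples $(2,\infty,2)$ for both $p$'s and $q$'s and hence convergence in $L^2(\Omega;L^2)$; your writing ``$p_1=p_2=2$, $q_1=q_2=\infty$'' misassigns the indices (and with $p_2=2$ you would only get $L^1$ in space, which is not what you need), although your stated conclusion and the rest of the argument are correct.
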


We define a global remainder function 
by summing over $\kappa$:
\begin{equation}\label{eq:commutator-ast}
	r^\ast_{\varepsilon,i}(\omega,t,x):=
	\sum_{\kappa\in\mathcal{A}} 
	r^\ast_{\kappa,\varepsilon,i}(\omega,t,x),
	\qquad (\omega,t,x)\in \Omega\times [0,T]\times M.
\end{equation}
Clearly, $r^\ast_{\varepsilon,i}
\overset{\varepsilon\downarrow0}{\longrightarrow} 0$ 
in $L^q([0,T];L^2(\Omega\times M))$, for 
all $q\in [1,\infty)$.

Finally, we introduce the function
\begin{equation}\label{eq:def-Ak-u}
	A_{\kappa,u}(\omega,t,z):=\rho(\omega,t,z)u(t,z)(\uk),
	\quad 
	(\omega,t,z)\in \Omega\times [0,T]\times \tilde{X}_\kappa,
\end{equation}
which has support contained in $\supp \uk$, 
uniformly in $\omega,t$. We regularize $A_{\kappa,u}$ 
in $z$ using the mollifier $\phi_\varepsilon$, and then 
apply the pullback-extension operator \eqref{eq:extension-op} to 
produce a function defined on $M$. 
We state the following lemma (without proof), noting that it is 
related to Lemmas \ref{lem:prop-rho-kappa-eps} 
and \ref{lem:propA-kappa-eps-j-1-2-3}.

\begin{lemma}\label{lem:prop-A-kappa-eps-u}
Fix $\kappa\in\mathcal{A}$, cf.~Lemma \ref{lem:det-metric-const}. Then
\begin{enumerate}

\item $\left(A_{\kappa,u}\right)_\varepsilon(\omega,t,\cdot)
\in C^\infty(\R^d)$, for $(\omega,t)\in\Omega \times [0,T]$.

\item For $\varepsilon<\varepsilon_\kappa$, cf.~\eqref{eq:def-ek-enull}, 
and $(\omega,t)\in\Omega \times[0,T]$, the support 
of $\left(A_{\kappa,u}\right)_\varepsilon(\omega,t,\cdot)$ 
is contained in 
$$
\kappa\left(\supp\uk\right) +\overline{B_\varepsilon(0)}
\subset\subset \tilde{X}_\kappa. 
$$ 
This implies in particular that 
$(A_{\kappa,u})_\varepsilon(\omega,t,\cdot)$ can 
be seen as a function in $C^\infty(M)$, provided 
that we set it equal to zero outside of $X_\kappa$.

\item $\left(A_{\kappa,u}\right)_\varepsilon 
\overset{\varepsilon\downarrow 0}{\longrightarrow} \rho\, u(\uk)$ 
in $L^1\left([0,T]; L^1(\Omega\times M)\right)$.
\end{enumerate}
\end{lemma}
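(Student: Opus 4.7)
The plan is to treat this lemma in complete parallel with Lemmas \ref{lem:prop-rho-kappa-eps} and \ref{lem:propA-kappa-eps-j-1-2-3}, the only genuinely new feature being that the ``rough'' factor is now $u(\uk)$, which has temporal integrability of order $1$ (not essentially bounded) and is of lower spatial regularity than $a_i(\uk)$. So I first record the pointwise local expression
$$
A_{\kappa,u}(\omega,t,z) = \rho(\omega,t,z)\, u^l(t,z)\, \partial_l \uk(z),
$$
from which, together with $\rho\in L^\infty([0,T];L^2(\Omega\times M))$, the smoothness of $\uk$, and $u\in L^1([0,T];\overrightarrow{W^{1,2}(M)})\subset L^1([0,T];\overrightarrow{L^2(M)})$, I deduce by Cauchy--Schwarz that for a.e.~$(\omega,t)$ the function $A_{\kappa,u}(\omega,t,\cdot)$ belongs to $L^1(\R^d)$, is compactly supported in $\kappa(\supp\uk)$, and satisfies
$$
\EE\,\norm{A_{\kappa,u}(t)}_{L^1(M)}
\le \norm{\rho}_{L^\infty_tL^2_{\omega,x}}\,\norm{u(t)(\uk)}_{L^2(M)},
$$
which is integrable in $t$.

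Claims (1) and (2) then follow at once from standard facts about mollification: convolution with the smooth, compactly supported $\phi_\varepsilon$ on $\R^d$ produces a $C^\infty$ function whose support is contained in $\supp A_{\kappa,u}(\omega,t,\cdot)+\overline{B_\varepsilon(0)}\subset \kappa(\supp\uk)+\overline{B_\varepsilon(0)}$; choosing $\varepsilon<\varepsilon_\kappa$ (cf.~\eqref{eq:def-ek-enull}) guarantees the inclusion in $\tilde X_\kappa$, so the extension by zero is smooth on all of $M$ exactly as in Lemma \ref{lem:prop-rho-kappa-eps}(2).

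For claim (3), I would argue by combining pointwise (in $(\omega,t)$) mollifier convergence with dominated convergence in $(\omega,t)$. Fix $(\omega,t)$; since $A_{\kappa,u}(\omega,t,\cdot)\in L^1(\R^d)$ with compact support and $\abs{h_\kappa}^{1/2}\equiv 1$ on $X_\kappa$ (Lemma \ref{lem:det-metric-const}), the standard $L^1$ mollification result gives
$$
\norm{(A_{\kappa,u})_\varepsilon(\omega,t,\cdot)-\rho(\omega,t,\cdot)\,u(t,\cdot)(\uk)}_{L^1(M)}
\xrightarrow[\varepsilon\downarrow 0]{}0.
$$
The uniform bound $\norm{(A_{\kappa,u})_\varepsilon(\omega,t,\cdot)}_{L^1(M)}\le\norm{A_{\kappa,u}(\omega,t,\cdot)}_{L^1(M)}$ together with the Cauchy--Schwarz estimate displayed above supplies the integrable dominating function in $(\omega,t)$, and dominated convergence upgrades the pointwise-in-$(\omega,t)$ $L^1(M)$ convergence to convergence in $L^1([0,T];L^1(\Omega\times M))$.

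I do not expect any real obstacle here: the smoothness and support parts are routine, and the only point worth watching is that the dominating function used for (3) lies in $L^1([0,T])$ rather than $L^q([0,T])$ with $q>1$, which is precisely why the convergence is stated in $L^1$ rather than in the stronger spaces used in Lemmas \ref{lem:prop-rho-kappa-eps}(3) and \ref{lem:propA-kappa-eps-j-1-2-3}(3)\,---\,this is the manifestation of the lower temporal regularity of $u$ (condition \eqref{eq:u-ass-1}) and mirrors exactly the weaker convergence obtained for $r_{\varepsilon,u}$ in Lemma \ref{lem:com-u}.
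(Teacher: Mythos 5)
The paper states this lemma explicitly \emph{without proof}, remarking only that it ``is related to Lemmas \ref{lem:prop-rho-kappa-eps} and \ref{lem:prop-A-kappa-eps-u}''; your argument supplies precisely the routine details the authors chose to omit, and it is correct. Claims (1) and (2) are indeed immediate from the smoothness and support properties of Euclidean mollification, and for (3) your reduction to pointwise-in-$(\omega,t)$ mollifier convergence in $L^1(M)$ (using $\abs{h_\kappa}=1$ from the special atlas) plus dominated convergence with the dominating function $2\norm{A_{\kappa,u}(\omega,t,\cdot)}_{L^1(M)}$, which lies in $L^1(\Omega\times[0,T])$ by Cauchy--Schwarz and the hypotheses $\rho\in L^\infty_tL^2_{\omega,x}$, $u\in L^1_t\overrightarrow{L^2_x}$, is exactly the natural way to proceed; your closing observation that the merely $L^1$-in-time integrability of $\norm{u(t)(\uk)}_{L^2(M)}$ is what forces the target space down to $L^1([0,T];L^1(\Omega\times M))$ (rather than the $L^q_tL^2_{\omega,x}$ obtained in Lemmas \ref{lem:prop-rho-kappa-eps}(3) and \ref{lem:propA-kappa-eps-j-1-2-3}(3)) correctly identifies the one non-routine point and matches the pattern of Lemma \ref{lem:com-u}.
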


We also introduce the globally defined function
$$
A_{u,\varepsilon}(\omega,t,x):=
\sum_\kappa(A_{\kappa,u})_\varepsilon(\omega,t,x),
\quad 
(\omega,t,x) \in \Omega\times [0,T] \times M.
$$
Clearly, $ A_{u,\varepsilon}\overset{\varepsilon\downarrow 0}{\longrightarrow} 
\rho\, u(1)=0$ in $L^1\left([0,T]; L^1(\Omega\times M)\right)$.


\subsection{Localized SPDEs and regularization}
Fix $\kappa\in\mathcal{A}$, $\kappa:X_\kappa\subset M\to \tilde{X}$, 
$ z=\kappa(x)$, cf.~Lemma \ref{lem:det-metric-const}, and recall 
that $\seq{\mathcal{U}_\kappa}_{\kappa\in\mathcal{A}}$ denotes
the partition of the unity introduced at the beginning of 
Subsection \ref{sec:local-smoothing}. By inserting into \eqref{eq:L2-weak-Ito} 
the test function  $\psi\, \mathcal{U}_\kappa$, $\psi\in C^\infty(M)$, 
we obtain, $\P$-a.s., for any $t\in[0,T]$,
\begin{equation}\label{eq:L2-weak-Ito-splitted}
	\begin{split}
		\int_M & \rho(t)\,\uk\psi\, dV_h = \int_M \rho_{0}\,\uk\psi\, dV_h 
		+ \sum_{i=1}^N\int_0^t\int_M \rho(s)\,\uk a_i(\psi) \, dV_h\,dW^i(s)
		\\& +\frac12\sum_{i=1}^N\int_0^t \int_M \rho(s)\,\uk
		\Bigl[ (\nabla^2\psi)(a_i,a_i) + (\nabla_{a_i}a_i)(\psi) \Bigr]\, dV_h \,ds
		\\ &+  \sum_{i=1}^N\int_0^t\int_M \rho(s)\,\psi\, a_i(\mathcal{U}_\kappa) \, dV_h\, dW^i(s)
		\\ &+\frac12\sum_{i=1}^N\int_0^t \int_M \rho(s)
		\Bigl[ \psi(\nabla^2\mathcal{U}_\kappa)(a_i,a_i) 
		\\ & \qquad \qquad \qquad \qquad \qquad 
		+ \psi(\nabla_{a_i}a_i)(\mathcal{U}_\kappa)
		+2d\psi\otimes d\mathcal{U}_\kappa(a_i,a_i) \Bigr ]\, dV_h \,ds
		\\ & +\int_0^t\int_M \Bigl[ \rho(s)\,\uk \,u(s)(\psi) 
		+ \rho(s)\psi \,u(s)(\uk)\Bigr]\, dV_h\,ds,
	\end{split}
\end{equation}
where we have used the tensorial identity
$$
\nabla^2 (fg) = g(\nabla^2f) + f(\nabla^2g) 
+ df\otimes dg + dg\otimes df, 
\quad f,g\in C^2(M). 
$$
By making use of the coordinates induced by $\kappa$, and 
some of quantities introduced previously, the equation 
\eqref{eq:L2-weak-Ito-splitted} amounts to writing
\begin{equation}\label{eq:L2-weak-Ito-splitted-2}
	\begin{split}
		\int_{\tilde{X}_\kappa} & \rho_\kappa(t) \psi\, d\bar{z}
		= \int_{\tilde{X}_\kappa} \rho_{0,\kappa} \psi\, d\bar{z} 
		+ \sum_{i=1}^N\int_0^t \int_{\tilde{X}_\kappa} 
		\rho_\kappa(s) a_i(\psi) \, d\bar{z} \,dW^i(s)
		\\ &  
		+\frac12\sum_{i=1}^N \int_0^t \int_{\tilde{X}_\kappa} 
		\rho_\kappa(s)  \Bigl[(\nabla^2\psi)(a_i,a_i) 
		+ (\nabla_{a_i}a_i)(\psi)\Bigr]\, d\bar{z} \,ds
		\\ & 
		+  \sum_{i=1}^N\int_0^t\int_{\tilde{X}_\kappa} 
		A_{\kappa,i}^1(s) \,\psi \, d\bar{z}\, dW^i(s)
		\\ & 
		+\frac12\sum_{i=1}^N\int_0^t \int_{\tilde{X}_\kappa} 
		\Bigl[\psi A_{\kappa,i}^2(s) + \psi A_{\kappa,i}^3(s)
		+ 2A_{\kappa,i}^4(s)(\psi)\Bigr]
		\, d\bar{z} \,ds
		\\ & 
		+\int_0^t\int_{\tilde{X}_\kappa} 
		\Bigl[ \rho_\kappa(s) \,u(s)(\psi) + A_{\kappa,u}(s)\psi \Bigr] \, d\bar{z}\,ds,
	\end{split}
\end{equation}
where $\rho_\kappa, \rho_{0,\kappa}, A_{\kappa,i}^1, A_{\kappa,i}^2, 
A_{\kappa,i}^3, A_{\kappa,i}^4, A_{\kappa,u}$ are defined 
in \eqref{eq:def-rhok-rho0k}, \eqref{eq:def-Aik-1-2-3}, \eqref{eq:def-Aik-4}, \eqref{eq:def-Ak-u}.

For convenience, set $\widetilde{B}_\kappa :=\kappa(\supp \uk) 
+B_{\varepsilon_\kappa/2}(0)$. From now on, we consider only 
$\varepsilon<\varepsilon_\kappa/4$, cf.~\eqref{eq:def-ek-enull}. 
Let us introduce the following family of test functions 
$\psi_{z,\varepsilon}$ (parametrized by $z\in\tilde{X}_\kappa$ 
and $\varepsilon<\varepsilon_\kappa/4$):
$$
\psi_{z,\varepsilon}(\cdot)
:=
\begin{cases}
	\phi_\varepsilon(z-\cdot),  
	& \mbox{if $B_\varepsilon(z)\cap \partial\tilde{X}_\kappa =\emptyset$}
	\\ 0,  & \mbox{otherwise} 
\end{cases}.
$$
We observe that these functions may be seen as elements of 
$C^\infty_c(X_\kappa)$ and that, for fixed $(\omega,t)\in\Omega \times [0,T]$,
$\supp \left((\ldots)_\varepsilon(\omega,t,\cdot)\right)
\subset \kappa(\supp \uk) +\overline{B_\varepsilon(0)}
\subset\subset \widetilde{B}_\kappa$, where $(\cdots)$ denotes 
any one of the objects defined previously. Moreover, for $\omega\in\Omega$, $t\in[0,T]$, 
and $z\in\tilde{X}_\kappa \setminus \widetilde{B}_\kappa$, we have that 
$(\cdots)_\varepsilon(\omega,t,z)=0$ and $(\cdots)_\varepsilon(\omega,t,z)$ 
coincides with the action of $(\cdots)(\omega,t)$ on the function $\psi_{z,\varepsilon}$.

We make use of $\psi_{z,\varepsilon}$ as test function 
in \eqref{eq:L2-weak-Ito-splitted-2}, which results in 
\begin{equation}\label{eq:reg-split-eqn}
	\begin{split}
		(\rho_\kappa & )_\varepsilon(t)(z) -(\rho_{0,\kappa})_\varepsilon(z) 
		= - \sum_{i=1}^N\int_0^t\int_{\tilde{X}_\kappa} \rho_\kappa(s) a_i^l(\bar{z})
		\left(\partial_l\phi_\varepsilon\right)(z-\bar{z}) \,d\bar{z} \, dW^i(s)
		\\ &
		+ \frac12\sum_{i=1}^N\int_0^t \int_{\tilde{X}_\kappa} \rho_\kappa(s)
		\left(\partial_{lm}\phi_\varepsilon\right)(z-\bar{z}) a_i^m(\bar{z}) a_i^l(\bar{z}) 
		\, d\bar{z} \,ds
		\\ & -\frac12\sum_{i=1}^N\int_0^t \int_{\tilde{X}_\kappa} \rho_\kappa(s)
		a_i^m(\bar{z})\partial_m a_i^l(\bar{z}) 
		\left(\partial_l\phi_\varepsilon\right)(z-\bar{z})\, d\bar{z} \,ds
		\\ & 
		+  \sum_{i=1}^N\int_0^t\int_{\tilde{X}_\kappa} \rho(s)\, 
		\phi_\varepsilon(z-\bar{z})\, a_i^l(\bar{z})\partial_l\uk(\bar{z}) \, d\bar{z} \, dW^i(s)
		\\ &
		+\frac12\sum_{i=1}^N\int_0^t \int_{\tilde{X}_\kappa} \rho(s)
		\phi_\varepsilon(z-\bar{z})\partial_{ml} \uk(\bar{z}) a_i^m(\bar{z}) a_i^l(\bar{z}) \,d\bar{z} \,ds
		\\ &
		+\frac12\sum_{i=1}^N\int_0^t \int_{\tilde{X}_\kappa} \rho(s)
		\phi_\varepsilon(z-\bar{z})a_i^l(\bar{z})\partial_l a_i^m(\bar{z}) \partial_m\uk(\bar{z})\,d\bar{z}\,ds
		\\ &
		-\sum_{i=1}^N\int_0^t\int_{\tilde{X}_\kappa}\rho(s)
		\left(\partial_l\phi_\varepsilon\right)(z-\bar{z})\partial_m\uk(\bar{z}) 
		a_i^l(\bar{z}) a_i^m(\bar{z}) \, d\bar{z} \,ds
		\\ & 
		-\int_0^t\int_{\tilde{X}_\kappa} \rho_\kappa(s) \,u^l(s,\bar{z})
		\left(\partial_l\phi_\varepsilon\right)(z-\bar{z})\, d\bar{z} \,ds
		\\ &
		+\int_0^t\int_{\tilde{X}_\kappa} \rho(s) \phi_\varepsilon(z-\bar{z}) 
		\,u^l(s,\bar{z})\partial_l\uk(\bar{z}) \, d\bar{z} \,ds,
	\end{split}
\end{equation}
valid for $z\in\tilde{X}_\kappa$ and $\varepsilon<\varepsilon_\kappa/4$. 
Note that for the terms involving the covariant Hessian we have used 
the geometric identities appearing in Lemma \ref{lem:ai-ai}. 
We can rewrite \eqref{eq:reg-split-eqn} in the following (pointwise) form:
\begin{equation}\label{eq:reg-split-eqn2}
	\begin{split}
		(\rho_\kappa &)_\varepsilon(t,z) -(\rho_{0,\kappa})_\varepsilon(z) 
		= - \sum_{i=1}^N\int_0^t \Divh \mathcal{L}_\kappa 
		\bigl(\rho_\kappa(s) a_i\bigr)_\varepsilon(z) \, dW^i(s) 
		\\ &
		+\frac12\sum_{i=1}^N\int_0^t 
		\Bigl[ 
		\Divh^{2}\mathcal{L}_\kappa \bigl(\rho_\kappa(s) \hat{a}_i\bigr)_\varepsilon(z) 
		- \Divh\mathcal{L}_\kappa \bigl(\rho_\kappa(s)\nabla_{a_i}a_i\bigr)_\varepsilon(z)
		\Bigr] \, ds 
		\\ & +\frac12\sum_{i=1}^N\int_0^t \Bigl[ 
		\Divh\mathcal{L}_\kappa V_{\kappa,i,\varepsilon}(s,z) 
		- \Divh\mathcal{L}_\kappa \bar{V}_{\kappa,i,\varepsilon}(s,z)
		\Bigr]\, ds 
		\\ &
		+ \sum_{i=1}^N\int_0^t \left(A_{\kappa,i}^1\right)_\varepsilon(s,z)\, dW^i(s)
		+\frac12\sum_{i=1}^N\int_0^t \Bigl[
		\left(A_{\kappa,i}^2\right)_\varepsilon(s,z)
		+ \left(A_{\kappa,i}^3\right)_\varepsilon(s,z)
		\Bigr] \,ds 
		\\ &
		-\sum_{i=1}^N\int_0^t\Divh \mathcal{L}_\kappa 
		\left(A_{\kappa,i}^4\right)_\varepsilon(s,z) \,ds
		-\int_0^t\Divh \mathcal{L}_\kappa \left(\rho_\kappa(s)u(s)\right)_\varepsilon(z)\, ds
		\\ & 
		+\int_0^t(A_{\kappa,u})_\varepsilon(s,z)\, ds,
	\end{split}
\end{equation}
where $V_{\kappa,i,\varepsilon}, \bar{V}_{\kappa,i,\varepsilon}$ 
are respectively the regularized versions of $V_{\kappa,i}$, cf.~\eqref{eq:def-Vk}, 
and $ \bar{V}_{\kappa,i}$, cf.~\eqref{eq:def-barVk}.  
Next, we turn these regularized local SPDEs, one equation 
for each chart $\kappa\in \mathcal{A}$, into a globally defined SPDE.


\subsection{Global SPDE for $\rho_\varepsilon$}
\label{subsec: Global regularized equation}
Summing the local equation \eqref{eq:reg-split-eqn2} over 
$\kappa\in\mathcal{A}$, we arrive at the global equation
\begin{align}
	\rho_\varepsilon&(t,x) -\rho_{0,\varepsilon}(x) 
	= - \sum_{i=1}^N\int_0^t\Divh \left(\rho(s)a_i\right)_\varepsilon(x) \, dW^i(s)
	\notag
	\\ & +\frac12\sum_{i=1}^N \int_0^t 
	\Bigl[ \Divh^{2} \left(\rho(s) \hat{a}_i\right)_\varepsilon(x) 
	- \Divh \left(\rho(s)\nabla_{a_i}a_i\right)_\varepsilon(x)\Bigr]\, ds
	\notag
	\\ &
	+\frac12\sum_{i=1}^N\int_0^t 
	\Bigl[ 
	\Divh V_{i,\varepsilon}(s,x) - \Divh \bar{V}_{i,\varepsilon}(s,x)
	\Bigr]\, ds
	\label{eq:global-reg-eqn}
	\\ &
	+\sum_{i=1}^N \int_0^t A_{i,\varepsilon}^1(s,x)\, dW^i(s)
	+\frac12\sum_{i=1}^N \int_0^t 
	\Bigl[
	A_{i,\varepsilon}^2(s,x)+A_{i,\varepsilon}^3(s,x)
	\Bigr] \,ds
	\notag
	\\ &
	-\sum_{i=1}^N\int_0^t \Divh A_{i,\varepsilon}^4(s,x) \,ds
	-\int_0^t\Divh \left(\rho(s)u(s)\right)_\varepsilon(x) \, ds
	+ \int_0^t A_{u,\varepsilon}(s,x)\, ds,
	\notag
\end{align}
which holds $\P$-a.s., for all $(t,x)\in [0,T]\times M$, and 
any $\varepsilon<\varepsilon_0$, cf.~\eqref{eq:def-ek-enull}.

\subsection{Global SPDE for $F(\rho_\varepsilon)$; $u\equiv 0$}
For simplicity of presentation, let us assume that the vector 
field $u(t,\cdot)$ is the zero section for all $t\in[0,T]$, and 
then derive the equation satisfied by the stochastic process 
$(\omega,t)\mapsto F(\rho_\varepsilon(\cdot,\cdot,x))$, $x\in M$ fixed. 
The general case ($u\not \equiv 0$) will be handled later.

Let us apply It\`o's formula to $F(\rho_\varepsilon(\cdot,x))$, where 
$F\in C^2(\R)$ with $F,F',F''$ bounded on $\R$. 
In view of \eqref{eq:global-reg-eqn}, we obtain
\begin{equation}\label{eq:Ito-reg-eqn}
	\begin{split}
		F(\rho_\varepsilon&(t,x)) -F(\rho_{0,\varepsilon}(x))
		 = - \sum_{i=1}^N\int_0^tF'(\rho_\varepsilon(s,x))
		 \Divh \left(\rho(s)a_i\right)_\varepsilon(x)\, dW^i(s)
		 \\ & 
		 +\frac12\sum_{i=1}^N\int_0^t F'(\rho_\varepsilon(s,x))
		 \Bigl[ \Divh^{2} \left(\rho(s) \hat{a}_i\right)_\varepsilon(x) 
		 - \Divh \left(\rho(s)\nabla_{a_i}a_i\right)_\varepsilon(x)
		 \Bigr]\, ds
		 \\ & 
		 +\frac12\sum_{i=1}^N\int_0^t F'(\rho_\varepsilon(s,x))
		 \Bigl[ \Divh V_{i,\varepsilon}(s,x) - \Divh \bar{V}_{i,\varepsilon}(s,x)\Bigr]\, ds
		 \\& 
		 +  \sum_{i=1}^N\int_0^t F'(\rho_\varepsilon(s,x))A_{i,\varepsilon}^1(s,x)\, dW^i(s)
		 \\& 
		 +\frac12\sum_{i=1}^N\int_0^t F'(\rho_\varepsilon(s,x))
		 \Bigl[A_{i,\varepsilon}^2(s,x)+A_{i,\varepsilon}^3(s,x)\Bigr]\,ds
		 \\ & 
		 -\sum_{i=1}^N\int_0^t F'(\rho_\varepsilon(s,x))
		 \Divh A_{i,\varepsilon}^4(s,x) \,ds
		 \\& 
		 +\frac12\sum_{i=1}^N\int_0^t F''(\rho_\varepsilon(s,x))
		 \bigl(\Divh (\rho(s)a_i)_\varepsilon(x)\bigr)^2\,ds
		 \\& 
		 +\frac12  \sum_{i=1}^N\int_0^tF''(\rho_\varepsilon(s,x))
		 \left(A_{i,\varepsilon}^1(s,x)\right)^2\, ds
		 \\ & 
		 -  \sum_{i=1}^N\int_0^tF''(\rho_\varepsilon(s,x)) 
		 \,\Divh \left(\rho(s)a_i\right)_\varepsilon(x) \,A_{i,\varepsilon}^1(s,x)\, ds, 
	\end{split}
\end{equation}
valid $\P$-a.s., for $(t,x)\in [0,T]\times M$, and any $\varepsilon<\varepsilon_0$.

In a nutshell, to prove Theorem \ref{thm:main-result}, i.e., 
the renormalized equation \eqref{eq:main-result-weak-form}, we need to 
send $\varepsilon\to 0$ (after integrating in $x$). The task is rather nontrivial, 
and, before we can accomplish that, we need 
several intermediate results, which will involve 
crucial cancellations among some of the terms in \eqref{eq:Ito-reg-eqn}.

First of all, to bring \eqref{eq:Ito-reg-eqn} into the form of a stochastic 
continuity equation for $F(\rho_\varepsilon(\cdot,x))$, 
we ``add and subtract'' the two terms
$\frac12 \sum_i \int_0^t\Lambda_i\left(F(\rho_\varepsilon)\right)\,ds$ 
and $\sum_i\int_0^t \Divh \left(F(\rho_\varepsilon)\,a_i\right)\, dW^i(s)$.
The resulting equation is
\begin{equation}\label{eq:Ito-reg-eqn-2}
	F(\rho_\varepsilon(t,x)) -F(\rho_{0,\varepsilon}(x)) 
	= \sum_{\ell=1}^{13} \mathcal{I}_\ell(\omega,t,x;\varepsilon),
\end{equation}
where $ \mathcal{I}_\ell= \mathcal{I}_\ell(\omega,t,x;\varepsilon)$, 
$\ell=1,\ldots, 13$, are defined by
\begin{equation}\label{eq:Ito-reg-eqn-2-defs}
	\begin{split}
 		& \mathcal{I}_1= -\sum_{i=1}^N\int_0^t 
		\Divh \left(F(\rho_\varepsilon(s,x))\,a_i\right)\, dW^i(s),
		\\ & 
		\mathcal{I}_2 = \frac12 \sum_{i=1}^N\int_0^t
		\Lambda_i \left(F(\rho_\varepsilon(s,x))\right) \,ds,
		\\ &
		\mathcal{I}_3 = - \sum_{i=1}^N \int_0^t 
		F'(\rho_\varepsilon(s,x)) \Divh \left(\rho(s)a_i\right)_\varepsilon(x)\, dW^i(s),
		\\ &
		\mathcal{I}_4 = \sum_{i=1}^N\int_0^t
		\Divh \left(F(\rho_\varepsilon(s,x))\,a_i\right)\, dW^i(s),
		\\  & 
		\mathcal{I}_5 =\frac12\sum_{i=1}^N\int_0^t F'(\rho_\varepsilon(s,x))
		\Bigl[ \Divh^{2}\left(\rho(s) \hat{a}_i\right)_\varepsilon(x) 
		- \Divh \left(\rho(s)\nabla_{a_i}a_i\right)_\varepsilon(x)\Bigr]\, ds,
		\\ & 
		\mathcal{I}_6 = -\frac12 \sum_{i=1}^N\int_0^t
		\Lambda_i\left(F(\rho_\varepsilon(s,x))\right)\,ds,
		\\ & 
		\mathcal{I}_7 =\frac12\sum_{i=1}^N\int_0^t F'(\rho_\varepsilon(s,x))
		\Bigl[ \Divh V_{i\,\varepsilon}(s,x) 
		- \Divh \bar{V}_{i\,\varepsilon}(s,x)\Bigr]\, ds,
		\\ & 
		\mathcal{I}_8= \sum_{i=1}^N\int_0^t F'(\rho_\varepsilon(s,x))\, 
		A_{i,\varepsilon}^1(s,x)\, dW^i(s),
		\\ & 
		\mathcal{I}_9 = \frac12\sum_{i=1}^N\int_0^t F'(\rho_\varepsilon(s,x))
		\Bigl[A_{i,\varepsilon}^2(s,x)+A_{i,\varepsilon}^3(s,x)\Bigr]\,ds,
		\\ &
		\mathcal{I}_{10} =-\sum_{i=1}^N\int_0^t F'(\rho_\varepsilon(s,x))
		\Divh A_{i,\varepsilon}^4(s,x) \,ds,
		\\ &
		\mathcal{I}_{11} =\frac12\sum_{i=1}^N\int_0^tF''(\rho_\varepsilon(s,x))
		\bigl(\Divh \left(\rho(s)a_i\right)_\varepsilon(x)\bigr)^2\,ds,
		\\ &
		\mathcal{I}_{12} =\frac12  \sum_{i=1}^N\int_0^tF''(\rho_\varepsilon(s,x))
		\left(A_{i,\varepsilon}^1(s,x)\right)^2\, ds,
		\\ & \mathcal{I}_{13} =-  \sum_{i=1}^N\int_0^t F''(\rho_\varepsilon(s,x))
		\, \Divh \left(\rho(s)a_i\right)_\varepsilon(x) 
		\,A_{i,\varepsilon}^1(s,x)\, ds.
	\end{split}
\end{equation}

\begin{lemma}[``$\mathcal{I}_3+\mathcal{I}_4$'']\label{lem:cor-to-commutator1}
With $\mathcal{I}_3$ and $\mathcal{I}_4$ defined in 
\eqref{eq:Ito-reg-eqn-2-defs},
\begin{align*}
	&\mathcal{I}_3+\mathcal{I}_4=\mathcal{I}_{3+4,1}+\mathcal{I}_{3+4,2},
	\\ & \mathcal{I}_{3+4,1} := 
	- \sum_{i=1}^N\int_0^tG_F(\rho_\varepsilon(s,x)) \Divh a_i(x)\, dW^i(s),
	\\ & 
	\mathcal{I}_{3+4,2} :=
	- \sum_{i=1}^N\int_0^t
	F'(\rho_\varepsilon(s,x)) \,r_{\varepsilon,i}(s,x)\, dW^i(s),
\end{align*}
where $G_F$ is defined in \eqref{eq:GF-def} and $r_{\varepsilon,i}$ is 
defined in \eqref{eq:commutator-r}. 
\end{lemma}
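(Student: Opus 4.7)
The plan is to reduce the claim to a pointwise algebraic identity between the integrands of the stochastic integrals $\mathcal{I}_3$ and $\mathcal{I}_4$, from which the lemma follows immediately by linearity of the It\^o integral against each $W^i$. Since for each fixed $(\omega,s)$ the regularized density $\rho_\varepsilon(s,\cdot)$ lies in $C^\infty(M)$ by Lemma \ref{lem:prop-rho-kappa-eps}, and the vector fields $a_i$ are smooth by \eqref{eq:a-ass}, all Leibniz and chain rules below are classical.

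First I would unfold the definition of the commutator $r_{\varepsilon,i}$ in \eqref{eq:commutator-r} at the global level, writing
\[
\Divh\bigl(\rho(s)\,a_i\bigr)_\varepsilon(x)
= \Divh\bigl(\rho_\varepsilon(s)\,a_i\bigr)(x) + r_{\varepsilon,i}(s,x).
\]
Then I would apply the Leibniz rule $\Divh(fX)=X(f)+f\,\Divh X$ to the smooth products $\rho_\varepsilon a_i$ and $F(\rho_\varepsilon)a_i$, using the scalar chain rule $a_i\bigl(F(\rho_\varepsilon)\bigr)=F'(\rho_\varepsilon)\,a_i(\rho_\varepsilon)$. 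Multiplying the first expansion by $-F'(\rho_\varepsilon)$ and adding the expansion of $\Divh(F(\rho_\varepsilon)a_i)$, the two copies of $F'(\rho_\varepsilon)\,a_i(\rho_\varepsilon)$ cancel, leaving
\[
-F'(\rho_\varepsilon)\Divh(\rho a_i)_\varepsilon + \Divh\bigl(F(\rho_\varepsilon)\,a_i\bigr)
= -G_F(\rho_\varepsilon)\,\Divh a_i - F'(\rho_\varepsilon)\,r_{\varepsilon,i},
\]
once the coefficient $F(\rho_\varepsilon)-\rho_\varepsilon F'(\rho_\varepsilon)$ is recognized as $-G_F(\rho_\varepsilon)$ via \eqref{eq:GF-def}. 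This is precisely the sum of the integrands of $\mathcal{I}_{3+4,1}$ and $\mathcal{I}_{3+4,2}$.

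Finally, I would integrate this identity against $dW^i(s)$, sum over $i$, and observe that for each fixed $\varepsilon>0$ and $x\in M$ every one of the four resulting integrands is $\{\mathcal{F}_s\}$-progressively measurable and lies in $L^2(\Omega\times[0,T])$: $F'$ is bounded, while $\Divh(\rho a_i)_\varepsilon$, $\rho_\varepsilon\,\Divh a_i$ and $r_{\varepsilon,i}$ inherit $L^2$-integrability from $\rho\in L^\infty_tL^2_{\omega,x}$ together with the smoothness of $a_i$ (any $\varepsilon$-dependent constants are harmless at this stage). I do not anticipate a real obstacle here: the whole content of the lemma is the cancellation of the transport-form term $F'(\rho_\varepsilon)\,a_i(\rho_\varepsilon)$ between the Leibniz expansion of $\Divh(F(\rho_\varepsilon)a_i)$ and the commutator decomposition of $\Divh(\rho a_i)_\varepsilon$. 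This is the manifold analogue of the Euclidean rewriting $F'(\rho)\partial_j(\rho a^j)=\partial_j(F(\rho)a^j)+G_F(\rho)\partial_j a^j$, and it is exactly the restructuring needed to later align $\mathcal{I}_3+\mathcal{I}_4$ with the martingale part of the renormalized equation \eqref{eq:main-result-weak-form}.
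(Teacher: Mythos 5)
Your argument is correct and follows essentially the same route as the paper: you unfold the global decomposition $\Divh(\rho\, a_i)_\varepsilon = \Divh(\rho_\varepsilon a_i) + r_{\varepsilon,i}$, then apply the Leibniz and chain rules to cancel $F'(\rho_\varepsilon)\,a_i(\rho_\varepsilon)$ and recognize $G_F$. The paper compresses the algebra into ``by the product and chain rules, recalling \eqref{eq:GF-def}, the lemma follows,'' while you spell it out and add the (sound, if optional at this stage) remark on square-integrability of the integrands.
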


\begin{proof}
By definition,  in a coordinate-free notation,
\begin{align*}
	& \Divh \left( \rho(s) a_i\right)_\varepsilon(x) 
	= \sum_{\kappa \in \mathcal{A}}
	\Divh \mathcal{L}_\kappa \left(\rho_\kappa(s)a_i \right)_\varepsilon(x)
	 \\ & \quad 
	 = \sum_{\kappa \in \mathcal{A}}
	 \Bigl \{ 
	 \Divh \bigl( (\rho_\kappa)_\varepsilon(s) a_i \bigr)(x) 
	 + r_{\kappa,\varepsilon,i}(s,x)
	 \Bigr\}
	 =\Divh (\rho_\varepsilon(s) a_i)(x) + r_{\varepsilon,i}(s,x).
\end{align*}
Therefore, by the product and chain rules, 
recalling \eqref{eq:GF-def}, the lemma follows.
\end{proof}

\begin{lemma}[``$\mathcal{I}_5+\mathcal{I}_6$'']\label{lem:cor-to-commutator2}
With $\mathcal{I}_5$, $\mathcal{I}_6$ defined in \eqref{eq:Ito-reg-eqn-2-defs}, 
$\mathcal{I}_5+\mathcal{I}_6
= \sum\limits_{\ell=1}^4 \mathcal{I}_{5+6,\ell}$,
\begin{align*}
	& \mathcal{I}_{5+6,1} := 
	\frac12\sum_{i=1}^N \int_0^t F'(\rho_\varepsilon(s,x))
	\Bigl[ \Divh^{2}\left(\rho(s) \hat{a}_i\right)_\varepsilon(x) 
	- \tilde{r}_{\varepsilon,i} \Bigr] \, ds,
	\\ & 
	\mathcal{I}_{5+6,2} :=
	-\frac12\sum_{i=1}^N\int_0^t F''(\rho_\varepsilon(s,x)) 
	\bigl(a_i(\rho_\varepsilon(s,x))\bigr)^2\,ds,
	\\ 
	& \mathcal{I}_{5+6,3} := 
	\frac12\sum_{i=1}^N\int_0^t G_F(\rho_\varepsilon(s,x)) \Lambda_i(1)\,ds,
	\\ 
	& \mathcal{I}_{5+6,4} := 
	-\frac12\sum_{i=1}^N\int_0^t F'(\rho_\varepsilon(s,x)) 
	\Divh^{2}\left(\rho_\varepsilon(s) \hat{a}_i \right)(x) \, ds,
\end{align*}
where $\tilde{r}_{\varepsilon,i}$, $G_F$, $\Lambda_i(1)$ are defined 
respectively in \eqref{eq:commutator-tilde}, \eqref{eq:GF-def}, 
\eqref{eq:bar-ai-Lambda-i(1)}.
\end{lemma}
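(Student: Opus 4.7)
The plan is to algebraically manipulate $\mathcal{I}_5+\mathcal{I}_6$ using two ingredients that are already available: the DiPerna-Lions commutator $\tilde r_{\varepsilon,i}$ from Lemma \ref{lem:com-term-2}, and the pointwise identity relating $\Lambda_i(F(\rho_\varepsilon))$ to $F'(\rho_\varepsilon)\Lambda_i(\rho_\varepsilon)$ that was derived informally in Section \ref{sec:informal-proof}. Since $\rho_\varepsilon(s,\cdot)$ is smooth, all the classical chain and product rules used in the informal discussion apply rigorously here.

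First I would rewrite $\mathcal{I}_5$. Summing the definition of $\tilde r_{\kappa,\varepsilon,i}$ over $\kappa\in\mathcal{A}$ gives the global identity
\begin{equation*}
\Divh\bigl(\rho(s)\nabla_{a_i}a_i\bigr)_\varepsilon
=\Divh\bigl(\rho_\varepsilon(s)\nabla_{a_i}a_i\bigr)+\tilde r_{\varepsilon,i}(s,\cdot).
\end{equation*}
Substituting this into $\mathcal{I}_5$ and regrouping yields
\begin{equation*}
\mathcal{I}_5=\mathcal{I}_{5+6,1}-\frac12\sum_{i=1}^N\int_0^t F'(\rho_\varepsilon(s,x))\,\Divh\bigl(\rho_\varepsilon(s)\nabla_{a_i}a_i\bigr)(x)\,ds.
\end{equation*}
Thus the remaining task is to show that $\mathcal{I}_6$ plus this leftover divergence term equals $\mathcal{I}_{5+6,2}+\mathcal{I}_{5+6,3}+\mathcal{I}_{5+6,4}$.

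To handle $\mathcal{I}_6$, I would apply Lemma \ref{lem:alt-Lambdai} to expand $\Lambda_i(F(\rho_\varepsilon))=\Divh^{2}(F(\rho_\varepsilon)\hat a_i)-\Divh(F(\rho_\varepsilon)\nabla_{a_i}a_i)$, and then apply the chain/product-rule computation from Section \ref{sec:informal-proof} (justified pointwise since $\rho_\varepsilon$ is smooth) to obtain the identity
\begin{equation*}
\Lambda_i(F(\rho_\varepsilon))=F'(\rho_\varepsilon)\Lambda_i(\rho_\varepsilon)-G_F(\rho_\varepsilon)\Lambda_i(1)+F''(\rho_\varepsilon)\bigl(a_i(\rho_\varepsilon)\bigr)^2.
\end{equation*}
Expanding $\Lambda_i(\rho_\varepsilon)=\Divh^{2}(\rho_\varepsilon\hat a_i)-\Divh(\rho_\varepsilon\nabla_{a_i}a_i)$ via Lemma \ref{lem:alt-Lambdai} and inserting into $\mathcal{I}_6$ then produces exactly $\mathcal{I}_{5+6,4}+\mathcal{I}_{5+6,3}+\mathcal{I}_{5+6,2}$ plus the single term $+\tfrac12\sum_i\int_0^t F'(\rho_\varepsilon)\Divh(\rho_\varepsilon\nabla_{a_i}a_i)\,ds$, which cancels precisely the leftover from the rewriting of $\mathcal{I}_5$.

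The argument is essentially bookkeeping; there is no serious analytic obstacle because we are working with $\rho_\varepsilon$, which is smooth, so all differentiations and the identity relating $F'(\rho_\varepsilon)\Lambda_i(\rho_\varepsilon)$ and $\Lambda_i(F(\rho_\varepsilon))$ are classical. The only subtle point is keeping track of signs and factors of $\tfrac12$ throughout, and making sure to isolate the commutator $\tilde r_{\varepsilon,i}$ on the side where it can later be shown to vanish as $\varepsilon\downarrow 0$. The cancellation of the $\Divh(\rho_\varepsilon\nabla_{a_i}a_i)$ term is the key structural feature that produces the clean decomposition stated in the lemma.
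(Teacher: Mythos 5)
Your proof is correct, and it rests on the same two ingredients as the paper's: the commutator identity $\Divh\bigl(\rho\nabla_{a_i}a_i\bigr)_\varepsilon = \Divh\bigl(\rho_\varepsilon\nabla_{a_i}a_i\bigr) + \tilde r_{\varepsilon,i}$ from Lemma \ref{lem:com-term-2}, and the pointwise algebraic identity $F'(\rho_\varepsilon)\Lambda_i(\rho_\varepsilon) - \Lambda_i(F(\rho_\varepsilon)) = G_F(\rho_\varepsilon)\Lambda_i(1) - F''(\rho_\varepsilon)\bigl(a_i(\rho_\varepsilon)\bigr)^2$. The only difference is organizational: you invoke the packaged identity derived in Section \ref{sec:informal-proof} directly (which is legitimate here since $\rho_\varepsilon(s,\cdot)$ is smooth), whereas the paper re-unfolds $\Lambda_i(F(\rho_\varepsilon))$ via \eqref{eq:expand-Lambda_i(F)} into eight intermediate pieces $\mathcal{J}_1,\ldots,\mathcal{J}_8$ and then regroups them using the second part of Lemma \ref{lem:div(F(f)S)}. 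Your route is a bit more economical but is the same computation at a different level of granularity; both end with the same cancellation of the $\Divh(\rho_\varepsilon\nabla_{a_i}a_i)$ contributions.
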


\begin{proof}
By \eqref{eq:expand-Lambda_i(F)} and arguing 
exactly as we have done several times before (i.e., expanding the sum 
over $\kappa$ that defines the global objects, working locally relative 
to a chart $\kappa$, and in the end reaggregate), we arrive at 
$\mathcal{I}_5+\mathcal{I}_6 = \sum_{\ell=1}^8 \mathcal{J}_{\ell}$, 
where $\mathcal{J}_{\ell}=\mathcal{J}_{\ell}(\omega,t,x,\varepsilon)$, 
$\ell=1,\ldots, 8$, are defined by
\begin{align*}
	& \mathcal{J}_{1} = I_{5+6,1}, \quad
	\mathcal{J}_{2} = -\frac12\sum_{i=1}^N\int_0^t F'(\rho_\varepsilon(s,x))
	\Divh \left(\rho_\varepsilon(s) \nabla_{a_i}a_i \right)(x) \, ds,
	\\ & 
	\mathcal{J}_{3} 
	=-\frac12\sum_{i=1}^N\int_0^t F'(\rho_\varepsilon(s,x)) 
	\left(\Divh(\hat{a}_i)\right)(\rho_\varepsilon(s,x)) \,ds,
	\\ &
	\mathcal{J}_{4} 
	=-\frac12\sum_{i=1}^N\int_0^t F(\rho_\varepsilon(s,x)) 
	\Divh^{2} \left(\hat{a}_i\right)(x)\,ds,
	\\ & 
	\mathcal{J}_{5} 
	= -\frac12\sum_{i=1}^N\int_0^t F'(\rho_\varepsilon(s,x)) 
	\Divh \left(\hat{a}_i(d\rho_\varepsilon(s),\cdot)\right)(x)\,ds,
	\\ & 
	\mathcal{J}_{6}=\mathcal{I}_{5+6,2},
	\quad \mathcal{J}_{7} 
	= \frac12\sum_{i=1}^N\int_0^t F'(\rho_\varepsilon(s,x)) 
	(\nabla_{a_i}a_i)(\rho_\varepsilon(s,x))\,ds, 
	\\ & 
	\mathcal{J}_{8} 
	= \frac12\sum_{i=1}^N\int_0^t F(\rho_\varepsilon(s,x)) 
	\Divh(\nabla_{a_i}a_i)(x)\,ds.
\end{align*}
By expanding $\Divh \left(\rho_\varepsilon(s) \nabla_{a_i}a_i \right)=
\rho_\varepsilon(s) \Divh(\nabla_{a_i}a_i) + (\nabla_{a_i}a_i)(\rho_\varepsilon(s))$ 
and recalling the definition of $G_F$, we obtain
\begin{align*}
	\mathcal{I}_5+\mathcal{I}_6 & = 
	\mathcal{J}_{1} + \mathcal{J}_{3}
	+ \mathcal{J}_{4}+ \mathcal{J}_{5}
	+ \mathcal{J}_{6}
	-\frac12\sum_{i=1}^N\int_0^t 
	G_F(\rho_\varepsilon(s,x)) \Divh (\nabla_{a_i}a_i)(x) \,ds.
\end{align*}
Considering \eqref{eq:expand-Lambda_i(F)} 
with $F(u)=u$ and $u\in C^2(M)$, 
$$
\Divh^{2} \left(u\hat{a}_i\right) 
= \Divh \left(\hat{a}_i\right)(u) 
+ u\,\Divh^{2}\left(\hat{a}_i\right) 
+ \Divh \left(\hat{a}_i(du,\cdot) \right).
$$
Using this identity and recalling once more \eqref{eq:GF-def}, 
\begin{align*}
	\mathcal{I}_5+\mathcal{I}_6 
	& = \mathcal{J}_{1}+\mathcal{J}_{6} 
	+\frac12\sum_{i=1}^N\int_0^t G_F(\rho_\varepsilon(s,x)) 
	\Bigl[ \Divh^{2}(\hat{a}_i) -\Divh(\nabla_{a_i}a_i) \Bigr] \,ds
	+\mathcal{I}_{5+6,4}.
\end{align*}
Recalling \eqref{eq:bar-ai-Lambda-i(1)}, the third term 
on the right-hand side of the equality sign is $\mathcal{I}_{5+6,3}$. 
Since $\mathcal{J}_{1} = I_{5+6,1}$ and $\mathcal{J}_{6}=\mathcal{I}_{5+6,2}$, 
this concludes the proof.
\end{proof}

\begin{lemma}[``$\mathcal{I}_{11}$'']\label{lem:quad-term}
With $\mathcal{I}_{11}$ defined in \eqref{eq:Ito-reg-eqn-2-defs}, 
$\mathcal{I}_{11}= \sum\limits_{\ell=1}^4 \mathcal{I}_{11,\ell}$,
\begin{align*}
	&\mathcal{I}_{11,1}
	:= \frac12\sum_{i=1}^N\int_0^tF''(\rho_\varepsilon(s,x))
	\bigl(a_i(\rho_\varepsilon(s,x))\bigr)^2  \,ds,
	\\ & 
	\mathcal{I}_{11,2}
	:= \frac12\sum_{i=1}^N\int_0^tF''(\rho_\varepsilon(s,x))
	\bigl(\rho_\varepsilon(s,x)\Divh a_i\bigr)^2 \,ds,
	\\ &
	\mathcal{I}_{11,3}
	:= \frac12\sum_{i=1}^N \int_0^t F''(\rho_\varepsilon(s,x))
	\bigl(r_{\varepsilon,i}(s,x)\bigr)^2\,ds,
	\\ &
	\mathcal{I}_{11,4}
	:=\sum_{i=1}^N\int_0^tF''(\rho_\varepsilon(s,x))
	\Divh \left(\rho_\varepsilon(s,x)a_i\right) 
	\,r_{\varepsilon,i}(s,x)\,ds,
	\\ &
	\mathcal{I}_{11,5}
	:=\sum_{i=1}^N\int_0^t 
	\bar{a}_i\left(G_F(\rho_\varepsilon(s,x))\right)\,ds,
\end{align*}
where $r_{\varepsilon,i}$ is defined in \eqref{eq:commutator-r}, 
$\bar{a}_i = (\Divh a_i) \,a_i$, and $G_F$ is defined in \eqref{eq:GF-def}.
\end{lemma}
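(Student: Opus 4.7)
The plan is to start from the definition of $\mathcal{I}_{11}$ and substitute the identity established (inside the proof of) Lemma \ref{lem:cor-to-commutator1}, namely
$$
\Divh\left(\rho(s)a_i\right)_\varepsilon(x) = \Divh\bigl(\rho_\varepsilon(s)\,a_i\bigr)(x) + r_{\varepsilon,i}(s,x),
$$
with $r_{\varepsilon,i}$ defined in \eqref{eq:commutator-r}. Squaring this identity produces three pieces: $\bigl(\Divh(\rho_\varepsilon a_i)\bigr)^2$, the cross term $2\,\Divh(\rho_\varepsilon a_i)\,r_{\varepsilon,i}$, and $r_{\varepsilon,i}^2$. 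After multiplying by $\tfrac12 F''(\rho_\varepsilon)$ and summing in $i$, the last two pieces yield exactly $\mathcal{I}_{11,3}$ and $\mathcal{I}_{11,4}$.

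Next I would expand the first piece by the product rule, writing $\Divh(\rho_\varepsilon a_i) = a_i(\rho_\varepsilon) + \rho_\varepsilon\,\Divh a_i$, and squaring to get three further contributions:
$$
\bigl(a_i(\rho_\varepsilon)\bigr)^2 + 2\,\rho_\varepsilon\,\Divh a_i\,a_i(\rho_\varepsilon) + \bigl(\rho_\varepsilon\,\Divh a_i\bigr)^2.
$$
After multiplying by $\tfrac12 F''(\rho_\varepsilon)$, the first and third summands produce $\mathcal{I}_{11,1}$ and $\mathcal{I}_{11,2}$, respectively. It remains to show that the middle summand yields $\mathcal{I}_{11,5}$.

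The only step with any content is this last one, and it is a chain-rule manipulation entirely analogous to the one performed in Section \ref{sec:informal-proof} when rewriting $\mathcal{Q}$. Since $\bar a_i=(\Divh a_i)\,a_i$, the middle summand, multiplied by $\tfrac12 F''(\rho_\varepsilon)$, equals
$$
F''(\rho_\varepsilon)\,\rho_\varepsilon\,\bar a_i(\rho_\varepsilon)
= \rho_\varepsilon\,\bar a_i\bigl(F'(\rho_\varepsilon)\bigr)
= \bar a_i\bigl(\rho_\varepsilon\,F'(\rho_\varepsilon)\bigr) - F'(\rho_\varepsilon)\,\bar a_i(\rho_\varepsilon),
$$
where the first equality uses the chain rule for the first-order differential operator $\bar a_i$ and the second uses the product rule. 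Since $\bar a_i(F(\rho_\varepsilon)) = F'(\rho_\varepsilon)\,\bar a_i(\rho_\varepsilon)$, the right-hand side collapses to $\bar a_i\bigl(\rho_\varepsilon F'(\rho_\varepsilon) - F(\rho_\varepsilon)\bigr) = \bar a_i\bigl(G_F(\rho_\varepsilon)\bigr)$, in view of \eqref{eq:GF-def}. Summing in $i$ and integrating in $s$ gives exactly $\mathcal{I}_{11,5}$, completing the decomposition.

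The only subtlety is keeping track of the chain/product rule cancellation that removes the spurious $F'(\rho_\varepsilon)\,\bar a_i(\rho_\varepsilon)$ term; everything else is routine expansion. Note that this is the place where the ``problematic'' quadratic term $\bigl(a_i(\rho_\varepsilon)\bigr)^2$ is isolated (as $\mathcal{I}_{11,1}$), so that it can later cancel against $\mathcal{I}_{5+6,2}$ from Lemma \ref{lem:cor-to-commutator2}, mirroring the informal argument in Section \ref{sec:informal-proof}.
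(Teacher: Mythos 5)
Your decomposition is correct and follows essentially the same route as the paper: substitute $\Divh(\rho a_i)_\varepsilon = \Divh(\rho_\varepsilon a_i) + r_{\varepsilon,i}$, expand the square, then expand $\Divh(\rho_\varepsilon a_i)$ by the product rule and square again, finally identifying the cross term with $\bar a_i(G_F(\rho_\varepsilon))$. The paper compresses your final chain/product-rule cancellation into the single observation that $G_F'(\xi)=\xi F''(\xi)$ so that $\rho_\varepsilon F''(\rho_\varepsilon)\,\bar a_i(\rho_\varepsilon)=G_F'(\rho_\varepsilon)\,\bar a_i(\rho_\varepsilon)=\bar a_i(G_F(\rho_\varepsilon))$, but the content is identical.
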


\begin{proof}
Writing 
\begin{equation}\label{eq:div+error}
	\Divh \left(\rho(s)a_i\right)_\varepsilon(x) 
	=\Divh \left(\rho_\varepsilon(s)a_i\right)(x) 
	+ r_{\varepsilon,i}(s,x)
\end{equation}
and expanding
$\bigl(\Divh \left(\rho_\varepsilon(s)a_i\right)(x) 
+ r_{\varepsilon,i}(s,x)\bigr)^2$, we obtain 
$\mathcal{I}_{11}=\mathcal{J}_{1}
+\mathcal{J}_{2}+\mathcal{J}_{3}$, where
\begin{align*}
	&\mathcal{J}_{1}
	= \frac12\sum_{i=1}^N\int_0^t F''(\rho_\varepsilon(s,x))
	\bigl(\Divh \left(\rho_\varepsilon(s)a_i\right)(x)\bigr)^2 \,ds,
	\quad 
	\mathcal{J}_{2}=\mathcal{I}_{11,2},
	\quad 
	\mathcal{J}_{3}=\mathcal{I}_{11,3}.
\end{align*}
By applying the product and chain rules to 
$\Divh \left(\rho_\varepsilon a_i\right)$ and then expanding the square 
$\bigl(\Divh \left(\rho_\varepsilon(s) a_i\right)\bigr)^2$
into $\bigl(a_i(\rho_\varepsilon(s))\bigr)^2 + (\rho_\varepsilon(s)\Divh a_i)^2 +
2\rho_\varepsilon(s)\bar{a}_i(\rho_\varepsilon(s))$,
\begin{align*}
	\mathcal{J}_{1} &=\mathcal{I}_{11,1}+\mathcal{I}_{11,2}
	+ \sum_{i=1}^N\int_0^t \rho_\varepsilon(s,x)
	\,F''(\rho_\varepsilon(s,x))\, \bar{a}_i\left(\rho_\varepsilon(s,x)\right)\,ds.
\end{align*}
Because $G_F'(\xi)=\xi\,F''(\xi)$ and 
$G_F'(\rho_\varepsilon)\bar{a}_i\left(\rho_\varepsilon\right)
=\bar{a}_i(G_F(\rho_\varepsilon))$, the last term becomes 
$\mathcal{I}_{11,4}$. This concludes the proof of the lemma.
\end{proof}

In view of Lemmas \ref{lem:cor-to-commutator1}, \ref{lem:cor-to-commutator2}, \ref{lem:quad-term} 
and noting the cancellation $\mathcal{I}_{5+6,2}+\mathcal{I}_{11,1}=0$, the 
equation \eqref{eq:Ito-reg-eqn-2} becomes
\begin{equation}\label{eq:Ito-reg-eqn-3}
	\begin{split}
		&F(\rho_\varepsilon(t,x)) -F(\rho_{0,\varepsilon}(x))
		\\ & \quad
		=\mathcal{I}_{1}+\mathcal{I}_{2}+\mathcal{I}_{3+4,1}
		+\mathcal{I}_{3+4,2}
		+\mathcal{I}_{5+6,1}+\mathcal{I}_{5+6,3}+\mathcal{I}_{5+6,4}
		\\ & \qquad\quad
		+\mathcal{I}_{11,2}+\mathcal{I}_{11,3}+\mathcal{I}_{11,4}
		+\mathcal{I}_{11,5}
		+\mathcal{I}_{7}+\mathcal{I}_{8}+\mathcal{I}_{9}+\mathcal{I}_{10}
		+\mathcal{I}_{12}+ \mathcal{I}_{13}.
	\end{split}
\end{equation}
Keeping an eye on the end result \eqref{eq:main-result} ($u\equiv 0$) 
while inspecting \eqref{eq:Ito-reg-eqn-3} as well 
as reorganizing and relabeling some of the terms, we 
rewrite \eqref{eq:Ito-reg-eqn-3} in the form
\begin{equation}\label{eq:Ito-reg-eqn-4}
	\begin{split}
		&F(\rho_\varepsilon(t,x)) -F(\rho_{0,\varepsilon}(x))
		+\sum_{i=1}^N\int_0^t
		\Divh \left(F(\rho_\varepsilon(s,x))\,a_i\right)\, dW^i(s)
		\\ & \quad 
		+ \sum_{i=1}^N\int_0^tG_F(\rho_\varepsilon(s,x))\Divh a_i\, dW^i(s)
		=\frac12 \sum_{i=1}^N\int_0^t \Lambda_i(F(\rho_\varepsilon(s,x)))\,ds
		\\ & \quad \quad 
		+\frac12\sum_{i=1}^N\int_0^t G_F(\rho_\varepsilon(s,x))
		\,\Lambda_i(1)\, ds
		+\frac12\sum_{i=1}^N\int_0^t F''(\rho_\varepsilon(s,x))
		\, \bigl(\rho_\varepsilon(s,x)\Divh a_i\bigr)^2\, ds
		\\ &\quad\quad\quad
		+\sum_{i=1}^N\int_0^t \bar{a}_i(G_F(\rho_\varepsilon(s,x)))\, ds
		+\mathcal{R}_\varepsilon(\omega,t,x),
	\end{split}
\end{equation}
where the third, fourth, fifth, sixth, seventh, 
and eighth terms correspond to $\mathcal{I}_1$, $\mathcal{I}_2$, 
$\mathcal{I}_{3+4,1}$, $\mathcal{I}_{5+6,3}$, 
$\mathcal{I}_{11,2}$, $\mathcal{I}_{11,5}$, respectively. The 
remaining terms from \eqref{eq:Ito-reg-eqn-3} 
have been transferred to the remainder $\mathcal{R}_\varepsilon$. 
Modulo the ``$\varepsilon$-subscripts'' and the remainder 
term, we recognize \eqref{eq:Ito-reg-eqn-4} as 
the sought-after renormalized equation \eqref{eq:main-result}, cf.~also 
Remark \ref{rem:modification-of-some-terms}. 
One of the remaining tasks is to demonstrate that 
$\mathcal{R}_\varepsilon\to 0$ as $\varepsilon\to 0$, 
weakly in $x$ and strongly in $(\omega,t)$. 
The remainder term $\mathcal{R}_\varepsilon
=\mathcal{R}_\varepsilon(\omega,t,x)$ is 
\begin{equation}\label{eq:the-remainder}
	\begin{split}
		& \mathcal{R}_\varepsilon=\sum_{\ell=1}^{11} \mathcal{H}_\ell, 
		\quad \text{where} \quad
		\mathcal{H}_1:= \mathcal{I}_{3+4,2},
		\\ &
		\mathcal{H}_2 := \frac12\sum_{i=1}^N\int_0^t
		F'(\rho_\varepsilon(s,x))
		\Bigl[ \Divh^{2} \left(\rho(s)\hat{a}_i\right)_\varepsilon(x)
		-\Divh^{2}\left( \rho_\varepsilon(s) \hat{a}_i\right)(x) 
		\Bigr]\, ds,
		\\ & 
		\mathcal{H}_3 :=-\frac12\sum_{i=1}^N\int_0^t 
		F'(\rho_\varepsilon(s,x))\, 
		\tilde{r}_{\varepsilon,i}(s,x)\,ds,
		\\ &
		\mathcal{H}_4 :=\mathcal{I}_{11,3},
		\quad 
		\mathcal{H}_5:= \mathcal{I}_{11,4},
		\quad
		\mathcal{H}_6 :=\mathcal{I}_{7},
		\quad
		\mathcal{H}_7:=\mathcal{I}_{8},
		\\ &
		\mathcal{H}_8 :=\mathcal{I}_{9},
		\quad
		\mathcal{H}_9 := \mathcal{I}_{10}, 
		\quad
		\mathcal{H}_{10} := \mathcal{I}_{12},
		\quad 
		\mathcal{H}_{11} := \mathcal{I}_{13}.
	\end{split}
\end{equation}

Some crucial cancellations will occur also in the remainder 
term $\mathcal{R}_\varepsilon$. This is the subject matter 
of the remaining lemmas in this subsection, involving 
the terms $\mathcal{H}_2$, $\mathcal{H}_5$, $\mathcal{H}_6$, 
and $\mathcal{H}_9$.

\begin{lemma}[``$\mathcal{H}_5$'']\label{lem:related-to-Reps-1}
With $\mathcal{H}_5=\mathcal{I}_{11,4}$ defined in 
Lemma \ref{lem:quad-term},
\begin{align*}
	&\mathcal{H}_5 =\mathcal{H}_{5,1}+\mathcal{H}_{5,2},
	\quad \text{where}
	\\ & 
	\mathcal{H}_{5,1} :=
	\sum_{i=1}^N\int_0^t 
	G_F'(\rho_\varepsilon(s,x))\, \Divh a_i 
	\,r_{\varepsilon,i}(s,x) \,ds,
	\\ & 
	\mathcal{H}_{5,2}:=
	-\sum_{i=1}^N\int_0^t 
	F'(\rho_\varepsilon(s,x)) 
	\, a_i\bigl(r_{\varepsilon,i}(s,x)\bigr)\,ds,
	\\ & \mathcal{H}_{5,3}:=
	\sum_{i=1}^N\int_0^t a_i\bigl(F'(\rho_\varepsilon(s))
	\,r_{\varepsilon,i}(s)\bigr)(x) \,ds, 
\end{align*}
where $r_{\varepsilon,i}$ is defined in \eqref{eq:commutator-r} 
and $G_F$ is defined in \eqref{eq:GF-def}.
\end{lemma}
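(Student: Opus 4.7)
The plan is to start from the expression for $\mathcal{H}_5 = \mathcal{I}_{11,4}$ and manipulate it by a combination of the product/chain rules (applied pointwise in $x$, since $a_i$ is smooth and $\rho_\varepsilon$, $r_{\varepsilon,i}$ are smooth in $x$) until it splits into the three pieces $\mathcal{H}_{5,1}$, $\mathcal{H}_{5,2}$, $\mathcal{H}_{5,3}$ listed in the statement. (Note that the identity to be established is $\mathcal{H}_5 = \mathcal{H}_{5,1}+\mathcal{H}_{5,2}+\mathcal{H}_{5,3}$, reading off the three displayed summands.)

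First, I would expand the divergence inside $\mathcal{I}_{11,4}$ by the product rule applied to $\rho_\varepsilon a_i$, giving
$$
\Divh\bigl(\rho_\varepsilon(s,x)\,a_i\bigr) = a_i\bigl(\rho_\varepsilon(s,x)\bigr) + \rho_\varepsilon(s,x)\,\Divh a_i.
$$
Substituting this identity splits $\mathcal{H}_5$ into two sums. In the second sum I would use the observation that $G_F'(\xi) = \xi\,F''(\xi)$ (immediate from $G_F(\xi)=\xi F'(\xi)-F(\xi)$), which rewrites the factor $F''(\rho_\varepsilon)\,\rho_\varepsilon$ as $G_F'(\rho_\varepsilon)$; that contribution is exactly $\mathcal{H}_{5,1}$.

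The first sum contains $F''(\rho_\varepsilon)\,a_i(\rho_\varepsilon)\,r_{\varepsilon,i}$. By the chain rule for the action of the vector field $a_i$ on the smooth function $F'\circ\rho_\varepsilon$, we have $F''(\rho_\varepsilon)\,a_i(\rho_\varepsilon) = a_i\bigl(F'(\rho_\varepsilon)\bigr)$. Then the Leibniz rule for $a_i$ on the product $F'(\rho_\varepsilon)\,r_{\varepsilon,i}$ yields
$$
a_i\bigl(F'(\rho_\varepsilon)\bigr)\,r_{\varepsilon,i} = a_i\bigl(F'(\rho_\varepsilon)\,r_{\varepsilon,i}\bigr) - F'(\rho_\varepsilon)\,a_i\bigl(r_{\varepsilon,i}\bigr).
$$
Integrating this identity in time against $ds$ and summing over $i$ produces exactly $\mathcal{H}_{5,3} + \mathcal{H}_{5,2}$, completing the decomposition.

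There is no substantive obstacle here: the argument is purely pointwise algebra using the product and chain rules, relying only on the smoothness of the vector fields $a_i$ (so that $a_i$ acts as a classical first-order differential operator on the smooth objects $\rho_\varepsilon$, $F'(\rho_\varepsilon)$, and $r_{\varepsilon,i}$) and on the elementary identity $G_F'=\mathrm{id}\cdot F''$. The only point worth flagging is the purpose of the decomposition, which is forward-looking: the piece $\mathcal{H}_{5,2}$ is designed to cancel (up to lower-order terms) against the $a_i(r_{\varepsilon,i})$ contribution appearing in the second-order commutator decomposition of Lemma~\ref{lem:Reps(2)-F} via $\mathcal{H}_2$, while $\mathcal{H}_{5,3}$ will vanish after testing against $\psi\in C^\infty(M)$ and integrating by parts, because it is of the form $a_i(\cdot)$. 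Thus the real work has been done in the earlier lemmas that prepared this identity; the present lemma is a bookkeeping step whose proof amounts to writing the four lines above.
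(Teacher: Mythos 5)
Your proposal is correct and follows essentially the same line as the paper's one-line proof: expand $\Divh(\rho_\varepsilon a_i)$ by the product rule, invoke $\xi F''(\xi)=G_F'(\xi)$ for the $\rho_\varepsilon\Divh a_i$ contribution, and apply the Leibniz rule to $a_i\bigl(F'(\rho_\varepsilon)\bigr)r_{\varepsilon,i}$ to split off $\mathcal{H}_{5,2}$ and $\mathcal{H}_{5,3}$. You also correctly spotted that the displayed claim $\mathcal{H}_5=\mathcal{H}_{5,1}+\mathcal{H}_{5,2}$ is a typo and should read $\mathcal{H}_5=\mathcal{H}_{5,1}+\mathcal{H}_{5,2}+\mathcal{H}_{5,3}$, consistent with how $\mathcal{H}_{5,1}$ and $\mathcal{H}_{5,3}$ reappear (and $\mathcal{H}_{5,2}$ cancels against $\mathcal{H}_{2+6,1}$) in the remainder \eqref{eq:the-remainder-2}.
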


\begin{proof}
In $\mathcal{I}_{11,4}$, expand $\Divh\left(\rho_\varepsilon(s) a_i\right)$ 
and then use the product rule for $a_i$, finally noting 
that $\xi F''(\xi)=G_F'(\xi)$.
\end{proof}

\begin{lemma}[``$\mathcal{H}_2+\mathcal{H}_6$"]\label{lem:related-to-Reps-2}
Consider $\mathcal{H}_2$ defined in \eqref{eq:the-remainder} 
and $\mathcal{H}_6=\mathcal{I}_7$ with $\mathcal{I}_7$ 
defined in \eqref{eq:Ito-reg-eqn-2-defs}. Then
\begin{align*}
	&\mathcal{H}_2+\mathcal{H}_6 
	= \sum_{\ell=1}^3\mathcal{H}_{2+6,\ell}, 
	\quad \text{where}
	\\ &
	\mathcal{H}_{2+6,1}:=\sum_{i=1}^N\int_0^t 
	F'(\rho_\varepsilon(s,x))
	a_i\bigl(r_{\varepsilon,i}(s,x)\bigr)\,ds,
	\\ &
	\mathcal{H}_{2+6,2}:=\frac12\sum_{i=1}^N\int_0^t 
	F'(\rho_\varepsilon(s,x))
	\bar{r}_{\varepsilon,i}(s,x)\,ds,
	\\ &
	\mathcal{H}_{2+6,3}:=
	\sum_{i=1}^N\int_0^t F'(\rho_\varepsilon(s,x))
	\sum_{\kappa\in\mathcal{A}} G_{\kappa,\varepsilon,i}(s,x)\,ds,
\end{align*}
where $r_{\varepsilon,i}$, $\bar{r}_{\varepsilon,i}$, 
$G_{\kappa,\varepsilon,i}$ are defined 
respectively in \eqref{eq:commutator-r}, 
\eqref{eq:commutator-bar}, \eqref{eq:def-Gkei}.
\end{lemma}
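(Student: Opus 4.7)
The plan is to observe that this lemma is essentially a bookkeeping corollary of Lemma \ref{lem:Reps(2)-F}. Comparing the definition of $R_{\varepsilon,i}$ in \eqref{eq:divh2-tmp0} with the formulas for $\mathcal{H}_2$ (in \eqref{eq:the-remainder}) and $\mathcal{H}_6=\mathcal{I}_7$ (in \eqref{eq:Ito-reg-eqn-2-defs}), I would first note that the bracketed integrand, after factoring out $\frac12 F'(\rho_\varepsilon)$, is precisely
\begin{align*}
R_{\varepsilon,i}(\omega,s,x)
& = \Divh^{2}\bigl(\rho(s)\hat{a}_i\bigr)_\varepsilon(x)
-\Divh^{2}\bigl(\rho_\varepsilon(s)\hat{a}_i\bigr)(x) \\
& \qquad + \Divh V_{i,\varepsilon}(s,x) - \Divh \bar{V}_{i,\varepsilon}(s,x),
\end{align*}
so that
$$
\mathcal{H}_2+\mathcal{H}_6
=\frac12\sum_{i=1}^N\int_0^t F'(\rho_\varepsilon(s,x))\,R_{\varepsilon,i}(s,x)\,ds.
$$

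Next I would invoke the structural identity \eqref{eq:Reps(2)-F} from Lemma \ref{lem:Reps(2)-F}, which decomposes
$$
R_{\varepsilon,i}=2a_i(r_{\varepsilon,i})+\bar{r}_{\varepsilon,i}+2G_{\varepsilon,i},
\qquad G_{\varepsilon,i}=\sum_{\kappa\in\mathcal{A}}G_{\kappa,\varepsilon,i}.
$$
Substituting this into the previous display, distributing $F'(\rho_\varepsilon)$ across the three summands, interchanging the finite sum over $\kappa$ with the time integral in the last piece, and relabelling yields exactly the three terms $\mathcal{H}_{2+6,1}$, $\mathcal{H}_{2+6,2}$, $\mathcal{H}_{2+6,3}$ in the statement.

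There is no genuine obstacle: all the hard work (the use of the atlas from Lemma \ref{lem:det-metric-const} to kill the Christoffel trace $\Gamma_{mj}^m$, the careful expansion \eqref{eq:C2-eps-pops-up}, and the identification of the Punshon-Smith second-order commutator \eqref{eq:def-Ceps}) is already encapsulated in Lemma \ref{lem:Reps(2)-F}. The point of the present lemma is merely to isolate, for the later limit passage, the only summand that does not vanish as $\varepsilon\to 0$, namely $\mathcal{H}_{2+6,1}$, which carries the term $a_i(r_{\varepsilon,i})$ with no obvious limit. That piece is earmarked for cancellation against $\mathcal{H}_{5,2}$ from Lemma \ref{lem:related-to-Reps-1} (cf.\ the remark following \eqref{eq:Reps(2)-F}), while $\mathcal{H}_{2+6,2}$ and $\mathcal{H}_{2+6,3}$ will be disposed of by the $L^q([0,T];L^2(\Omega\times M))$-convergence of $\bar{r}_{\varepsilon,i}$ and $G_{\varepsilon,i}$ to zero, combined with the boundedness of $F'$.
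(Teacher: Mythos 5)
Your proposal is correct and follows precisely the same route as the paper: the paper's own proof simply says that the claim follows by inspecting the integrands of $\mathcal{H}_2$ and $\mathcal{H}_6$ and invoking Lemma \ref{lem:Reps(2)-F}, which is exactly the observation that $\mathcal{H}_2+\mathcal{H}_6=\frac12\sum_i\int_0^t F'(\rho_\varepsilon)\,R_{\varepsilon,i}\,ds$ followed by the decomposition \eqref{eq:Reps(2)-F}. You have just written out explicitly what the paper compresses into one sentence.
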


\begin{proof}
By inspecting the integrands of $\mathcal{H}_2$ and 
$\mathcal{H}_6$, we recognize that the claim follows from the 
``second order" commutator Lemma \ref{lem:Reps(2)-F}. 
\end{proof}

\begin{lemma}[``$\mathcal{H}_9$'']\label{lem:related-to-Reps-3}
With $\mathcal{H}_9=\mathcal{I}_{10}$ 
defined in \eqref{eq:Ito-reg-eqn-2-defs},
\begin{align*}
	& \mathcal{H}_9=\mathcal{H}_{9,1}+\mathcal{H}_{9,2},
	\quad \text{where}
	\\ &
	\mathcal{H}_{9,1}
	:=-\sum_{i=1}^N\int_0^tF'(\rho_\varepsilon(s,x))
	\, r^\ast_{\varepsilon,i}(s,x)\,ds,
	\\ &
	\mathcal{H}_{9,2}:=-\sum_{i=1}^N\int_0^tF'(\rho_\varepsilon(s,x))
	\Divh \left( A_{i,\varepsilon}^1(s,x)\,a_i \right)\,ds,
\end{align*}
where $r^\ast_{\varepsilon,i}$, $A_{i,\varepsilon}^1$ are 
defined respectively in \eqref{eq:commutator-ast}, \eqref{eq:Ai-def}.
\end{lemma}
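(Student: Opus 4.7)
The plan is to prove this lemma by a direct unfolding of the definition of the commutator $r^\ast_{\varepsilon,i}$ introduced just before Lemma \ref{lem:com-term-4}. The decomposition is a purely algebraic bookkeeping step; no convergence arguments or cancellation miracles are needed here.

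First I would recall the identity noted immediately before the definition of $r^\ast_{\kappa,\varepsilon,i}$, namely $\mathcal{L}_\kappa (A_{\kappa,i}^4)_\varepsilon = \mathcal{L}_\kappa (A_{\kappa,i}^1\,a_i)_\varepsilon$, together with the definition of $r^\ast_{\kappa,\varepsilon,i}$, which rearranges to
\begin{equation*}
	\Divh \mathcal{L}_\kappa (A_{\kappa,i}^4)_\varepsilon
	= \Divh\bigl((A_{\kappa,i}^1)_\varepsilon\, a_i\bigr)
	+ r^\ast_{\kappa,\varepsilon,i}
\end{equation*}
on all of $M$ (the terms on both sides are compactly supported inside $X_\kappa$, so the equality outside $X_\kappa$ is trivial).

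Next I would sum this identity over $\kappa\in\mathcal{A}$. By the definitions $A_{i,\varepsilon}^4 = \sum_\kappa \mathcal{L}_\kappa (A_{\kappa,i}^4)_\varepsilon$, $A_{i,\varepsilon}^1 = \sum_\kappa (A_{\kappa,i}^1)_\varepsilon$ (valid after trivial extension as in Lemma \ref{lem:propA-kappa-eps-j-1-2-3}), and $r^\ast_{\varepsilon,i} = \sum_\kappa r^\ast_{\kappa,\varepsilon,i}$ from \eqref{eq:commutator-ast}, linearity of $\Divh$ yields the global identity
\begin{equation*}
	\Divh A_{i,\varepsilon}^4(s,x)
	= \Divh \bigl(A_{i,\varepsilon}^1(s,x)\,a_i\bigr)
	+ r^\ast_{\varepsilon,i}(s,x).
\end{equation*}

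Finally, I would substitute this into the definition of $\mathcal{I}_{10}$ in \eqref{eq:Ito-reg-eqn-2-defs}: multiplying by $-F'(\rho_\varepsilon(s,x))$, integrating in $s$, and summing over $i$ splits the integral cleanly into $\mathcal{H}_{9,1}+\mathcal{H}_{9,2}$, matching the stated expressions. Since everything is a tautological substitution there is no genuine obstacle; the only point requiring care is verifying that summing over $\kappa$ commutes with $\Divh$ despite the pullback–extension, which is immediate because each $\mathcal{L}_\kappa (A_{\kappa,i}^4)_\varepsilon$ is smooth with compact support inside $X_\kappa$ for $\varepsilon<\varepsilon_0$.
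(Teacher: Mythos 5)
Your proposal is correct and coincides with the paper's argument, which is exactly the one-line observation that the decomposition follows from the definition of $r^\ast_{\varepsilon,i}$; you have merely made the bookkeeping explicit. The only point the paper leaves implicit and you correctly flag is the identification $A_{i,\varepsilon}^1 = \sum_\kappa (A_{\kappa,i}^1)_\varepsilon$ after trivial extension, which is harmless since each summand is compactly supported in $X_\kappa$.
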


\begin{proof}
This follows from the very definition of $r^\ast_{\varepsilon,i}$.
\end{proof}

\begin{lemma}[``$\mathcal{H}_{11}$'']\label{lem:related-to-Reps-4}
With $\mathcal{H}_9=\mathcal{I}_{13}$ 
defined in \eqref{eq:Ito-reg-eqn-2-defs}
\begin{align*}
	& \mathcal{H}_{11}=\mathcal{H}_{11,1}+\mathcal{H}_{11,2},
	\quad \text{where}	
	\\ & 
	\mathcal{H}_{11,1}:=
	-\sum_{i=1}^N\int_0^t A_{i,\varepsilon}^1(s,x)\,
	a_i\bigl(F'(\rho_\varepsilon(s,x))\bigr)\,ds,
	\\ &
	\mathcal{H}_{11,2}:=
	-\sum_{i=1}^N\int_0^t F''(\rho_\varepsilon(s,x))
	A_{i,\varepsilon}^1(s,x)
	\Bigl( r_{\varepsilon,i}(s,x) 
	+ \rho_\varepsilon(s,x)\Divh a_i\Bigr) \,ds,
\end{align*}
where $r_{\varepsilon,i}$, $A_{i,\varepsilon}^1$ are 
defined respectively in \eqref{eq:commutator-r}, \eqref{eq:Ai-def}.
\end{lemma}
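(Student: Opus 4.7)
The plan is to follow the same elementary calculus-type manipulations that were used in the proofs of Lemmas \ref{lem:cor-to-commutator1} and \ref{lem:quad-term}, since the decomposition of $\mathcal{H}_{11}=\mathcal{I}_{13}$ is purely algebraic. First I would invoke the key identity \eqref{eq:div+error}, already established in the proof of Lemma \ref{lem:quad-term}, namely
$$
\Divh \left(\rho(s) a_i\right)_\varepsilon(x) = \Divh \left(\rho_\varepsilon(s) a_i\right)(x) + r_{\varepsilon,i}(s,x),
$$
where $r_{\varepsilon,i}$ is the DiPerna-Lions commutator from \eqref{eq:commutator-r}. Substituting this into the definition of $\mathcal{I}_{13}$ in \eqref{eq:Ito-reg-eqn-2-defs} splits $\mathcal{H}_{11}$ into two pieces: one containing $\Divh(\rho_\varepsilon a_i)$ and one containing $r_{\varepsilon,i}$.

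Next, I would apply the product rule to the smooth vector field $a_i$ acting against the smooth function $\rho_\varepsilon$:
$$
\Divh(\rho_\varepsilon(s) a_i)(x) = a_i(\rho_\varepsilon(s,x)) + \rho_\varepsilon(s,x)\, \Divh a_i(x).
$$
This produces three contributions to $\mathcal{H}_{11}$: an $a_i(\rho_\varepsilon)$ term, a $\rho_\varepsilon \Divh a_i$ term, and the $r_{\varepsilon,i}$ term from the previous step. The last two combine directly to yield
$$
\mathcal{H}_{11,2} = -\sum_{i=1}^N \int_0^t F''(\rho_\varepsilon(s,x))\, A_{i,\varepsilon}^1(s,x) \Bigl( r_{\varepsilon,i}(s,x) + \rho_\varepsilon(s,x)\Divh a_i\Bigr)\,ds.
$$

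For the remaining $a_i(\rho_\varepsilon)$ contribution, I would invoke the classical chain rule for the (smooth) vector field $a_i$ acting on the composition $F'\circ\rho_\varepsilon$, which gives $a_i(F'(\rho_\varepsilon)) = F''(\rho_\varepsilon)\, a_i(\rho_\varepsilon)$ pointwise on $M$. This identifies the remaining term as exactly $\mathcal{H}_{11,1}$. Since every step is a pointwise algebraic identity involving smooth objects (recall $\rho_\varepsilon, A_{i,\varepsilon}^1$ are smooth in $x$ and $a_i\in C^\infty(M)$ by \eqref{eq:a-ass}, while $F\in C^2$), no convergence issues arise and no obstacle is expected; the lemma is effectively a bookkeeping statement that isolates the DiPerna-Lions commutator $r_{\varepsilon,i}$ so that it can be handled later in concert with the cancellations identified in Lemmas \ref{lem:related-to-Reps-1} and \ref{lem:related-to-Reps-2}.
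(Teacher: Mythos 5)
Your proof is correct and follows exactly the same route as the paper: apply the decomposition $\Divh(\rho a_i)_\varepsilon = \Divh(\rho_\varepsilon a_i) + r_{\varepsilon,i}$, expand $\Divh(\rho_\varepsilon a_i)$ by the product rule, and absorb the $a_i(\rho_\varepsilon)$ factor via the chain rule $F''(\rho_\varepsilon)\,a_i(\rho_\varepsilon)=a_i\bigl(F'(\rho_\varepsilon)\bigr)$. No gaps; this is precisely the bookkeeping the paper performs.
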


\begin{proof}
Making use of \eqref{eq:div+error} and the 
product rule, 
$$
\Divh \left(\rho(s)a_i\right)_\varepsilon
=r_{\varepsilon,i}(s) 
+\rho_\varepsilon(s)\Divh a_i + 
a_i\left(\rho_\varepsilon(s)\right).
$$ 
Thus the claim follows by noting that
$F''(\rho_\varepsilon(s))\, 
a_i(\rho_\varepsilon(s)) 
=a_i\bigl(F'(\rho_\varepsilon(s))\bigr)$.
\end{proof}

\begin{lemma}[``$\mathcal{H}_9+\mathcal{H}_{11}$'']\label{lem:related-to-Reps-5}
Consider $\mathcal{H}_9=\mathcal{I}_{10}$ and 
$\mathcal{H}_{11}=\mathcal{I}_{13}$ 
with $\mathcal{I}_{10}$ and $\mathcal{I}_{13}$ defined 
in \eqref{eq:Ito-reg-eqn-2-defs}. Then
\begin{align*}
	& \mathcal{H}_{9}+\mathcal{H}_{11}
	=\sum_{\ell=1}^4 \mathcal{H}_{9+11,\ell},
	\quad \text{where}
	\\ &
	\mathcal{H}_{9+11,1} 
	:= -\sum_{i=1}^N\int_0^tF'(\rho_\varepsilon(s,x))
	\, r^\ast_{\varepsilon,i}(s,x)\,ds,
	\\ &
	\mathcal{H}_{9+11,2}
	:=-\sum_{i=1}^N\int_0^tF'(\rho_\varepsilon(s,x))
	\, A_{i,\varepsilon}^1(s,x)\, \Divh a_i\,ds,
	\\ &
	\mathcal{H}_{9+11,3}
	:=-\sum_{i=1}^N\int_0^t
	a_i\bigl(F'(\rho_\varepsilon(s,x))A_{i,\varepsilon}^1(s,x)\bigr)\,ds,
	\\ &
	\mathcal{H}_{9+11,4}
	:=-\sum_{i=1}^N\int_0^t F''(\rho_\varepsilon(s,x))
	A_{i,\varepsilon}^1(s,x)
	\bigl (r_{\varepsilon,i}(s,x) 
	+\rho_\varepsilon(s,x)\, \Divh a_i\bigr) \,ds,
\end{align*}
where $r^\ast_{\varepsilon,i}$, $A_{i,\varepsilon}^1$, 
$r_{\varepsilon,i}$ are defined respectively 
in \eqref{eq:commutator-ast}, \eqref{eq:Ai-def}, 
\eqref{eq:commutator-r}.
\end{lemma}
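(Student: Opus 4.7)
The plan is to identify which pieces from the two inputs map directly to the target pieces, and then apply the product rule for the divergence to rearrange the remainder. Inspection of the statements of Lemmas \ref{lem:related-to-Reps-3} and \ref{lem:related-to-Reps-4} reveals two \emph{gratis} identifications: $\mathcal{H}_{9,1}$ matches $\mathcal{H}_{9+11,1}$ verbatim, and $\mathcal{H}_{11,2}$ matches $\mathcal{H}_{9+11,4}$ verbatim. So the entire content of the lemma reduces to verifying the pointwise identity
$$
\mathcal{H}_{9,2}+\mathcal{H}_{11,1}
=\mathcal{H}_{9+11,2}+\mathcal{H}_{9+11,3},
$$
i.e., showing
$$
-F'(\rho_\varepsilon)\Divh\bigl(A^1_{i,\varepsilon}\,a_i\bigr)
-A^1_{i,\varepsilon}\,a_i\bigl(F'(\rho_\varepsilon)\bigr)
=-F'(\rho_\varepsilon)\,A^1_{i,\varepsilon}\,\Divh a_i
-a_i\bigl(F'(\rho_\varepsilon)\,A^1_{i,\varepsilon}\bigr),
$$
for each $i=1,\ldots,N$ and (almost every) $(\omega,t,x)$.

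To carry this out I first apply the product rule for the divergence of a scalar times a smooth vector field,
$$
\Divh\bigl(A^1_{i,\varepsilon}\,a_i\bigr)
=A^1_{i,\varepsilon}\,\Divh a_i + a_i\bigl(A^1_{i,\varepsilon}\bigr),
$$
which is legitimate because $A^1_{i,\varepsilon}(t,\cdot)\in C^\infty(M)$ and $a_i$ is smooth. Substituting this into $\mathcal{H}_{9,2}$ splits it into $-F'(\rho_\varepsilon)A^1_{i,\varepsilon}\Divh a_i$ (which is exactly $\mathcal{H}_{9+11,2}$) plus the cross term $-F'(\rho_\varepsilon)\,a_i(A^1_{i,\varepsilon})$. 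Combining the latter with $\mathcal{H}_{11,1}=-A^1_{i,\varepsilon}\,a_i(F'(\rho_\varepsilon))$ and applying the Leibniz rule for the first-order differential operator $a_i$ acting on the product $F'(\rho_\varepsilon)\,A^1_{i,\varepsilon}$ yields
$$
-F'(\rho_\varepsilon)\,a_i\bigl(A^1_{i,\varepsilon}\bigr)-A^1_{i,\varepsilon}\,a_i\bigl(F'(\rho_\varepsilon)\bigr)
=-a_i\bigl(F'(\rho_\varepsilon)\,A^1_{i,\varepsilon}\bigr),
$$
which is precisely $\mathcal{H}_{9+11,3}$.

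I anticipate no technical obstacle: both identities used are purely algebraic (the product rule for $\Divh$ on smooth objects, and the Leibniz rule for $a_i$), all objects in sight are smooth in $x$ for fixed $(\omega,t)$ and $\varepsilon<\varepsilon_0$, so the manipulations are pointwise and the Riemannian structure enters only trivially (even in our convenient atlas of Lemma \ref{lem:det-metric-const} the identities hold coordinate-freely). After assembling the four pieces, summing over $i$ and integrating in $s$ gives the stated decomposition and concludes the proof.
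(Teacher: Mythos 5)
Your proof is correct and follows essentially the same route as the paper's: cite the verbatim matches $\mathcal{H}_{9+11,1}=\mathcal{H}_{9,1}$ and $\mathcal{H}_{9+11,4}=\mathcal{H}_{11,2}$ from Lemmas \ref{lem:related-to-Reps-3} and \ref{lem:related-to-Reps-4}, then combine the product rule for $\Divh\bigl(A^1_{i,\varepsilon}\,a_i\bigr)$ with the Leibniz rule for $a_i$ to obtain $\mathcal{H}_{9,2}+\mathcal{H}_{11,1}=\mathcal{H}_{9+11,2}+\mathcal{H}_{9+11,3}$. The paper states this more tersely; your version spells out the intermediate algebra but is otherwise the same argument.
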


\begin{proof}
Note that $\mathcal{H}_{9+11,1}=\mathcal{H}_{9,1}$, 
cf.~Lemma \ref{lem:related-to-Reps-3},  
and $\mathcal{H}_{9+11,4}=\mathcal{H}_{11,2}$, 
cf.~Lemma \ref{lem:related-to-Reps-4}. 
Moreover, applying the product rule to 
$\Divh \left( A_{i,\varepsilon}^1(s,x)\,a_i \right)$, 
we find that $\mathcal{H}_{9+11,2}+\mathcal{H}_{9+11,3}
=\mathcal{H}_{9,2}+\mathcal{H}_{11,1}$. 
\end{proof}

Combining \eqref{eq:the-remainder} and 
Lemmas \ref{lem:related-to-Reps-1}, \ref{lem:related-to-Reps-2}, 
and \ref{lem:related-to-Reps-5}, we arrive at the following expression 
for the remainder term $\mathcal{R}_\varepsilon$, which is a function 
of $(\omega,t,x)\in\Omega\times [0,T] \times M$:
\begin{equation}\label{eq:the-remainder-2}
	\begin{split}
		\mathcal{R}_\varepsilon & = \mathcal{H}_1
		+\mathcal{H}_3+\mathcal{H}_4+\mathcal{H}_7
		+\mathcal{H}_8+\mathcal{H}_{10}
		+\mathcal{H}_{5,1}+\mathcal{H}_{5,3}
		\\ & \qquad +\mathcal{H}_{2+6,2}+\mathcal{H}_{2+6,3}
		+\mathcal{H}_{9+11,1}+\mathcal{H}_{9+11,2}
		+\mathcal{H}_{9+11,3}+\mathcal{H}_{9+11,4},
	\end{split}
\end{equation}
where the difficult terms involving $a_i\bigl(r_{\varepsilon,i}\bigr)$ 
cancel out: $\mathcal{H}_{2+6,1}+\mathcal{H}_{5,2}=0$.


\subsection{Passing to the limit in SPDE for $F(\rho_\varepsilon)$; $u\equiv 0$}
We wish to send $\varepsilon\to 0$ in the $x$-weak 
form of \eqref{eq:Ito-reg-eqn-4}, analyzing the limiting 
behavior of the remainder $\mathcal{R}_\varepsilon$, 
which is going to vanish, separately from the other terms 
in \eqref{eq:Ito-reg-eqn-4}, which are going to 
converge to their respective terms in \eqref{eq:main-result-weak-form}. 
Denote by $\langle\cdot,\cdot\rangle$ the canonical 
pairing between functions on $M$. In the following, 
we will make repeated (unannounced) use of Lemma \ref{lem:classical-result} 
and the (stochastic) Fubini theorem.

We begin with the convergence analysis of the remainder term.

\begin{proposition}[convergence of the remainder $\mathcal{R}_\varepsilon$]
\label{prop:the-remainder-vanishes}
For any $\psi\in C^\infty(M)$, 
$\bigl\langle \mathcal{R}_\varepsilon,\psi\bigr\rangle
\to 0$ in $L^1(\Omega \times [0,T])$ as $\varepsilon\to 0$.
\end{proposition}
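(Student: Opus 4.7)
The plan is to treat the fourteen summands listed in \eqref{eq:the-remainder-2} separately, paired with $\psi\in C^\infty(M)$, and to establish the convergence of each pairing to zero in $L^1(\Omega\times[0,T])$. These fourteen terms fall into two natural families, distinguished by the mechanism responsible for their vanishing, and I would treat them in parallel.

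The first family comprises the commutator-dominated terms $\mathcal{H}_1$, $\mathcal{H}_3$, $\mathcal{H}_4$, $\mathcal{H}_{5,1}$, $\mathcal{H}_{2+6,2}$, $\mathcal{H}_{2+6,3}$, $\mathcal{H}_{9+11,1}$ and $\mathcal{H}_{9+11,4}$. Each integrand carries a factor of one of the global commutators $r_{\varepsilon,i}$, $\tilde{r}_{\varepsilon,i}$, $\bar{r}_{\varepsilon,i}$, $r^\ast_{\varepsilon,i}$, or of the second-order commutator $G_{\kappa,\varepsilon,i}$, every one of which tends to zero in $L^q([0,T];L^2(\Omega\times M))$ for any $q\in[1,\infty)$ by Lemmas~\ref{lem:com-term-1}, \ref{lem:com-term-2}, \ref{lem:com-term-3}, \ref{lem:com-term-4} and \ref{lem:Reps(2)-F}. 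Combining these convergences with the uniform bounds on $F'$ and $F''$, the smoothness of the $a_i$ and of $\psi$, and H\"older's inequality (or the Burkholder-Davis-Gundy inequality \eqref{eq:BDG} with $p=1$ for the stochastic integral $\mathcal{H}_1$, whose quadratic variation reduces to an $L^2$ commutator norm), the $L^1(\Omega\times[0,T])$-norm of $\bigl\langle\mathcal{H}_\ell,\psi\bigr\rangle$ is controlled by a constant multiple of the relevant commutator norm and therefore vanishes. The quadratic term $\mathcal{H}_4$ involves $(r_{\varepsilon,i})^2$; the already established $L^2$-convergence of $r_{\varepsilon,i}$ delivers $L^1$-convergence of the square, which suffices.

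The second family gathers the partition-of-unity terms $\mathcal{H}_7$, $\mathcal{H}_8$, $\mathcal{H}_{10}$, $\mathcal{H}_{5,3}$, $\mathcal{H}_{9+11,2}$ and $\mathcal{H}_{9+11,3}$, whose integrands involve the globally regularized objects $A^1_{i,\varepsilon}$, $A^2_{i,\varepsilon}$, $A^3_{i,\varepsilon}$ or $A^4_{i,\varepsilon}$. Lemma~\ref{lem:propA-kappa-eps-j-1-2-3} (and the analogous computation for $A^4$) provides, as $\varepsilon\downarrow 0$, the limits $\rho\sum_\kappa a_i(\uk)$, $\rho\sum_\kappa \nabla^2\uk(a_i,a_i)$, $\rho\sum_\kappa(\nabla_{a_i}a_i)(\uk)$ and $\rho\bigl(\sum_\kappa a_i(\uk)\bigr)a_i$, respectively, in $L^q([0,T];L^2(\Omega\times M))$. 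Applying the relevant first- or second-order differential operator to the partition-of-unity identity $\sum_\kappa \uk\equiv 1$ shows that each of these limits vanishes identically on $M$. Consequently the pairings $\bigl\langle\mathcal{H}_\ell,\psi\bigr\rangle$ converge to zero in $L^1(\Omega\times[0,T])$ by dominated convergence, using again the uniform bounds on $F'$, $F''$ together with H\"older's inequality, supplemented by the BDG inequality for the stochastic integral $\mathcal{H}_7$. In the divergence-form terms $\mathcal{H}_{5,3}$ and $\mathcal{H}_{9+11,3}$ one first integrates by parts on the closed manifold $M$ to transfer $a_i$ onto $\psi$, after which the preceding argument applies verbatim.

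The main substantive obstacle will be the term $\mathcal{H}_{2+6,3}$ carrying the second-order commutator $G_{\kappa,\varepsilon,i}$ of \eqref{eq:def-Gkei}: unlike the first-order commutators, the summation over charts $\kappa$ cannot be absorbed inside the commutator because each $G_{\kappa,\varepsilon,i}$ depends on its own chart-dependent mollifier. One must therefore invoke Lemma~\ref{lem:Reps(2)-F} chart by chart, exploiting the Punshon-Smith-type identity encoded in \eqref{eq:def-Ceps}. All remaining potentially problematic cancellations have been pre-arranged algebraically in passing from \eqref{eq:the-remainder} to \eqref{eq:the-remainder-2}, most notably the exact cancellation $\mathcal{H}_{2+6,1}+\mathcal{H}_{5,2}=0$ that eliminates the $a_i(r_{\varepsilon,i})$-type terms for which no independent limit is available. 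Once the second-order-commutator input is secured, the proposition reduces to a systematic application of the convergence lemmas already established in Subsection~\ref{sec:local-smoothing}.
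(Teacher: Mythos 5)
Your proposal follows essentially the same route as the paper: split the fourteen summands, exploit the $L^2$-convergence of the commutators $r_{\varepsilon,i},\tilde{r}_{\varepsilon,i},\bar{r}_{\varepsilon,i},r^\ast_{\varepsilon,i},G_{\kappa,\varepsilon,i}$ and of the $A^j_{i,\varepsilon}$ together with boundedness of $F',F''$ and H\"older, use an $L^2$-type martingale estimate (the paper uses the It\^{o} isometry rather than BDG, but either works) for $\mathcal{H}_1$ and $\mathcal{H}_7$, and integrate by parts for the divergence-form terms $\mathcal{H}_{5,3}$ and $\mathcal{H}_{9+11,3}$. One small bookkeeping slip: $\mathcal{H}_{5,3}$ contains the commutator $r_{\varepsilon,i}$, not an $A^j_{i,\varepsilon}$-factor, so it belongs to your first family rather than your second, but since your treatment of it (integration by parts, then convergence of the interior factor) is correct this does not affect the argument.
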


\begin{proof}
We recall the following convergences, 
which are consequences of Lemmas \ref{lem:prop-rho-kappa-eps}, 
\ref{lem:com-term-1}, \ref{lem:com-term-2}, \ref{lem:com-term-3}, 
\ref{lem:Reps(2)-F}, \ref{lem:propA-kappa-eps-j-1-2-3}, 
and \ref{lem:com-term-4}:
\begin{equation}\label{eq:main-convergences}
	\begin{split}
		& \rho_\varepsilon 
		\overset{\varepsilon\downarrow 0}{\longrightarrow} \rho, 
		\quad
		\rho_{0,\varepsilon} 
		\overset{\varepsilon\downarrow 0}{\longrightarrow} \rho_0,
		\quad 
		r_{\varepsilon,i}
		\overset{\varepsilon\downarrow 0}{\longrightarrow} 0, 
		\quad 
		\tilde{r}_{\varepsilon,i}
		\overset{\varepsilon\downarrow 0}{\longrightarrow} 0, 
		\\ & 
		\bar{r}_{\varepsilon,i}
		\overset{\varepsilon\downarrow 0}{\longrightarrow} 0,
		\quad
		r^\ast_{\varepsilon,i}
		\overset{\varepsilon\downarrow 0}{\longrightarrow} 0,
		\quad 
		G_{\kappa,\varepsilon,i}
		\overset{\varepsilon\downarrow 0}{\longrightarrow} 0,
		\quad  
		A_{i,\varepsilon}^1
		\overset{\varepsilon\downarrow 0}{\longrightarrow} 
		\rho\, a_i(1)=0,
		\quad 
		\\ & A_{i,\varepsilon}^2 
		\overset{\varepsilon\downarrow 0}{\longrightarrow} 
		\rho\, \nabla^2 1(a_i,a_i)=0, 
		\quad
		A_{i,\varepsilon}^3
		\overset{\varepsilon\downarrow 0}{\longrightarrow} 
		\rho\, (\nabla_{a_i}a_i)(1)=0,
	\end{split}
\end{equation}
which take place in $L^q\left([0,T]; L^2(\Omega\times M)\right)$, 
for any $q\in [1,\infty)$.

We multiply the remainder identity \eqref{eq:the-remainder-2} by $\psi$ 
and integrate over $M$, and then analyze the convergence (in $\omega,t$)
of the resulting terms separately. Consider first 
the term $\action{\mathcal{H}_1(\omega,t)}{\psi}$, 
$\mathcal{H}_1=- \sum_i\int_0^tF'(\rho_\varepsilon) 
\,r_{\varepsilon,i}\, dW^i(s)$. 
Since $F'$ is bounded,
$$
\abs{
\bigl\langle 
F'(\rho_\varepsilon(\omega,s)) 
\,r_{\varepsilon,i}(\omega,s),\psi
\bigr\rangle}
\leq \norm{\psi}_{L^2(M)}\norm{F'}_\infty
\norm{r_{\varepsilon,i}(\omega,s)}_{L^2(M)}.
$$
Therefore, in view of \eqref{eq:main-convergences} with $q=2$, 
$\bigl\langle F'(\rho_\varepsilon)\,r_{\varepsilon,i},
\psi\bigr \rangle \overset{\varepsilon\downarrow 0}{\longrightarrow} 0$ 
in $L^2(\Omega \times [0,T])$. 
Consequently, by the It\^o isometry, $\sup\limits_{t\in [0,T]}
\Bigl(\EE \action{\mathcal{H}_1}{\psi}^2\Big)^{1/2}
\overset{\varepsilon\downarrow 0}{\longrightarrow} 0$; 
whence
$$
\action{\mathcal{H}_1}{\psi} 
\overset{\varepsilon\downarrow 0}{\longrightarrow} 
0 \quad \text{in $L^2(\Omega \times [0,T])$}.
$$
The term $\action{\mathcal{H}_{7}(\omega,t)}{\psi}$, 
where $\mathcal{H}_7=\sum_i\int_0^t F'(\rho_\varepsilon)
\, A_{i,\varepsilon}^1 \, dW^i(s)$, is treated in 
the same way, yielding the convergence  
$$
\action{\mathcal{H}_7}{\psi} 
\overset{\varepsilon\downarrow 0}{\longrightarrow} 
0 \quad \text{in $L^2(\Omega \times [0,T])$}.
$$

Again thanks to \eqref{eq:main-convergences}, 
$\bigl\langle F'(\rho_\varepsilon)
\,\tilde{r}_{\varepsilon,i},\psi\bigr\rangle 
\overset{\varepsilon\downarrow 0}{\longrightarrow} 0$ 
in $L^2(\Omega \times [0,T])$. It is 
therefore straightforward to deduce
$$
\action{\mathcal{H}_3}{\psi} 
\overset{\varepsilon\downarrow 0}{\longrightarrow} 
0 \quad \text{in $L^2(\Omega \times [0,T])$},
$$
recalling that $\mathcal{H}_3 :=-\frac12\sum_i\int_0^t 
F'(\rho_\varepsilon)\, \tilde{r}_{\varepsilon,i}\,ds$.

The two terms $\action{\mathcal{H}_{2+6,2}(\omega,t)}{\psi}$, 
and $\action{\mathcal{H}_{9+11,1}(\omega,t)}{\psi}$, 
where we recall that $\mathcal{H}_{2+6,2}=\frac12 \sum_i\int_0^t 
F'(\rho_\varepsilon)\, \bar{r}_{\varepsilon,i}\,ds$ and 
$\mathcal{H}_{9+11,1}=-\sum_i\int_0^t
F'(\rho_\varepsilon)\, r^\ast_{\varepsilon,i}\,ds$, 
are dealt with exactly in the same way, supplying
$$
\action{\mathcal{H}_{2+6,2}}{\psi},\,
\action{\mathcal{H}_{9+11,1}}{\psi} 
\overset{\varepsilon\downarrow 0}{\longrightarrow} 
0 \quad \text{in $L^2(\Omega \times [0,T])$},
$$

We continue with  $\action{\mathcal{H}_4(\omega,t)}{\psi}$, 
$\mathcal{H}_4=\frac12\sum_i \int_0^t F''(\rho_\varepsilon) 
\, r_{\varepsilon,i}^2 \,ds$. Since $F''$ is bounded, 
$$
\abs{\left\langle F''(\rho_\varepsilon(\omega,s))
\, \left(r_{\varepsilon,i}(\omega,s)\right)^2,\psi \right\rangle}
\leq \norm{\psi}_{L^\infty(M)}\norm{F''}_\infty
\norm{r_{\varepsilon,i}(\omega,s)}^2_{L^2(M)},
$$
and so, once again using \eqref{eq:main-convergences}, 
$\left\langle F''(\rho_\varepsilon) \,r^2_{\varepsilon,i},
\psi\right\rangle \overset{\varepsilon\downarrow 0}{\longrightarrow} 0$ 
in $L^1(\Omega \times [0,T])$. As a result
$$
\action{\mathcal{H}_4}{\psi} 
\overset{\varepsilon\downarrow 0}{\longrightarrow} 
0 \quad \text{in $L^1(\Omega \times [0,T])$}.
$$
Similarly, with 
$\mathcal{H}_{10}=\frac12  \sum_i\int_0^tF''(\rho_\varepsilon)
\left(A_{i,\varepsilon}^1\right)^2\, ds$ 
and \eqref{eq:main-convergences},
$$
\action{\mathcal{H}_{10}}{\psi} 
\overset{\varepsilon\downarrow 0}{\longrightarrow} 
0 \quad \text{in $L^1(\Omega \times [0,T])$}.
$$

Let us analyze $\action{\mathcal{H}_{5,1}(\omega,t)}{\psi}$, 
$\mathcal{H}_{5,1}=\sum_i\int_0^t 
\rho_\varepsilon F''(\rho_\varepsilon)
\, \Divh a_i\,r_{\varepsilon,i} \,ds$. We have
\begin{align*}
	& \abs{\bigl\langle \rho_\varepsilon(\omega,s)
	F''(\rho_\varepsilon(\omega,s))\, 
	\Divh a_i\,r_{\varepsilon,i}(\omega,s),
	\psi \bigr\rangle}
	\\ & \qquad 
	\leq \norm{\psi}_{L^\infty(M)}\norm{F''}_\infty
	\norm{\Div a_i}_{L^\infty(M)}
	\norm{\rho_\varepsilon(\omega,s)}_{L^2(M)}
	\norm{r_{\varepsilon,i}(\omega,s)}_{L^2(M)}.
\end{align*}
Therefore, by the Cauchy-Schwarz inequality,
\begin{align*}
	& \iint\limits_{\Omega \times [0,T]}
	\abs{
	\bigl\langle 
	\rho_\varepsilon F''(\rho_\varepsilon)
	\Divh a_i \,r_{\varepsilon,i},\psi
	\bigr\rangle}\, ds\, d\P
	\\ & \qquad
	\lesssim_{\psi} \norm{\rho_\varepsilon}_{L^2(\Omega \times [0,T]\times M)}
	\norm{r_{\varepsilon,i}}_{L^2(\Omega \times [0,T]\times M)}.
\end{align*}
Accordingly, again making use of 
\eqref{eq:main-convergences} with $q=2$,
$$
\action{\mathcal{H}_{5,1}}{\psi} 
\overset{\varepsilon\downarrow 0}{\longrightarrow} 
0 \quad \text{in $L^1(\Omega \times [0,T])$}.
$$
Likewise, for 
$\mathcal{H}_{9+11,4}
=-\sum_i\int_0^t F''(\rho_\varepsilon)
A_{i,\varepsilon}^1\bigl (r_{\varepsilon,i} 
+\rho_\varepsilon\, \Divh a_i\bigr) \,ds$, we find that
$$
\action{\mathcal{H}_{9+11,4}}{\psi} 
\overset{\varepsilon\downarrow 0}{\longrightarrow} 
0 \quad \text{in $L^1(\Omega \times [0,T])$}.
$$

Next we deal with the term 
$\action{\mathcal{H}_{5,3}(\omega,t)}{\psi}$, where 
$\mathcal{H}_{5,3}=\sum_i\int_0^t 
a_i\bigl(F'(\rho_\varepsilon) \,r_{\varepsilon,i}\bigr) \,ds$. 
Integration by parts yields 
$$
\action{\mathcal{H}_{5,3}(\omega,t)}{\psi}
=-\sum_{i=1}^N\int_0^t \int_M F'(\rho_\varepsilon(s,x))
\,r_{\varepsilon,i}(s,x) \Div \left(\psi \, a_i\right)
\,dV_h(x) \,ds.
$$
Since $\Div\left(\psi \,a_i\right)\in L^\infty(M)$, 
we conclude as before that 
$$
\action{\mathcal{H}_{5,3}}{\psi} 
\overset{\varepsilon\downarrow 0}{\longrightarrow} 
0 \quad \text{in $L^2(\Omega \times [0,T])$}.
$$
We treat $\action{\mathcal{H}_{9+11,3}(\omega,t)}{\psi}$, 
where $\mathcal{H}_{9+11,3}=-\sum_i\int_0^t 
a_i\bigl(F'(\rho_\varepsilon)
A_{i,\varepsilon}^1\bigr)\,ds$, in a similar way, writing
$$
\action{\mathcal{H}_{9+11,3}(\omega,t)}{\psi}
= \sum_{i=1}^N\int_0^t \int_MF'(\rho_\varepsilon(s,x))A_{i,\varepsilon}^1(s,x)
\Div(\psi\,a_i)\,dV_h(x)\,ds
$$
and also in this case hammering out the convergence
$$
\action{\mathcal{H}_{9+11,3}}{\psi} 
\overset{\varepsilon\downarrow 0}{\longrightarrow} 
0 \quad \text{in $L^2(\Omega \times [0,T])$}.
$$

Putting into service once more the boundedness of $F'$ 
and the convergences \eqref{eq:main-convergences}, we infer 
$$
\action{\mathcal{H}_8}{\psi}, \, 
\action{\mathcal{H}_{9+11,2}}{\psi}  
\overset{\varepsilon\downarrow 0}{\longrightarrow} 
0 \quad \text{in $L^2(\Omega \times [0,T])$},
$$
where $\mathcal{H}_8 = \frac12\sum_i
\int_0^t F'(\rho_\varepsilon)
\bigl[A_{i,\varepsilon}^2+A_{i,\varepsilon}^3\bigr]\,ds$, 
$\mathcal{H}_{9+11,2}=-\sum_i\int_0^tF'(\rho_\varepsilon)
\, A_{i,\varepsilon}^1 \Divh a_i\,ds$.

Finally, we deal with $\action{\mathcal{H}_{9+11,3}(\omega,t)}{\psi}$, 
where $\mathcal{H}_{9+11,3}=\sum_i\int_0^t 
F'(\rho_\varepsilon)\sum_{\kappa} G_{\kappa,\varepsilon,i}\,ds$. 
We clearly have
$$
\action{\mathcal{H}_{9+11,3}(\omega,t)}{\psi}
=\sum_{i=1}^N\sum_{\kappa\in\mathcal{A}}\int_0^t
\bigl \langle F'(\rho_\varepsilon(s))
\, G_{\kappa,\varepsilon,i}(s),
\psi \bigr\rangle \,ds,
$$
along with the following bounds on the integrands:
$$
\abs{\bigl\langle F'(\rho_\varepsilon(\omega,s))\, 
G_{\kappa,\varepsilon,i}(\omega,s),\psi \bigr\rangle}
\leq \norm{F'}_\infty\norm{\psi}_{L^2(M)}
\norm{G_{\kappa,\varepsilon,i}(\omega,s)}_{L^2(M)}.
$$
Recalling that $G_{\kappa,\varepsilon,i}\to 0$ in 
$L^2$, cf.~\eqref{eq:main-convergences}, we obtain
$$
\action{\mathcal{H}_{9+11,3}}{\psi}  
\overset{\varepsilon\downarrow 0}{\longrightarrow} 
0 \quad \text{in $L^2(\Omega \times [0,T])$}.
$$

Summarizing our findings, 
$\bigl\langle \mathcal{R}_\varepsilon,\psi\bigr\rangle
\overset{\varepsilon\downarrow 0}{\longrightarrow} 0$ 
in $L^1(\Omega \times [0,T])$.
\end{proof}

\begin{proposition}[limit SPDE, $u\equiv 0$]\label{prop:convergence-SPDE}
The function $F(\rho)$ satisfies the weak (in $x$) formulation 
\eqref{eq:main-result-weak-form}, $\P$-a.s., for all $t\in[0,T]$, 
for each $\psi\in C^\infty(M)$. 
\end{proposition}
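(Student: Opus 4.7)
Plan of proof. I would start from the regularised identity \eqref{eq:Ito-reg-eqn-4}, test it against an arbitrary $\psi\in C^\infty(M)$, and integrate over $M$ against $dV_h$. The resulting pointwise-in-$t$ identity already matches \eqref{eq:main-result-weak-form} in structure, with $\rho$ replaced by $\rho_\varepsilon$ and an extra term $\bigl\langle \mathcal{R}_\varepsilon,\psi\bigr\rangle$. Proposition \ref{prop:the-remainder-vanishes} kills the remainder in $L^1(\Omega\times[0,T])$, so along a subsequence it vanishes pointwise a.e.\ in $(\omega,t)$, and the whole problem reduces to identifying the limits of the remaining terms.

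Before letting $\varepsilon\downarrow 0$, I would shift each spatial differential operator onto the smooth test function. Concretely,
\begin{align*}
\bigl\langle\Divh\bigl(F(\rho_\varepsilon)a_i\bigr),\psi\bigr\rangle
&=-\bigl\langle F(\rho_\varepsilon),a_i(\psi)\bigr\rangle,\\
\bigl\langle\bar a_i\bigl(G_F(\rho_\varepsilon)\bigr),\psi\bigr\rangle
&=-\bigl\langle G_F(\rho_\varepsilon),\bar a_i(\psi)\bigr\rangle-\bigl\langle G_F(\rho_\varepsilon)\Lambda_i(1),\psi\bigr\rangle,
\end{align*}
using the product rule $\Divh(\psi\bar a_i)=\bar a_i(\psi)+\psi\Lambda_i(1)$, while the adjointness of $\Lambda_i$ (Remark after Lemma \ref{lem:alt-Lambdai}) gives $\bigl\langle\Lambda_i(F(\rho_\varepsilon)),\psi\bigr\rangle=\bigl\langle F(\rho_\varepsilon),a_i(a_i(\psi))\bigr\rangle$. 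The contributions of $\bar a_i(G_F(\rho))$ and $\tfrac12 G_F(\rho)\Lambda_i(1)$ then combine to reproduce exactly the two last lines of \eqref{eq:main-result-weak-form}.

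For the passage to the limit I would invoke $\rho_\varepsilon\to\rho$ in $L^q([0,T];L^2(\Omega\times M))$ from Lemma \ref{lem:prop-rho-kappa-eps}(3), extract an a.e.\ subsequence, and use dominated convergence together with the boundedness of $F,F',F''$ to obtain $F(\rho_\varepsilon)\to F(\rho)$ in every $L^p_{\omega,t,x}$; the linear-growth bound $|G_F(\xi)|\lesssim 1+|\xi|$ combined with the $L^2$ convergence of $\rho_\varepsilon$ delivers $G_F(\rho_\varepsilon)\to G_F(\rho)$ in $L^2_{\omega,t,x}$ (via Vitali). Since the coefficients appearing after the integration by parts, namely $a_i(\psi),\,a_i(a_i(\psi)),\,\bar a_i(\psi),\,\Divh a_i,\,\Lambda_i(1)$, all lie in $L^\infty(M)$, convergence of the Bochner-type terms is immediate, and convergence of the two stochastic terms follows from the It\^o isometry, yielding uniform-in-$t$ convergence of the continuous martingales by Doob's inequality.

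The most delicate piece will be the quadratic $\tfrac12\int_0^t\int_M F''(\rho_\varepsilon)(\rho_\varepsilon\Divh a_i)^2\psi\,dV_h\,ds$, where one must pass to the limit in the genuine product $F''(\rho_\varepsilon)\rho_\varepsilon^2$. Here $F''(\rho_\varepsilon)\to F''(\rho)$ a.e.\ with uniform bound $\|F''\|_\infty$, and $\rho_\varepsilon^2\to\rho^2$ in $L^1_{\omega,t,x}$ because $\rho_\varepsilon\to\rho$ in $L^2_{\omega,t,x}$, so Vitali's theorem yields $F''(\rho_\varepsilon)\rho_\varepsilon^2\to F''(\rho)\rho^2$ in $L^1_{\omega,t,x}$; pairing with $(\Divh a_i)^2\psi\in L^\infty(M)$ gives convergence in $L^1(\Omega\times[0,T])$. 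Combining everything, \eqref{eq:main-result-weak-form} holds $\P$-a.s.\ for a.e.\ $t\in[0,T]$; since every term on the right-hand side admits a continuous $\{\mathcal F_t\}$-adapted modification (Bochner integrals in $t$ are continuous, and It\^o integrals against Brownian motions with square-integrable integrands admit continuous modifications), the identity upgrades to $\P$-a.s.\ for every $t\in[0,T]$, as required.
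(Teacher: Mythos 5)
Your proposal is correct and follows essentially the same route as the paper's proof: both test \eqref{eq:Ito-reg-eqn-4} against $\psi$, dispose of $\mathcal{R}_\varepsilon$ via Proposition \ref{prop:the-remainder-vanishes}, integrate the spatial operators by parts onto $\psi$, and pass $\varepsilon\downarrow0$ term by term using the boundedness of $F,F',F''$, the $L^q([0,T];L^2(\Omega\times M))$ convergence of $\rho_\varepsilon$, and It\^o's isometry for the martingale parts, before upgrading from a.e.\ $t$ to all $t$ by continuity of the right-hand side. Where you invoke Vitali for the nonlinear compositions $G_F(\rho_\varepsilon)$ and $F''(\rho_\varepsilon)\rho_\varepsilon^2$, the paper appeals to its Lemma \ref{lem:product-limit}, which packages the same subsequence-plus-dominated-convergence content.
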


\begin{proof}
We multiply \eqref{eq:Ito-reg-eqn-4} by $\psi\in C^\infty(M)$ 
and integrate over $M$. Let us write the resulting 
identity symbolically as 
$$
\mathcal{J}_{1,\varepsilon}
-\mathcal{J}_{2,\varepsilon}
+\mathcal{J}_{3,\varepsilon}
+\mathcal{J}_{4,\varepsilon}
=\mathcal{J}_{5,\varepsilon}
+\mathcal{J}_{6,\varepsilon}
+\mathcal{J}_{7,\varepsilon}
+\mathcal{J}_{8,\varepsilon}
+\bigl\langle \mathcal{R}_\varepsilon,\psi\bigr\rangle.
$$ 
In what follows, we analyze separately the terms $\mathcal{J}_{\ell,\varepsilon}
=\mathcal{J}_{\ell,\varepsilon}(\omega,t)$, for $\ell=1,\ldots,7$, 
referring to Proposition \ref{prop:the-remainder-vanishes} 
for $\bigl\langle \mathcal{R}_\varepsilon,\psi\bigr\rangle$. 

The term $\mathcal{J}_{1,\varepsilon}(\omega,t) 
=\bigl\langle F(\rho_\varepsilon(t)),\psi\bigr\rangle$ 
is easily handled. Indeed, we have
$$
\abs{\bigl\langle F(\rho_\varepsilon(\omega,t))
-F(\rho(\omega,t)),\psi\bigr\rangle}
\leq \norm{\psi}_{L^\infty(M)}\norm{F'}_\infty 
\norm{\rho_\varepsilon(\omega,t) - \rho(\omega,t)}_{L^2(M)},
$$
and hence, by \eqref{eq:main-convergences},
$\mathcal{J}_{1,\varepsilon}
\overset{\varepsilon\downarrow 0}{\longrightarrow} 
\langle F(\rho),\psi\rangle$ in $L^2(\Omega \times [0,T])$. 
Similarly, we have $\mathcal{J}_{2,\varepsilon}(\omega)
=\bigl\langle F(\rho_{0,\varepsilon}),\psi\bigr\rangle
\overset{\varepsilon\downarrow 0}{\longrightarrow} 
\bigl \langle F(\rho_0),\psi\bigr\rangle$ 
in $L^2(\Omega \times [0,T])$. 

Integration by parts yields
$$
\mathcal{J}_{5,\varepsilon}(\omega,t)
=\action{\frac12 \sum_{i=1}^N\int_0^t
\Lambda_i\left(F(\rho_\varepsilon(s))\right)\,ds}{\psi} 
=\frac12 \sum_{i=1}^N\int_0^t
\bigl\langle 
F(\rho_\varepsilon(s)),a_i(a_i(\psi))
\bigr\rangle \,ds.
$$
Noting that
\begin{align*}
	&\abs{
	\bigl\langle F(\rho_\varepsilon(\omega,s))-F(\rho(\omega,s)),
	a_i(a_i(\psi))\bigr\rangle}\\ & \qquad 
	\leq \norm{a_i(a_i(\psi))}_{L^2(M)}\norm{F'}_\infty
	\norm{\rho_\varepsilon(\omega,s)-\rho(\omega,s)}_{L^2(M)},
\end{align*}
we use again \eqref{eq:main-convergences} to infer  
$$
\mathcal{J}_{5,\varepsilon}
\overset{\varepsilon\downarrow 0}{\longrightarrow} 
\frac12 \sum_{i=1}^N\int_0^\cdot 
\bigl\langle F(\rho(s)), 
a_i(a_i(\psi))\bigr\rangle \,ds 
\quad \text{in $L^2(\Omega \times [0,T])$}.
$$

Let us analyze the stochastic integral
$$
\mathcal{J}_{3,\varepsilon}(\omega,t)
=-\sum_{i=1}^N\int_0^t
\bigl\langle F(\rho_\varepsilon(s)),
a_i(\psi)\bigr\rangle\, dW^i(s),
$$
where integration by parts was used to 
obtain the right-hand side. Making use of the 
estimates ($I=1,\ldots,N$)
\begin{align*}
	&\abs{\bigl\langle F(\rho_\varepsilon(\omega,s))
	-F(\rho(\omega,s)),a_i(\psi)\bigr\rangle}
	\\ & \qquad \leq \norm{a_i(\psi)}_{L^2(M)}\norm{F'}_\infty 
	\norm{\rho_\varepsilon(\omega,s)-\rho(\omega,s)}_{L^2(M)},
\end{align*}
the It\^o isometry, and \eqref{eq:main-convergences}, we obtain
$$
\mathcal{J}_{3,\varepsilon}
\overset{\varepsilon\downarrow 0}{\longrightarrow} 
\sum_{i=1}^N\int_0^\cdot 
\bigl\langle F(\rho(s)),a_i(\psi)\bigr\rangle\, dW^i(s)
\quad \text{in $L^2(\Omega \times [0,T])$}.
$$
The other term involving a stochastic integral is dealt with 
in a similar fashion. Indeed, recalling \eqref{eq:GF-def}, 
$\mathcal{J}_{4,\varepsilon}=\mathcal{J}_{4_1,\varepsilon}
+\mathcal{J}_{4_2,\varepsilon}$, where 
\begin{align*}
	\mathcal{J}_{4_1,\varepsilon}(\omega,t)
	& :=\sum_{i=1}^N\int_0^t
	\bigl\langle \rho_\varepsilon(s) F'(\rho_\varepsilon(s)) 
	\Divh a_i, \psi \bigr\rangle \, dW^i(s), 
	\\ \mathcal{J}_{4_2,\varepsilon}(\omega,t)
	& :=- \sum_{i=1}^N\int_0^t
	\langle F(\rho_\varepsilon(s)) \Divh a_i,\psi\rangle\, dW^i(s).
\end{align*}
As before, 
$$
\mathcal{J}_{6_4,\varepsilon}
\overset{\varepsilon\downarrow 0}{\longrightarrow} 
\sum_{i=1}^N\int_0^\cdot 
\bigl \langle F(\rho(s))\Divh a_i,\psi\bigr\rangle\, dW^i(s)
\quad \text{in $L^2(\Omega \times [0,T])$}.
$$
Regarding $\mathcal{J}_{4_1,\varepsilon}$, note that 
\begin{align*}
	&\abs{\bigl\langle \rho_\varepsilon(\omega,s) 
	F'(\rho_\varepsilon(\omega,s)) 
	- \rho(\omega,s)F'(\rho(\omega,s)), 
	\Divh a_i\,\psi \bigr\rangle}
	\\ & \qquad 
	\leq \norm{\psi \Div a_i}_{L^2(M)} 
	\norm{\rho_\varepsilon(\omega,s)F'(\rho_\varepsilon(\omega,s))
	-\rho(\omega,s)F'(\rho(\omega,s))}_{L^2(M)}
\end{align*}
and, invoking Lemma \ref{lem:product-limit} (appendix),
$$
\norm{\rho_\varepsilon F'(\rho_\varepsilon)-\rho F'(\rho)}_{L^2(M)}
\overset{\varepsilon\downarrow 0}{\longrightarrow} 
\quad \text{in $L^2(\Omega \times [0,T])$}.
$$
Hence, appealing once more to the 
It\^o isometry,
$$
\mathcal{J}_{4_1,\varepsilon}
\overset{\varepsilon\downarrow 0}{\longrightarrow} 
\sum_{i=1}^N\int_0^\cdot \bigl\langle \rho(s) F'(\rho(s)) \Divh a_i,
\psi \bigr\rangle\, dW^i(s)
\quad \text{in $L^2(\Omega \times [0,T])$.}
$$

By a similar reasoning process, we compute easily the limits
\begin{align*}
	\mathcal{J}_{6,\varepsilon}(\omega,t)
	&=\action{ \frac12\sum_{i=1}^N\int_0^\cdot 
	G_F(\rho_\varepsilon(s))\,\Lambda_i(1)\, ds}{\psi} \nonumber
	\\ & \overset{\varepsilon\downarrow 0}{\longrightarrow}
	\frac12\sum_{i=1}^N\int_0^\cdot \langle G_F(\rho(s))
	\,\Lambda_i(1),\psi\rangle\, ds
	\;\;\; \text{in } L^2(\Omega \times [0,T]),
\end{align*}
and, after an integration by parts, 
\begin{align*}
	\mathcal{J}_{8,\varepsilon}(\omega,t)
	& =\action{\sum_{i=1}^N\int_0^t 
	\bar{a}_i\bigl(G_F(\rho_\varepsilon(s))\bigr)\, ds}{\psi} \nonumber
	\\ & \overset{\varepsilon\downarrow 0}{\longrightarrow}
	-\sum_{i=1}^N\int_0^\cdot
	\bigl \langle 
	G_F(\rho(s)), \Divh\left(\psi\bar{a}_i\right)
	\bigr\rangle \, ds
	\quad \text{in $L^2(\Omega \times [0,T])$}.
\end{align*}

It remains to deal with $\mathcal{J}_{7,\varepsilon}(\omega,t)=
\action{\frac12\sum_i \int_0^t F''(\rho_\varepsilon)
\,\bigl(\rho_\varepsilon \Divh a_i\bigr)^2\, ds}{\psi}$. 
Paying attention to the estimates ($i=1,\ldots,N$)
\begin{align*}
	&\abs{\left\langle 
	\left(\rho(\omega,s)\right)^2 
	F''(\rho_\varepsilon(\omega,s)) 
	- \left(\rho(\omega,s)\right)^2 F''(\rho(\omega,s)),
	\left(\Divh a_i\right)^2\,\psi\right\rangle}
	\\ & \quad 
	\leq \norm{\psi (\Divh a_i)^2}_{L^\infty(M)} 
	\norm{\left(\rho_\varepsilon(\omega,s)\right)^2
	F''(\rho_\varepsilon(\omega,s)) 
	- \left(\rho^2(\omega,s)\right)^2 
	F''(\rho(\omega,s))}_{L^1(M)}.
\end{align*}
Since $\rho^2_\varepsilon  
\overset{\varepsilon\downarrow 0}{\longrightarrow}
\rho^2$ in $L^1(\Omega\times [0,T]\times M)$, cf.~\eqref{eq:main-convergences}, 
and $F''\in C_b(\R)$, we can again invoke 
Lemma \ref{lem:product-limit} to arrive at 
$$
\mathcal{J}_{7,\varepsilon}
\overset{\varepsilon\downarrow 0}{\longrightarrow}
\frac12\sum_{i=1}^N\int_0^\cdot 
\left\langle F''(\rho(s)) \bigl(\rho(s)\Divh a_i \bigr)^2, \psi \right \rangle \, ds
\quad \text{in $L^1(\Omega \times [0,T])$.}
$$

In view of the established convergences, it is clear that $F(\rho)$ 
satisfies the weak formulation \eqref{eq:main-result-weak-form} 
(with $u\equiv 0$), $\P$-a.s., for a.e.~$t\in [0,T]$. To improve this to 
\textit{all} times $t\in [0,T]$ note that the right-hand side of 
\eqref{eq:main-result-weak-form} defines a continuous stochastic process. 
Therefore, $(\omega,t)\mapsto \int_M F(\rho(t))\psi \,dV_h$ 
admits a continuous modification. This concludes the proof of the proposition.
\end{proof}

As of now, we have proved our main result 
(Theorem \ref{thm:main-result})
under the additional assumption that $u\equiv 0$.


\subsection{The general case $u\not\equiv 0$}
Let us adapt the prior proof to the general 
case. First, \eqref{eq:Ito-reg-eqn} 
continues to hold provided we add to 
the right-hand side the terms
$$
-\int_0^t F'(\rho_\varepsilon(s,x))
\Divh \bigl(\rho(s) u(s)\bigr)_\varepsilon(x)\, ds, 
\quad 
\widetilde{\mathcal{J}_{A}}
:=\int_0^t F'(\rho_\varepsilon(s,x))\, 
A_{u,\varepsilon}(s,x)\, ds,
$$
where the first term is (by now) easily 
seen to be equal to 
\begin{align*}
	\widetilde{\mathcal{J}_u} & :=-\int_0^t 
	\Divh \bigl (F(\rho_\varepsilon(s,x))\, u(s) \bigr)\,ds
	-\int_0^t G_F(\rho_\varepsilon(s,x)) \Divh u(s)\, ds
	\\ & \qquad 
	-\int_0^t F'(\rho_\varepsilon(s,x)) 
	\, r_{\varepsilon,u}(s,x)\, ds,	
\end{align*}
where $G_F$ is defined in \eqref{eq:GF-def} and 
the remainder $r_{\varepsilon,u}$ is 
defined in \eqref{eq:commutator-u}. In other words, 
the equation \eqref{eq:Ito-reg-eqn-4} for $F(\rho_\varepsilon)$ 
continues to hold with $-\widetilde{\mathcal{J}_u}$ and 
$-\widetilde{\mathcal{J}_A}$ added to the left-hand side of the equality sign.  

To conclude proof of Theorem \ref{thm:main-result} (in the general case, 
$u\not \equiv 0$), we need strong convergence results for the following 
terms related to $\widetilde{\mathcal{J}_u}$ 
and $\widetilde{\mathcal{J}_A}$:
\begin{align*}
	& \mathcal{J}_{1,\varepsilon}(\omega,t)
	:=\action{\int_0^t 
	\Divh \bigl (F(\rho_\varepsilon(s))\, u(s)\bigr)\,ds}{\psi},
	\\ 
	& \mathcal{J}_{2,\varepsilon}(\omega,t)
	:=\action{\int_0^t G_F(\rho_\varepsilon(s))\Divh u(s)\, ds}{\psi},
	\\ 
	& \mathcal{J}_{3,\varepsilon}(\omega,t)
	:= \action{ \int_0^t F'(\rho_\varepsilon(s))
	\, A_{u,\varepsilon}(s)\, ds}{\psi},
	\\ 
	& \mathcal{J}_{4,\varepsilon}(\omega,t)
	:= \action{\int_0^t F'(\rho_\varepsilon(s)) 
	\,r_{\varepsilon,u}(s)\, ds}{\psi},
\end{align*}
for any $\psi\in C^\infty(M)$ and 
$(\omega,t)\in \Omega\times [0,T]$.

\begin{lemma}[convergence of terms related to $u$]
Fix $\psi\in C^\infty(M)$. Then
\begin{align}
	& \mathcal{J}_{1,\varepsilon}
	\overset{\varepsilon\downarrow 0}{\longrightarrow}
	-\int_0^\cdot \bigl\langle F(\rho(s)),u(s)(\psi)
	\bigr\rangle \, ds 
	\quad \text{in $L^1(\Omega \times [0,T])$},
	\label{eq:u-claim1}
	\\ & 
	\mathcal{J}_{2,\varepsilon}
	\overset{\varepsilon\downarrow 0}{\longrightarrow}
	\int_0^\cdot \bigl \langle G_F(\rho(s)) 
	\Divh u(s),\psi\bigr \rangle\, ds
	\quad \text{in $L^1(\Omega \times [0,T])$},
	\label{eq:u-claim2}
	\\ & 
	\mathcal{J}_{3,\varepsilon}
	\overset{\varepsilon\downarrow 0}{\longrightarrow} 0
	\quad \text{in $L^1(\Omega \times [0,T])$},
	\quad
	\mathcal{J}_{4,\varepsilon}
	\overset{\varepsilon\downarrow 0}{\longrightarrow}0
	\quad \text{in $L^1(\Omega \times [0,T])$}.
	\label{eq:u-claim34}
\end{align}
\end{lemma}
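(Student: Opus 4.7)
The plan is to treat each of the four convergences by combining integration by parts (where applicable), strong convergence of the regularizations, and dominated convergence, leveraging the commutator estimates already established in this section. As a preliminary, since $F,F',F''$ are bounded, $F$ is globally Lipschitz, and the pointwise bound
\[
\|F(\rho_\varepsilon(s))-F(\rho(s))\|_{L^2(M)} \leq \|F'\|_\infty\, \|\rho_\varepsilon(s)-\rho(s)\|_{L^2(M)}
\]
combined with Lemma \ref{lem:prop-rho-kappa-eps}(3) yields $F(\rho_\varepsilon) \to F(\rho)$ in $L^q([0,T]; L^2(\Omega\times M))$ for every $q\in[1,\infty)$. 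Writing $G_F(\rho_\varepsilon)-G_F(\rho) = \bigl(\rho_\varepsilon F'(\rho_\varepsilon) - \rho F'(\rho)\bigr) - \bigl(F(\rho_\varepsilon) - F(\rho)\bigr)$ and invoking Lemma \ref{lem:product-limit} for the first difference, I would likewise obtain $G_F(\rho_\varepsilon) \to G_F(\rho)$ in the same spaces.

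For claim \eqref{eq:u-claim1}, integration by parts on $M$ gives
\[
\mathcal{J}_{1,\varepsilon}(\omega,t) = -\int_0^t \bigl\langle F(\rho_\varepsilon(s)), u(s)(\psi)\bigr\rangle \, ds,
\]
so the limit candidate has the same form with $\rho$ in place of $\rho_\varepsilon$. The estimate
\[
\bigl|\bigl\langle F(\rho_\varepsilon)-F(\rho), u(\psi)\bigr\rangle\bigr|
\leq \|F'\|_\infty \|\rho_\varepsilon(s)-\rho(s)\|_{L^2(M)} \|u(s)(\psi)\|_{L^2(M)},
\]
together with $\|u(\cdot)(\psi)\|_{L^2(M)} \in L^1(0,T)$ (a consequence of \eqref{eq:u-ass-1} via $|u(\psi)| \leq |u|_h |\Grad \psi|_h$), reduces the problem to verifying that
\[
\int_0^T \|u(s)(\psi)\|_{L^2(M)}\, g_\varepsilon(s)\,ds \xrightarrow[\varepsilon\downarrow 0]{} 0,
\]
where $g_\varepsilon(s) := \EE\|\rho_\varepsilon(s)-\rho(s)\|_{L^2(M)}$ is uniformly bounded (by $\rho\in L^\infty_tL^2_{\omega,x}$) and tends to $0$ in $L^1(0,T)$. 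The argument proceeds by a tail-splitting: for a truncation level $K$, use the uniform bound on $g_\varepsilon$ on $\{\|u(\psi)\|_{L^2}>K\}$ (small in $L^1_t$ for large $K$) and the $K$-bound of the integrand combined with $g_\varepsilon \to 0$ in $L^1_t$ on the complement.

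Claim \eqref{eq:u-claim2} follows the same pattern. Writing
\[
\mathcal{J}_{2,\varepsilon} - \int_0^\cdot \bigl\langle G_F(\rho)\Divh u, \psi\bigr\rangle\,ds
= \int_0^\cdot \bigl\langle (G_F(\rho_\varepsilon) - G_F(\rho))\Divh u, \psi\bigr\rangle\,ds,
\]
Cauchy--Schwarz in $x$ isolates the product $\|\Divh u(s)\|_{L^2(M)}\cdot \EE\|G_F(\rho_\varepsilon(s))-G_F(\rho(s))\|_{L^2(M)}$, where the first factor lies in $L^1(0,T)$ by \eqref{eq:u-ass-1} and the second is uniformly bounded while tending to $0$ in $L^1(0,T)$; the same tail-splitting closes the argument. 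Notably, assumption \eqref{eq:u-ass-2} plays no role here.

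The pair \eqref{eq:u-claim34} are near-immediate. For $\mathcal{J}_{3,\varepsilon}$, the bound
\[
|\mathcal{J}_{3,\varepsilon}(\omega,t)| \leq \|F'\|_\infty\|\psi\|_{L^\infty(M)} \int_0^T \|A_{u,\varepsilon}(s)\|_{L^1(M)}\,ds
\]
combined with $A_{u,\varepsilon} \to \rho\,u(\sum_\kappa\mathcal{U}_\kappa) = \rho\,u(1) = 0$ in $L^1([0,T]; L^1(\Omega\times M))$ from Lemma \ref{lem:prop-A-kappa-eps-u}(3) produces the $L^1(\Omega\times[0,T])$ limit after taking expectations and integrating in $t$. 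For $\mathcal{J}_{4,\varepsilon}$, the analogous bound together with Jensen's inequality $\EE\|r_{\varepsilon,u}(s)\|_{L^1(M)} \leq (\EE\|r_{\varepsilon,u}(s)\|_{L^1(M)}^2)^{1/2}$ reduces the claim to the convergence $r_{\varepsilon,u} \to 0$ in $L^1([0,T]; L^2(\Omega; L^1(M)))$ established in Lemma \ref{lem:com-u}. The only genuine technical point throughout is the merely $L^1$ time-integrability of $u$ and $\Divh u$, which blocks a direct Hölder estimate in $t$; this is the step where I expect to spend the most care, and it is uniformly handled by the tail-splitting device described above.
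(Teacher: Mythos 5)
Your proof is correct and follows essentially the same route as the paper: integration by parts reduces $\mathcal{J}_{1,\varepsilon}$ to $-\int_0^t\langle F(\rho_\varepsilon),u(\psi)\rangle\,ds$; Lipschitz bounds for $F$ (and Lemma \ref{lem:product-limit-2} for $G_F$) combineed with the mollifier convergences reduce everything to integrating a product of an $L^1_t$ function ($\|u(s)\|_{L^2}$, $\|\Divh u(s)\|_{L^2}$) against a time-function that is uniformly bounded and tends to $0$ in $L^q_t$; and $\mathcal{J}_3$, $\mathcal{J}_4$ follow directly from the $L^1$ convergences in Lemmas \ref{lem:prop-A-kappa-eps-u} and \ref{lem:com-u}. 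The one place you deviate from the paper is the final step of \eqref{eq:u-claim1}--\eqref{eq:u-claim2}: the paper applies the dominated convergence theorem in $t$ (using that the $L^2_{\omega,x}$ norms converge to zero in $L^q_t$ and hence a.e.\ along subsequences, and are dominated by $C\|\Divh u(s)\|_{L^2(M)}\in L^1_t$), whereas you use a Vitali-type tail-splitting on the level set $\{\|u(\psi)\|_{L^2}>K\}$; these are interchangeable and both correct, with your version avoiding the subsequence extraction at the price of a slightly more hands-on estimate.
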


\begin{proof}
Clearly,
$\mathcal{J}_{1,\varepsilon}
=-\int_0^t \bigl\langle 
F(\rho_\varepsilon(s)),u(s)(\psi) \bigr\rangle\, ds$ 
and
\begin{align*}
	& \abs{\bigl \langle F(\rho_\varepsilon(\omega,s)) 
	- F(\rho(\omega,s)), u(s)(\psi) \bigr\rangle}
	\\ & \quad 
	\lesssim_\psi  
	\norm{F'}_\infty\norm{u(s)}_{L^2\left(\mathcal{T}^0_1(M)\right)}
	\norm{\rho_\varepsilon(\omega,s) - \rho(\omega,s)}_{L^2(M)}.
\end{align*}
Recalling \eqref{eq:main-convergences}, the latter estimate 
implies that $\abs{\bigl\langle F(\rho_\varepsilon)-F(\rho),u(\psi)\bigr\rangle}
\overset{\varepsilon\downarrow 0}{\longrightarrow}0$ 
in $L^1(\Omega \times [0,T])$. Thus, invoking 
Lemma \ref{lem:classical-result}, the claim 	\eqref{eq:u-claim1} follows.

Similarly, looking back on \eqref{eq:GF-def},  
\begin{align*}
	I_1(\omega,s) & :=\abs{\bigl\langle \rho_\varepsilon(\omega,s) 
	F'(\rho_\varepsilon(\omega,s)) 
	- \rho(\omega,s)F'(\rho(\omega,s)),\psi\Divh u(s)\bigr\rangle}
	\\ & 
	\lesssim_{\psi} \norm{\Divh u(s)}_{L^2(M)}
	\norm{\rho_\varepsilon(\omega,s) F'(\rho_\varepsilon(\omega,s)) 
	-\rho(\omega,s)F(\rho(\omega,s))}_{L^2(M)},
	\\ 
	I_2(\omega,s) & 
	:=\abs{\bigl\langle F(\rho_\varepsilon(\omega,s))
	-F(\rho(\omega,s)),\psi\Divh u(s)\bigr\rangle}
	\\ & 
	\lesssim_{\psi} \norm{\Divh u(s)}_{L^2(M)}
	\norm{\rho_\varepsilon(\omega,s)-\rho(\omega,s)}_{L^2(M)}.
\end{align*}
Recalling that $\rho_\varepsilon \in L^\infty_tL^2_{\omega,x}$ 
and $\Divh u\in L^1_tL^2_x$, cf.~\eqref{eq:u-ass-1}, we thus obtain
\begin{align*}
	& \iint\limits_{\Omega \times [0,T]} 
	I_1(\omega,s)\, ds\,d\P 
	\lesssim_{\psi} \int_0^T \tilde{I}_1(s) \, ds,
	\\ & \tilde{I}_1(s):=
	\norm{\rho_\varepsilon(s) F'(\rho_\varepsilon(s))
	-\rho(s)F'(\rho(s))}_{L^2(\Omega\times M)}
	\norm{\Divh u(s)}_{L^2(M)},
\end{align*}
and
\begin{align*} 
	& \iint\limits_{\Omega \times [0,T]} 
	I_2(\omega,s)\, ds\,d\P
	\lesssim_{\psi} \int_0^T \tilde{I}_2(s) \, ds,
	\\ & \tilde{I}_2(s):=\norm{ F(\rho_\varepsilon(s))
	-F(\rho(s))}_{L^2(\Omega\times M)}
	\norm{\Divh u(s)}_{L^2(M)}.
\end{align*}
The functions $\norm{\rho_\varepsilon F'(\rho_\varepsilon)
-\rho F'(\rho)}_{L^2(\Omega\times M)}$ and $\norm{F(\rho_\varepsilon)
-F(\rho)}_{L^2(\Omega\times M)}$ converge to zero in 
$L^q([0,T])$ for any $q\in [1,\infty)$, by Lemma \ref{lem:product-limit-2}, and 
also a.e.~on $[0,T]$ (up to subsequences). Moreover, for a.e. $s\in [0,T]$,
\begin{align*}
	& \tilde{I}_1(s),\, \tilde{I}_2(s)
	\lesssim \norm{\rho }_{L^2(\Omega\times M)}
	\norm{\Divh u}_{L^2(M)}\in L^1([0,T]).
\end{align*}
Hence, by the dominated convergence theorem, 
$\int_0^T\int_{\Omega} I_1(\omega,s)\, ds\,d\P 
\overset{\varepsilon\downarrow 0}{\longrightarrow} 0$ and 
$\int_0^T\int_{\Omega} I_2(\omega,s)\, ds\,d\P 
\overset{\varepsilon\downarrow 0}{\longrightarrow} 0$.
In combination with Lemma \ref{lem:classical-result}, 
this gives \eqref{eq:u-claim2}.

Finally, since $A_{u,\varepsilon}
\overset{\varepsilon\downarrow 0}{\longrightarrow}0$ 
in $L^1(\Omega\times [0,T]\times M)$, cf.~Lemma \ref{lem:prop-A-kappa-eps-u}, 
and $r_{\varepsilon,u}\overset{\varepsilon\downarrow 0}{\longrightarrow}0$ 
in $L^1\left([0,T];L^2(\Omega; L^1(M))\right)$, 
cf.~Lemma \ref{lem:com-u}, we easily arrive at \eqref{eq:u-claim34}.
\end{proof}

\section{Uniqueness and a priori estimate}\label{sec:uniqueness}


\subsection{Uniqueness, proof of Corollary \ref{cor:uniq-result}}
The aim is to prove Corollary \ref{cor:uniq-result}, 
relying on the renormalization property of $L^2$ 
weak solutions (Theorem \ref{thm:main-result}). 
The renormalization property holds for 
bounded nonlinearities $F:\R\to\R$. To handle $F(\xi)=\xi^2$ 
we employ an approximation (truncation) procedure. 

To this end, pick any increasing function 
$\chi\in C^\infty\bigl([0,\infty)\bigr)$ such that
$\chi(\xi)=\xi$ for $\xi\in [0,1]$, $\chi(\xi)=2$ 
for $\xi\geq 2$, $\chi(\xi)\in (1,2)$ for $\xi\in (1,2)$, 
and $A_0:=\sup_{\xi\geq 0}\chi'(\xi)>1$. 
Set $A_1:=\sup_{\xi\geq 0} \abs{\chi''(\xi)}$. 
We need the rescaled function 
$\chi_\mu(\xi)=\mu \,\chi(\xi/\mu)$, for $\mu>0$. 
The relevant approximation of $F(\xi)=\xi^2$ is
$$
F_\mu(\xi):=\chi_\mu\left(\xi^2\right), 
\quad \xi\in\R, \quad \mu>0.
$$
Some tedious computations will reveal that
\begin{equation}\label{eq:Fmu-bounds}
	\begin{split}
		& F_\mu\in C^\infty(\R), 
		\quad 
		\lim_{\mu\to\infty} F_\mu(\xi)=\xi^2,
		\quad
		\sup_{\xi\in\R} F_\mu(\xi)\leq 2\mu, 
		\quad \sup_{\mu>0} F_\mu(\xi)\leq 2\xi^2,
		\\ & 
		\sup_{\xi\in\R}\abs{F_\mu'(\xi)}\leq 2\sqrt{2}A_0\sqrt{\mu}, 
		\quad 
		\sup_{\mu>0} \abs{F_\mu'(\xi)}\leq 2\sqrt{2}A_0\abs{\xi},
		\quad 
		\lim_{\mu\to\infty} F_\mu'(\xi)=2\xi,
		\\ &
		\lim_{\mu\to\infty} F_\mu''(\xi)=2,
		\quad
		\abs{F_\mu''(\xi)}\leq 8A_1 + 2A_0, 
		\quad \text{for $\xi\in \R$, $\mu>0$}.
	\end{split}
\end{equation}
Furthermore, the function $G_{F_\mu}(\xi)=\xi F_\mu'(\xi)-F_\mu(\xi)$, 
cf.~\eqref{eq:GF-def}, satisfies
\begin{equation}\label{eq:Gmu-bounds}
	\begin{split}
		& \sup_{\xi\in\R}\abs{G_{F_\mu}(\xi)}
		\leq (4A_0 + 2)\mu, 
		\quad \sup_{\mu>0}\abs{G_{F_\mu}(\xi)}
		\leq 2(\sqrt{2}A_0 + 1) \xi^2,
		\\ & \qquad 
		\text{and} \quad 
		\lim_{\mu\to\infty}G_{F_\mu}(\xi)=\xi^2,
		\quad \text{for $\xi\in \R$, $\mu>0$}.
	\end{split}
\end{equation}
Finally, to prove Corollary \ref{cor:apriori-est}, 
we will also make use of the bounds
\begin{equation}\label{eq:Fmu-Gmu-prop}
	\abs{G_{F_{\mu}}(\xi)} \leq C_\chi F_{\mu}(\xi), 
	\quad
	\abs{\xi^2F_\mu''(\xi)} \leq C_\chi 
	\begin{cases}
		F_{\mu}(\xi), & \abs{\xi} \leq \mu\\
		\xi^2, & \abs{\xi} \in \left[\sqrt{\mu},\sqrt{2\mu}\right]\\
		0, & \abs{\xi} > \sqrt{2\mu},
	\end{cases}
\end{equation} 
for some constant $C_\chi$ that does not depend on $\mu$.

Consider weak $L^2$-solution $\rho$ of \eqref{eq:target} 
with initial datum $\rho_0\in L^2(M)$. Referring to
Theorem \ref{thm:main-result}, taking $F=F_\mu$ and $\psi\equiv 1$ 
in \eqref{eq:main-result-weak-form} supplies the equation
\begin{equation}\label{eq:weak-form-Fmu}
	\begin{split}
		&\int_M F_\mu(\rho(t))\, dV_h 
		= \int_M F_\mu(\rho_0)\, dV_h 
		-\int_0^t\int_M G_{F_\mu}(\rho(s))\Divh u(s) \, dV_h\, ds
		\\ &\qquad 
		-\sum_{i=1}^N\int_0^t\int_M 
		G_{F_\mu}(\rho(s)) \Divh a_i \,dV_h \, dW^i(s)
		\\ & \qquad\qquad 
		-\frac12 \sum_{i=1}^N\int_0^t\int_M 
		\Lambda_i(1)\,G_{F_\mu}(\rho(s))\,dV_h\,ds
		\\ & \qquad\qquad\qquad
		+\frac12\sum_{i=1}^N\int_0^t\int_M 
		F_\mu''(\rho(s)) \bigl(\rho(s)\Divh a_i\bigr)^2\,dV_h\,ds,
	\end{split}
\end{equation}
which holds $\P$-a.s., for all $t\in [0,T]$, and for any finite $\mu>0$. 
Recall that $\Lambda_i(1)$ equals $\Divh \bar{a}_i$ 
and $\bar{a}_i=(\Divh a_i) \, a_i$.

In view of the bounds on $G_{F_\mu}$, cf.~\eqref{eq:Gmu-bounds}, it 
is clear that the stochastic integral in \eqref{eq:weak-form-Fmu}  
is a zero-mean martingale, and taking the expectation leads then to
\begin{equation}\label{eq:weak-form-Fmu-EE}
	\begin{split}
		&\EE\int_M F_\mu(\rho(t))\, dV_h 
		= \EE\int_M F_\mu(\rho_0)\, dV_h 
		-\EE\int_0^t\int_M G_{F_\mu}(\rho(s)) \Divh u(s) \, dV_h\, ds
		\\ & \qquad
		-\frac12 \sum_{i=1}^N\EE\int_0^t\int_M \Lambda_i(1)
		\,G_{F_\mu}(\rho(s))\,dV_h\,ds
		\\ & \qquad \qquad 
		+\frac12\sum_{i=1}^N \EE\int_0^t\int_M 
		F_\mu''(\rho(s))\bigl(\rho(s)\Divh a_i\bigr)^2\,dV_h\,ds,
	\end{split}
\end{equation}
for all $t\in [0,T]$ and any $\mu>0$. In view of the properties 
of $F_\mu$ and $G_{F_\mu}$, cf.~\eqref{eq:Fmu-bounds} and \eqref{eq:Gmu-bounds}, 
the assumption $\Divh u\in L^1([0,T]; L^\infty(M))$, cf.~\eqref{eq:u-ass-2}, 
and $\rho\in L^\infty\left([0,T];L^2(\Omega\times M)\right)$, it is 
straightforward to use the dominated convergence theorem to 
send $\mu\to \infty$ in \eqref{eq:weak-form-Fmu-EE}, 
eventually concluding that
\begin{align*}
	&\EE \norm{\rho(t)}_{L^2(M)}^2 = \EE \norm{\rho_0}_{L^2(M)}^2 
	-\int_0^t \EE\int_M \left(\rho(s)\right)^2\Divh u(s) \,dV_h\,ds
	\\ &\quad
	-\frac12 \sum_{i=1}^N\int_0^t\EE\int_M 
	\Lambda_i(1)\, \left(\rho(s)\right)^2\,dV_h\,ds
	+\sum_{i=1}^N\int_0^t \EE\int_M 
	\left(\rho(s)\right)^2 \bigl(\Divh a_i\bigr)^2\,dV_h\,ds.
\end{align*}
Setting $f(t):=\EE \norm{\rho(t)}_{L^2(M)}^2$ for $t>0$ 
and $f(0):=\norm{\rho_0}_{L^2(M)}^2$, this identity implies
$$
f(t) \leq f(0) + \int_0^t \norm{\Divh u(s)}_{L^\infty(M)} f(s)\, ds 
+\bar C\int_0^t f(s)\,ds, \quad t\in [0,T],
$$
where $\bar C=\sum_{i=1}^N\Bigl( \frac12
\norm{\Lambda_i(1)}_{L^\infty(M)} + \norm{\bigl(\Divh a_i\bigr)^2}_{L^\infty(M)}\Bigr)$.  
By Gr{\"o}nwall's inequality, there is a constant $C$ 
depending on $\bar C$, $T$, and $\norm{\Divh u}_{L^1_tL^\infty_x}$ such that
$$
\EE \norm{\rho(t)}_{L^2(M)}^2 
\leq C\,\EE \norm{\rho_0}_{L^2(M)}^2,
\qquad t\in [0,T].
$$
This, in combination with the linearity of the SPDE \eqref{eq:target},
implies Corollary \ref{cor:uniq-result}.

 
\subsection{A priori estimate, proof of Corollary \ref{cor:apriori-est}}

Define $f_\mu:[0,T]\to \R$ by 
$$
f_\mu(t)=\EE \esssup_{r\in [0,t]} \int_M F_\mu(\rho(r))\, dV_h,  
\quad \text{for $t>0$},
$$
and $f_\mu(0)=\int_M F_\mu(\rho_0)\, dV_h$. 
By the boundedness of $F_\mu$, note that $f_\mu \in L^\infty$ 
(for a forthcoming application of Gr{\"o}nwall's inequality, 
we simply need $f_\mu\in L^1$). 
Set
$$
M_\mu(t):=\sum_{i=1}^N\int_0^t \int_M 
G_{F_\mu}(\rho(s))\Divh a_i\,dV_h \, dW^i(s), 
\quad t\in [0,T].
$$
Kicking off from \eqref{eq:weak-form-Fmu} and utilizing 
\eqref{eq:Fmu-Gmu-prop}, it is not difficult deduce
\begin{align*}
	f_\mu(t) & \lesssim f_\mu(0)
	+\int_0^t  \norm{\Divh u(s)}_{L^\infty(M)}
	\EE  \int_M F_\mu(\rho(s))\, dV_h, \, ds
	\\ & \qquad 
	+ \EE \sup_{r\in [0,t]}\abs{M_\mu(r)}
	+\int_0^t \EE \int_M F_\mu(\rho(\tau))\, dV_h,\,ds 
	+ o(1/\mu),
	\quad t\in [0,T],
\end{align*}
where we have taken advantage of the assumption 
$\rho \in L^\infty_t L^2_{\omega,x}$ to conclude that 
$\rho^2 \in L^1(\Omega\times [0,T]\times M)$ and thus
$$
\iiint\limits_{\seq{\rho^2> \mu}}\rho^2\,dV_h \,ds \, d\P 
=o(1/\mu) \overset{\mu\uparrow \infty}{\longrightarrow} 0.
$$
The constant hidden in ``$\lesssim$" depends 
on  $\max_i\norm{a_i}_{C^2}$ and $\chi$. 

By the Burkholder-Davis-Gundy inequality \eqref{eq:BDG},
\begin{align*}
	& \EE \sup_{r\in [0,t]}\abs{M_\mu(r)}
	\leq 3 \EE \left( \sum_{i=1}^N\int_0^t
	\left( \int_M G_{F_\mu}(\rho(s))\Divh a_i \,dV_h \right)^2
	\, ds \right)^{\frac12}
	\\ & \qquad \overset{\eqref{eq:Fmu-Gmu-prop}}{\leq} C_1
	\EE \left( \int_0^t
	\left(\int_M F_\mu(\rho(s))\,dV_h \right)^2 
	\, ds \right)^{\frac12}
	\\ & \qquad \leq C_1\EE  \left(  
	\esssup_{\tau\in [0,t]} \int_M F_\mu(\rho(\tau))\,dV_h
	\int_0^t \int_M F_\mu(\rho(s))\,dV_h \,ds \right)^{\frac12}
	\\ & \qquad 
	\leq \frac12  \EE  \esssup_{r\in [0,t]} 
	\int_M F_\mu(\rho(r))\,dV_h 
	+ \frac{C_1}{2}\int_0^t \EE  \int_M F_\mu(\rho(s)) \,dV_h \,ds,
\end{align*}
where the constant $C_1$ is independent of $\mu, t$ (but it 
depends on $\max_i\norm{a_i}_{C^1}$). On that account, we obtain
\begin{align*}
	f_\mu(t) & \lesssim f_\mu(0)
	+ \int_0^t  \left(1+\norm{\Divh u(s)}_{L^\infty(M)}\right)f_\mu(s) 
	\, ds+o(1/\mu), \quad t\in [0,T],
\end{align*}
which, in combination with the Gr{\"o}nwall inequality, implies 
$$
f_\mu(t) \leq \exp(C t) f_\mu(0) + o(1/\mu)
\overset{\eqref{eq:Fmu-Gmu-prop}}{\leq}  
2 \exp(C t) \norm{\rho_0}_{L^2(M)}^2 +o(1/\mu),
$$
for some $\mu$-independent constant $C$. Relying 
on the Fatou lemma, the a priori estimate 
\eqref{eq:apriori-est} emerges after sending $\mu\to \infty$.

\appendix


\section{Some technical results}\label{sec:appendix}

We collect here several results that have been used 
throughout the paper (often unannounced), starting with 
a minor generalization of a well-known commutator 
estimate, see \cite[Lemma II.1]{DL89} 
or \cite[Lemma 2.3]{Lions:NSI}. 

We fix a standard Friedrichs mollifier $\phi_\varepsilon$ 
($=\varepsilon^{-d} \phi(x/\varepsilon)$) 
on $\R^d$. In what follows, we will consider functions and vector fields 
defined on an open (bounded or unbounded) 
subset of the Euclidean space $\R^d$.

We say that a triple $(\alpha_1,\alpha_2,\beta)$ is 
\textit{(1)-admissible} if $\alpha_1,\alpha_2\in [1,\infty]$, 
$\frac{1}{\alpha_1} + \frac{1}{\alpha_2} \leq 1$, 
$\frac{1}{\beta}= \frac{1}{\alpha_1} + \frac{1}{\alpha_2}$ if 
$\alpha_1<\infty$ or $\alpha_2<\infty$, and 
$\beta\in [1,\infty)$ is arbitrary if $\alpha_1=\alpha_2=\infty$.

\begin{lemma}[DiPerna-Lions]\label{lemma:gen-diperna-lions}
Let $(Z,\mu)$ be a finite measure space. Suppose
$$
g\in L^{q_1}\left(Z;L^{p_1}_\mathrm{loc}(G)\right), 
\qquad 
V\in L^{q_2}\left(Z;W^{1,p_2}_\mathrm{loc}(G;\R^d)\right),
$$ 
for some (1)-admissible triples $(p_1,p_2,p)$, $(q_1,q_2,q)$. 
Then, for any compact $K\subset G$,
\begin{equation}\label{eq:genDP1}
	\begin{split}
		&\norm{\dive\left(g V\right)_\varepsilon
		-\dive\left(g_\varepsilon V\right)}_{L^q\left(Z;L^p(K)\right)}
		\\ & \quad
		\leq  C \norm{g}_{L^{q_1}\left(Z;L^{p_1}(K)\right)} 
		\norm{V}_{L^{q_2}\left(Z;W^{1,p_2}(K;\R^d)\right)},
	\end{split}
\end{equation}
for some constant $C$ that does not depend on 
$\varepsilon, p, g, V$. Furthermore,
\begin{equation}\label{eq:genDP2}
	\norm{\dive\left(gV\right)_\varepsilon
	-\dive\left(g_\varepsilon V\right)}_{L^q\left(Z;L^p(K)\right)}
	\overset{\varepsilon\downarrow 0}{\longrightarrow}0.
\end{equation}
\end{lemma}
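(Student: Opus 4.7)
The plan is to treat \eqref{eq:genDP1} and \eqref{eq:genDP2} as the ``section-wise'' analogues over the parameter space $(Z,\mu)$ of the purely deterministic DiPerna--Lions commutator lemma on Euclidean space, and to promote the latter to the former by applying it pointwise in $\omega\in Z$ and then integrating in $\omega$ via H\"older's inequality. I would first fix a compact $K\subset G$ and an auxiliary $K'\subset\subset G$ with $K+\overline{B_\varepsilon(0)}\subset K'$ for all sufficiently small $\varepsilon$, so that all convolutions are unambiguously defined from the local hypotheses on $g$ and $V$.

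For $\mu$-a.e.~$\omega\in Z$ the sections $g(\omega,\cdot)\in L^{p_1}_{\mathrm{loc}}(G)$ and $V(\omega,\cdot)\in W^{1,p_2}_{\mathrm{loc}}(G;\R^d)$ satisfy the hypotheses of the classical commutator lemma \cite[Lemma II.1]{DL89}, which yields, uniformly in $\varepsilon$ small and in $\omega$,
\[
\norm{\dive\bigl(g(\omega)V(\omega)\bigr)_\varepsilon
-\dive\bigl(g_\varepsilon(\omega)V(\omega)\bigr)}_{L^p(K)}
\leq C_\ast\,\norm{g(\omega)}_{L^{p_1}(K')}
\norm{V(\omega)}_{W^{1,p_2}(K';\R^d)},
\]
with $C_\ast$ depending only on the mollifier $\phi$ and on the dimension $d$. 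The classical derivation rewrites the commutator, after integration by parts on the convolution, in the form $R_\varepsilon(g,V)(x)=\int_G\nabla\phi_\varepsilon(x-y)\cdot\bigl(V(y)-V(x)\bigr)g(y)\,dy$ and then controls the difference quotient by a maximal function of $\nabla V$. I would next raise the displayed inequality to the $q$-th power, integrate over $Z$, and apply H\"older's inequality on $Z$ with the integrability exponents $q_1,q_2$ (well-defined because $\frac{1}{q_1}+\frac{1}{q_2}=\frac{1}{q}$ by the $(1)$-admissibility assumption; the case $q_1=q_2=\infty$ is immediate since $\mu(Z)<\infty$). This produces \eqref{eq:genDP1} directly.

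For the convergence \eqref{eq:genDP2} I would argue by dominated convergence on $Z$. The classical lemma also supplies, for $\mu$-a.e.~$\omega$, the strong convergence $\norm{R_\varepsilon(g(\omega),V(\omega))}_{L^p(K)}\to 0$ as $\varepsilon\downarrow 0$; and the pointwise-in-$\omega$ bound from the previous paragraph provides the uniform-in-$\varepsilon$ dominating function $\omega\mapsto C_\ast^q\norm{g(\omega)}_{L^{p_1}(K')}^q\norm{V(\omega)}_{W^{1,p_2}(K';\R^d)}^q$, whose $L^1(Z)$ integrability is precisely what the H\"older step in the preceding paragraph delivers. The only genuine obstacle is confined to the classical pointwise step at the endpoint exponents: when $p_2=\infty$ one obtains a direct Lipschitz estimate, while in the intermediate regimes one invokes boundedness of the Hardy--Littlewood maximal operator on $L^{p_2}$ or approximates $V$ by smooth vector fields in $W^{1,p_2}_{\mathrm{loc}}$; in every subcase the constant $C_\ast$ is $\omega$-independent, which is exactly the feature the $Z$-integration step needs.
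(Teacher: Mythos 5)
Your proof follows the same strategy as the paper's: apply the classical DiPerna--Lions commutator lemma pointwise in $\omega\in Z$ to obtain the $\omega$-wise bound and convergence, then pass to the $L^q(Z)$ estimate via H\"older's inequality and conclude \eqref{eq:genDP2} by dominated convergence, with the $q_1=q_2=\infty$ case handled separately using $\mu(Z)<\infty$. The only cosmetic difference is that you explicitly introduce a slightly larger compact $K'\supset\supset K$ for the convolution and spell out a bit more of the classical Euclidean argument; these are sensible clarifications, not a different route.
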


\begin{proof}
For brevity, let us write
$c_\varepsilon(z,x) := \dive\left(gV\right)_\varepsilon(z,x) 
- \dive\left(g_\varepsilon V\right)(z,x)$, for $z\in Z$ and $x\in G$. 
By the classical DiPerna-Lions theory 
(cf.~\cite[Lemma II.1]{DL89} or \cite[Lemma 2.3]{Lions:NSI}), 
$c_\varepsilon(z,\cdot)\overset{\varepsilon\downarrow 0}{\longrightarrow} 0$ 
in $L^p(K)$ for $\mu$-a.e.~$z\in Z$. Besides,
$$
\norm{c_\varepsilon(z,\cdot)}_{L^p(K)}\lesssim
\norm{g(z,\cdot)}_{L^{p_1}(K)} 
\norm{V(z,\cdot)}_{W^{1,p_2}(K;\R^d)},
$$
for $\mu$-a.e.~$z\in \Z$. Suppose $q_1<\infty$ or $q_2<\infty$. We raise to the power 
$q$ this inequality and apply the generalized H\"older inequality
to demonstrate that the resulting right-hand side is an integrable function (i.e., 
a $\mu$-dominant integrable function). Therefore, by the dominated convergence 
theorem, we obtain the desired convergence result \eqref{eq:genDP2} 
as well the bound \eqref{eq:genDP1}. The case $q_1=q_2=\infty$ is treated 
analogously. Indeed, for any $q\in [1,\infty)$,
$$
\norm{c_\varepsilon(z,\cdot)}^q_{L^p(K)}
\lesssim \norm{g}^q_{L^{\infty}\left(Z;L^{p_1}(K)\right)} 
\norm{V}^q_{L^{\infty}\left(Z;W^{1,p_2}(K;\R^d)\right)},
$$
for $\mu$-a.e.~$z\in \Z$, and once again we have obtained 
a $\mu$-dominant integrable function and conclude by dominated convergence.
\end{proof}

\begin{remark}
In this paper, we apply Lemma \ref{lemma:gen-diperna-lions} with 
the finite measure space $(Z,\mu)$ equal to $\left(\Omega,\P\right)$, $\left([0,T],dt\right)$, 
or $\left(\Omega\times [0,T],\P\otimes dt\right)$.
\end{remark}

Our next lemma is about the convergence of 
a ``second order'' commutator. The lemma is taken from 
Punshon-Smith's preprint \cite{Punshon-Smith:2017aa}.

We say that a triple $(\alpha_1,\alpha_2,\beta)$ is 
\textit{(2)-admissible} if $\alpha_1,\alpha_2\in [1,\infty]$, 
$\frac{1}{\alpha_1} + \frac{2}{\alpha_2} \leq 1$, 
$\frac{1}{\beta}= \frac{1}{\alpha_1} + \frac{2}{\alpha_2}$ if 
$\alpha_1<\infty$ or $\alpha_2<\infty$, and 
$\beta\in [1,\infty)$ is arbitrary if $\alpha_1=\alpha_2=\infty$.

\begin{lemma}[Punshon-Smith]\label{lem:Punshon-Smith}
Suppose 
$$
g\in L^{p_1}_\mathrm{loc}(G), 
\qquad 
V\in W^{1,p_2}_\mathrm{loc}(G;\R^d),
$$
for some (2)-admissible triple $(p_1,p_2,p)$, and define
$$
\mathcal{C}_{\varepsilon}\left[g ,V\right]
:=\frac12\partial_{ij}\left(V^iV^j \, g\right)_\varepsilon
-V^i\partial_{ij}\left(V^j\, g\right)_\varepsilon
+ \frac12 V^i V^j\partial_{ij} g_\varepsilon. 
$$
For any compact subset $K\subset G$,
$$
\mathcal{C}_{\varepsilon}\left[g ,V\right]
-\frac12  \Bigl( \left(\dive V\right)^2
+\partial_iV^j\partial_jV^i\Bigr)g_\varepsilon
\overset{\varepsilon\downarrow 0}{\longrightarrow}0
\quad \text{in $L^p(K)$}.
$$
Furthermore, there is a constant 
$C$ independent of $\varepsilon,p,g,V$ such that
\begin{align*}
	&\norm{\mathcal{C}_{\varepsilon}\left[g ,V\right]
	-\frac12 \Bigl(\left(\dive V\right)^2
	+ \partial_i V^j \partial_j V^i \Bigr) g_\varepsilon}_{L^p(K)} 
	\\ & \qquad
	\leq C \norm{V}_{W^{1,p_2}(K;\R^d)}^2 \norm{g}_{L^{p_1}(K)} .
\end{align*}

\end{lemma}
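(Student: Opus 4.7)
The plan is to prove Lemma~\ref{lem:Punshon-Smith} by first collapsing the three pieces of $\mathcal{C}_\varepsilon[g,V]$ into a single convolution integral whose kernel involves the squared finite difference of $V$, then passing to the limit via Taylor expansion in the smooth case, and finally extending to the general case by a density argument that respects the quadratic structure in $V$.

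First I would unfold all three terms in $\mathcal{C}_\varepsilon[g,V](x)$ as integrals against $\partial_{ij}\phi_\varepsilon(x-y)$. The coefficient tensor
\[
\tfrac{1}{2}V^i(y)V^j(y) - V^i(x)V^j(y) + \tfrac{1}{2}V^i(x)V^j(x),
\]
after symmetrization in $(i,j)$, which is permitted because $\partial_{ij}\phi_\varepsilon = \partial_{ji}\phi_\varepsilon$, equals $\tfrac{1}{2}(V^i(y)-V^i(x))(V^j(y)-V^j(x))$. This produces the compact representation
\[
\mathcal{C}_\varepsilon[g,V](x) = \frac{1}{2} \int \partial_{ij}\phi_\varepsilon(x-y)\,(V^i(y)-V^i(x))(V^j(y)-V^j(x))\, g(y)\, dy.
\]
After the change of variables $y = x - \varepsilon z$, using $\partial_{ij}\phi_\varepsilon(x-y)\, dy = \varepsilon^{-2}(\partial_{ij}\phi)(z)\, dz$, I obtain
\[
\mathcal{C}_\varepsilon[g,V](x) = \frac{1}{2}\int_{|z|\leq 1} (\partial_{ij}\phi)(z)\, D_\varepsilon V^i(x,z)\, D_\varepsilon V^j(x,z)\, g(x-\varepsilon z)\, dz,
\]
where $D_\varepsilon V^i(x,z) := [V^i(x-\varepsilon z) - V^i(x)]/\varepsilon$. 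For the uniform-in-$\varepsilon$ bound, I invoke the standard difference-quotient estimate $\|D_\varepsilon V^i(\cdot,z)\|_{L^{p_2}(K)} \leq |z|\, \|\nabla V^i\|_{L^{p_2}(K')}$ for $\varepsilon$ small, where $K' \subset G$ is a slightly enlarged compact set. Combined with the generalized H\"older inequality, whose exponent relation $\tfrac{1}{p} = \tfrac{1}{p_1} + \tfrac{2}{p_2}$ is precisely the (2)-admissibility condition, together with $\|g(\cdot - \varepsilon z)\|_{L^{p_1}(K)} \leq \|g\|_{L^{p_1}(K')}$, this yields the asserted estimate.

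For the identification of the limit, I would first treat the smooth case. If $V \in C^\infty$, then $D_\varepsilon V^i(x,z) \to -z^k \partial_k V^i(x)$ uniformly on compacts, while $g(\cdot - \varepsilon z) \to g$ in $L^{p_1}_{\mathrm{loc}}$ by continuity of translations. Dominated convergence then gives
\[
\frac{1}{2}\partial_k V^i(x)\, \partial_l V^j(x)\, g(x) \int \partial_{ij}\phi(z)\, z^k z^l\, dz,
\]
and a double integration by parts using $\int \phi = 1$ yields $\int \partial_{ij}\phi(z) z^k z^l\, dz = \delta^k_i \delta^l_j + \delta^l_i \delta^k_j$, so the limit equals $\tfrac{1}{2}((\dive V)^2 + \partial_i V^j \partial_j V^i)\, g$; since $g_\varepsilon \to g$ in $L^{p_1}_{\mathrm{loc}}$, this matches the claimed limit in the lemma. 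For general $V \in W^{1,p_2}_{\mathrm{loc}}$, I would approximate by smooth $V_n \to V$ in $W^{1,p_2}(K')$ and write
\[
\mathcal{C}_\varepsilon[g,V] - \mathcal{C}_\varepsilon[g,V_n] = \widetilde{\mathcal{C}}_\varepsilon[g,\, V-V_n,\, V+V_n],
\]
where $\widetilde{\mathcal{C}}_\varepsilon[g,V,W]$ is the symmetric bilinear polarization of $\mathcal{C}_\varepsilon$, to which the bilinear version of the uniform bound applies: each slot contributes a factor of $\|V-V_n\|_{W^{1,p_2}(K')}$ or $\|V+V_n\|_{W^{1,p_2}(K')}$. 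One then chooses $n$ large, passes $\varepsilon \to 0$ with $V_n$ fixed and smooth, and finally lets $n \to \infty$.

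The main obstacle is exactly this quadratic (rather than linear) dependence of $\mathcal{C}_\varepsilon[g,V]$ on $V$, which rules out the naive density argument familiar from the first-order DiPerna--Lions lemma. Polarizing into a symmetric bilinear operator on $V\times V$ and applying the $W^{1,p_2}$-norm bound in each slot separately is the key technical device. Once this bookkeeping is in place, the proof follows the classical DiPerna--Lions pattern (compare Lemma~\ref{lemma:gen-diperna-lions}), the extra derivative being absorbed by the squared finite difference $D_\varepsilon V^i \cdot D_\varepsilon V^j$ staying bounded in $L^{p_2/2}$ uniformly in $\varepsilon$.
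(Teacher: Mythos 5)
Your proof is correct, but it takes a genuinely different route from the paper: the paper delegates the heart of the argument to the citation \cite[Lemma 3.2]{Punshon-Smith:2017aa} (which gives the bound and the convergence with $g$ in place of $g_\varepsilon$), and then merely adds a one-line triangle inequality and a H\"older estimate on $\bigl((\dive V)^2+\partial_iV^j\partial_jV^i\bigr)(g-g_\varepsilon)$ to pass from $g$ to $g_\varepsilon$. You instead reconstruct the cited lemma from scratch. Your decomposition is sound: after symmetrizing in $(i,j)$, which is legitimate since $\partial_{ij}\phi_\varepsilon$ is symmetric, the three terms collapse to the kernel $\tfrac12\,\partial_{ij}\phi_\varepsilon(x-y)\,(V^i(y)-V^i(x))(V^j(y)-V^j(x))\,g(y)$, and the scaling $y=x-\varepsilon z$ turns this into a squared difference quotient. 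The Minkowski--H\"older estimate with $\tfrac1p=\tfrac1{p_1}+\tfrac2{p_2}$ and the difference-quotient bound $\norm{D_\varepsilon V^i(\cdot,z)}_{L^{p_2}(K)}\le|z|\,\norm{\nabla V^i}_{L^{p_2}(K')}$ then give the uniform bound, and the moment identity $\int\partial_{ij}\phi(z)z^kz^l\,dz=\delta^k_i\delta^l_j+\delta^l_i\delta^k_j$ identifies the limit for smooth $V$. Your observation that the naive linear density argument fails because $\mathcal{C}_\varepsilon[g,\cdot]$ is \emph{quadratic} in $V$, and that polarization $\mathcal{C}_\varepsilon[g,V]-\mathcal{C}_\varepsilon[g,V_n]=\widetilde{\mathcal{C}}_\varepsilon[g,V-V_n,V+V_n]$ restores the density argument, is exactly the right technical point. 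What the paper's approach buys is brevity; what yours buys is a self-contained exposition of the key algebraic identity (the factored kernel) and of why the second-order commutator is controlled by $\norm{V}_{W^{1,p_2}}^2$ rather than by a parabolic-type norm. Two small caveats: (i) the right-hand side of the uniform bound should carry a slightly enlarged compact $K'\supset K$ rather than $K$ itself, which you implicitly acknowledge and which the paper glosses over as well; and (ii) in the edge case $p_1=\infty$ the step ``$g(\cdot-\varepsilon z)\to g$ and $g_\varepsilon\to g$ in $L^{p_1}_{\mathrm{loc}}$'' fails in norm, though the conclusion still holds by pointwise a.e.\ convergence together with $L^\infty$ domination (since $p<\infty$ in all admissible cases), so this is a gap in the stated reason rather than in the conclusion.
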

\begin{proof}
By \cite[Lemma 3.2]{Punshon-Smith:2017aa},
$$
\mathcal{C}_{\varepsilon}\left[g ,V\right]
- \frac12 \Bigl( \left(\dive V\right)^2+ \partial_iV^j\partial_jV^i\Bigr)g
\overset{\varepsilon\downarrow 0}{\longrightarrow}0
\quad \text{in $L^p(K)$},
$$
and 
\begin{align*}
	&\norm{\mathcal{C}_{\varepsilon}\left[g ,V\right]
	-\frac12 \Bigl( \left(\dive V\right)^2+\partial_i V^j \partial_j V^i \Bigr) g}_{L^p(K)} 
	\\ & \qquad
	\leq C \norm{V}_{W^{1,p_2}(K;\R^d)}^2\norm{g}_{L^{p_1}(K)},
\end{align*}
for some constant $C$ independent of $\varepsilon,p,g,V$. 
The lemma follows from this, the triangle inequality, 
and the bound ($\frac{1}{p}=\frac{1}{p_1}+\frac{1}{p_2/2}$)
\begin{align*}
	& \norm{\Bigl( \left(\dive V\right)^2+\partial_i V^j \partial_j V^i \Bigr) 
	\bigl(g-g_\varepsilon\bigr)}_{L^p(K)} 
	\\ & \quad \leq 
	\norm{\left(\dive V\right)^2+\partial_i V^j \partial_j V^i}_{L^{p_2/2}(K)}
	\norm{g-g_\varepsilon}_{L^{p_1}(K)}
	\lesssim \norm{V}_{W^{1,p_2}(K)}^2
	\norm{g}_{L^{p_1}(K)}.
\end{align*}
\end{proof}

Let us also state the following generalization 
of Lemma \ref{lem:Punshon-Smith}, which is 
analogous to Lemma \ref{lemma:gen-diperna-lions} (the 
proof is also the same).

\begin{lemma}\label{lem:Punshon-Smith-gen}
Let $(Z,\mu)$ be a finite measure space. Suppose
$$
g\in L^{q_1}\left(Z;L^{p_1}_\mathrm{loc}(G)\right), 
\qquad 
V\in L^{q_2}\left(Z;W^{1,p_2}_\mathrm{loc}(G;\R^d)\right),
$$ 
for some (2)-admissible triples $(p_1,p_2,p)$, $(q_1,q_2,q)$. 
Then, for any compact $K\subset G$,
\begin{align*}
&\norm{2\mathcal{C}_{\varepsilon}\left[g,V\right] 
-g_\varepsilon\left(\dive V\right)^2
	-g_\varepsilon \partial_i V^j \partial_j V^i
}_{L^q\left(Z;L^p(K)\right)}
\\ & \qquad
\leq  C \norm{g}_{L^{q_1}\left(Z;L^{p_1}(K)\right)} 
\norm{V}_{L^{q_2}\left(Z;W^{1,p_2}(K;\R^d)\right)}^2,
\end{align*}
for some constant $C$ that does not depend 
on $\varepsilon,p,g,V$. Furthermore, 
\[
\norm{2\mathcal{C}_{\varepsilon}\left[g,V\right]
-g_\varepsilon\left(\dive V\right)^2
-g_\varepsilon \partial_i V^j \partial_j V^i}_{L^q\left(Z;L^p(K)\right)}
\overset{\varepsilon\downarrow 0}{\longrightarrow}0.
\]
\end{lemma}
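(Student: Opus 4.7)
The plan is to upgrade the pointwise (in $z \in Z$) commutator estimate of Lemma \ref{lem:Punshon-Smith} to the integrated version here, exactly paralleling how Lemma \ref{lemma:gen-diperna-lions} was deduced from the classical DiPerna-Lions commutator estimate. For brevity let
$$
c_\varepsilon(z,x) := 2\mathcal{C}_{\varepsilon}\left[g(z,\cdot),V(z,\cdot)\right](x)
- g_\varepsilon(z,x)\left(\dive V(z,\cdot)\right)^2(x)
- g_\varepsilon(z,x)\,\partial_iV^j(z,x)\partial_jV^i(z,x),
$$
for $z\in Z$ and $x\in K$. The first step is to invoke Lemma \ref{lem:Punshon-Smith} for $\mu$-a.e.~fixed $z\in Z$. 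Since the hypotheses guarantee that $g(z,\cdot)\in L^{p_1}_{\mathrm{loc}}(G)$ and $V(z,\cdot)\in W^{1,p_2}_{\mathrm{loc}}(G;\R^d)$ for $\mu$-a.e.~$z$, we obtain simultaneously the slicewise convergence $\norm{c_\varepsilon(z,\cdot)}_{L^p(K)}\to 0$ as $\varepsilon\downarrow 0$ and the uniform-in-$\varepsilon$ bound
$$
\norm{c_\varepsilon(z,\cdot)}_{L^p(K)}
\leq C\,\norm{g(z,\cdot)}_{L^{p_1}(K)}\norm{V(z,\cdot)}_{W^{1,p_2}(K;\R^d)}^2,
\quad \text{for $\mu$-a.e.~$z\in Z$.}
$$

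The second step handles integration in $z$. Suppose first that $q_1<\infty$ or $q_2<\infty$. Raising the pointwise inequality to the power $q$ and applying the generalized Hölder inequality in $z$ with exponents $q_1/q$ and $(q_2/2)/q$, which is permitted by the (2)-admissibility relation $\frac{1}{q} = \frac{1}{q_1} + \frac{2}{q_2}$, we immediately obtain the claimed norm bound. Moreover, the right-hand side of the raised inequality provides a $\mu$-integrable majorant independent of $\varepsilon$, so the dominated convergence theorem delivers the convergence statement. The edge case $q_1=q_2=\infty$ is even simpler, since then
$$
\norm{c_\varepsilon(z,\cdot)}^q_{L^p(K)} \leq C^q\norm{g}^q_{L^\infty(Z;L^{p_1}(K))}\norm{V}^{2q}_{L^\infty(Z;W^{1,p_2}(K;\R^d))},
$$
which is a constant (in $z$) majorant and is $\mu$-integrable because $\mu(Z)<\infty$; again dominated convergence applies.

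There is no real obstacle to overcome beyond the bookkeeping involved in verifying that the (2)-admissibility of $(q_1,q_2,q)$ is precisely what makes Hölder's inequality close in the $z$-variable and that the $\mu$-a.e.~hypotheses of Lemma \ref{lem:Punshon-Smith} are inherited from membership of $g,V$ in the Bochner-type spaces. Once these bookkeeping items are in place, the entire argument is a verbatim transcription of the proof of Lemma \ref{lemma:gen-diperna-lions} with ``DiPerna-Lions'' replaced by ``Punshon-Smith''.
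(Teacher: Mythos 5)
Your proof is correct and follows precisely the route the paper indicates: the authors state that Lemma~\ref{lem:Punshon-Smith-gen} is proved "analogously to Lemma~\ref{lemma:gen-diperna-lions}" (same slicewise application of the Euclidean commutator estimate, then generalized Hölder in $z$ using the $(2)$-admissibility exponent relation, then dominated convergence, with the $q_1=q_2=\infty$ case handled via finiteness of $\mu$). Nothing is missing and nothing differs from the paper's intent.
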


On several occasions we  use the following 
basic convergence lemma:

\begin{lemma}\label{lem:product-limit}
Fix $r\in [1,\infty]$ and $H\in C_b(\R)$. Let $\seq{f_j}_{j\ge 1}$ be 
a sequence in $L^r(\Omega \times [0,T]\times M)$ 
converging to $f$ in $L^r(\Omega\times [0,T]\times M)$. 
Then, as $j\to\infty$,
$$
H(f_j)f_j \to H(f)f \quad \text{in $L^r(\Omega\times [0,T]\times M)$}.
$$
\end{lemma}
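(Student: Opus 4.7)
The plan is to write $H(f_j)f_j - H(f)f = H(f_j)(f_j - f) + \bigl(H(f_j) - H(f)\bigr)f$ and control each summand separately in $L^r(\Omega\times[0,T]\times M)$. The first summand is dealt with immediately by the bound $\|H(f_j)(f_j-f)\|_{L^r} \leq \|H\|_\infty \|f_j - f\|_{L^r}$, which tends to zero by hypothesis. So the entire task reduces to showing that $\bigl(H(f_j) - H(f)\bigr)f \to 0$ in $L^r$.

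For this second summand the strategy I would use is the classical subsequence--subsequence principle: to prove that a sequence converges in a metric space to a prescribed limit, it suffices to show that every subsequence admits a further subsequence converging to that same limit. Fix then any subsequence $\{f_{j_k}\}$. Assuming first $r<\infty$, since $f_{j_k}\to f$ in $L^r(\Omega\times[0,T]\times M)$, a standard measure-theoretic fact furnishes a further subsequence $\{f_{j_{k_\ell}}\}$ with $f_{j_{k_\ell}}\to f$ pointwise almost everywhere with respect to $\P\otimes dt\otimes dV_h$. Continuity of $H$ then yields $H(f_{j_{k_\ell}})\to H(f)$ a.e., so $(H(f_{j_{k_\ell}})-H(f))f\to 0$ a.e. The pointwise domination $\bigl|\bigl(H(f_{j_{k_\ell}})-H(f)\bigr)f\bigr|^r \leq (2\|H\|_\infty)^r\,|f|^r$, together with $|f|^r\in L^1(\Omega\times[0,T]\times M)$, lets the dominated convergence theorem close the argument.

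For the case $r=\infty$ the argument is more direct: convergence $f_j\to f$ in $L^\infty$ yields, for all sufficiently large $j$, a uniform essential bound $R>0$ on $\{f_j\}\cup\{f\}$, and the continuous function $H$ is uniformly continuous on $[-R,R]$, so $\|H(f_j)-H(f)\|_{L^\infty}\to 0$, from which the conclusion follows by taking the product with $f\in L^\infty$. I do not anticipate any genuine obstacle in this proof: the only delicate point is the extraction of an almost-everywhere convergent subsequence from $L^r$-convergence for $r<\infty$, which is a well-known and routine consequence of norm convergence in Lebesgue spaces over a $\sigma$-finite measure space.
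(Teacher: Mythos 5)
Your proof is correct and rests on the same core strategy as the paper's: the subsequence--subsequence principle, extraction of an a.e.\ convergent further subsequence, and the dominated convergence theorem. The one genuine difference is the preliminary decomposition $H(f_j)f_j - H(f)f = H(f_j)(f_j-f) + \bigl(H(f_j)-H(f)\bigr)f$. You dispatch the first summand by the trivial bound $\norm{H(f_j)(f_j-f)}_{L^r}\le \norm{H}_\infty\norm{f_j-f}_{L^r}$, so that for the second summand the dominating function is simply the fixed $|f|^r\in L^1$. The paper instead applies the subsequence argument directly to the whole product $H(f_{j_{n_k}})f_{j_{n_k}}$, and therefore invokes the full strength of the inverse dominated convergence theorem (\cite[Theorem~4.9]{Brezis:2010aa}) to obtain both an a.e.\ convergent sub-subsequence \emph{and} an $L^r$-dominating function $g$ for that sub-subsequence. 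Your split buys a marginally more elementary argument in that you need only the a.e.\ convergence part of that theorem, not the accompanying $L^r$-dominant; you also make the $r=\infty$ case explicit (via uniform continuity of $H$ on a compact interval), which the paper dismisses as trivial. Both routes are sound.
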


\begin{proof}
We can assume $r<\infty$, as the result is trivial for $r=\infty$. 
Fix an arbitrary subsequence $\seq{f_{j_n}}_{n\ge 1}
\subset \seq{f_j}_{j\ge 1}$. Then, by the ``inverse dominated 
convergence" theorem, there exists a sub-subsequence 
$\seq{f_{j_{n_k}}}_{k\ge 1}\subset \seq{f_{j_n}}_{n\ge 1}$ which converges 
a.e.~to $f$, and there exists a function $g\in L^r$ that 
dominates $\seq{f_{j_{n_k}}}_{k\ge 1}$, see \cite[Theorem 4.9]{Brezis:2010aa}. 
Clearly, $H(f_{j_{n_k}})\to H(f)$ a.e.~as $k\to \infty$, and
$$
\sup_k \norm{H\left(f_{j_{n_k}}\right)}_{L^\infty}<\infty, 
\qquad 
H(f)\in L^\infty(\Omega\times[0,T]\times M).
$$
Therefore, by the dominated convergence theorem,
$$
H\left(f_{j_{n_k}}\right) f_{j_{n_k}} 
\overset{k\uparrow \infty}{\longrightarrow} 
H(f)f
 \quad 
 \text{in $L^r(\Omega\times [0,T]\times M)$}.
$$
By the arbitrariness of the fixed subsequence 
and the uniqueness of the limit, the original sequence 
must converge as well.
\end{proof}

We will also need an easy variant of the previous lemma.

\begin{lemma}\label{lem:product-limit-2}
Fix $q\in [1,\infty)$. Lemma \ref{lem:product-limit} continues 
to hold with $L^r(\Omega \times [0,T]\times M)$ replaced 
by $L^q\left([0,T]; L^2(\Omega\times M)\right)$.
\end{lemma}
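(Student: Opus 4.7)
The plan is to reduce Lemma \ref{lem:product-limit-2} to Lemma \ref{lem:product-limit} (applied pointwise in $t$) combined with a time-wise dominated convergence argument, all wrapped in a subsequence argument to exploit the partial nature of the a.e.~convergence we can extract.

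First I would fix an arbitrary subsequence $\{f_{j_n}\}_{n\geq 1}$ of $\{f_j\}_{j\geq 1}$. Since $\|f_j-f\|_{L^2(\Omega\times M)}\to 0$ in $L^q([0,T])$, the ``inverse dominated convergence'' theorem (cf.~\cite[Theorem 4.9]{Brezis:2010aa}) yields a further sub-subsequence $\{f_{j_{n_k}}\}_{k\geq 1}$ and a fixed function $G\in L^q([0,T])$ such that, for a.e.~$t\in[0,T]$,
$$
f_{j_{n_k}}(t)\xrightarrow[k\uparrow\infty]{} f(t) \quad\text{in $L^2(\Omega\times M)$},
\qquad
\|f_{j_{n_k}}(t)-f(t)\|_{L^2(\Omega\times M)}\leq G(t).
$$

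Next, for each $t$ where the above holds, I would invoke Lemma \ref{lem:product-limit} on the sigma-finite measure space $(\Omega\times M,\P\otimes dV_h)$ in place of $(\Omega\times[0,T]\times M,\P\otimes dt\otimes dV_h)$ (the proof there transfers verbatim because it uses only the a.e.~extraction and the dominated convergence theorem in an abstract measure-theoretic setting). This yields, for a.e.~$t\in[0,T]$,
$$
H(f_{j_{n_k}}(t))\,f_{j_{n_k}}(t) \xrightarrow[k\uparrow\infty]{} H(f(t))\,f(t)
\quad\text{in $L^2(\Omega\times M)$.}
$$

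The final step is to promote this pointwise-in-$t$ convergence to convergence in $L^q([0,T];L^2(\Omega\times M))$ via the dominated convergence theorem. Since $H$ is bounded, Minkowski's inequality in $L^2(\Omega\times M)$ gives
$$
\bigl\|H(f_{j_{n_k}}(t))\,f_{j_{n_k}}(t)-H(f(t))\,f(t)\bigr\|_{L^2(\Omega\times M)}
\leq 2\|H\|_\infty\bigl(G(t)+2\|f(t)\|_{L^2(\Omega\times M)}\bigr),
$$
and the right-hand side belongs to $L^q([0,T])$ because $G\in L^q([0,T])$ and $\|f(\cdot)\|_{L^2(\Omega\times M)}\in L^q([0,T])$ by assumption. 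Raising to the $q$-th power and applying the scalar dominated convergence theorem in $t$ yields $H(f_{j_{n_k}})f_{j_{n_k}}\to H(f)f$ in $L^q([0,T];L^2(\Omega\times M))$. Since the original subsequence of $\{f_j\}$ was arbitrary and the limit is uniquely determined, a standard argument (if every subsequence has a further subsequence converging to the same limit, then the whole sequence converges) shows that the full sequence $H(f_j)f_j$ converges to $H(f)f$ in $L^q([0,T];L^2(\Omega\times M))$, concluding the proof. There is no significant obstacle here beyond bookkeeping: the crucial ingredients—the a.e.~extraction with a dominating $L^q$-function, and the measure-theoretic adaptability of Lemma \ref{lem:product-limit}—are both completely standard.
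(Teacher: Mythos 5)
Your proof is correct. The paper does not record an explicit proof---it merely calls this an ``easy variant'' of Lemma~\ref{lem:product-limit}---but the intended argument is exactly the subsequence-plus-dominated-convergence scheme you describe, and your implementation is sound: the Bochner-space form of the ``inverse dominated convergence'' theorem (which, as you implicitly use, follows from the scalar version in \cite[Theorem~4.9]{Brezis:2010aa} applied to $t\mapsto\|f_j(t)-f(t)\|_{L^2(\Omega\times M)}$) gives the a.e.-in-$t$ convergent sub-subsequence together with an $L^q([0,T])$ majorant $G$, the measure-theoretic version of Lemma~\ref{lem:product-limit} applied on $(\Omega\times M,\P\otimes dV_h)$ handles the spatial variable for each such $t$, and the $L^q([0,T])$-domination $\|H\|_\infty\bigl(G(t)+2\|f(t)\|_{L^2(\Omega\times M)}\bigr)$ closes the scalar dominated-convergence step in $t$. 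A marginally more streamlined variant would, after the same extraction, also extract a.e.\ convergence on $\Omega\times[0,T]\times M$ (using $L^q([0,T];L^2(\Omega\times M))\hookrightarrow L^1(\Omega\times[0,T]\times M)$ on the finite product measure space) and then apply a single dominated-convergence argument directly in the mixed norm, avoiding the nested use of Lemma~\ref{lem:product-limit}; but the mathematical content and the ingredients are the same as yours.
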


Finally, we recall (without proof) a simple result that has been 
utilized several times when passing to the limit 
in the space-weak formulation of the SPDE. 

\begin{lemma}\label{lem:classical-result}
Fix $r\in [1,\infty]$. Let $\seq{f_j}_{j\ge 1}$ be 
a sequence in $L^r(\Omega \times [0,T])$ 
converging to $f$ in $L^r(\Omega\times [0,T])$.
Consider the functions
$$
F_j(\omega,t):=\int_0^t f_j(\omega,s)\,ds,
\qquad
F(\omega,t) := \int_0^tf(\omega,s)\,ds.
$$
Then $F_j\to F$ in $L^r(\Omega\times [0,T])$ as $j\to\infty$.
\end{lemma}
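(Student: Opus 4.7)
The plan is to recognize that the assertion is exactly the statement that the linear time-integration operator $Tg(\omega,t):=\int_0^t g(\omega,s)\,ds$ is bounded on $L^r(\Omega\times[0,T])$, with operator norm at most $T$. Once that bound is in hand, applying $T$ to $g:=f_j-f$ and using the hypothesis $\norm{f_j-f}_{L^r(\Omega\times[0,T])}\to 0$ yields the claim. So the whole task reduces to a standard Jensen/H\"older estimate.

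First, I would fix $(\omega,t)\in\Omega\times[0,T]$ and apply the triangle inequality to obtain $\abs{Tg(\omega,t)}\le \int_0^t\abs{g(\omega,s)}\,ds$. For $r\in[1,\infty)$, H\"older's inequality in $s$ on $[0,t]$ with conjugate exponents $r$ and $r/(r-1)$ (reading $r/(r-1)=\infty$ if $r=1$) gives
\[
\abs{Tg(\omega,t)}\le t^{1-1/r}\Bigl(\int_0^t\abs{g(\omega,s)}^r\,ds\Bigr)^{1/r}\le T^{1-1/r}\Bigl(\int_0^T\abs{g(\omega,s)}^r\,ds\Bigr)^{1/r}.
\]
For $r=\infty$ the estimate is immediate: $\abs{Tg(\omega,t)}\le T\,\norm{g(\omega,\cdot)}_{L^\infty([0,T])}$.

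Next, for $r<\infty$ I would raise the pointwise bound to the power $r$, integrate in $t\in[0,T]$, take expectation in $\omega$, and invoke Fubini to arrive at $\norm{Tg}_{L^r(\Omega\times[0,T])}\le T\,\norm{g}_{L^r(\Omega\times[0,T])}$. The case $r=\infty$ follows by taking the essential supremum in $(\omega,t)$ on the pointwise bound. Specializing to $g=f_j-f$ and letting $j\to\infty$ concludes the proof for every $r\in[1,\infty]$. No real obstacle is anticipated; this is a routine continuity statement whose role in the main argument is simply to let us commute the $L^r(\Omega\times[0,T])$ limit $\varepsilon\to 0$ with the time integral $\int_0^t$ appearing in the space-weak formulation.
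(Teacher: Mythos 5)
Your argument is correct. You establish the boundedness of the time\DH integration operator $T g(\omega,t)=\int_0^t g(\omega,s)\,ds$ on $L^r(\Omega\times[0,T])$ with operator norm at most $T$, via H\"older in $s$ (for $r<\infty$) and the trivial sup bound (for $r=\infty$), and then apply this to $g=f_j-f$. The paper states this lemma explicitly ``without proof,'' so there is no given argument to compare against, but what you have written is the standard and essentially only sensible route; all the steps (the pointwise H\"older bound, the integration in $t$ and $\omega$ with Tonelli, and the $r=\infty$ case via the product\DH measure essential supremum) are valid.
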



\end{document}